\documentclass[12pt]{article}

\usepackage{amsmath, amsthm, amssymb, stmaryrd}
\usepackage[all,cmtip]{xy}
\usepackage{fullpage}
\usepackage{titlesec}
\usepackage{mathrsfs}
\usepackage{amsbsy}
\usepackage{turnstile}
\usepackage{bbm}
\usepackage{yhmath}
\usepackage{tensor}
\usepackage{graphics}
\usepackage{enumitem}
\usepackage{calligra}
\usepackage{verbatim}
\usepackage{setspace}
\usepackage{hhline}
\usepackage{array}

\usepackage[pdftex,bookmarks=true]{hyperref}
\usepackage[svgnames]{xcolor}
\hypersetup{
    linkbordercolor = {PaleTurquoise},
    citebordercolor = {PaleGreen},
}

\usepackage{titletoc}

\titlecontents{section}
[0pt]                                               
{}
{\contentsmargin{0pt}                               
    \thecontentslabel\enspace
    }
{\contentsmargin{0pt}}                        
{\titlerule*[.5pc]{.}\contentspage}                 
[]

\makeatletter
\newcommand*{\@old@slash}{}\let\@old@slash\slash
\def\slash{\relax\ifmmode\delimiter"502F30E\mathopen{}\else\@old@slash\fi}
\makeatother

\titleformat{\section}{\normalsize\bfseries}{\thesection}{1em}{}
\titleformat{\subsection}{\normalsize\bfseries}{\thesubsection}{1em}{}

\numberwithin{equation}{subsection}

\theoremstyle{plain}

\newtheorem{prop}[subsection]{Proposition}
\newtheorem{lem}[subsection]{Lemma}
\newtheorem{cor}[subsection]{Corollary}
\newtheorem{thm}[subsection]{Theorem}

\newtheorem{obj}[subsection]{Objective}

\newtheorem{propsub}[subsubsection]{Proposition}

\newtheorem{thmsub}[subsubsection]{Theorem}

\theoremstyle{definition}

\newtheorem{defn}[subsection]{Definition}
\newtheorem{exa}[subsection]{Example}
\newtheorem{rem}[subsection]{Remark}
\newtheorem{para}[subsection]{}

\newtheorem{notn}[subsection]{Notation}

\newtheorem{exasub}[subsubsection]{Example}
\newtheorem{parasub}[subsubsection]{}

\newcommand*{\emptybox}{\leavevmode\hbox{}}

\DeclareMathOperator{\Hom}{Hom}
\DeclareMathOperator{\sfHom}{\mathsf{Hom}}

\newdir{ >}{{}*!/-10pt/@{>}}

\DeclareMathAlphabet{\mathpzc}{OT1}{pzc}{m}{it}
\DeclareMathAlphabet{\mathcalligra}{T1}{calligra}{m}{n}

\newcommand{\pref}[1]{\textnormal{(\ref{#1})}}

\newcommand{\wavy}{\ensuremath{\rightsquigarrow}}

\newcommand{\A}{\ensuremath{\mathscr{A}}}
\newcommand{\B}{\ensuremath{\mathscr{B}}}
\newcommand{\C}{\ensuremath{\mathscr{C}}}
\newcommand{\D}{\ensuremath{\mathscr{D}}}
\newcommand{\E}{\ensuremath{\mathscr{E}}}
\newcommand{\F}{\ensuremath{\mathscr{F}}}

\newcommand{\normalJ}{\ensuremath{\mathscr{J}}}
\newcommand{\J}{\ensuremath{{\kern -0.4ex \mathscr{J}}}}

\newcommand{\JF}{\ensuremath{{\kern -0.4ex \mathscr{J}_{\kern -0.05ex F}}}}

\newcommand{\K}{\ensuremath{\mathscr{K}}}

\newcommand{\R}{\ensuremath{\mathscr{R}}}
\newcommand{\sS}{\ensuremath{\mathscr{S}}}
\newcommand{\T}{\ensuremath{\mathscr{T}}}
\newcommand{\U}{\ensuremath{\mathscr{U}}}
\newcommand{\V}{\ensuremath{\mathscr{V}}}

\newcommand{\X}{\ensuremath{\mathscr{X}}}
\newcommand{\Y}{\ensuremath{\mathscr{Y}}}

\newcommand{\CAT}{\ensuremath{\operatorname{\textnormal{\text{CAT}}}}}

\newcommand{\VCAT}{\ensuremath{\V\textnormal{-\text{CAT}}}}

\newcommand{\ALGCAT}{\ensuremath{\operatorname{\textnormal{\text{ALGCAT}}}}}
\newcommand{\ALGCATJ}{\ensuremath{\ALGCAT_{\kern -0.7ex \normalJ}}}
\newcommand{\ADA}{\ensuremath{\operatorname{\textnormal{\text{ADA}}}}}
\newcommand{\LSADA}{\ensuremath{\operatorname{\textnormal{\text{LSADA}}}}}
\newcommand{\RSADA}{\ensuremath{\operatorname{\textnormal{\text{RSADA}}}}}
\newcommand{\SADA}{\ensuremath{\operatorname{\textnormal{\text{SADA}}}}}
\newcommand{\ADAJ}{\ensuremath{\ADA_{\kern -0.6ex \normalJ}}}
\newcommand{\LSADAJ}{\ensuremath{\LSADA_{\kern -0.6ex \normalJ}}}
\newcommand{\RSADAJ}{\ensuremath{\RSADA_{\kern -0.6ex \normalJ}}}
\newcommand{\SADAJ}{\ensuremath{\SADA_{\kern -0.6ex \normalJ}}}

\newcommand{\NN}{\ensuremath{\mathbb{N}}}
\newcommand{\PP}{\ensuremath{\mathbb{P}}}

\newcommand{\RR}{\ensuremath{\mathbb{R}}}

\newcommand{\TT}{\ensuremath{\mathbb{T}}}

\newcommand{\UU}{\ensuremath{\mathbb{U}}}

\newcommand{\ZZ}{\ensuremath{\mathbb{Z}}}

\newcommand{\ob}{\ensuremath{\operatorname{\textnormal{\textsf{ob}}}}}

\newcommand{\Lan}{\ensuremath{\operatorname{\textnormal{\textsf{Lan}}}}}

\newcommand{\End}{\ensuremath{\operatorname{\textnormal{End}}}}

\newcommand{\Th}{\ensuremath{\textnormal{Th}}}

\newcommand{\ThJ}{\ensuremath{\Th_{\kern -0.5ex \normalJ}}}
\newcommand{\SubThJ}{\ensuremath{\textnormal{SubTh}_{\kern -0.5ex \normalJ}}}

\newcommand{\otimesJ}[2]{\ensuremath{#1 \otimes_{\kern -1ex \normalJ} \kern-0.5ex #2}}
\newcommand{\totimes}{\mathbin{\tilde{\otimes}}}
\newcommand{\totimesJ}[2]{\ensuremath{#1 \totimes_{\kern -1ex \normalJ} \kern-0.5ex #2}}

\newcommand{\Vfp}{\ensuremath{\V_{\kern -0.5ex fp}}}
\newcommand{\TTperpJ}{\ensuremath{\TT^\perp_{\kern-.1ex\scriptscriptstyle\J}}}
\newcommand{\TTperpJwrt}[1]{\ensuremath{\TT^\perp_{\kern-.1ex\scriptscriptstyle\J#1}}}
\newcommand{\PPperpJ}{\ensuremath{\PP^\perp_{\kern-.1ex\scriptscriptstyle\J}}}
\newcommand{\PPperpJwrt}[1]{\ensuremath{\PP^\perp_{\kern-.1ex\scriptscriptstyle\J#1}}}

\newcommand{\y}{\ensuremath{\textnormal{\textsf{y}}}}

\newcommand{\inJ}[1]{#1 \in \normalJ\kern -1.2ex}

\newcommand{\VCATJ}{\VCAT_{\kern -0.7ex \normalJ}}
\newcommand{\PhiJ}{\Phi_{\kern -0.8ex \normalJ}}
\newcommand{\MndJ}{\Mnd_{\kern -0.8ex \normalJ}}

\newcommand{\Set}{\ensuremath{\operatorname{\textnormal{\text{Set}}}}}
\newcommand{\FinCard}{\ensuremath{\operatorname{\textnormal{\text{FinCard}}}}}

\newcommand{\SF}{\ensuremath{\operatorname{\textnormal{\text{SF}}}}}

\newcommand{\SLat}{\ensuremath{\operatorname{\textnormal{\text{SLat}}}}}

\newcommand{\Ab}{\ensuremath{\operatorname{\textnormal{\text{Ab}}}}}
\newcommand{\sAb}{\ensuremath{\operatorname{\textnormal{\textsl{Ab}}}}}

\newcommand{\Mod}[1]{\ensuremath{#1\textnormal{-\textsl{Mod}}}}
\newcommand{\pMod}[1]{\ensuremath{#1\textnormal{-\text{Mod}}}}

\newcommand{\Aff}[1]{\ensuremath{#1\textnormal{-\text{Aff}}}}
\newcommand{\sAff}[1]{\ensuremath{#1\textnormal{-\textsl{Aff}}}}
\newcommand{\Cvx}[1]{\ensuremath{#1\textnormal{-\textsl{Cvx}}}}

\newcommand{\Bimod}{\ensuremath{\textnormal{\text{Bimod}}}}
\newcommand{\Mat}{\ensuremath{\textnormal{\text{Mat}}}}

\newcommand{\Top}{\ensuremath{\operatorname{\textnormal{\text{Top}}}}}

\newcommand{\Conv}{\ensuremath{\operatorname{\textnormal{\text{Conv}}}}}

\newcommand{\Cah}{\ensuremath{\operatorname{\textnormal{\text{Cah}}}}}

\newcommand{\Mnd}{\ensuremath{\operatorname{\textnormal{\text{Mnd}}}}}

\newcommand{\Mon}{\ensuremath{\operatorname{\textnormal{\text{Mon}}}}}
\newcommand{\CMon}{\ensuremath{\operatorname{\textnormal{\text{CMon}}}}}
\newcommand{\sCMon}{\ensuremath{\operatorname{\textnormal{\textsl{CMon}}}}}

\newcommand{\Alg}[1]{\ensuremath{#1\kern -.5ex\operatorname{\textsl{-Alg}}}}
\newcommand{\pAlg}[1]{\ensuremath{#1\kern -.5ex\operatorname{\textnormal{-\textnormal{Alg}}}}}
\newcommand{\CAlg}[1]{\ensuremath{#1\kern -.5ex\operatorname{\textnormal{-\textlf{CAlg}}}}}
\newcommand{\CAlgPair}[2]{\ensuremath{(#1,#2)\kern -.5ex\operatorname{\textnormal{-\textsl{CPair}}}}}
\newcommand{\pCAlgPair}[2]{\ensuremath{(#1,#2)\kern -.5ex\operatorname{\textnormal{-\textnormal{CPair}}}}}
\newcommand{\CinftyRing}{\ensuremath{C^\infty\kern -.5ex\operatorname{\textnormal{-\text{Ring}}}}}

\newcommand{\CRProf}{\ensuremath{\operatorname{\textnormal{\text{CRProf}}}}}
\newcommand{\CRProfJ}{\ensuremath{\CRProf_{\kern -0.6ex \normalJ}}}
\newcommand{\Algs}{\ensuremath{\operatorname{\textnormal{\text{Alg}}}}}
\newcommand{\SatAlgs}{\ensuremath{\operatorname{\textnormal{\text{SatAlg}}}}}
\newcommand{\AlgsJ}{\Algs_{\kern -0.6ex \normalJ}}
\newcommand{\SatAlgsJ}{\SatAlgs_{\kern -0.6ex \normalJ}}

\newcommand{\AlgT}{\Algs_{\mathsf{th}}}
\newcommand{\AlgS}{\Algs_{\mathsf{sth}}}
\newcommand{\SatAlgS}{\SatAlgs_{\mathsf{sth}}}
\newcommand{\AlgO}{\Algs}
\newcommand{\SatAlgO}{\SatAlgs}
\newcommand{\sAlgs}{\ensuremath{\operatorname{\textnormal{\textsl{Alg}}}}}
\newcommand{\BAlg}{\ensuremath{\operatorname{\textnormal{\text{BAlg}}}}}
\newcommand{\BAlgJ}{\BAlg_{\kern -0.6ex \normalJ}}
\newcommand{\BAlgO}{\BAlg}
\newcommand{\RComBAlg}{\ensuremath{\operatorname{\textnormal{\text{RComBAlg}}}}}
\newcommand{\LComBAlg}{\ensuremath{\operatorname{\textnormal{\text{LComBAlg}}}}}
\newcommand{\ComBAlg}{\ensuremath{\operatorname{\textnormal{\text{ComBAlg}}}}}
\newcommand{\RComBAlgO}{\RComBAlg}
\newcommand{\LComBAlgO}{\LComBAlg}
\newcommand{\ComBAlgO}{\ComBAlg}
\newcommand{\CAlgPairs}{\ensuremath{\operatorname{\textnormal{\text{CAlgPair}}}}}
\newcommand{\CAlgPairO}{\CAlgPairs}

\newcommand{\Dualn}{\ensuremath{\operatorname{\mathsf{Dualn}}}}
\newcommand{\Opfib}{\ensuremath{\operatorname{\mathsf{Opfib}}}}

\newcommand{\Fib}{\ensuremath{\operatorname{\mathsf{Fib}}}}

\newcommand{\Adjn}[6]{\xymatrix {#1 \ar@/_0.5pc/[rr]_{#2}^(0.4){#4}^(0.6){#5}^{\top} & & #6 \ar@/_0.5pc/[ll]_{#3}}}

\newcommand{\RAdjn}[6]{\xymatrix {#1 \ar@/^0.45pc/[rr]^{#2}^(0.4){#4}^(0.6){#5}_{\top} & & #6 \ar@/^0.45pc/[ll]^{#3}}}

\newcommand{\Equiv}[6]{\xymatrix {#1 \ar@/_0.5pc/[rr]_{#2}^(0.4){#4}^(0.6){#5}^{\sim} & & #6 \ar@/_0.5pc/[ll]_{#3}}}

\newcommand{\EquivAlt}[6]{\xymatrix {#1 \ar@{<-}@/_0.5pc/[rr]_{#3}^(0.4){#4}^(0.6){#5}^{\sim} & & #6 \ar@{<-}@/_0.5pc/[ll]_{#2}}}
\newcommand{\AlgDual}[4]{\xymatrix {#1:#2 \ar@/_0.2pc/[r] & #3:#4 \ar@/_0.2pc/[l] }}

\newcommand{\sslashps}{\sslash_{\kern -0.5ex\textnormal{\tiny{p}}}}

\newcommand{\op}{\ensuremath{\textnormal{\hspace{0.3ex}\tiny op}}}
\newcommand{\co}{\ensuremath{\textnormal{\hspace{0.3ex}\tiny co}}}

\newcommand{\aff}{\ensuremath{\textnormal{\tiny aff}}}

\newcommand{\pushoutcorner}{\ar@{}[dr]|(.3)\ulcorner}
\newcommand{\pullbackcorner}{\ar@{}[dr]|(.3)\lrcorner}

\newcommand{\ladjto}[2]{\nsststile{#2}{#1}}

\newcommand{\blanktwo}{\mathop{?}}

\newcommand{\sx}{{\mathsf{sc}}}
\newcommand{\si}{{\scriptscriptstyle\cong}}
\newcommand{\seq}{{\scriptscriptstyle\simeq}}

\newcommand{\bit}[1]{\textbf{\textit{#1}}}

\begin{document}

\author{\normalsize  Rory B. B. Lucyshyn-Wright\thanks{We acknowledge the support of the Natural Sciences and Engineering Research Council of Canada (NSERC), [funding reference numbers RGPIN-2019-05274, RGPAS-2019-00087, DGECR-2019-00273].  Cette recherche a \'et\'e financ\'ee par le Conseil de recherches en sciences naturelles et en g\'enie du Canada (CRSNG), [num\'eros de r\'ef\'erence RGPIN-2019-05274, RGPAS-2019-00087, DGECR-2019-00273].}\let\thefootnote\relax\footnote{Keywords: Dual adjunction; enriched category; Lawvere theory; monad; algebraic theory; system of arities; algebraic category; commutant; duality; reflexivity; monoidal category; closed category}\footnote{2020 Mathematics Subject Classification: 18A40, 18C05, 18C10, 18C15, 18C20, 18C40, 18D15, 18D20, 18M05, 18N10}
\\
\small Brandon University, Brandon, Manitoba, Canada}

\title{\large \textbf{Dual adjunctions between enriched algebraic categories}}

\date{}

\maketitle

\abstract{Working in the setting of enriched algebraic theories for a system of arities, we study several aspects of dual adjunctions between enriched algebraic categories, which we call \textit{algebraic dual adjunctions}. Firstly, we generalize Freyd's theorem on contravariant algebra-valued right-adjoint functors to this setting. Secondly, we establish a biequivalence between a 2-category of algebraic dual adjunctions and a locally discrete 2-category of \textit{bifold algebras}, in a sense defined in prior work of the author. Thirdly, we define special classes of algebraic dual adjunctions that we call (\textit{left-} and \textit{right-})\textit{stable}, in which certain free objects are reflexive, and we establish biequivalences between these and special classes of bifold algebras defined in terms of \textit{commutants} in prior work of the author. Fourthly, we show that every algebra for an enriched algebraic theory canonically induces left- and right-stable algebraic dual adjunctions, and we establish a biequivalence between such algebras and left- (or right-)stable algebraic dual adjunctions, and also between \textit{saturated} algebras and stable algebraic dual adjunctions. We also discuss examples of algebraic dual adjunctions, including dualization of internal modules, Pontryagin and Binz-Butzmann duality, dualization of internal affine spaces and convex spaces, dualization of semilattices, dualization of complete sup-lattices, and dualization of abelian groups with reference to a theorem of Ehrenfeucht and \L o\'s.}

\section{Introduction} \label{sec:intro}

\emptybox

The unifying role of \textit{dualities} or \textit{dual equivalences} $\A^\op \simeq \B$ in algebra, geometry, and logic is evident in examples like the Gelfand, Stone, and Pontryagin dualities. A clarifying insight of category theory has been that dualities often arise from the more basic notion of \textit{dual adjunction}, i.e.~an adjunction of the form $\nabla \dashv \Delta:\A^\op \to \B$. Indeed, as noted in \cite{LamRa:Stone-Gelfand,PorTho:ConcreteDualities}, every such adjunction restricts to a dual equivalence between the full subcategories of $\A$ and $\B$ spanned by the \textit{reflexive} objects; an object $A$ of $\A$ is reflexive (with respect to $\nabla \dashv \Delta$) if the associated morphism $A \to \nabla\Delta A$ is invertible.

Category theory has also been used to give explanations for the familiar feature of many dualities and dual adjunctions $\nabla \dashv \Delta:\A^\op \to \B$ that the functors $\nabla$ and $\Delta$ are given by \textit{dualizing} or \textit{`hom'ing} into an object $D$ (the \textit{dualizer}) that lives both in $\A$ and in $\B$, in some sense; see, for example, \cite{Freyd:AlgebraValued,Isbell:GenFuncSemI,LamRa:Stone-Gelfand,FoltzLairKelly,Joh:StSp,PorTho:ConcreteDualities}. One of the earliest results on this topic was a 1966 theorem of Freyd \cite{Freyd:AlgebraValued}, which begins with the old observation that if $\TT$ is a finitary algebraic theory (in the logical sense) and $E \in \ob\A$ underlies a $\TT$-algebra internal to a category $\A$, then the hom-functor $\A(-,E):\A^\op \to \Set$ lifts to a functor $\llbracket -,E\rrbracket:\A^\op \to \pAlg{\TT}$ valued in the category of $\TT$-algebras in $\Set$. Freyd's theorem \cite[Theorem 2]{Freyd:AlgebraValued} on ``algebra-valued functors'', in its contravariant form, is the statement that if $\A$ is a complete category, then a functor $\Delta:\A^\op \to \pAlg{\TT}$ has a left adjoint $\nabla$ if and only if $\Delta \cong \llbracket -,E\rrbracket$ for some $\TT$-algebra $E$ in $\A$. As a corollary, Freyd showed that for a given pair of finitary algebraic theories $\TT$ and $\UU$, a right adjoint functor $\Delta:\pAlg{\TT}^\op \to \pAlg{\UU}$ is equivalently given by any one of the following: (1) a $\TT$-algebra in $\pAlg{\UU}$, (2) a $\UU$-algebra in $\pAlg{\TT}$, (3) a set $D$ equipped with the structure of both a $\TT$-algebra and a $\UU$-algebra, such that these two algebra structures \textit{commute}, meaning that each $\TT$-operation $D^n \to D$ is a $\UU$-homomorphism, equivalently, each $\UU$-operation is a $\TT$-homomorphism. In the early 1970s, Pultr \cite{Pultr:RightAdjointsRelSys} and Isbell \cite{Isbell:GenFuncSemI} generalized Freyd's theorem to more general kinds of theories. Porst and Tholen \cite{PorTho:ConcreteDualities} later provided an approach to the relationship between dual adjunctions and dualization that does not involve theories and is instead formulated in terms of initial sources and initial lifts in concrete categories over $\Set$.

In this paper, we pursue aspects of these themes in the setting of category theory enriched in a symmetric monoidal closed category $\V$ \cite{Ke:Ba}. In contemporary category theory, there are several possible notions of theory to which one might seek to extend the above results of Freyd, Pultr, and Isbell, in view of the extensive development of enriched and 2-dimensional categories since the early 1970s. We begin with the basic case of \textit{enriched algebraic theories}, which have received thorough development, starting with \cite{Dub:StrSem,BoDay,Pow:EnrLaw} for example, and we seek to not only develop analogues of Freyd's results in this case but also to study further specific aspects of dual adjunctions between enriched algebraic categories, which we call \textit{algebraic dual adjunctions}\footnote{At CT2017, this name was suggested by Walter Tholen to the author, who at the time had called them \textit{algebraic dualities}.}.

\medskip

In more detail, the main contributions of this paper are as follows:

\medskip

\noindent(1). Our first contribution is to develop analogues of the above results of Freyd in the setting of \textit{enriched algebraic theories for systems of arities} $\J \hookrightarrow \V$ \cite{Lu:EnrAlgTh}, or \mbox{\bit{$\J$-theories}}, which generalize not only the usual Lawvere theories \cite{Law:PhD} but also various kinds of enriched Lawvere theories and monads appearing in \cite{Dub:StrSem,BoDay,KeLa:SF,Pow:EnrLaw,LaRo,RosTen:TowardsEnrUnivAlg,Tendas:MoreOnSoundness}. Given a $\J$-theory $\T$, a \bit{$\T$-algebra} (in $\V$) is a suitable $\V$-functor $A:\T \to \V$, while an \bit{algebra} is a pair $(\T,A)$ consisting of a $\J$-theory $\T$ and a $\T$-algebra $A$. The $\V$-categories of $\T$-algebras $\Alg{\T}$ can be characterized up to equivalence in the pseudo-slice 2-category $\VCAT \sslash \V$ as \bit{$\J$-algebraic $\V$-categories over $\V$} \cite{Lu:EnrAlgTh,LuPa:Struct}, which are $\V$-categories $\A$ equipped with a suitable $\V$-functor $\A \to \V$. We generalize Freyd's theorem on algebra-valued functors, showing that if $\C$ is a $\V$-category with certain weighted limits, then right adjoint $\V$-functors $\C^\op \to \Alg{\T}$ are equivalently given by $\T$-algebras $E$ in $\C$, which are suitable $\V$-functors $E:\T \to \C$. We also develop a Yoneda principle for the resulting notion of \textit{representable} algebra-valued $\V$-functor. To generalize Freyd's corollary on commuting algebraic structures, we employ the theory of commutation for $\J$-theories developed in \cite{Lu:Cmt,Lu:BAlgCmt}, wherein commuting pairs of algebraic structures on an object of $\V$ are equivalently given by \bit{bifold algebras} \cite{Lu:BAlgCmt}, which are triples $(\T,\U,D)$ consisting of $\J$-theories $\T,\U$ and a suitable $\V$-functor $D:\T \otimes \U \to \V$, called a \textit{$(\T,\U)$-algebra}. Each bifold algebra $(\T,\U,D)$ has two underlying algebras $(\T,D_\ell)$ and $(\U,D_r)$ that we call the \bit{left and right faces} of $(\T,\U,D)$. We show that right adjoint $\V$-functors $\Delta:\Alg{\T}^\op \to \Alg{\U}$ are equivalently given by $(\T,\U)$-algebras $D$.

\medskip

\noindent(2). Our second contribution is to study the functoriality and basic 2-dimensional aspects of constructions and results in (1). In particular, we establish a biequivalence between a 2-category of \bit{$\J$-algebraic dual adjunctions} $\nabla \dashv \Delta:\A^\op \to \B$ (between various $\J$-algebraic $\V$-categories $\A$ and $\B$ over $\V$) and a locally discrete 2-category of bifold algebras $(\T,\U,D)$ (over various $\J$-theories $\T$ and $\U$). As part of this, we show that the assignment to each bifold algebra $(\T,\U,D)$ its associated $\J$-algebraic dual adjunction
$$\RAdjn{\Alg{\T}^\op}{\Hom_\T(-,D)}{\Hom_\U(-,D)}{}{}{\Alg{\U}}$$
is functorial in $(\T,\U,D)$. More fundamentally, we first use methods of fibred categories to show that $\Hom_\T(A,D)$ is functorial $A$ and $D$ in such a way that $\T$ and $\U$ vary as well. Under the correspondence between bifold algebras $D$ and $\J$-algebraic dual adjunctions $\nabla \dashv \Delta:\A^\op \to \B$, the left and right faces of $D$ correspond to special objects of $\A$ and of $\B$ that we call the \bit{left dualizer} and \bit{right dualizer}, respectively.

\medskip

\noindent(3). Our third contribution is to show that certain special classes of bifold algebras defined in \cite{Lu:BAlgCmt} using \bit{commutants} \cite{Lu:Cmt,Lu:BAlgCmt} correspond to special classes of $\J$-algebraic dual adjunctions in which certain free objects are required to be reflexive. Explicitly, we say that a $\J$-algebraic dual adjunction $\nabla \dashv \Delta:\A^\op \to \B$ is \bit{left-stable} (resp.~\bit{right-stable}) if every $\J$-generated free object of $\A$ (resp.~of $\B$) is reflexive (where an object of $\A$ is \textit{$\J$-generated free} if it is free on an object in the full subcategory $\J \hookrightarrow \V$). A $\J$-algebraic dual adjunction is \bit{stable} if it is both left- and right-stable. We show that stable $\J$-algebraic dual adjunctions correspond (under the above biequivalence) to a special class of bifold algebras introduced in \cite{Lu:BAlgCmt}, called \bit{commutant bifold algebras}, whose left and right faces are mutually interdetermined. In more detail, a \bit{left-commutant bifold algebra} \cite{Lu:BAlgCmt} is a bifold algebra $(\T,\U,D)$ whose left face $(\T,D_\ell)$ is the \textit{commutant} of its right face $(\U,D_r)$, in the sense defined in \cite{Lu:BAlgCmt}; the notion of \bit{right-commutant bifold algebra} is defined analogously. Accordingly, a \textit{commutant bifold algebra} is one whose left and right faces are commutants of one another. This essentially means that each of the two faces of $(\T,\U,D)$ consists of those operations that commute with the operations supplied by the other face; see \ref{para:cmt}, \ref{para:cmtnt_bif_algs}. We show that the biequivalence between $\J$-algebraic dual adjunctions and bifold algebras restricts to a biequivalence between stable (resp.~left-stable, right-stable) $\J$-algebraic dual adjunctions and commutant (resp.~left-commutant, right-commutant) bifold algebras. 

\medskip

\noindent(4). Our fourth contribution is to show that every algebra canonically induces right- and left-stable $\J$-algebraic dual adjunctions, and that right-stable (resp.~left-stable) $\J$-algebraic dual adjunctions can be reconstructed up to equivalence from their left (resp.~right) dualizers and thus are equivalently given by algebras. In more detail, we show that every algebra $(\T,A)$ determines a right-stable $\J$-algebraic dual adjunction that we write as
\begin{equation}\label{eq:dualn-adjn-induced-from-left-intro}\RAdjn{\Alg{\T}^\op}{\Hom_\T(-,A)}{\Hom_{\T^\perp_A}(-,A^\perp)}{}{}{\Alg{\T^\perp_A}}\end{equation}
and call the \bit{dualization adjunction induced from the left by $A$}, where the $\J$-theory $\T^\perp_A$ is the \bit{commutant of $\T$ with respect to $A$} \cite{Lu:Cmt}, and the algebra $(\T^\perp_A,A^\perp)$ is the commutant of $(\T,A)$ \cite{Lu:BAlgCmt}. Using this, we establish a biequivalence between a 2-category of right-stable $\J$-algebraic dual adjunctions and a locally discrete 2-category of algebras over various $\J$-theories, and a similar biequivalence involving instead left-stable $\J$-algebraic dual adjunctions. We show that the $\J$-algebraic dual adjunction \eqref{eq:dualn-adjn-induced-from-left-intro} is stable if and only if $(\T,A)$ is \textit{saturated} in the sense of \cite{Lu:FDistn,Lu:BAlgCmt}. Using this, we obtain a biequivalence between a 2-category of stable $\J$-algebraic dual adjunctions and a locally discrete 2-category of saturated algebras.

\medskip

We now discuss in more detail the setting of this paper, namely  $\J$-theories for systems of arities $\J \hookrightarrow \V$ \cite{Lu:EnrAlgTh}. These generalize all of the following notions of enriched Lawvere theory: (1) The \textit{$\V$-theories} of Dubuc \cite{Dub}, which correspond to arbitrary $\V$-monads on $\V$; (2) the finitary enriched theories of Borceux and Day \cite{BoDay}, which correspond to \textit{strongly finitary $\V$-monads} on $\V$ \cite{KeLa:SF} when $\V$ is cartesian closed; (3) the $\alpha$-ary \textit{enriched Lawvere theories} of Power \cite{Pow:EnrLaw}, which correspond to \textit{$\alpha$-accessible $\V$-monads} on $\V$ when $\V$ is locally $\alpha$-presentable; (4) the \textit{Lawvere $\Phi$-theories in $\V$} in the sense of Lack and Rosick\'y \cite{LaRo}, and the closely related \textit{$\Phi\mathcal{I}$-theories} considered by Tendas \cite{Tendas:MoreOnSoundness} for weakly sound classes of weights $\Phi$, both of which correspond to \textit{$\Phi$-accessible $\V$-monads} on $\V$. One can contrast \textit{systems} of arities $\J \hookrightarrow \V$ with the \textit{subcategories of arities} $\J \hookrightarrow \C$ of \cite{LuPa:Pres,LuPa:DiagrPres,LuPa:Struct}, which are simply full and dense sub-$\V$-categories. The latter are more general than the former, in two respects: Firstly, the latter involve an additional specified $\V$-category $\C$ that we call the \textit{base}, over which the categories of algebras are monadic, while the former necessarily pertain to the base $\C = \V$; secondly, the former require $\J \hookrightarrow \V$ to be closed under the monoidal product and contain the unit object. Thus, in this paper we work in a setting of \textit{$\V$-based $\V$-enriched} algebra, rather than the more general \textit{$\C$-based $\V$-enriched} algebra \cite{LuPa:Pres,LuPa:DiagrPres,LuPa:Struct}, special cases of which one finds also in \cite{KellyPower,NishizawaPower,BouGar:MndTh}. This restriction of generality is appropriate for the topic of this paper because (1) we can lift a hom $\V$-functor $\A(-,E):\A^\op \to \V$ to $\Alg{\T}$ in analogy with Freyd's case of $\V = \Set$ only if $\Alg{\T}$ is equipped with a $\V$-functor to $\V$, and (2) a notion of \textit{commutation} of operations in $\J$-theories has been developed only in the $\V$-based case \cite{Lu:Cmt,Lu:BAlgCmt}, and it uses the fact that systems of arities $\J \hookrightarrow \V$ are closed under the monoidal product in $\V$. Nevertheless, we formulate our results in a setting slightly more general than that provided by the \textit{eleutheric systems of arities} of \cite{Lu:EnrAlgTh}, as we work here with systems of arities that are only assumed to be \textit{amenable} in the sense of \cite{LuPa:Struct}; nonetheless, all the classes of examples (1)--(4) above are described by eleutheric systems of arities.

\medskip

We now survey the organization of the paper. In Section \ref{sec:supporting-theory}, we review background material on lax comma 2-categories (\ref{sec:lax-comma}), enriched algebraic theories (\ref{sec:enr-alg-th}), and bifold algebras (\ref{sec:balg}). In \ref{sec:enr-alg-th} we also establish some further supporting results (\ref{thm:bieq_th_algcat}, \ref{thm:loc-sound-eleutheric}), and we specify the blanket assumptions in place throughout the paper (\ref{para:setting2}). In Section \ref{sec:alg-val-func}, we establish (i) our enriched analogue of Freyd's theorem on contravariant algebra-valued functors, (ii) a Yoneda principle and Yoneda embedding for algebra-valued $\V$-functors, and (iii) an equivalence between $\T$-algebras in a suitable $\V$-category $\C$ and right adjoint $\V$-functors $\C^\op \to \Alg{\T}$. In Section \ref{sec:dualn_adj}, we define the dualization adjunction induced by a bifold algebra, and we provide an equivalence between bifold $(\T,\U)$-algebras, for a fixed pair of $\J$-theories $\T$ and $\U$, and right adjoint $\V$-functors from $\Alg{\T}^\op$ to $\Alg{\U}$. In Section \ref{sec:alg-dual-adj}, we discuss $\J$-algebraic dual adjunctions and the 2-category of these, and we prove that each of them is equivalent to the dualization adjunction induced by a bifold algebra. In Section \ref{sec:global-func}, we use methods of fibred categories to establish the global functoriality of the algebra-valued hom-functor. In Section \ref{sec:bieq-bif-alg-alg-dual-adjn}, we establish a biequivalence between bifold algebras and $\J$-algebraic dual adjunctions. In Section \ref{exa:bim}, we discuss a basic example that involves modules and bimodules and is obtained by specializing to the  subcategory of arities $\{I\} \hookrightarrow \V$ spanned by the unit object, noting a connection with Morita's classic paper \cite{Morita}. In Section \ref{sec:background-II}, we review background material on commutants and commutation for $\J$-theories. In Section \ref{sec:alg-dual-adj-comm-alg-pairs}, we deduce that there is a biequivalence between commuting algebra pairs and $\J$-algebraic dual adjunctions. In Section \ref{sec:stabl-alg-dual-adj}, we define left-stable, right-stable, and stable $\J$-algebraic dual adjunctions, and we establish biequivalences between these and left-commutant, right-commutant, and commutant bifold algebras, respectively. In Section \ref{sec:stable-ada-alg}, we discuss the dualization adjunction induced from the left by an algebra, and we establish biequivalences between algebras and right-stable (or left-stable) $\J$-algebraic dual adjunctions and between saturated algebras and stable $\J$-algebraic dual adjunctions. In Section \ref{sec:exa_ada}, we discuss several examples of $\J$-algebraic dual adjunctions, including the basic example mentioned above (\ref{exa:dln_mod}), dualization of modules for internal rings and rigs in a cartesian closed category such as the category of convergence spaces or a model of synthetic differential geometry (\ref{exa:dualn_mod}), Pontryagin duality and Binz-Butzmann duality (\ref{exa:binz-butmzmann-duality}), dualization of internal $R$-affine spaces and pointed $R$-modules (\ref{exa:r_aff}), dualization of internal $R$-convex spaces and pointed $R_+$-modules (\ref{exa:cvx}), dualization of several kinds of semilattices (\ref{exa:dualn_semilattices}), the self-duality of complete sup-lattices (\ref{exa:sup-lat}), the restricted Pontryagin duality of compact and discrete topological groups (\ref{exa:cpct-disc}), and lastly the example of dualization of abelian groups, where we observe that a theorem of Ehrenfeucht and \L o\'s \cite{EhrLos} entails that the dualization adjunction for $\ZZ$ is stable if and only if there are no measurable cardinals (\ref{exa:los}).

\medskip

We now comment on the genesis of this paper. Some of the central ideas and results of this paper were presented in a talk at CT2017 \cite{Lu:CT2017}, while the main results in the form of biequivalences were announced in an online talk at CT2021 \cite{Lu:CT2021}. Supporting material on bifold algebras was published in 2022 \cite{Lu:BAlgCmt}. The treatment of algebraic dual adjunctions in this paper incorporates methodological refinements devised since the 2021 talk, including the Yoneda principle for algebra-valued $\V$-functors (\ref{thm:yon-alg-val}, \ref{thm:yon-emb-alg-val}) and a streamlined approach to the supporting biequivalence between $\J$-theories and $\J$-algebraic $\V$-categories over $\V$, on the basis of recent joint work with Jason Parker \cite{LuPa:Struct} that has also enabled a reduction of the axiomatic assumptions in this paper from eleuthericity to amenability (\ref{para:setting2}, \ref{thm:eleutheric-implies-amenable}).

\tableofcontents

\section{Supporting theory I}\label{sec:supporting-theory}

\subsection{Lax comma categories}\label{sec:lax-comma}

We now recall background material on lax comma categories and the Grothendieck construction.

\begin{parasub}\label{para:lax_comma}
Given 2-functors $F:\A \rightarrow \C$ and $G:\B \rightarrow \C$, we denote by $(F \downarrow G)_\textnormal{lax}$ the \bit{lax comma 2-category} (called the \textit{2-comma category} in \cite[I,2.5]{Gray}).  The objects of $(F \downarrow G)_\textnormal{lax}$ are triples $(A,B,c)$ consisting of an object $A$ of $\A$, an object $B$ of $\B$, and a 1-cell $c:FA \rightarrow GB$ in $\C$.  A 1-cell $(a,b,\gamma):(A,B,c) \rightarrow (A',B',c')$ in $(F \downarrow G)_\textnormal{lax}$ consists of a 1-cell $a:A \rightarrow A'$ in $\A$, a 1-cell $b:B \rightarrow B'$ in $\B$, and a 2-cell $\gamma:c' \circ Fa \Rightarrow Gb \circ c$ in $\C$.  A 2-cell $(\alpha,\beta):(a,b,\gamma) \Rightarrow (a',b',\gamma'):(A,B,c) \rightarrow (A',B',c')$ in $(F \downarrow G)_\textnormal{lax}$ consists of a 2-cell $\alpha:a \Rightarrow a'$ in $\A$ and a 2-cell $\beta:b \Rightarrow b'$ in $\B$ such that $\gamma' \cdot (c' \circ F\alpha) = (G\beta \circ c) \cdot \gamma$, where $\circ$ denotes whiskering and $\cdot$ denotes vertical composition.

We write $(F \downarrow G)_\textnormal{colax}$ to denote the \bit{colax comma 2-category}, which is defined as $((F^\co \downarrow G^\co)_{\textnormal{lax}})^\co$, where $F^\co:\A^\co \rightarrow \C^\co$ and $G^\co:\B^\co \rightarrow \C^\co$ are the 2-functors obtained by reversing the 2-cells in each of the 2-categories involved.  Hence, $(F \downarrow G)_\textnormal{lax}$ and $(F \downarrow G)_\textnormal{colax}$ have the same objects, and their 1-cells $(a,b,\gamma)$ differ only in that the direction of $\gamma$ is reversed, while the 2-cells in both these 2-categories have same form (but with the required equation modified in the evident way).

The \bit{pseudo comma 2-category} $(F \downarrow G)_\textnormal{ps}$ is the locally full sub-2-category of $(F \downarrow G)_\textnormal{lax}$ with the same objects, but with just those 1-cells $(a,b,\gamma)$ in which $\gamma$ is invertible.

If a 1-cell $(a,b,\gamma):(A,B,c) \rightarrow (A',B',c')$ in $(F \downarrow G)_\textnormal{lax}$ has a right adjoint $(a',b',\gamma'):(A',B',c') \to (A,B,c)$, then $\gamma:c' \circ Fa \Rightarrow Gb \circ c$ is invertible; indeed, $Fa \dashv Fa'$ and $Gb \dashv Gb'$ in $\C$, and the \textit{mate} $\delta:Gb \circ c \Rightarrow c' \circ Fa$ of $\gamma':c \circ Fa' \Rightarrow Gb' \circ c'$ (in the sense of \cite[\S 2.2, Prop.~2.1]{KeStr:RevEl2Cats}) is an inverse of $\gamma$. Dually, if a 1-cell $(a,b,\gamma)$ in $(F \downarrow G)_\textnormal{colax}$ has a left adjoint, then $\gamma$ is invertible. In particular, if a 1-cell $(a,b,\gamma)$ in $(F \downarrow G)_\textnormal{lax}$ or in $(F \downarrow G)_\textnormal{colax}$ is an equivalence, then $\gamma$ is invertible.
\end{parasub}

\begin{parasub}\label{para:pseudo_slice}
Given an object $B$ of a 2-category $\A$, we write $\A \sslash B$ to denote the \bit{pseudo-slice 2-category} over $B$ in $\A$, defined as the pseudo comma 2-category $(1_\A \downarrow [B])_\textnormal{ps}$ determined by the identity 2-functor $1_\A:\A \rightarrow \A$ and the  2-functor $[B]:1 \rightarrow \A$ determined by $B$, where $1$ is the terminal 2-category.  We may write the objects of $\A \sslash B$ as pairs $(A,c)$ consisting of an object $A$ of $\A$ and a 1-cell $c:A \rightarrow B$, while a 1-cell $(a,\gamma):(A,c) \rightarrow (A',c')$ consists of a 1-cell $a:A \rightarrow A'$ and an invertible 2-cell $\gamma:c'a \Rightarrow c$ in $\A$.  A 2-cell $\alpha:(a,\gamma) \Rightarrow (a',\gamma'):(A,c) \rightarrow (A',c')$ in $\A \sslash B$ is a 2-cell $\alpha:a \Rightarrow a'$ in $\A$ such that $\gamma' \cdot (c'\alpha) = \gamma$. 
\end{parasub}

\begin{parasub}
We regard categories equivalently as a locally discrete 2-categories.  If $\B$ is a category and $\C$ a 2-category, then by a \textit{functor} $\Phi:\B \to \C$ we mean a functor $\Phi:\B \to \C_0$, or equivalently a 2-functor $\Phi:\B \to \C$. Given categories $\A$ and $\B$ and functors $F:\A \to \C$ and $G:\B \to \C$ valued in a 2-category $\C$, the lax and colax comma 2-categories $(F \downarrow G)_\textnormal{lax}$ and $(F \downarrow G)_\textnormal{colax}$ are locally discrete, so we call them simply the \bit{lax comma category} and the \bit{colax comma category}, respectively.
\end{parasub}

It is observed in \cite{Ke:ClDo} that the Grothendieck construction for split op-fibrations is an example of a colax comma category, as we now recall:

\begin{parasub}\label{para:gro_constr_opfibr}
Given a category $\B$ and a functor $\Phi:\B \rightarrow \CAT$, we write $\Opfib(\B,\Phi)$ to denote the colax comma category $([1]\downarrow\Phi)_\textnormal{colax}$, where $1$ is the terminal category and $[1]:1 \rightarrow \CAT$ is the functor determined by $1$. Hence, the objects of $\Opfib(\B,\Phi)$ are pairs $(B,F)$ consisting of an object $B$ of $\B$ and an object $F$ of $\Phi B$.  A morphism $(b,f):(B,F) \rightarrow (B',F')$ in $\Opfib(\B,\Phi)$ is a pair consisting of a morphism $b:B \rightarrow B'$ and a morphism $f:(\Phi b)F \rightarrow F'$ in $\Phi B'$.  The evident projection functor $\Opfib(\B,\Phi) \rightarrow \B$ is then an op-fibration.  We call $\Opfib(\B,\Phi)$ the \bit{op-fibred category over $\B$ determined by $\Phi$}.
\end{parasub}

\begin{parasub}\label{para:fibr_op}
Given a category $\B$ and a functor $\Phi:\B \rightarrow \CAT$ (i.e.~an ordinary functor $\Phi:\B \to \CAT_0$), let us write $\Phi^*:\B \to \CAT$ to denote the composite functor 
$$\B \xrightarrow{\Phi} \CAT_0 \xrightarrow{(-)^\op} \CAT_0\;.$$
We call $\Opfib(\B,\Phi^*)$ the \bit{fibrewise opposite} of $\Opfib(\B,\Phi)$.  A morphism $(b,f):(B,F) \rightarrow (B',F')$ in $\Opfib(\B,\Phi^*)$ has $b:B \rightarrow B'$ in $\B$ and $f:F' \rightarrow (\Phi b)F$ in $\Phi B'$.  If we are instead given a functor $\Psi:\B^\op \rightarrow \CAT$, then
$$\Fib(\B,\Psi) = \Opfib(\B^\op,\Psi^*)^\op$$
is the fibred category over $\B$ determined by $\Psi$ via the classic Grothendieck construction \cite{Gr:Fibred}.
\end{parasub}

\subsection{Enriched algebraic theories}\label{sec:enr-alg-th}

In this section, we recall background material on enriched algebraic theories, we specify the blanket assumptions in place throughout the paper (\ref{para:setting2}), and we establish some further supporting results that we require (\ref{thm:bieq_th_algcat}, \ref{thm:loc-sound-eleutheric}).

We write $\V$ to denote a given symmetric monoidal closed category whose underlying ordinary category $\V_0$ has finite limits, and we employ the theory of categories enriched in $\V$ \cite{Ke:Ba,Dub}. We use the notation and terminology of \cite{Ke:Ba} throughout, except where otherwise indicated. As usual, we use the term \textit{weight} to mean \textit{indexing type} in the sense of \cite{Ke:Ba}, i.e.~a $\V$-functor whose codomain is $\V$ (and whose domain may be large).

\begin{parasub}\label{para:univ_enl}
We write $\Set$ to denote the category of sets in some universe $\U$ of \textit{small sets} for which $\V_0$ is locally small. We say that a $\V$-category $\C$ is \textit{small} if $\C$ is equivalent to a $\V$-category with a small set of objects, while we call a weight \textit{small} its domain is so. We employ the universe enlargement methodology that is discussed in \cite[\S 3.11]{Ke:Ba}. In particular, we let $\U'$ be a universe for which both $\Set$ and $\V_0$ are $\U'$-small, and for which all given $\V$-categories under discussion are $\U'$-small, and we write $\Set'$ for the category of $\U'$-small sets. With the convention of \cite{Ke:Ba}, we write $\V'$ for a symmetric monoidal closed category that has all $\U'$-small limits and colimits and is equipped with a fully faithful symmetric strong monoidal functor $\V \hookrightarrow \V'$ (written as an inclusion) that preserves all limits that exist in $\V$.
\end{parasub}

\begin{parasub}\label{para:sys_arities}
Throughout the paper, we fix a \bit{system of arities} in the sense of \cite[3.1]{Lu:EnrAlgTh}, i.e.~a fully faithful symmetric strong monoidal $\V$-functor $j:\J \rightarrow \V$. In view of \cite[3.9]{Lu:EnrAlgTh}, we assume without loss of generality that $j$ is the inclusion of a full sub-$\V$-category $\J \hookrightarrow \V$ closed under the monoidal product and containing the unit object $I$. The inclusion $j$ is dense, in the enriched sense, simply because $I \in \J$. In \cite{LuPa:Pres,LuPa:DiagrPres,LuPa:Struct}, a dense full sub-$\V$-category of a $\V$-category $\C$ is called a \textit{subcategory of arities}, so in the present case $\J \hookrightarrow \V$ is a subcategory of arities in $\C = \V$.
\end{parasub}

\begin{parasub}\label{para:jcots}
Given an object $X$ of $\V$ and an object $C$ of a $\V$-category $\C$, a \bit{power} $C^X$ (resp.~\bit{copower} $X \cdot C$) in $\C$ is by definition a \textit{cotensor} (resp.~\textit{tensor}) of $C$ by $X$ in the sense of \cite[\S 3.7]{Ke:Ba}. Given a system of arities $j:\J \hookrightarrow \V$, we say that a $\V$-category $\C$ \bit{has $\J$-powers} if $\C$ is equipped with a (designated) power $C^J$ for each $J \in \ob\J$ and each $C \in \ob\C$, of course with a specified counit $\gamma_J:J \to \C(C^J,C)$ in the sense of \cite[(3.43)]{Ke:Ba}.  We assume that $C^I = C$ and that $\gamma_I:I \to \C(C,C)$ is the identity.
\end{parasub}

\begin{parasub}\label{para:jth}
A \bit{$\J$-theory} (enriched in $\V$) is a $\V$-category $\T$ equipped with a $\V$-functor $\tau:\J^\op \rightarrow \T$ that is the identity on objects and preserves $\J$-powers \cite[4.1]{Lu:EnrAlgTh}. Each object $J$ of $\J^\op$ is itself a power $I^J = J$ in $\J^\op$, so by applying the $\V$-functor $\tau$ associated to a $\J$-theory $\T$ we find that each object  $J \in \ob\T = \ob\J$ is a power $I^J = J$ in $\T$ \cite[5.8]{Lu:EnrAlgTh}. Thus $\tau$ supplies designated $\J$-powers of $I$. Moreover, $\T$ has $\J$-powers \cite[4.3, 4.5]{Lu:EnrAlgTh}.

There is a category $\ThJ$ whose objects are $\J$-theories and whose morphisms $M:\T_1 \rightarrow \T_2$ are $\V$-functors that commute with the associated $\V$-functors $\tau_i:\J^\op \rightarrow \T_i$ $(i = 1,2)$, equivalently, that strictly preserve the designated $\J$-powers of $I$ \cite[5.16]{Lu:EnrAlgTh}. It then follows that $M$ preserves $\J$-powers \cite[5.16]{Lu:EnrAlgTh}.

A $\V$-functor $G:\C \to \D$ is \bit{faithful} if $G_{AB}:\C(A,B) \to \D(GA,GB)$ is a monomorphism in $\V$ for all $A,B \in \ob\C$. A \bit{subtheory} of a $\J$-theory $\T$ is a $\J$-theory $\sS$ with a morphism of $\J$-theories $\sS \to \T$ that is faithful as a $\V$-functor.
\end{parasub}

\begin{parasub}\label{para:talg}
Let $\T$ be a $\J$-theory and $\C$ a $\V$-category with $\J$-powers. A \bit{$\T$-algebra} in $\C$ is a $\V$-functor $A:\T \rightarrow \C$ that preserves $\J$-powers. In view of \ref{para:jth}, this entails that $AJ \cong (AI)^J$ for each $J \in \ob\J = \ob\T$. By definition, a \bit{normal $\T$-algebra} in $\C$ \cite[5.10]{Lu:EnrAlgTh} is a $\V$-functor $A:\T \to \C$ that sends the designated $\J$-powers $I^J = J$ of $I$ in $\T$ to the designated $\J$-powers $(AI)^J$ $(J \in \ob\J)$ of $AI$ in $\C$. Every normal $\T$-algebra is, in particular, a $\T$-algebra \cite[5.9, 5.10]{Lu:EnrAlgTh}.

As usual $[\T,\C]$ denotes the $\V'$-category of $\V$-functors from $\T$ to $\C$, which is not in general a $\V$-category since in general $\T$ need not be small and $\V_0$ need not be complete. We write $\Alg{\T}(\C)$ (resp.~$\Alg{\T}^{\,!}(\C)$) for the full sub-$\V'$-category of $[\T,\C]$ spanned by the $\T$-algebras (resp.~the normal $\T$-algebras), and we write $\pAlg{\T}(\C)$ and $\pAlg{\T}^!(\C)$ and for the ordinary categories underlying $\Alg{\T}(\C)$ and $\Alg{\T}^{\,!}(\C)$, respectively\footnote{Although not the usual notation for underlying ordinary categories, this notation is especially convenient in this paper because of our frequent use of these ordinary categories within further formal constructions.}. We
are mainly interested in cases where $\Alg{\T}(\C)$ \textit{exists as a $\V$-category}, i.e.~when the ends needed in forming $\Alg{\T}(\C)$ exist in $\V$. There are significant cases where this is so even when $\J$ is not small, as we discuss below. Regardless, the inclusion $\Alg{\T}^{\,!}(\C) \hookrightarrow \Alg{\T}(\C)$ is an equivalence $\Alg{\T}^{\,!}(\C) \simeq \Alg{\T}(\C)$ by \cite[5.14]{Lu:EnrAlgTh}.

The \bit{carrier} of a $\T$-algebra $A:\T \to \C$ is the object $|A| = AI$ of $\C$. There is a faithful $\V'$-functor $G^\T:\Alg{\T}(\C) \rightarrow \C$ that is given by evaluation at the unit object $I$ (\cite[5.4]{Lu:EnrAlgTh}, \cite[4.8]{Lu:Cmt}), and whose underlying ordinary functor we denote also by $G^\T$. If $f:A \rightarrow B$ is a morphism in $\pAlg{\T}(\C)$, then we say that $f$ is a \bit{$\T$-homomorphism} and we write $|f| = G^\T f:|A| \rightarrow |B|$.
\end{parasub}

\begin{parasub}[\textbf{The setting}]\label{para:setting2}
Throughout the rest of this paper, we let $j:\J \hookrightarrow \V$ be a system of arities in a symmetric monoidal closed category $\V$ such that $\V_0$ has finite limits. We suppose throughout that $j:\J \hookrightarrow \V$ is  \bit{amenable} \cite[4.12]{LuPa:Struct}, meaning that for every $\J$-theory $\T$, $\Alg{\T}(\V)$ exists as a $\V$-category and the $\V$-functor $G^\T:\Alg{\T}(\V) \to \V$ has a left adjoint $F^\T$. We discuss broad classes of examples of amenable systems of arities in \ref{thm:loc-bdd-small-sys-ar}--\ref{exa:sys-ar}.
\end{parasub}

\begin{parasub}\label{para:talgs}
Given a $\J$-theory $\T$, we abbreviate our notation by writing
$$\Alg{\T} = \Alg{\T}(\V),\;\;\;\;\;\;\pAlg{\T} = \pAlg{\T}(\V),$$
so that we have a faithful $\V$-functor
$$G^\T:\Alg{\T} \longrightarrow \V\;.$$
We call $\T$-algebras in $\V$ simply \bit{$\T$-algebras} when there is no cause for confusion. Note that $\Alg{\T}$ has $\J$-powers, formed pointwise. In view of the Yoneda lemma, there is a fully faithful $\V$-functor $Y:\T^\op \to \Alg{\T}$ given by $YJ = \T(J,-)$ $(J \in \T)$, and $Y$ is also dense, by \cite[\S 5.1, Theorem 5.13]{Ke:Ba}. Its opposite $Y^\op:\T \to \Alg{\T}^\op$ preserves $\J$-powers (i.e.~sends $\J$-powers in $\T$ to copowers in $\Alg{\T}$) because for each $\T$-algebra $A$, the $\V$-functor $\Alg{\T}(Y^\op-,A) \cong A:\T \to \V$ preserves $\J$-powers.

By definition, an \bit{algebra} (with arities $\J$) is a pair $(\T,A)$ consisting of a $\J$-theory $\T$ and a $\T$-algebra $A$.  We also write simply $A$ to denote the algebra $(\T,A)$.

There are functors
\begin{equation}\label{eq:func_talg}\Alg{(-)}:\ThJ^\op \rightarrow \VCAT,\;\;\;\;\pAlg{(-)}:\ThJ^\op \rightarrow \CAT\end{equation}
sending each $\J$-theory $\T$ to $\Alg{\T}$ and to its underlying ordinary category $\pAlg{\T}$, respectively.
The first of these functors sends each morphism of $\J$-theories $M:\T \rightarrow \U$ to the $\V$-functor
\begin{equation}\label{eq:m_upper_star}M^*\;:\;\Alg{\U} \longrightarrow \Alg{\T}\;,\end{equation}
given by $M^*(A) = AM$ $(A \in \Alg{\U})$, while the second sends $M$ to the ordinary functor underlying $M^*$, which we write also as $M^*$. Similarly, there are functors 
\begin{equation}\label{eq:func_normal_talg}\Alg{(-)}^{\,!}:\ThJ^\op \rightarrow \VCAT,\;\;\;\;\pAlg{(-)}^!:\ThJ^\op \rightarrow \CAT.\end{equation}

\begin{parasub}\label{para:jalg}
A \textit{$\V$-category over $\V$}, denoted by $(\A,G)$ or simply $\A$, is a $\V$-category $\A$ equipped with a $\V$-functor $G:\A \to \V$. Thus $\V$-categories over $\V$ are the objects of the pseudo-slice 2-category $\VCAT \sslash \V$ \pref{para:pseudo_slice} and of the (ordinary) slice category $\VCAT_0 \slash \C$ where $\VCAT_0$ is the ordinary category of $\V$-categories. A 1-cell in $\VCAT \sslash \V$ is an equivalence in $\VCAT \sslash \V$ iff its underlying $\V$-functor is an equivalence \cite[2.5]{Lu:EnrAlgTh}. For each $\J$-theory $\T$, we regard $\Alg{\T}$ and $\Alg{\T}^!$ as $\V$-categories over $\V$ via $G^\T$ and its restriction to $\Alg{\T}^!$, respectively. The inclusion $\Alg{\T}^! \hookrightarrow \Alg{\T}$ is an equivalence (\ref{para:talgs}) and so underlies an equivalence $\Alg{\T}^! \simeq \Alg{\T}$ in $\VCAT \sslash \V$. Given a $\V$-category $(\A,G)$ over $\V$, we say that $(\A,G)$ is a \bit{$\J$-algebraic} (or that $G$ is $\J$-algebraic) if there exists an equivalence $\A \simeq \Alg{\T}$ in $\VCAT\sslash\V$ for some $\J$-theory $\T$ (\cite[12.1]{Lu:EnrAlgTh}, \cite[4.29--4.31]{LuPa:Struct}). We say that $(\A,G)$ (or $G$) is \bit{strictly $\J$-algebraic} if there is an isomomorphism $\A \cong \Alg{\T}^!$ in $\VCAT_0 \slash \V$ for some $\J$-theory $\T$ \cite[4.29--4.31]{LuPa:Struct}. Hence a $\V$-category over $\V$ is $\J$-algebraic iff it is equivalent, in $\VCAT \sslash \V$, to a strictly $\J$-algebraic $\V$-category over $\V$. Intrinsic characterizations of $\J$-algebraic $\V$-functors are given in \cite[4.35]{LuPa:Struct} and, in the case where $\J \hookrightarrow \V$ is \textit{eleutheric} (\ref{para:eleutheric}), in \cite[12.2]{Lu:EnrAlgTh}.

Let $G:\A \to \V$ be a $\V$-functor. If $G$ is $\J$-algebraic (resp.~strictly $\J$-algebraic), then $G$ is monadic (resp.~strictly monadic) \cite[5.14]{LuPa:Struct}, so in particular, $G$ is conservative (i.e.~if $Gf$ is an isomorphism then $f$ is an isomorphism). Moreover, we note in passing that, by \cite[5.14]{LuPa:Struct}, a $\V$-functor $G:\A \to \V$ is $\J$-algebraic (resp.~strictly $\J$-algebraic) if and only if $G$ is monadic (resp.~strictly monadic) and the associated $\V$-monad on $\V$ is \textit{$\J$-nervous} as defined in \cite[5.9]{LuPa:Struct} (following \cite[Definition 17]{BouGar:MndTh}). Furthermore, there is an equivalence of categories between $\ThJ$ and the category of $\J$-nervous $\V$-monads on $\V$ \cite[5.13]{LuPa:Struct}.

A $\V$-functor $G:\A \to \C$ is a \textit{discrete isofibration} if its underlying ordinary functor $G_0$ is a discrete isofibration (in the sense discussed in \cite{Lack:2CatsCompanion}), i.e., for every isomorphism $g:GA \xrightarrow{\sim} X$ in $\C$ (where $A \in \ob\A$), there is a unique isomorphism $f:A \xrightarrow{\sim} A'$ in $\A$ with $Gf = g$ (and in particular $GA' = X$). If a $\V$-functor $G:\A \to \C$ is a discrete isofibration, then the functor $\VCAT(\B,G):\VCAT(\B,\A) \to \VCAT(\B,\C)$ is discrete isofibration for each $\V$-category $\B$. Every strictly monadic functor is a discrete isofibration (e.g.~by \cite[Proposition 20.12]{AHS}). In particular, every strictly $\J$-algebraic $\V$-functor $G:\A \to \V$ is a discrete isofibration.

We write
$$\ALGCATJ \hookrightarrow \VCAT \sslash \V$$
for the full sub-2-category spanned by the $\J$-algebraic $\V$-categories over $\V$. A 1-cell $(Q,\gamma):(\A,G) \to (\B,H)$ in $\ALGCATJ$ therefore consists of a $\V$-functor $Q:\A \to \B$ and a $\V$-natural isomorphism $HQ \overset{\sim}{\Rightarrow} G:\A \to \V$. We write
$$\ALGCATJ^! \hookrightarrow \VCAT_0 \slash \V$$
for the full subcategory spanned by the strictly $\J$-algebraic $\V$-categories over $\V$. Regarding $\ALGCATJ^!$ as a locally discrete 2-category, we shall show that there is a biequivalence $\ALGCATJ^! \simeq \ALGCATJ$, but first we recall some standard material on biequivalences in the case where the bicategories involved are (strict) 2-categories:
\end{parasub}

\begin{parasub}\label{sec:bieq}
A $2$-functor $G:\A \rightarrow \X$ is a \bit{local equivalence} if for all objects $A$ and $B$ of $\A$, the functor $G_{AB}:\A(A,B) \rightarrow \X(GA,GB)$ is an equivalence of categories.  We say that $G$ is \bit{biessentially surjective on objects} if for every object $X$ of $\X$ there is an object $A$ of $\A$ and an equivalence $GA \simeq X$ in $\X$.  Putting these two properties together, $G$ is a \bit{biequivalence} if $G$ is a local equivalence and is biessentially surjective on objects \cite[(1.33)]{Str:FibrBicats}.   This definition generalizes immediately to the case where $G:\A \rightarrow \X$ is assumed only to be a homomorphism between bicategories $\A$ and $\X$ and provides a standard notion of equivalence of bicategories.  In particular, if $G$ is a biequivalence, then there is a homomorphism of bicategories $F:\X \rightarrow \A$ that is a left biadjoint of $G$ and is also a biequivalence \cite[(1.33)]{Str:FibrBicats}.
\end{parasub}

\begin{propsub}\label{thm:iota-bieq}
Regarding $\ALGCATJ^!$ as a locally discrete 2-category, the evident 2-functor $\iota:\ALGCATJ^! \to \ALGCATJ$ is a biequivalence $$\ALGCATJ^! \simeq \ALGCATJ\;.$$
\end{propsub}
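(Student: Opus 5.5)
We must show that $\iota$ is biessentially surjective on objects and a local equivalence, as in \ref{sec:bieq}.

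Biessential surjectivity is immediate from \ref{para:jalg}. Each $\J$-algebraic $(\A,G)$ admits an equivalence $\A \simeq \Alg{\T}$ in $\VCAT\sslash\V$ for some $\J$-theory $\T$. Composing with the equivalence $\Alg{\T}^! \simeq \Alg{\T}$ in $\VCAT\sslash\V$ exhibits $(\A,G)$ as equivalent to the strictly $\J$-algebraic $(\Alg{\T}^!,G^\T|)$.

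For the local equivalence, fix strictly $\J$-algebraic $(\A,G)$ and $(\B,H)$. The hom-category $\ALGCATJ^!((\A,G),(\B,H))$ is the discrete set of $\V$-functors $Q$ with $HQ = G$. The hom-category of $\ALGCATJ$ has objects the pairs $(Q,\gamma)$ with $\gamma:HQ \overset{\sim}{\Rightarrow} G$. We must show that the inclusion $Q \mapsto (Q,1)$ is an equivalence, i.e.\ fully faithful and essentially surjective.

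Essential surjectivity is the key step, and it uses the fact from \ref{para:jalg} that $H$ is a discrete isofibration, so that $\VCAT(\A,H)$ is a discrete isofibration too. Given $(Q,\gamma)$, consider the invertible $\V$-natural transformation $\gamma:HQ \Rightarrow G$ in $\VCAT(\A,\V)$. Lifting it along $\VCAT(\A,H)$ gives a unique $\V$-functor $Q'$ with $HQ' = G$ and a $\V$-natural isomorphism $\alpha:Q \Rightarrow Q'$ with $H\alpha = \gamma$. The condition for $\alpha$ to be a 2-cell $(Q,\gamma)\Rightarrow(Q',1)$ in the pseudo-slice is $1\cdot(H\alpha)=\gamma$, which holds. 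Hence $(Q,\gamma)\cong(Q',1)$.

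Full faithfulness says that between $(Q,1)$ and $(Q',1)$ with $HQ = G = HQ'$ there is exactly one 2-cell if $Q = Q'$ and none otherwise. A 2-cell $\alpha:Q\Rightarrow Q'$ must satisfy $H\alpha = 1_G$. Then each component $\alpha_A$ is a morphism with $H\alpha_A = 1$. Since $H$ is conservative, $\alpha_A$ is an isomorphism lying over an identity. Uniqueness of lifts for the discrete isofibration $H$ then forces $\alpha_A$ to be an identity, and in particular $QA = Q'A$.

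So $\alpha$ is an identity natural transformation. Thus $Q$ and $Q'$ agree on objects, and since the identity family is $\V$-natural from $Q$ to $Q'$, they agree on homs as well, giving $Q = Q'$. The only mildly delicate points are this last identification and the check that the enriched lift in the essential-surjectivity step is $\V$-natural. The latter follows because $\alpha$ lies over $\gamma$ and $H$ is faithful, which is part of monadicity (\ref{para:jalg}).
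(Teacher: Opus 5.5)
Your proof is correct and follows essentially the same route as the paper: biessential surjectivity comes from \ref{para:jalg}, and the local equivalence comes from lifting $\gamma$ along the discrete isofibration $H$ together with conservativity of $H$. The only difference is packaging: the paper uses a characterization of equivalences out of a discrete category (a unique pair $(P,\phi)$ with $\phi:(Q,\gamma)\Rightarrow(P,1_G)$, where every such $\phi$ is automatically invertible), which amounts to your separate essential-surjectivity and full-faithfulness arguments.
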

\begin{proof}
Let us write $\K = \ALGCATJ$ and $\K^! = \ALGCATJ^!$. The 2-functor $\iota:\K^! \to \K$ sends each 1-cell $P:(\A,G) \to (\B,H)$ in $\K^!$ to the 1-cell $(P,1_G):(\A,G) \to (\B,H)$ in $\K$. In view of \ref{para:jalg}, $\iota$ is biessentially surjective on objects, so it suffices to let $\A = (\A,G)$ and $\B = (\B,H)$ be objects of $\K^!$ and show that the functor $\iota_{\A\B}:\K^!(\A,\B) \to \K(\A,\B)$ is an equivalence, recalling that $\K^!(\A,\B)$ is a discrete category. In general, if $\X$ is any discrete category, then an equivalence $F:\X \to \Y$ is equivalently a family of objects $FX$ of $\Y$ $(X \in \X)$ such that the following statements hold for each object $Y$ of $\Y$: (1) There is a unique pair $(X,f)$ with $X \in \X$ and $f:Y \to FX$ in $\Y$, and (2) $f$ is an isomorphism. To apply this to $\iota_{\A\B}$, let $(Q,\gamma):\A \to \B$ be a 1-cell in $\K$, so that $Q:\A \to \B$ in $\VCAT$ and $\gamma:HQ \overset{\sim}{\Rightarrow} G$ in $\VCAT$. Since $H:\B \to \V$ is a discrete isofibration (\ref{para:jalg}), there is a unique pair $(P,\phi)$ consisting of a 1-cell $P:\A \to \B$ in $\VCAT$ and an invertible 2-cell $\phi:Q \overset{\sim}{\Rightarrow} P$ in $\VCAT$ such that $HP = G$ and $H\phi = \gamma$. Equivalently, there is a unique pair $(P,\phi)$ consisting of a 1-cell $P:(\A,G) \to (\B,H)$ in $\K^!$ and an invertible 2-cell $\phi:(Q,\gamma) \overset{\sim}{\Rightarrow} (P,1_G):(\A,G) \to (\B,H)$ in $\K$. But \textit{any} 2-cell $\phi:(Q,\gamma) \Rightarrow (P,1_G)$ in $\K$ is necessarily invertible, because $H\phi = \gamma$ is invertible and $H$ is conservative (\ref{para:jalg}).
\end{proof}

By \cite[4.32]{LuPa:Struct}, the functor $\Alg{(-)}^{\,!}$ in \eqref{eq:func_normal_talg} lifts to an equivalence of (ordinary) categories
\begin{equation}\label{eqn:equiv_thj_str-jalg}\sAlgs^!\;:\;\ThJ^\op\overset{\sim}{\longrightarrow} \ALGCATJ^!\;.\end{equation}
There is also a functor
\begin{equation}\label{eqn:func_alg_valued_in_psslice}\sAlgs\;:\;\ThJ^\op\longrightarrow \ALGCATJ\end{equation}
given on objects by $\T \mapsto (\Alg{\T},G^\T)$ and on morphisms by $M \mapsto (M^*,1)$. Regarding $\ThJ$ as a locally discrete 2-category, we may regard $\sAlgs$ as a 2-functor, which is in fact a biequivalence:
\end{parasub}

\begin{thmsub}\label{thm:bieq_th_algcat}
The 2-functor $\sAlgs$ in \eqref{eqn:func_alg_valued_in_psslice} is a biequivalence $\ThJ^\op \simeq \ALGCATJ$.
\end{thmsub}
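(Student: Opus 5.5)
We factor $\sAlgs$ through the biequivalence $\iota$ of \ref{thm:iota-bieq}. Explicitly, we compare the 2-functor $\sAlgs:\ThJ^\op \to \ALGCATJ$ with the composite
$$\ThJ^\op \xrightarrow{\;\sAlgs^!\;} \ALGCATJ^! \xrightarrow{\;\iota\;} \ALGCATJ\;,$$
in which the first factor is an equivalence of ordinary categories by \eqref{eqn:equiv_thj_str-jalg}, and hence a biequivalence of locally discrete 2-categories. The second factor is a biequivalence by \ref{thm:iota-bieq}. Composites of biequivalences are biequivalences, so $\iota\circ\sAlgs^!$ is a biequivalence. It then suffices to exhibit a 2-natural (or pseudonatural) equivalence $\iota\circ\sAlgs^! \simeq \sAlgs$, since a 2-functor pseudonaturally equivalent to a biequivalence is again one.

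For each $\J$-theory $\T$, we take the component $\epsilon_\T:(\Alg{\T}^{\,!},G^\T|) \to (\Alg{\T},G^\T)$ to be the inclusion, equipped with the identity 2-cell. By \ref{para:jalg} this is an equivalence in $\VCAT\sslash\V$. Naturality in a morphism $M:\T\to\U$ amounts to the strict equation $M^*\circ\epsilon_\U = \epsilon_\T\circ M^*|$. This holds because the functor $\Alg{(-)}^{\,!}$ of \eqref{eq:func_normal_talg} is given on morphisms by restriction of $M^*$, which preserves normality since $M$ strictly preserves designated $\J$-powers of $I$. Since $\ThJ^\op$ is locally discrete, there are no 2-cell conditions to check. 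This gives a 2-natural transformation with components that are equivalences.

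Alternatively, we can verify the two conditions directly. \emph{Biessential surjectivity} holds by the very definition of $\J$-algebraic in \ref{para:jalg}. For \emph{local equivalence}, we must show that $\ThJ(\U,\T)\to\ALGCATJ(\Alg{\T},\Alg{\U})$ is an equivalence. Using the components $\epsilon$ and the 2-naturality just noted, this functor is identified, up to composing with equivalences of hom-categories, with
$$\iota_{\Alg{\T}^!,\Alg{\U}^!}\circ \sAlgs^!_{\U\T}\;.$$
The first factor is a bijection by \eqref{eqn:equiv_thj_str-jalg}, and the second is an equivalence by \ref{thm:iota-bieq}.

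The main point to check carefully is the strict compatibility of $M^*$ with the normal-algebra inclusions, which is needed for 2-naturality. We expect this to follow from the cited functoriality \eqref{eq:func_normal_talg}, together with the fact that $\sAlgs^!$ lifts $\Alg{(-)}^{\,!}$. Everything else is formal bicategorical bookkeeping.
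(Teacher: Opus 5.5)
Your proposal is correct and takes essentially the same route as the paper. The paper also uses the inclusions $\Alg{\T}^{\,!} \hookrightarrow \Alg{\T}$ as equivalences $\sAlgs^!\T \xrightarrow{\sim} \sAlgs\T$ in $\ALGCATJ$, strictly natural in $\T$, so that $\sAlgs$ is equivalent to $\iota \circ \sAlgs^!$ and is therefore a biequivalence by \ref{thm:iota-bieq} and \eqref{eqn:equiv_thj_str-jalg}; your direct check of biessential surjectivity and local equivalence is just an unpacking of the same argument.
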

\begin{proof}
In view of \ref{para:jalg}, we have equivalences $\sAlgs^! \T \xrightarrow{\sim} \sAlgs \T$ in $\ALGCATJ$ that are (strictly) natural in $\T \in \ThJ$. Thus the composite
$$\ThJ^\op \xrightarrow{\sAlgs^!} \ALGCATJ^! \xrightarrow{\iota} \ALGCATJ$$
is equivalent to $\sAlgs$, which is therefore a biequivalence, by \ref{thm:iota-bieq} and \eqref{eqn:equiv_thj_str-jalg}.
\end{proof}

Next we discuss two broad classes of examples of amenable systems of arities. For the first, recall that Kelly introduced the notion of \textit{locally bounded} symmetric monoidal closed category \cite[6.1]{Ke:Ba} as a general setting for the advanced aspects of enriched category theory developed in \cite[Chapter 6]{Ke:Ba}. Examples of locally bounded symmetric monoidal closed categories include all locally presentable symmetric monoidal closed categories and also various convenient categories of topological spaces and related structures; we refer the reader to \cite[6.1]{Ke:Ba} and \cite[5.14]{LuPa:LocBdd} for extensive lists of examples. We have the following result on the plenitude of amenable systems of arities in the locally bounded setting, by \cite[6.3.12]{LuPa:Struct}:

\begin{thmsub}\label{thm:loc-bdd-small-sys-ar}
If $\V$ is a \textit{locally bounded} symmetric monoidal closed category, then every small system of arities $\J \hookrightarrow \V$ is amenable.
\end{thmsub}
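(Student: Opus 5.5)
The statement is cited from \cite[6.3.12]{LuPa:Struct}, but I will outline a direct argument. By \ref{para:setting2}, amenability asks two things of each $\J$-theory $\T$. First, $\Alg{\T}$ must exist as a $\V$-category. Second, $G^\T:\Alg{\T} \to \V$ must have a left adjoint. Since $\J$ is small, each $\J$-theory $\T$ is small, because $\ob\T = \ob\J$. So the first requirement reduces to the existence in $\V$ of the ends defining $[\T,\V]$. A locally bounded $\V$ has $\V_0$ complete and cocomplete, so $[\T,\V]$ is a $\V$-category. Then $\Alg{\T}$, as a full sub-$\V$-category, exists as well. What remains is the left adjoint $F^\T$.

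For the left adjoint, I would realise $\Alg{\T}$ as the full reflective sub-$\V$-category of $[\T,\V]$ determined by an orthogonality condition. A $\V$-functor $A:\T \to \V$ preserves $\J$-powers exactly when the canonical comparisons $AJ \to (AI)^J$ are invertible for all $J \in \ob\J$. Via the Yoneda lemma, this is enriched orthogonality of $A$ with respect to the small set of morphisms $J\cdot\T(I,-) \to \T(J,-)$ in $[\T,\V]$, where $J \in \ob\J$. This set is small because $\J$ is small.

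The key step, and the expected main obstacle, is showing that a small orthogonality class in $[\T,\V]$ is reflective when $\V$ is locally bounded. Here I would invoke Kelly's theory \cite[Chapter 6]{Ke:Ba}. For $\T$ small, $[\T,\V]$ is again locally bounded in the enriched sense. Kelly's results on orthogonal subcategories then show that the full subcategory orthogonal to a small set of maps is reflective. This is done by transfinite construction of the reflection, using the bounded factorization system to make the construction converge. I would try this route first and verify that the required hypotheses (a proper factorization system and a generator, transferred pointwise from $\V$) hold for $[\T,\V]$.

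With the reflection $L:[\T,\V] \to \Alg{\T}$ in hand, the rest is formal. Evaluation at $I$ on $[\T,\V]$ has the left adjoint $X \mapsto X\cdot\T(I,-)$. Composing these adjunctions gives $F^\T X = L(X\cdot\T(I,-))$, which is left adjoint to $G^\T$. Hence $\J \hookrightarrow \V$ is amenable.
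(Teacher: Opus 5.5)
Your argument is correct, but it is not the paper's route, because the paper gives no proof here at all: it imports the statement from \cite[6.3.12]{LuPa:Struct}, whose framework is subcategories of arities over a general base. So the comparison is between that citation and your direct argument, which is specific to the base $\V$.

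Your route uses what is special about the base $\V$. A $\T$-algebra is just a $\V$-functor $\T \to \V$ that preserves the designated powers $I^J$. By your Yoneda computation, $\Alg{\T}$ is therefore the enriched orthogonality class in $[\T,\V]$ of the maps $J \cdot \T(I,-) \to \T(J,-)$, that is, the $\V$-category of models of a small limit sketch. Kelly's result that such model $\V$-categories are reflective in $[\T,\V]$ when $\V$ is locally bounded \cite[Ch.~6]{Ke:Ba} then supplies the reflection. The hypothesis-checking on $[\T,\V]$ that you single out as the main obstacle is essentially what that chapter carries out, so you can cite the sketch version directly rather than re-verify local boundedness of $[\T,\V]$. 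Composing with $X \mapsto X \cdot \T(I,-) \dashv \mathrm{ev}_I$ then gives $F^\T$.

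Your approach gives a short, nearly self-contained proof from classical results. The citation instead places the fact inside the amenability theory of \cite{LuPa:Struct}, on which the paper relies throughout.

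One small precision. In this paper ``small'' means equivalent to a $\V$-category with a small set of objects, so $\ob\T = \ob\J$ need not itself be a small set. However, $\tau:\J^\op \to \T$ is the identity on objects, so $\T$ is equivalent to its full subcategory on representatives of the isomorphism classes of $\J$. That gives both the existence of $[\T,\V]$ and the essential smallness of your family of orthogonality maps.
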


\begin{parasub}\label{para:eleutheric}
Our second broad class of examples of amenable subcategories of arities includes cases where $\J$ is not small and $\V$ is not locally bounded (e.g. $\J = \V$, \ref{exa:sys-ar}). A system of arities $j:\J \hookrightarrow \V$ is \bit{eleutheric} \cite{Lu:EnrAlgTh,LuPa:Pres,LuPa:Struct} if for every $\V$-functor $S:\J \to \V$ the (enriched) left Kan extension $\Lan_j S:\V \to \V$ exists and is preserved by $\V(J,-):\V \to \V$ for each $J \in \ob\J$.  Equivalently, a system of arities $j:\J \hookrightarrow \V$ is eleutheric if $\V$ has $\PhiJ$-colimits for the class of (possibly large) weights $\PhiJ = \{\V(j-,X) \mid X \in \ob\V\}$ and each $\V$-functor $\V(J,-):\V \to \V$ $(J \in \ob\J)$ preserves $\PhiJ$-colimits. More abstractly, a system of arities $j:\J \hookrightarrow \V$ is eleutheric iff $j$ presents $\V$ as a free $\PhiJ$-cocompletion of $\J$ \cite[7.8]{Lu:EnrAlgTh}. Given an eleutheric system of arities $\J \hookrightarrow \V$, a \bit{$\J$-ary $\V$-monad} is a $\V$-monad on $\V$ whose underlying $\V$-functor $T:\V \to \V$ preserves $\PhiJ$-colimits (equivalently, preserves left Kan extensions along $j$). We have the following result, by \cite[8.8, 8.9]{Lu:EnrAlgTh} (or \cite[6.1.9]{LuPa:Struct}), \cite[11.8, 12.2]{Lu:EnrAlgTh}, and \cite[3.1.5]{Lu:FDistn}:
\end{parasub}

\begin{thmsub}\label{thm:eleutheric-implies-amenable}
Every eleutheric system of arities $j:\J \hookrightarrow \V$ is amenable. If $j$ is eleutheric, then (1) there is an equivalence of categories between $\ThJ$ and the category of $\J$-ary $\V$-monads on $\V$, and (2) a $\V$-functor $G:\A \to \V$ is $\J$-algebraic (resp.~strictly $\J$-algebraic) if and only if $G$ is monadic (resp.~strictly monadic) and the associated $\V$-monad on $\V$ is $\J$-ary.
\end{thmsub}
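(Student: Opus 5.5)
The statement collects known results, so the plan is to assemble them from the cited sources rather than to build new arguments.

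Amenability has two requirements. First, $\Alg{\T}(\V)$ must exist as a $\V$-category for every $\J$-theory $\T$. Second, $G^\T$ must have a left adjoint.

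For the first requirement, I would use eleuthericity. Every $\V$-functor $S:\J\to\V$ extends along $j$ via $\Lan_j S$. The plan is to show that $\J$-power-preserving $\V$-functors $\T\to\V$ correspond to Eilenberg--Moore algebras of a $\V$-monad $T$ on $\V$ obtained as follows:
\begin{itemize}
\item take the left Kan extension along $j$ of $J\mapsto \T(J,I)$, or more precisely of the restriction of the $\T$-hom $\V$-functor;
\item use the fact that each $\V(J,-)$ preserves these Kan extensions to equip $\Lan_j$ of the associated $\J$-indexed data with a monad structure;
\item check that $T$ is $\J$-ary.
\end{itemize}
This is the content of \cite[8.8, 8.9, 11.8]{Lu:EnrAlgTh}.

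The hom-objects of $\Alg{\T}$ are ends over $\T$. For normal algebras, however, they can be computed as the equalizer in $\V$ of the pair $\V(AI,BI)\rightrightarrows\V(TAI,BI)$. This equalizer exists because $\V_0$ has finite limits. In this way the potentially large ends reduce to finite limits. The left adjoint $F^\T$ is then the free Eilenberg--Moore algebra functor transported along the equivalence $\Alg{\T}^!\simeq\pAlg{T}$, which establishes amenability.

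For (1), I would send a $\J$-theory $\T$ to the $\J$-ary monad $T$ constructed above. In the other direction, a $\J$-ary monad is sent to the full sub-$\V$-category of the opposite of its Kleisli category on the objects of $\J$. I would then verify that the two composites are isomorphic, as in \cite[11.8]{Lu:EnrAlgTh}; the key input is that $\J$-ary monads are determined by their restriction to $\J$, because they preserve $\PhiJ$-colimits and $\V$ is the free $\PhiJ$-cocompletion of $\J$.

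For (2), I would combine the above with \cite[5.14]{LuPa:Struct}. That result says $G$ is (strictly) $\J$-algebraic iff $G$ is (strictly) monadic with $\J$-nervous associated monad. It then remains to identify the $\J$-nervous monads with the $\J$-ary ones in the eleutheric case, which follows from comparing (1) with \cite[5.13]{LuPa:Struct}, as done in \cite[3.1.5]{Lu:FDistn} and \cite[12.2]{Lu:EnrAlgTh}.

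The main obstacle I expect is the construction of the monad from $\T$ together with the verification that its algebras coincide with normal $\T$-algebras. Size issues arise there because $\J$ may be large, and the preservation hypothesis on the functors $\V(J,-)$ is essential at that step.
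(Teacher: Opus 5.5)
Your proposal is correct, and for amenability and for (1) it follows the same route as the paper. The paper gives no argument of its own here; it simply cites \cite[8.8, 8.9]{Lu:EnrAlgTh} (or \cite[6.1.9]{LuPa:Struct}), \cite[11.8, 12.2]{Lu:EnrAlgTh} and \cite[3.1.5]{Lu:FDistn}. Your unpacking of those citations is faithful:
\begin{itemize}
\item the monad is $TV=\int^{J\in\J}\V(J,V)\otimes\T(J,I)$;
\item preservation of such coends by each $\V(J,-)$, together with $\V(J,\T(K,I))\cong\T(K,J)$, is exactly what produces the multiplication from composition in $\T$;
\item the inverse is the usual Kleisli construction restricted to $\J$.
\end{itemize}

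You genuinely diverge only in (2). The paper takes (2) directly from the characterization of $\J$-algebraic $\V$-functors in \cite[12.2]{Lu:EnrAlgTh} and its strict counterpart \cite[3.1.5]{Lu:FDistn}. You instead go through \cite[5.14]{LuPa:Struct} and the identification of $\J$-nervous with $\J$-ary monads. That route is legitimate; the paper itself records this identification for eleutheric $\J$ as \cite[6.1.15]{LuPa:Struct}.

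However, deducing the identification from ``comparing (1) with \cite[5.13]{LuPa:Struct}'' needs a check you did not state. Two abstract equivalences with $\ThJ$ would not suffice. You must also verify:
\begin{itemize}
\item both equivalences are implemented, up to isomorphism, by the same functor $\T\mapsto$ (the $\V$-monad of $F^\T\dashv G^\T$);
\item both classes of monads are replete, so that each coincides with the essential image of that functor.
\end{itemize}
Your attribution of this comparison to \cite[3.1.5]{Lu:FDistn} and \cite[12.2]{Lu:EnrAlgTh} is also off, since those results establish (2) directly, without $\J$-nervous monads. The paper's route is shorter. Yours has the merit of exhibiting (2) as a special case of the general characterization \cite[5.14]{LuPa:Struct} that the paper uses elsewhere.
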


\noindent Moreover, for eleutheric $\J$, a $\V$-monad on $\V$ is $\J$-ary iff it is $\J$-nervous \cite[6.1.15]{LuPa:Struct}.

\begin{parasub}\label{para:small-ele}
In the case where $\V$ is complete and cocomplete, a small system of arities $j:\J \hookrightarrow \V$ is eleutheric iff there exists a class of small weights $\Psi$ such that $j:\J \hookrightarrow \V$ is a free $\Psi$-cocompletion \cite[3.8]{LuPa:Pres}, i.e.~$j$ presents $\V$ as a free $\Psi$-cocompletion of $\J$, in which case a $\V$-monad on $\V$ is $\J$-ary iff it preserves $\Psi$-colimits \cite[4.2]{LuPa:Pres}.
\end{parasub}

\begin{parasub}\label{para:weakly-sound}
Again considering the case where $\V$ is complete and cocomplete, an important class of examples of eleutheric systems of arities arises by starting with a class of small weights $\Phi$ that is \textit{locally small} \cite[8.10]{KeSch} and \textit{weakly sound} \cite{DayLack,KeSch,LaRo,Tendas:MoreOnSoundness}. A class of small weights $\Phi$ is said to be \textit{locally small} if the free $\Phi$-cocompletion $\Phi(\C)$ of any small $\V$-category $\C$ is small; recall that $\Phi(\C)$ is constructed as the closure $\Phi(\C) \hookrightarrow [\C^\op,\V]$ of the representables under $\Phi$-colimits, and that a weight $W:\C^\op \to \V$ lies in $\Phi(\C)$ iff $W$ lies in the \textit{saturation} $\Phi^*$ of $\Phi$ \cite{AlbertKelly,KeSch} (i.e.~every $\Phi$-cocomplete $\V$-category has $W$-colimits and every $\Phi$-cocontinuous $\V$-functor between $\Phi$-cocomplete $\V$-categories preserves $W$-colimits). A weight $W:\C^\op \to \V$ is \textit{$\Phi$-flat} if $\C$ is small and $W$-colimits commute with $\Phi$-limits in $\V$, i.e.~$W * (-):[\C,\V] \to \V$ is $\Phi$-continuous \cite{KeSch}; we write $\Phi^+$ for the class of $\Phi$-flat weights. We say that $\Phi$ is \textit{weakly sound} if for every small $\Phi$-cocomplete $\V$-category $\C$, every $\Phi$-continuous $\V$-functor $W:\C^\op \to \V$ is a $\Phi$-flat weight.

As discussed in \cite[\S 6.4]{LaRo} and \cite[3.9(7)]{LuPa:Pres} (on the basis of \cite[\S 8]{KeSch}), if $\Phi$ is a locally small and weakly sound class of weights and $\C$ is a small $\Phi$-cocomplete $\V$-category, then the canonical $\V$-functor $\y_\Phi:\C \to \textnormal{$\Phi$-Cts}(\C^\op,\V)$ is a free $\Phi^+$-cocompletion, where $\textnormal{$\Phi$-Cts}(\C^\op,\V) \hookrightarrow [\C^\op,\V]$ is the full sub-$\V$-category spanned by the $\Phi$-continuous $\V$-functors. (Consequently, $\y_\Phi$ is an \textit{eleutheric subcategory of arities}, \cite[3.9(7)]{LuPa:Pres}).

Given a locally small and weakly sound class of weights $\Phi$, we now consider the free $\Phi$-cocompletion $\Phi(\mathbb{I})$ of the unit $\V$-category $\mathbb{I}$, which is simply the full sub-$\V$-category $\Phi(\mathbb{I}) \hookrightarrow \V$ obtained as the closure of the unit object $I$ in $\V$ under $\Phi$-colimits. Under much stronger assumptions on $\V$, Tendas \cite[\S 5]{Tendas:MoreOnSoundness} proved that $\Phi(\mathbb{I}) \hookrightarrow \V$ is a system of arities and discussed $\Phi(\mathbb{I})$-theories and $\V$-monads on $\V$ preserving $\Phi$-flat colimits. Even without those stronger assumptions, $\Phi(\mathbb{I}) \hookrightarrow \V$ is an eleutheric system of arities and the latter are the $\Phi(\mathbb{I})$-ary $\V$-monads:
\end{parasub}

\begin{thmsub}\label{thm:loc-sound-eleutheric}
Suppose $\V$ is complete and cocomplete, and let $\Phi$ be a locally small and weakly sound class of weights. Then $\Phi(\mathbb{I}) \hookrightarrow \V$ is a small and eleutheric system of arities. Also, a $\V$-monad $T:\V \to \V$ is $\Phi(\mathbb{I})$-ary iff $T$ preserves $\Phi$-flat colimits.
\end{thmsub}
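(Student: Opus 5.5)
Write $\J = \Phi(\mathbb{I})$ and $j:\J \hookrightarrow \V$. The proof has four steps.

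\textbf{Step 1: smallness.} $\J$ is small because $\Phi$ is locally small and $\mathbb{I}$ is small.

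\textbf{Step 2: $\J$ is a system of arities.} It is full and contains $I$, so it remains to check closure under $\otimes$. Fix $J \in \J$. The $\V$-functor $J \otimes (-):\V \to \V$ is a left adjoint and so preserves all colimits. Hence the full subcategory $\{X \in \V \mid J \otimes X \in \J\}$ contains $I$ (as $J \otimes I \cong J$) and is closed under $\Phi$-colimits, since $\J$ is. It therefore contains the closure of $I$ under $\Phi$-colimits, which is $\J$. Here one should also note that $\J$ is replete, so closure up to isomorphism is harmless.

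\textbf{Step 3: eleuthericity.} Using \ref{para:small-ele}, it suffices to show that $j$ is a free $\Psi$-cocompletion for some class of small weights $\Psi$. Take $\Psi = \Phi^+$. By \ref{para:weakly-sound}, since $\J$ is small and $\Phi$-cocomplete, $\y_\Phi:\J \to \textnormal{$\Phi$-Cts}(\J^\op,\V)$ is a free $\Phi^+$-cocompletion. So we need an equivalence $\V \simeq \textnormal{$\Phi$-Cts}(\J^\op,\V)$ commuting with the inclusions of $\J$.

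\begin{itemize}
\item The nerve $\V \to [\J^\op,\V]$, $X \mapsto \V(j-,X)$, is fully faithful because $j$ is dense ($I \in \J$).
\item Its image is $\Phi$-continuous, since $\V(-,X)$ sends colimits in $\V$ to limits, and $\Phi$-colimits in $\J$ are computed in $\V$.
\item For essential surjectivity, let $W:\J^\op \to \V$ be $\Phi$-continuous. Then $W \cong \V(j-, WI)$: the natural comparison is an isomorphism at $I$, and both sides are $\Phi$-continuous. The class of $J$ where it is invertible contains $I$ and is closed under $\Phi$-colimits, so it is all of $\J$. This is the same closure trick as in Step 2.
\end{itemize}

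The main obstacle is this last point. It relies on $\Phi$-colimits in $\J$ being those of $\V$, and on the fact that $\Phi$-limits of isomorphisms are isomorphisms. Both are routine.

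\textbf{Step 4: the monads.} By \ref{para:small-ele}, a $\V$-monad is $\J$-ary iff it preserves $\Psi$-colimits, i.e.\ $\Phi^+$-colimits, which are exactly the $\Phi$-flat colimits.
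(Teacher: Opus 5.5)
Your proof is correct, and its skeleton matches the paper's. Both arguments run: smallness from local smallness, then the free $\Phi^+$-cocompletion $\y_\Phi$ of \ref{para:weakly-sound}, then \ref{para:small-ele} for both eleuthericity and the characterization of $\Phi(\mathbb{I})$-ary monads. The differences are in two sub-steps.

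\textbf{Closure under $\otimes$.} The paper uses the saturation characterization: $X \in \Phi(\mathbb{I})$ iff $X$, regarded as a weight on $\mathbb{I}$, lies in $\Phi^*$. It then combines this with $(X\otimes Y)\cdot A \cong X\cdot(Y\cdot A)$. You use only two facts: $\Phi(\mathbb{I})$ is the closure of $I$ in $\V$ under $\Phi$-colimits, and $J\otimes(-)$ is a $\V$-left adjoint. Your argument is more elementary and avoids the saturation theory entirely.

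\textbf{The equivalence $\textnormal{$\Phi$-Cts}(\J^\op,\V) \simeq \V$.} The paper gets this from the universal property of the free $\Phi$-cocompletion, namely restriction along $\mathbb{I} \to \Phi(\mathbb{I})$, which is legitimate because $\V^\op$ is $\Phi$-cocomplete. You build the equivalence by hand, using the nerve $X \mapsto \V(j-,X)$ and the same closure trick. This costs a few more lines. In return it shows explicitly that the equivalence carries $j$ to $\y_\Phi$, which the paper leaves implicit when it concludes that the inclusion itself is a free $\Phi^+$-cocompletion.

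\textbf{A detail to write out.} Specify the comparison map $W \Rightarrow \V(j-,WI)$. It is the transpose of $W_{JI}:J \cong \J(I,J) \to \V(WJ,WI)$. Its $\V$-naturality comes from the functoriality of $W$, and its component at $I$ is the canonical isomorphism $WI \cong \V(I,WI)$. With that in place, your closure argument for invertibility goes through as stated.
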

\begin{proof}
Locally smallness of $\Phi$ entails that $\Phi(\mathbb{I})$ is small. By \ref{para:weakly-sound}, the canonical $\V$-functor $\Phi(\mathbb{I}) \to \textnormal{$\Phi$-Cts}(\Phi(\mathbb{I})^\op,\V)$ presents $\textnormal{$\Phi$-Cts}(\Phi(\mathbb{I})^\op,\V) \simeq [\mathbb{I},\V] = \V$ as a free $\Phi^+$-cocompletion of $\Phi(\mathbb{I})$, so the inclusion $\Phi(\mathbb{I}) \hookrightarrow \V$ presents $\V$ as a free $\Phi^+$-cocompletion of $\Phi(\mathbb{I})$. Hence, by \ref{para:small-ele} it suffices to show that $\Phi(\mathbb{I}) \hookrightarrow \V$ is a system of arities. An object $X$ of $\V$ lies in $\Phi(\mathbb{I})$ iff $X:\mathbb{I}^\op \to \V$ (regarded as a weight) lies in the saturation $\Phi^*$, i.e.~iff every $\Phi$-cocomplete $\V$-category has $X$-copowers and every $\Phi$-cocontinuous $\V$-functor between $\Phi$-cocomplete $\V$-categories preserves $X$-copowers. Consequently, $\Phi(\mathbb{I}) \hookrightarrow \V$ is closed under the monoidal product, since if $X,Y \in \Phi(\mathbb{I})$ then every $\Phi$-cocomplete $\V$-category $\A$ has $X \otimes Y$-copowers $(X \otimes Y) \cdot A \cong X \cdot (Y \cdot A)$ $(A \in \ob\A)$, and every $\Phi$-cocontinuous $\V$-functor between $\Phi$-cocomplete $\V$-categories preserves $(X \otimes Y)$-copowers, showing that $X \otimes Y \in \Phi^*$ and hence $X \otimes Y \in \Phi(\mathbb{I})$.
\end{proof}

We now discuss some specific examples, recalling that all we require of $\V$ in general is that $\V_0$ have finite limits.

\begin{exasub}\label{exa:sys-ar}\emptybox

\medskip

\noindent(1) The full subcategory $\FinCard \hookrightarrow \Set$ spanned by the finite cardinals is an eleutheric system of arities, for which $\FinCard$-theories are precisely Lawvere theories in the usual sense, and for which $\FinCard$-ary monads are precisely finitary monads \cite[4.2, 7.5, 11.3]{Lu:EnrAlgTh}.

\medskip

\noindent(2) The identity functor on $\Set$ is an eleutheric system of arities $\Set \hookrightarrow \Set$  for which $\Set$-theories are precisely Linton's \textit{varietal theories} \cite{Lin:Eq}, and $\Set$-ary monads are simply arbitrary monads on $\Set$ \cite[4.2, 7.5, 11.3]{Lu:EnrAlgTh}.

\medskip

\noindent(3) Generalizing (2), the identity $\V$-functor on $\V$ is an eleutheric system of arities $\J = \V \hookrightarrow \V$,  for which $\J$-theories are precisely the $\V$-theories of Dubuc \cite{Dub:StrSem}, and $\J$-ary $\V$-monads are arbitrary $\V$-monads on $\V$ \cite[4.2,7.5,11.3]{Lu:EnrAlgTh}.

\medskip

\noindent(4) There is an eleutheric system of arities $\J = \{I\} \hookrightarrow \V$ spanned by just the unit object $I$ of $\V$, for which $\Th_{\{I\}}$ is equivalent to the category $\Mon(\V)$ on monoids in $\V$ \cite[4.2]{Lu:EnrAlgTh}. Given a monoid $R$ in $\V$, with corresponding $\{I\}$-theory $\T$, a $\T$-algebra is equivalently a left $R$-module in $\V$ \cite[5.3]{Lu:EnrAlgTh} (i.e., an $R$-left-object in the sense of \cite[\S 2]{Parei:I}). In particular, left $R$-modules in $\V$ are the objects of the $\V$-category $\Mod{R} = \Alg{\T}$.

\medskip

\noindent(5) Generalizing (1), if $\V$ is \textit{locally $\alpha$-presentable as a closed category} in the sense of \cite{Ke:FL} (where $\alpha$ is a regular cardinal), then the full sub-$\V$-category $\V_\alpha \hookrightarrow \V$ spanned by the $\alpha$-presentable objects is an eleutheric system of arities, for which $\V_\alpha$-theories are precisely (the $\alpha$-ary version of) the enriched Lawvere theories of Power \cite{Pow:EnrLaw}, and $\V_\alpha$-ary $\V$-monads are precisely $\alpha$-ary $\V$-monads on $\V$ (i.e.~those that preserve conical $\alpha$-filtered colimits) \cite[3.9, 4.7]{LuPa:Pres}.

\medskip

\noindent(6) If $\V$ is a complete and cocomplete cartesian closed category, or more generally, a \textit{$\pi$-category} in the sense of \cite{BoDay}, then by \cite[4.2, 7.5]{Lu:EnrAlgTh} there is an eleutheric system of arities $\SF(\V) \hookrightarrow \V$ spanned by the copowers $n \cdot I$ of the unit object $I$ by finite cardinals $n$, for which $\SF(\V)$-theories are the enriched algebraic theories of Borceux and Day \cite{BoDay}. In the case where $\V$ is cartesian closed, $\SF(\V)$-ary $\V$-monads are precisely \textit{strongly finitary $\V$-monads} on $\V$ in the sense of \cite{KeLa:SF}, by \cite[4.7]{LuPa:Pres}.
\end{exasub}

\begin{parasub}\label{para:specific-cccs}
As in \cite{Lu:BAlgCmt}, in \ref{sec:str-fin-alg-dual-adjn} we briefly discuss some examples involving two specific cartesian closed categories, as follows.

Firstly, we consider the category of \textit{convergence spaces} in the sense of \cite{BeBu}, $\Conv$, i.e.~the category of Limesr\"aume (or limit spaces) of Fischer \cite{Fischer} and Kowalsky \cite{Kowalsky}, into which the category of topological spaces $\Top$ embeds as a full, reflective subcategory. Following \cite[2.3]{Lu:FDistn}, we now provide a brief overview. A \textit{convergence space} is a (small) set $X$ with a binary relation $\to$ from the set of all \textit{proper filters on $X$} (i.e.~filters $\X \subsetneq \mathcal{P}(X)$) to $X$ such that (1) for every $x \in X$, $\{A \subseteq X \mid x \in A\} \to x$; (2) if $\X \to x$ and $\X \subseteq \Y$ for proper filters $\X$ and $\Y$, then $\Y \to x$; (3) if $\X \to x$ and $\Y \to x$ then $\X \cap \Y \to x$. The internal hom from $X$ to $Y$ in $\Conv$ is the convergence space $C_c(X,Y)$ of all continuous maps from $X$ to $Y$, equipped with \textit{continuous convergence}; see \cite[1.1.5]{BeBu} or \cite[2.3]{Lu:FDistn}. Explicitly, a map $f:X \to Y$ is \textit{continuous} if $f[\X] \to f(x)$ in $Y$ whenever $\X \to x$ in $X$, where $f[\X] = \{B \subseteq Y \mid f(A) \subseteq B, A \in \X\}$. If $\F$ is a proper filter on the set $C_c(X,Y)$, then $\F \to f$ iff $\F[\X] \to f(x)$ in $Y$ whenever $\X \to x$ in $X$, where we write $\F[\X] = \{B \subseteq Y \mid F(A) \subseteq B, F \in \F, A \in \X\}$ and $F(A) = \{g(a) \mid g \in F, a \in A\}$. Identifying $\Top$ with a full subcategory of $\Conv$, the inclusion $\Top \hookrightarrow \Conv$ preserves all limits, and upon identifying sets with discrete topological spaces, the inclusion $\Set \hookrightarrow \Top$ preserves finite limits. Consequently, every Lawvere theory $\T$ in the usual sense is an example of an $\SF(\Conv)$-theory, and every $\T$-algebra in $\Set$ or in $\Top$ determines a $\T$-algebra in $\Conv$.
 
Secondly, in two examples in \ref{sec:str-fin-alg-dual-adjn} we briefly mention the \textit{Cahiers topos} \cite{Dub:ModSDG,Kock:SDG}, $\Cah$, which is a (fully) well-adapted model of synthetic differential geometry (\textit{un mod\`ele pleinement bien adapt\'e}, \cite[4.10]{Dub:ModSDG}); in particular, there is a fully faithful functor from the category of smooth manifolds into $\Cah$, under which the manifold $\RR$ is sent to an internal commutative ring $R$ in $\Cah$ called the \textit{line object}.
\end{parasub}

\subsection{Bifold algebras}\label{sec:balg}

We now review some basic material on bifold algebras from \cite{Lu:BAlgCmt}. Given $\J$-theories $\T$ and $\U$, a \bit{$(\T,\U)$-algebra} is a functor $D:\T \otimes \U \rightarrow \V$ that preserves $\J$-powers in each variable separately \cite[5.2]{Lu:BAlgCmt}, where $\T \otimes \U$ is simply the monoidal product of $\V$-categories.  We call $|D| = D(I,I)$ the \bit{carrier} of $D$.  A \bit{bifold algebra} is a triple $(\T,\U,D)$ consisting of $\J$-theories $\T$ and $\U$ and a $(\T,\U)$-algebra $D$.  We often write simply $D$ to denote the bifold algebra $(\T,\U,D)$, thus implicitly regarding $(\T,\U)$-algebras as bifold algebras. Bifold algebras are equivalently given by \textit{commuting algebra pairs}, as we recall in \ref{para:cmt}. Various examples of bifold algebras are discussed in Section \ref{sec:exa_ada}. $(\T,\U)$-algebras are the objects of a full sub-$\V'$-category of $[\T \otimes \U,\V]$ that we denote by $\Alg{(\T,\U)}$, but $\Alg{(\T,\U)}$ is in fact a $\V$-category, by  \cite[2.2.8, 11.11]{Lu:BAlgCmt}.  We call $\Alg{(\T,\U)}$ the \bit{$\V$-category of $(\T,\U)$-algebras}, and we denote its underlying ordinary category by $\pAlg{(\T,\U)}$.  There are isomorphisms
\begin{equation}\label{eq:ba_transposition_isos}\Alg{\T}(\Alg{\U}) \cong \Alg{(\T,\U)} \cong \Alg{(\U,\T)} \cong \Alg{\U}(\Alg{\T})\end{equation}
that witness how $(\T,\U)$-algebras can be described equivalently as $\T$-algebras in $\Alg{\U}$, as $(\U,\T)$-algebras, or as $\U$-algebras in $\Alg{\T}$ \cite[5.3]{Lu:BAlgCmt}.  Sufficient conditions for the existence of a $\J$-theory $\otimesJ{\T}{\U}$, called the \textit{tensor product} of $\T$ and $\U$, with $\Alg{(\T,\U)} \simeq \Alg{(\otimesJ{\T}{\U})}$ are discussed in \cite[\S 6]{Lu:BAlgCmt} on the basis of methods of \cite[\S 6.5]{Ke:Ba}, but we do not require the tensor product in this paper.

Every $(\T,\U)$-algebra $D$ determines a $\T$-algebra $D_\ell = D(-,I)$ in $\V$ and a $\U$-algebra $D_r = D(I,-)$ in $\V$ that both have the same carrier $|D|$ and are called the \bit{left face} and \bit{right face} of $D$, respectively \cite[5.4]{Lu:BAlgCmt}.  As in \cite[5.4]{Lu:BAlgCmt}, we denote the transposes of $D$ under the isomorphisms \eqref{eq:ba_transposition_isos} by
\begin{equation}\label{eq:transpose-of-bif-alg}D_{\ell r}:\T \rightarrow \Alg{\U},\;\;\;\;D^\circ:\U \otimes \T \to \V,\;\;\;\;\text{and}\;\;\;\;D_{r\ell}:\U \rightarrow \Alg{\T},\end{equation}
respectively, so that $(D_{\ell r} J)K = D(J,K) = D^\circ(K,J) = (D_{r\ell} K)J$ naturally in $J \in \T$, $K \in \U$. Thus $D_{\ell r}$ is a $\T$-algebra in $\Alg{\U}$ with carrier $D_r$, $D^\circ$ is a $(\U,\T)$-algebra, and $D_{r\ell}$ is a $\U$-algebra in $\Alg{\T}$ with carrier $D_\ell$.

\section{Representable algebra-valued $\V$-functors}\label{sec:alg-val-func}

In this section, we establish an enriched generalization of Freyd's theorem on algebra-valued functors (Theorem \ref{thm:rep-equiv-ra}) and a Yoneda principle for algebra-valued $\V$-functors (Lemma \ref{thm:yon-alg-val}), which together entail an enriched functorial version of Freyd's theorem (Theorem \ref{thm:talgs-in-c-equiv-radj}).

Let $\T$ be a $\J$-theory, and let $\C$ be a $\V$-category with $\J$-powers. We define a $\V'$-functor
\begin{equation}\label{eq:alg-val-hom}\llbracket-,?\rrbracket\;:\;\C^\op \otimes \Alg{\T}(\C) \longrightarrow \Alg{\T}\end{equation}
by
$$\llbracket C,E\rrbracket = \C(C,E-)\;\;\;\;(C \in \C, E \in \Alg{\T}(\C)).$$
Hence, for each $\T$-algebra $E:\T \to \C$, we obtain a $\V$-functor
$$\llbracket-,E\rrbracket\;:\;\C^\op \longrightarrow \Alg{\T}$$
that sends each object $C$ of $\C$ to the $\T$-algebra $\llbracket C,E\rrbracket = \C(C,E-):\T \to \V$. For each morphism of $\J$-theories $M:\U \to \T$, we note for later use that
\begin{equation}\label{eq:hom-nat}M^*\llbracket C,E\rrbracket = \llbracket C,EM\rrbracket\;\;\;\;\;\;(C \in \C, E \in \Alg{\T}(\C)).\end{equation}

\begin{defn}\label{defn:repbl-alg-val-vfunc}
A $\V$-functor $S:\C^\op \to \Alg{\T}$ is \bit{representable} if $S \cong \llbracket-,E\rrbracket$ for some $\T$-algebra $E:\T \to \C$, in which case we say that $S$ is \bit{represented by} $E$.
\end{defn}

We now study the relationship between representability of such a $\V$-functor $S$ and the existence of a left adjoint to $S$.

\begin{prop}\label{thm:rep-has-la}\emptybox
\begin{enumerate}
\item For each $\T$-algebra $E:\T \to \C$, the $\V$-functor $\llbracket-,E\rrbracket:\C^\op \to \Alg{\T}$ has a left adjoint if and only if the weighted limit $\{A,E\}$ exists in $\C$ for all $\T$-algebras $A:\T \to \V$.  In this case, $\{-,E\}:\Alg{\T} \to \C^\op$ is left adjoint to $\llbracket-,E\rrbracket$.
\item If the weighted limit $\{A,E\}$ exists for all $\T$-algebras $A:\T \to \V$ and $E:\T \to \C$, then the isomorphisms
$\C(C,\{A,E\}) \cong \Alg{\T}(A,\llbracket C,E\rrbracket)$ are $\V'$-natural in $C \in \C$, $A \in \Alg{\T}$, and $E \in \Alg{\T}(\C)$.
\end{enumerate}
\end{prop}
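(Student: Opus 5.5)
The plan is to reduce everything to the definition of weighted limits together with the pointwise description of hom-objects in $\Alg{\T}$ as ends. For a $\T$-algebra $A:\T \to \V$ and $C \in \C$, the $\V$-category $\Alg{\T}$ is a full sub-$\V$-category of $[\T,\V]$. This gives
$$\Alg{\T}(A,\llbracket C,E\rrbracket) = [\T,\V](A,\C(C,E-)) = \int_{J \in \T} \V(AJ,\C(C,EJ))\;,$$
and this end exists in $\V$ by the amenability assumption in \ref{para:setting2}, since $\llbracket C,E\rrbracket$ is a $\T$-algebra. Note that $\C(C,E-)$ preserves $\J$-powers because $E$ does and representables preserve powers. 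The defining universal property of the weighted limit $\{A,E\}$ is exactly a $\V$-natural isomorphism
$$\C(C,\{A,E\}) \cong [\T,\V](A,\C(C,E-))\;,\qquad C \in \C\;.$$
So $\{A,E\}$ exists iff the $\V$-functor $\Alg{\T}(A,\llbracket -,E\rrbracket):\C^\op \to \V$, i.e.\ $C \mapsto \Alg{\T}(A,\llbracket C,E\rrbracket)$, is representable (covariantly in $C \in \C$, contravariantly in $C \in \C^\op$), with $\{A,E\}$ as representing object.

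For (1), I would invoke the standard enriched criterion that a $\V$-functor $R:\C^\op \to \Alg{\T}$ has a left adjoint iff for each $A \in \Alg{\T}$ the $\V$-functor $\Alg{\T}(A,R-):\C^\op \to \V$ is representable, i.e.\ there is an object $LA$ of $\C^\op$ with $\C^\op(LA,-) \cong \Alg{\T}(A,R-)$ \cite[\S 1.11]{Ke:Ba}. In that case the representing objects assemble uniquely into a left adjoint $L$. Since $\C^\op(LA,C) = \C(C,LA)$, this representability is precisely the existence of $\{A,E\}$ with $LA = \{A,E\}$, by the first paragraph. This yields both directions of the equivalence and identifies the left adjoint as $\{-,E\}:\Alg{\T} \to \C^\op$. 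In the process, $\{-,E\}$ acquires its $\V$-functor structure from the adjunction, and this agrees with the usual functoriality of weighted limits in the weight.

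For (2), the isomorphisms are the composites of the defining isomorphism of $\{A,E\}$ with the equality of ends above. Naturality in $C$ and $A$ follows from (1), as adjunction isomorphisms are $\V$-natural in both variables. For naturality in $E$, I would regard $\{A,E\}$ as a $\V'$-functor $\{-,?\}:\Alg{\T}^\op \otimes \Alg{\T}(\C) \to \C$, using that weighted limits, when they exist for all weights and diagrams in question, are functorial in both arguments. The isomorphism $\C(C,\{A,E\}) \cong [\T,\V](A,\C(C,E-))$ is then $\V'$-natural in all three variables, because it is the unique family representing the $\V'$-functor on the right. Here $\V'$ is needed since $\Alg{\T}(\C)$ is only a $\V'$-category. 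Finally, the functoriality of $\llbracket -,?\rrbracket$ in \eqref{eq:alg-val-hom} is by definition the restriction of $(C,E) \mapsto \C(C,E-)$. Hence the identification of $[\T,\V](A,\C(C,E-))$ with $\Alg{\T}(A,\llbracket C,E\rrbracket)$ is natural in $E$ as well, by fullness of $\Alg{\T} \hookrightarrow [\T,\V]$.

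The main obstacle is the size and enrichment bookkeeping in (2). One must check that passing from $\V$ to $\V'$ preserves the relevant ends and weighted limits (the inclusion $\V \hookrightarrow \V'$ preserves limits by \ref{para:univ_enl}). One must also confirm that the parametrized representability argument (\cite[\S 1.10]{Ke:Ba}) applies verbatim to the $\V'$-functor in three variables, so that the family of representing objects $\{A,E\}$ indeed forms a $\V'$-functor for which the isomorphisms are natural.
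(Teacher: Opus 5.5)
Your proposal is correct and follows essentially the same route as the paper. The paper identifies $\Alg{\T}(A,\llbracket C,E\rrbracket)$ with the end $[\T,\V](A,\C(C,E-))$, which exists by amenability. It then reads $\{A,E\}$ as a representation of this $\V$-functor of $C$, which yields (1), and for (2) it simply appeals to \cite[\S 1.10]{Ke:Ba} for the extra $\V'$-naturality in $A$ and $E$, exactly as your parametrized-representability argument does.
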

\begin{proof}
(1). Given a $\T$-algebra $A:\T \to \V$, since $\llbracket C,E\rrbracket = \C(C,E-):\T \to \V$ is a $\T$-algebra, the object of $\V$-natural transformations $[\T,\V](A,\C(C,E-)) = \Alg{\T}(A,\llbracket C,E\rrbracket)$ exists in $\V$, as the system of arities $\J \hookrightarrow \V$ is amenable (\ref{para:setting2}, \ref{para:talgs}). Therefore a weighted limit $\{A,E\}$ is equivalently an object of $\C$ with isomorphisms $\C(C,\{A,E\}) \cong \Alg{\T}(A,\llbracket C,E\rrbracket)$ that are $\V$-natural in $C \in \C$. (2). The $\V'$-naturality in the additional variables $A$ and $E$ is automatic, by \cite[\S 1.10]{Ke:Ba}.
\end{proof}

\begin{para}\label{para:limits-of-algs-weighted-by-algs}
We say that $\C$ \bit{admits limits of $\T$-algebras weighted by $\T$-algebras} if the weighted limit $\{A,E\}$ exists for every pair of $\T$-algebras $A:\T \to \V$ and $E:\T \to \C$. For example, if the system of arities $\J$ is small and $\C$ is complete, then $\C$ admits limits of $\T$-algebras weighted by $\T$-algebras. As another example, under just the blanket assumptions of \ref{para:setting2}, we show in Section \ref{sec:dualn_adj} that if $\T$ and $\U$ are $\J$-theories, then $\Alg{\U}$ admits limits of $\T$-algebras weighted by $\T$-algebras.
\end{para}

\begin{thm}\label{thm:rep-equiv-ra}
Let $\C$ be a $\V$-category with $\J$-powers, and suppose $\C$ admits limits of $\T$-algebras weighted by $\T$-algebras.
\begin{enumerate}
\item A $\V$-functor $S:\C^\op \to \Alg{\T}$ is representable if and only if $S$ has a left adjoint.
\item If $S$ has a left adjoint $L:\Alg{\T} \to \C^\op$, then the composite
$$\T \xrightarrow{Y^\op} \Alg{\T}^\op \xrightarrow{L^\op} \C$$
is a $\T$-algebra with $S \cong \llbracket-,L^\op Y^\op\rrbracket$, where $Y$ is as defined in \ref{para:talgs}.
\end{enumerate}
\end{thm}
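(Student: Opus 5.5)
The "only if" direction of (1) is immediate from Proposition \ref{thm:rep-has-la}(1). If $S \cong \llbracket-,E\rrbracket$, then since $\C$ admits limits of $\T$-algebras weighted by $\T$-algebras, $\llbracket-,E\rrbracket$ has the left adjoint $\{-,E\}$. Hence so does $S$, because adjoints transport along $\V$-natural isomorphisms. The converse is contained in (2), so the substance is proving (2).

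For (2), let $L \dashv S$ with $L:\Alg{\T} \to \C^\op$, and set $E = L^\op Y^\op:\T \to \C$. Since $\C$ is assumed to have $\J$-powers, to see that $E$ is a $\T$-algebra I need that $E$ preserves $\J$-powers. I would argue this in two steps.
\begin{itemize}
\item By \ref{para:talgs}, $Y^\op:\T \to \Alg{\T}^\op$ sends $\J$-powers in $\T$ to $\J$-powers in $\Alg{\T}^\op$, i.e.\ to copowers $J \cdot YK$ in $\Alg{\T}$.
\item $L$ is a left adjoint, so it preserves all weighted colimits that exist, in particular copowers. Hence $L^\op:\Alg{\T}^\op \to \C$ preserves powers.
\end{itemize}
The only point needing care is that these copowers genuinely exist in $\Alg{\T}$, which is exactly what \ref{para:talgs} asserts via the representability of $\Alg{\T}(Y^\op-,A)\cong A$. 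Thus $E$ preserves $\J$-powers and is a $\T$-algebra.

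It remains to construct the isomorphism $S \cong \llbracket-,E\rrbracket$, which I would do objectwise. For $C \in \C$ and $J \in \T$, the Yoneda isomorphism $\Alg{\T}(YJ,A)\cong AJ$ (valid by the Yoneda lemma since $YJ=\T(J,-)$), followed by the adjunction and the definition of $E$, gives
$$(SC)J \cong \Alg{\T}(YJ, SC) \cong \C^\op(LYJ, C) = \C(C, L^\op Y^\op J) = \C(C,EJ) = \llbracket C,E\rrbracket J.$$
Each step is $\V$-natural in $J$ and $C$ jointly: the Yoneda step is natural in $J$ and in the algebra $SC$, and the adjunction isomorphism is $\V$-natural in both of its variables. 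Since $\Alg{\T}$ is a full sub-$\V$-category of $[\T,\V]$, an isomorphism of $\V$-functors $\T \to \V$ natural in $J$ gives an isomorphism in $\Alg{\T}$, and naturality in $C$ then yields a $\V$-natural isomorphism $S \cong \llbracket-,E\rrbracket$ of $\V$-functors $\C^\op \to \Alg{\T}$. This proves (2), and with it the "if" direction of (1).

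The main obstacle is bookkeeping: checking that the composite isomorphism is $\V$-natural in $C$ as a map into $\Alg{\T}$ rather than merely componentwise. I would reduce this to the $\V$-naturality of the composite in the two variables $(C,J)$ of the functor $\C^\op\otimes\T\to\V$, using the transposition between $\V$-functors $\C^\op \to [\T,\V]$ and $\V$-functors $\C^\op \otimes \T \to \V$, and working in $\V'$ where necessary.
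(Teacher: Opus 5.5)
Your proposal is correct and follows essentially the same route as the paper: the forward implication of (1) comes from Proposition~\ref{thm:rep-has-la}, and for (2) you note that $Y^\op$ preserves $\J$-powers while $L^\op$ preserves all limits, then obtain $\llbracket C,L^\op Y^\op\rrbracket \cong \C^\op(LY-,C) \cong \Alg{\T}(Y-,SC) \cong SC$ by adjointness and the Yoneda lemma, $\V$-naturally in $C$. Your extra remarks, on transporting the left adjoint along $S \cong \llbracket-,E\rrbracket$ and on checking naturality via transposition, simply make explicit what the paper leaves implicit.
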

\begin{proof}
The forward implication in (1) is immediate from \ref{thm:rep-has-la}, so it suffices to prove (2). Suppose $S$ has a left adjoint $L$. Then $L^\op$ preserves all limits, while $Y^\op$ preserves $\J$-powers by \ref{para:talgs}, so $L^\op Y^\op$ is a $\T$-algebra in $\C$. By adjointness and the Yoneda lemma,
$$\llbracket C,L^\op Y^\op\rrbracket = \C(C,L^\op Y^\op-) \cong \C^\op(LY-,C) \cong \Alg{\T}(Y-,SC) \cong SC,$$
$\V$-naturally in $C \in \C$.
\end{proof}

Next we develop variants of the Yoneda lemma and Yoneda embedding involving algebra-valued $\V$-functors.

\begin{defn}
Let $S:\C^\op \to \Alg{\T}$ be a $\V$-functor, and let $E:\T \to \C$ be a $\T$-algebra. We write
$$S[E] \;:\;\T^\op \otimes \T \longrightarrow \V$$
to denote the $\V$-functor given by
$$S[E](J,K) = (SEJ)K \;\;\;\;(J,K \in \T).$$
Thus $S[E]$ is the transpose of the composite $\T^\op \xrightarrow{E^\op} \C^\op \xrightarrow{S} \Alg{\T} \hookrightarrow [\T,\V]$. A \bit{section of $S$ at $E$} is a family $\sigma$ of morphisms
$$\sigma_J:I \to S[E](J,J)\;\;\;\;(J \in \J)$$
in $\V$ that are (extraordinarily) $\V$-natural in $J \in \J$. The \bit{object of sections of $S$ at $E$} is the end $\int S[E] := \int_{J \in \T} S[E](J,J)$ in $\V'$.
\end{defn}

\begin{lem}[\textbf{Yoneda principle for algebra-valued $\V$-functors}]\label{thm:yon-alg-val}\emptybox\newline
There are isomorphisms
$$[\C^\op,\Alg{\T}](\llbracket-,E\rrbracket,S) \cong \int S[E]$$
$\V'$-natural in $E \in \Alg{\T}(\C)$ and $S \in [\C^\op,\Alg{\T}]$. In particular, given a $\T$-algebra $E:\T \to \C$ and a $\V$-functor $S:\C^\op \to \Alg{\T}$, there is a bijective correspondence between $\V$-natural transformations $\phi:\llbracket-,E\rrbracket \Rightarrow S$ and sections $\sigma$ of $S$ at $E$.
\end{lem}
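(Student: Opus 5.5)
The plan is to reduce the statement to the ordinary enriched Yoneda lemma by writing the $\V'$-object of $\V$-natural transformations as a nested end and then using the fact that $\llbracket-,E\rrbracket$ is a pointwise-representable object. Since $\Alg{\T}$ is a full sub-$\V$-category of $[\T,\V]$, we have
$$[\C^\op,\Alg{\T}](\llbracket-,E\rrbracket,S) \cong \int_{C\in\C}\Alg{\T}(\llbracket C,E\rrbracket,SC) \cong \int_{C \in \C}\int_{K\in\T}\V(\C(C,EK),(SC)K),$$
with all ends formed in $\V'$, which is complete.

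The first step is to interchange the two ends by Fubini in $\V'$, giving
$$\int_{K\in\T}\int_{C\in\C}\V\bigl(\C(C,EK),(SC)K\bigr).$$
The second step applies, for each fixed $K$, the enriched Yoneda lemma to the $\V$-functor $\C^\op \to \V$ given by $C\mapsto (SC)K$, that is, the composite of $S$ with evaluation at $K$ and the inclusion $\Alg{\T}\hookrightarrow[\T,\V]$. This functor is covariant on $\C^\op$ and the weight is the representable $\C(-,EK)=\C^\op(EK,-)$. The inner end is therefore isomorphic to $(SEK)K = S[E](K,K)$, $\V'$-naturally in $K$ in the extraordinary sense. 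Substituting, the whole object becomes $\int_{K\in\T}S[E](K,K) = \int S[E]$.

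For the $\V'$-naturality in $E$ and $S$, I would note that every isomorphism used (the full-subcategory identification, Fubini, and Yoneda) is $\V'$-natural in all parameters. Alternatively, one can invoke \cite[\S 1.10]{Ke:Ba}, as in the proof of \ref{thm:rep-has-la}, so that naturality in the extra variables is automatic once the isomorphisms are natural in the variables being integrated.

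The particular statement follows by applying $\V'_0(I,-)$, which preserves ends. Elements $I\to\int S[E]$ then correspond to extraordinary $\V$-natural families $\sigma_J: I\to S[E](J,J)$, i.e.\ sections. Elements of the left side correspond to $\V$-natural transformations $\llbracket-,E\rrbracket\Rightarrow S$.

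The main obstacle is bookkeeping rather than conceptual. One must check that $S[E]$ as defined (the transpose of $S E^\op$) really has diagonal $K\mapsto(SEK)K$ with the extraordinary variance matching the Yoneda isomorphism. One must also make sure the ends are legitimately taken in $\V'$, since $\T$ and $\C$ may be large relative to $\V$, and that Fubini applies there. I would handle this by explicitly enlarging to $\V'$ via \ref{para:univ_enl}.
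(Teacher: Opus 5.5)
Your proposal is correct and is essentially the paper's proof: the paper likewise writes the hom-object as the nested end $\int_{C \in \C}\int_{J \in \T}\V(\C(C,EJ),(SC)J)$, interchanges the ends to get $\int_{J \in \T}[\C^\op,\V](\C(-,EJ),(S-)J)$, and applies the enriched Yoneda lemma to obtain $\int_{J \in \T}(SEJ)J$. Your additional remarks on $\V'$-naturality in $E$ and $S$ and on passing to elements via $\V'_0(I,-)$ to get the bijection with sections make explicit what the paper leaves implicit.
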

\begin{proof}
By the $\V$-enriched Yoneda lemma,
\begin{eqnarray*}
[\C^\op,\Alg{\T}](\llbracket-,E\rrbracket,S) & = & \int_{C \in \C}\int_{J \in \T} \V(\C(C,EJ),(SC)J)\\
                               & \cong & \int_{J \in \T} [\C^\op,\V](\C(-,EJ),(S-)J)\\
                               & \cong & \int_{J \in \T} (SEJ)J\;.
\end{eqnarray*}
\end{proof}

\begin{rem}
Under the bijection in \ref{thm:yon-alg-val}, each section $\sigma$ corresponds to the transformation $\phi = \tilde{\sigma}$ whose constituent morphisms $\tilde{\sigma}_{CJ}:\C(C,EJ) \to (SC)J$ $(C \in \C,J \in \T)$ correspond by the enriched Yoneda lemma to $\sigma_J:I \to S[E](J,J) = (SEJ)J$ $(J \in \T)$.
\end{rem}

\begin{exa}\label{exa:yon-lemma-for-yon-emb}
Let $E_1,E_2:\T \to \C$ be $\T$-algebras. Then the object of sections of $\llbracket-,E_2\rrbracket:\C^\op \to \Alg{\T}$ at $E_1$ is
$$\int \llbracket-,E_2\rrbracket[E_1] = \int_{J \in \T} \C(E_1J,E_2 J) = [\T,\C](E_1,E_2) = \Alg{\T}(\C)(E_1,E_2).$$
In particular, a section of $\llbracket-,E_2\rrbracket$ at $E_1$ is given by a $\V$-natural transformation $\sigma:E_1 \Rightarrow E_2$, i.e.~a morphism $\sigma:E_1 \to E_2$ in $\Alg{\T}(\C)$. By \ref{thm:yon-alg-val} we have isomorphisms
\begin{equation}\label{eq:yon-emb-isos}\Alg{\T}(\C)(E_1,E_2) \xrightarrow{\sim} [\C^\op,\Alg{\T}](\llbracket-,E_1\rrbracket,\llbracket-,E_2\rrbracket)\;.\end{equation}
\end{exa}

\begin{prop}[\textbf{Yoneda embedding for algebra-valued $\V$-functors}]\label{thm:yon-emb-alg-val}
There is a fully faithful $\V'$-functor $\mathsf{Y}:\Alg{\T}(\C) \to [\C^\op,\Alg{\T}]$ given by $\mathsf{Y} E = \llbracket-,E\rrbracket$.
\end{prop}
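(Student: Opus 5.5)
We will obtain $\mathsf{Y}$ as a transpose of the $\V'$-functor $\llbracket-,?\rrbracket:\C^\op \otimes \Alg{\T}(\C) \to \Alg{\T}$ of \eqref{eq:alg-val-hom}, and then deduce full faithfulness from Example \ref{exa:yon-lemma-for-yon-emb}. Working in $\V'$ (\ref{para:univ_enl}), where the functor $\V'$-category $[\C^\op,\Alg{\T}]$ exists, we transpose $\llbracket-,?\rrbracket$ across the closed structure of $\V'\textnormal{-CAT}$. This yields a $\V'$-functor $\mathsf{Y}:\Alg{\T}(\C) \to [\C^\op,\Alg{\T}]$ with $\mathsf{Y}E = \llbracket-,E\rrbracket$. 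On homs, its structure morphism
$$\mathsf{Y}_{E_1E_2}:\Alg{\T}(\C)(E_1,E_2) \to [\C^\op,\Alg{\T}](\llbracket-,E_1\rrbracket,\llbracket-,E_2\rrbracket)$$
is induced by the components $\C(E_1J,E_2J) \to \V(\C(C,E_1J),\C(C,E_2J))$ of the hom-functors $\C(C,-)$, that is, by postcomposition.

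Full faithfulness then amounts to showing that $\mathsf{Y}_{E_1E_2}$ coincides with the isomorphism \eqref{eq:yon-emb-isos} obtained from Lemma \ref{thm:yon-alg-val} with $S = \llbracket-,E_2\rrbracket$ and $E = E_1$. Unwinding the proof of Lemma \ref{thm:yon-alg-val}, the Yoneda isomorphism
$$[\C^\op,\V](\C(-,E_1J),\C(-,E_2J)) \cong \C(E_1J,E_2J)$$
has as its inverse precisely the map induced by the hom-$\V$-functor $\C(-,?)$, i.e.~postcomposition. The inverse of the composite isomorphism in \ref{thm:yon-alg-val} is therefore the end over $J \in \T$ of these postcomposition maps. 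This end is exactly $\mathsf{Y}_{E_1E_2}$, once we identify
$$[\C^\op,\Alg{\T}](\llbracket-,E_1\rrbracket,\llbracket-,E_2\rrbracket) = \int_C\int_J \V(\C(C,E_1J),\C(C,E_2J)),$$
using the fact that $\Alg{\T}$ is a full sub-$\V$-category of $[\T,\V]$ and applying Fubini for ends.

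The main (though mild) obstacle is this compatibility check: we must verify that the $\V'$-functor structure obtained by transposition agrees with the map produced by the Yoneda principle, rather than merely knowing that both hom-objects are isomorphic. The natural thing to do first is to compare the two maps after composing with the end projections to $\V(\C(C,E_1J),\C(C,E_2J))$. Both sides then reduce to the component $\C(E_1J,E_2J) \to \V(\C(C,E_1J),\C(C,E_2J))$ of the $\V$-functor $\C(C,-)$, which is the standard description of the enriched Yoneda isomorphism in \cite[\S 2.4]{Ke:Ba}. Alternatively, we can invoke the $\V'$-naturality in $E$ asserted in Lemma \ref{thm:yon-alg-val} to see that \eqref{eq:yon-emb-isos} is the hom-action of a $\V'$-functor sending $E \mapsto \llbracket-,E\rrbracket$, which must agree with $\mathsf{Y}$ by evaluating at identities.
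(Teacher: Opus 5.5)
Your proposal is correct and follows the paper's proof: $\mathsf{Y}$ is defined as the transpose of $\llbracket-,?\rrbracket$, and its structure morphisms $\mathsf{Y}_{E_1E_2}$ are identified with the isomorphisms \eqref{eq:yon-emb-isos} that Lemma \ref{thm:yon-alg-val} provides. Your componentwise check makes explicit a compatibility that the paper asserts without detail: after the end projections, both maps reduce to the hom-action of $\C(C,-)$, i.e.\ postcomposition, which is the inverse of the enriched Yoneda isomorphism.
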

\begin{proof}
By definition, $\mathsf{Y}$ is the transpose of the $\V'$-functor $\llbracket-,?\rrbracket:\C^\op \otimes \Alg{\T}(\C) \to \Alg{\T}$ of \eqref{eq:alg-val-hom}. The isomorphisms in \eqref{eq:yon-emb-isos} are the structural morphisms $\mathsf{Y}_{E_1E_2}$.
\end{proof}

\begin{exa}\label{exa:ind-transf-between-homs}
We require the following further application of the Yoneda principle (\ref{thm:yon-alg-val}) in the sequel. Suppose both $\B$ and $\C$ are $\V$-categories with $\J$-powers, and let $P:\C \to \B$ be a $\V$-functor. Given a $\T$-algebra $E:\T \to \C$, the associated $\T$-algebra $PE:\T \to \B$ determines a $\V$-functor $\llbracket-,PE\rrbracket:\B^\op \to \Alg{\T}$.
There is a family of morphisms
$$P_{CE}\;:\;\llbracket C,E\rrbracket \to \llbracket PC,PE\rrbracket$$
$\Alg{\T}$ that are $\V'$-natural in $C \in \C$ and $E \in \Alg{\T}(\C)$, namely the $\V$-natural transformations
$$P_{CE} = P_{C,E-}\;:\;\C(C,E-) \Rightarrow \B(PC,PE-).$$
Given also a $\T$-algebra $E':\T \to \B$, a section $\sigma$ of $\llbracket P-,E'\rrbracket:\C^\op \to \Alg{\T}$ at $E$ is equivalently a $\V$-natural transformation $\sigma:PE \Rightarrow E'$ and corresponds via the Yoneda principle to the composite $\llbracket -,E\rrbracket \overset{P_{-E}}{\Longrightarrow} \llbracket P-,PE \rrbracket \overset{\llbracket P-,\sigma\rrbracket}{\Longrightarrow} \llbracket P-,E'\rrbracket$. In particular, taking $E' = PE$, the identity $1_{PE}:PE \Rightarrow PE$ corresponds to $P_{-E}:\llbracket-,E\rrbracket \Rightarrow \llbracket P-,PE\rrbracket$.
\end{exa}

In the following, we write $\textnormal{RAdj}(\C^\op,\Alg{\T}) \hookrightarrow [\C^\op,\Alg{\T}]$ to denote the full sub-$\V'$-category spanned by the right adjoint $\V$-functors.

\begin{thm}\label{thm:talgs-in-c-equiv-radj}
Let $\T$ be a $\J$-theory, let $\C$ be a $\V$-category with $\J$-powers, and suppose that $\C$ admits limits of $\T$-algebras weighted by $\T$-algebras (\ref{para:limits-of-algs-weighted-by-algs}). Then there is an equivalence of $\V'$-categories
$$\Alg{\T}(\C) \;\simeq\; \textnormal{RAdj}(\C^\op,\Alg{\T})$$
under which each $\T$-algebra $E:\T \to \C$ corresponds to the right adjoint $\V$-functor $\llbracket-,E\rrbracket:\C^\op \to \Alg{\T}$.
\end{thm}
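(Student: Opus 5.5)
The plan is to obtain the equivalence as a corestriction of the Yoneda embedding $\mathsf{Y}:\Alg{\T}(\C) \to [\C^\op,\Alg{\T}]$ of \ref{thm:yon-emb-alg-val}, which sends $E$ to $\llbracket-,E\rrbracket$. By \ref{thm:yon-emb-alg-val}, $\mathsf{Y}$ is already fully faithful, with structure isomorphisms given by \eqref{eq:yon-emb-isos}. So it remains to check two things: that $\mathsf{Y}$ lands in the full sub-$\V'$-category $\textnormal{RAdj}(\C^\op,\Alg{\T})$, and that it is essentially surjective onto it.

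The first point uses \ref{thm:rep-has-la}(1). For each $\T$-algebra $E:\T \to \C$, the hypothesis that $\C$ admits limits of $\T$-algebras weighted by $\T$-algebras (\ref{para:limits-of-algs-weighted-by-algs}) says that $\{A,E\}$ exists for every $\T$-algebra $A$. Hence $\llbracket-,E\rrbracket$ has the left adjoint $\{-,E\}$. It follows that $\mathsf{Y}$ factors through $\textnormal{RAdj}(\C^\op,\Alg{\T})$. Because this is a \emph{full} sub-$\V'$-category, the factorization $\mathsf{Y}':\Alg{\T}(\C) \to \textnormal{RAdj}(\C^\op,\Alg{\T})$ is still fully faithful.

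The second point uses \ref{thm:rep-equiv-ra}(2). If $S:\C^\op \to \Alg{\T}$ has a left adjoint $L$, then $E := L^\op Y^\op$ is a $\T$-algebra in $\C$ and $S \cong \llbracket-,E\rrbracket$. This isomorphism is an isomorphism in the underlying ordinary category of $[\C^\op,\Alg{\T}]$, and hence in the full subcategory $\textnormal{RAdj}(\C^\op,\Alg{\T})$. So $\mathsf{Y}'$ is essentially surjective.

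Finally, a fully faithful and essentially surjective $\V'$-functor is an equivalence of $\V'$-categories. Here $\V'$ is the enlarged base of \ref{para:univ_enl}, so the usual construction of a pseudo-inverse by choosing representing objects applies verbatim, using a choice of $E$ for each $S$. The only step needing care is that the isomorphism $S \cong \llbracket-,L^\op Y^\op\rrbracket$ of \ref{thm:rep-equiv-ra} is a genuine $\V$-natural isomorphism of $\V$-functors, not merely objectwise. This holds because its components are composites of $\V$-natural adjunction and Yoneda isomorphisms, all natural in $C$. I do not expect a real obstacle: the substantive content has already been packaged in \ref{thm:rep-has-la}, \ref{thm:rep-equiv-ra} and \ref{thm:yon-emb-alg-val}.
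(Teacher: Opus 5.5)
Your proposal is correct and follows essentially the paper's route: the paper's proof simply combines \ref{thm:rep-equiv-ra} (representable iff right adjoint, with $S \cong \llbracket-,L^\op Y^\op\rrbracket$) with the Yoneda principle \ref{thm:yon-alg-val}, i.e.\ full faithfulness of $\mathsf{Y}$. You have unpacked exactly these ingredients into the corestriction of $\mathsf{Y}$ to $\textnormal{RAdj}(\C^\op,\Alg{\T})$ plus essential surjectivity.
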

\begin{proof}
This follows immediately from \ref{thm:rep-equiv-ra} and \ref{thm:yon-alg-val}.
\end{proof}

\section{Dual adjunctions between categories of algebras}\label{sec:dualn_adj}

Let $\T$ and $\U$ be $\J$-theories. By \eqref{eq:alg-val-hom} we obtain a $\V$-functor
$$\llbracket-,?\rrbracket\;:\;\Alg{\T}^\op \otimes \Alg{\U}(\Alg{\T}) \longrightarrow \Alg{\U}$$
that sends each $\T$-algebra $A:\T \to \V$ and each $\U$-algebra $E:\U \to \Alg{\T}$ to the $\U$-algebra $\llbracket A,E\rrbracket = \Alg{\T}(A,E-):\U \to \V$. But by \eqref{eq:ba_transposition_isos} we have an isomorphism of $\V$-categories $(-)_{r\ell}:\Alg{(\T,\U)} \xrightarrow{\sim} \Alg{\U}(\Alg{\T})$ that sends each $(\T,\U)$-algebra $D:\T \otimes \U \to \V$ to its transpose $D_{r\ell}:\U \to \Alg{\T}$ (\ref{eq:transpose-of-bif-alg}).

\begin{defn}\label{defn:hom-t}
Given $\J$-theories $\T$ and $\U$, we write
$$\Hom_\T\;:\;\Alg{\T}^\op \otimes\:\Alg{(\T,\U)} \longrightarrow \Alg{\U}$$
to denote the composite $\V$-functor
$$\Alg{\T}^\op \otimes \Alg{(\T,\U)} \xrightarrow[\sim]{1 \otimes (-)_{r\ell}} \Alg{\T}^\op \otimes \Alg{\U}(\Alg{\T}) \xrightarrow{\llbracket-,?\rrbracket} \Alg{\U}\;.$$
Explicitly
$$\Hom_\T(A,D) = \llbracket A,D_{r\ell}\rrbracket = \Alg{\T}(A,D_{r\ell}-)\;\;\;\;(A \in \Alg{\T}, D \in \Alg{(\T,\U)}),$$
or equivalently
$$\bigl(\Hom_\T(A,D)\bigr)K = \Alg{\T}\bigl(A,D(-,K)\bigr)\;\;\;\;(A \in \Alg{\T}, D \in \Alg{(\T,\U)}, K \in \U).$$
\end{defn}

\begin{prop}\label{thm:hom-t-weighted-limit}
Let $D:\T \otimes \U \to \V$ be a $(\T,\U)$-algebra. For each $\T$-algebra $A:\T \to \V$, the $\U$-algebra $\Hom_\T(A,D) = \Alg{\T}(A,D_{r\ell}-):\U \to \V$ is a weighted limit $\{A,D_{\ell r}\}$ in $\Alg{\U}$ for the weight $A$ and the diagram $D_{\ell r}:\T \to \Alg{\U}$. Moreover,
$$\Hom_\T(A,D) = \{A,D_{\ell r}\}$$
$\V$-naturally in $A \in \Alg{\T}$, $D \in \Alg{(\T,\U)}$.
\end{prop}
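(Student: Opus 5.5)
We must show that $\Hom_\T(A,D)$ is a weighted limit $\{A,D_{\ell r}\}$ in $\Alg{\U}$, i.e.\ that there are isomorphisms
$$\Alg{\U}\bigl(B,\Hom_\T(A,D)\bigr) \cong [\T,\V]\bigl(A,\Alg{\U}(B,D_{\ell r}-)\bigr)$$
that are $\V$-natural in $B \in \Alg{\U}$, and that the counit is the canonical one. The plan is to compute both sides as ends and compare them with Fubini, exploiting the symmetry of the transposition isomorphisms \eqref{eq:ba_transposition_isos}.

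\textbf{Step 1: expand the left side.} For $K \in \U$ we have $(\Hom_\T(A,D))K = \Alg{\T}(A,D(-,K)) = \int_{J\in\T}\V(AJ,D(J,K))$. Both ends exist in $\V$ by amenability (\ref{para:setting2}), since $\Alg{\T}$ and $\Alg{\U}$ are $\V$-categories. Hence
$$\Alg{\U}\bigl(B,\Hom_\T(A,D)\bigr)=\int_{K}\V\Bigl(BK,\int_J \V\bigl(AJ,D(J,K)\bigr)\Bigr)\cong\int_K\int_J\V\bigl(BK\otimes AJ,D(J,K)\bigr).$$
This uses that $\V(BK,-)$ preserves ends together with closedness.

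\textbf{Step 2: expand the right side.} $\Alg{\U}(B,D_{\ell r}J) = \int_K \V(BK,D(J,K))$, so
$$[\T,\V]\bigl(A,\Alg{\U}(B,D_{\ell r}-)\bigr)\cong\int_J\int_K\V\bigl(AJ\otimes BK,D(J,K)\bigr).$$
A priori this end lives in $\V'$. Fubini for ends in $\V'$, together with the symmetry $AJ\otimes BK\cong BK\otimes AJ$, identifies it with the left side. In particular it exists in $\V$, so the weighted limit exists in $\Alg{\U}$. All isomorphisms involved are $\V$-natural in $B$, and $\Hom_\T(A,D)$ is a $\U$-algebra by \ref{defn:hom-t}. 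Hence $\Hom_\T(A,D)$ is a weighted limit $\{A,D_{\ell r}\}$.

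An alternative route is to observe that $\Alg{\U}\hookrightarrow[\U,\V]$ is closed under pointwise limits: $\J$-powers commute with limits in $\V$. One then notes that in $[\U,\V]$ the limit $\{A,D_{\ell r}\}$ is computed pointwise as $K\mapsto\{A,D(-,K)\}=\Alg{\T}(A,D(-,K))$. The pointwise limit exists because $\Alg{\T}$ is a $\V$-category. I would mention this as the conceptual reason, but carry out the end computation, since that computation also handles the existence subtlety in $\V$ rather than $\V'$.

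\textbf{Step 3: naturality.} The displayed equality is $\V$-natural in $A$ and $D$. I expect this to follow by the same argument as in \ref{thm:rep-has-la}(2): once the limit isomorphisms are natural in $B$, naturality in the parameters $A$ and $D$ is automatic by \cite[\S 1.10]{Ke:Ba}. Here one takes $\{-,-\}$ to be the functor induced by the chosen limits $\Hom_\T(A,D)$. The main obstacle is bookkeeping of size: the ends must be taken in $\V'$ and then recognized as lying in $\V$. Amenability resolves this, as in the Yoneda lemma computation of \ref{thm:yon-alg-val}.
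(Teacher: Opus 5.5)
Your proof is correct, but your main argument takes a different route from the paper. The paper's proof is essentially the ``alternative route'' you set aside. It notes that for each $K \in \ob\U$ the object $\Alg{\T}(A,D(-,K)) = [\T,\V](A,(D_{\ell r}-)K)$ \emph{is} the weighted limit $\{A,(D_{\ell r}-)K\}$ in $\V$, which exists in $\V$ by amenability, naturally in $A$, $D$, $K$. So $\Hom_\T(A,D)$ is a pointwise limit $\{A,D_{\ell r}\}$ in $[\U,\V]$. Since it is already a $\U$-algebra by construction, it is therefore a limit in the full subcategory $\Alg{\U}$. No closure of $\Alg{\U}$ under pointwise limits and no Fubini argument is needed.

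Your end calculation amounts to unpacking by hand, for this particular diagram, the fact that pointwise limits in functor categories are limits \cite[\S 3.3]{Ke:Ba}. It makes the $\V$ versus $\V'$ bookkeeping explicit. That bookkeeping is not really an obstacle on the paper's route either: the defining isomorphism $\Alg{\U}(B,\Hom_\T(A,D)) \cong [\T,\V](A,\Alg{\U}(B,D_{\ell r}-))$ itself shows that the right-hand side lies in $\V$.

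One small point on your Step 3. \cite[\S 1.10]{Ke:Ba} only gives a unique $\V$-functor structure on $(A,D) \mapsto \{A,D_{\ell r}\}$ that makes the isomorphisms natural. To conclude that this structure is the given $\V$-functor $\Hom_\T$ of Definition \ref{defn:hom-t}, you must also note that your chain of isomorphisms is $\V$-natural in $A$ and $D$ for $\Hom_\T$'s structure. This does hold: each step is a canonical end, closed-structure or Fubini isomorphism, and $\Hom_\T$ is functorial via the same end formula. The paper covers this with its remark that $\Alg{\T}(A,D_{r\ell}K) = \{A,(D_{\ell r}-)K\}$ naturally in all variables.
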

\begin{proof}
For each $K \in \ob\U$, the object of $\V$-natural transformations $\Alg{\T}(A,D_{r\ell}K) = [\T,\V](A,D_{r\ell} K)$
is a weighted limit $\{A,D_{r\ell}K\}$ of the diagram $D_{r\ell}K = D(-,K) = (D_{\ell r} -)K:\T \to \V$. Moreover, $\Alg{\T}(A,D_{r\ell}K) = \{A,(D_{\ell r}-)K\}$, $\V$-naturally in $A \in \Alg{\T}, D \in \Alg{(\T,\U)}, K \in \U$. Hence the $\V$-functor $\Alg{\T}(A,D_{r\ell}-):\U \to \V$ is a pointwise limit $\{A,D_{\ell r}\}$ and, in particular, a limit in $\Alg{\U}$, and the result follows.
\end{proof}

\begin{cor}
The $\V$-category $\Alg{\U}$ admits limits of $\T$-algebras weighted by $\T$-algebras (\ref{para:limits-of-algs-weighted-by-algs}).
\end{cor}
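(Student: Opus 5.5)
The plan is to reduce the corollary to Proposition \ref{thm:hom-t-weighted-limit}, using the isomorphisms \eqref{eq:ba_transposition_isos} to pass from $\T$-algebras in $\Alg{\U}$ to $(\T,\U)$-algebras. First, $\Alg{\U}$ has $\J$-powers, formed pointwise (\ref{para:talgs}). So the notion of a $\T$-algebra $E:\T \to \Alg{\U}$ makes sense, and we must show that for every $\T$-algebra $A:\T \to \V$ and every $\T$-algebra $E:\T \to \Alg{\U}$, the weighted limit $\{A,E\}$ exists in $\Alg{\U}$.

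Given such an $E$, the isomorphism $\Alg{\T}(\Alg{\U}) \cong \Alg{(\T,\U)}$ of \eqref{eq:ba_transposition_isos} produces a $(\T,\U)$-algebra $D$ with $D_{\ell r} = E$. Explicitly, $D(J,K) = (EJ)K$. This $D$ preserves $\J$-powers in the first variable because $E$ preserves $\J$-powers and these are pointwise in $\Alg{\U}$. It preserves them in the second variable because each $EJ$ is a $\U$-algebra.

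Proposition \ref{thm:hom-t-weighted-limit} then states that $\Hom_\T(A,D)$ is a weighted limit $\{A,D_{\ell r}\} = \{A,E\}$ in $\Alg{\U}$, which is the required limit. The only point needing care is that $E$ really is of the form $D_{\ell r}$ for some $(\T,\U)$-algebra $D$. That is exactly the content of the first isomorphism in \eqref{eq:ba_transposition_isos}, so I expect no genuine obstacle.

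As a remark, the proof of Proposition \ref{thm:hom-t-weighted-limit} shows the limit is computed pointwise in $\U$. It exists because each end $\Alg{\T}(A,D(-,K))$ exists in $\V$ by amenability (\ref{para:setting2}).
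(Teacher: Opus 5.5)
Your proposal is correct and follows essentially the same route as the paper: the paper's proof observes that every $\T$-algebra $E:\T \to \Alg{\U}$ is of the form $D_{\ell r}$ for some $(\T,\U)$-algebra $D$ (via \eqref{eq:ba_transposition_isos}) and then applies Proposition \ref{thm:hom-t-weighted-limit}. Your extra checks that $D$ preserves $\J$-powers in each variable are just an unpacking of that isomorphism.
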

\begin{proof}
Every $\T$-algebra $E:\T \to \Alg{\U}$ is of the form $D_{\ell r}$ for some $(\T,\U)$-algebra $D$, so this follows from \ref{thm:hom-t-weighted-limit}.
\end{proof}

\begin{para}\label{para:dualn-func}
Let $D$ be a $(\T,\U)$-algebra. By \ref{defn:hom-t} and \ref{thm:hom-t-weighted-limit}, we have a $\V$-functor
$$\Hom_\T(-,D) = \llbracket-,D_{r\ell}\rrbracket = \{-,D_{\ell r}\}\;:\;\Alg{\T}^\op \longrightarrow \Alg{\U}$$
whose composite with $G^\U:\Alg{\U} \to \V$ is precisely $\Alg{\T}(-,D_\ell)$, where $D_\ell = D(-,I):\T \to \V$ is the left face of $D$ (\ref{sec:balg}). For each $\T$-algebra $A$, we call the $\U$-algebra $\Hom_\T(A,D)$ the \bit{dual} of $A$ with respect to $D$. Applying \ref{defn:hom-t} and \ref{thm:hom-t-weighted-limit} to the $(\U,\T)$-algebra $D^\circ$ of \eqref{eq:transpose-of-bif-alg}, we obtain a $\V$-functor
$$\Hom_\U(-,D^\circ) = \llbracket-,D_{\ell r}\rrbracket = \{-,D_{r\ell}\}\;:\;\Alg{\U}^\op \longrightarrow \Alg{\T}\;,$$
whose opposite we write as
$$\Hom_\U(-,D) := \Hom_\U(-,D^\circ)^\op = \{-,D_{r\ell}\}\;:\;\Alg{\U} \longrightarrow \Alg{\T}^\op\;.$$
\end{para}

\begin{thm}\label{thm:dualn_adjn}
Given a $(\T,\U)$-algebra $D$ for $\J$-theories $\T$ and $\U$, there is a \mbox{$\V$-adjunction}
\begin{equation}\label{eq:dualn_adjn}\RAdjn{\Alg{\T}^\op}{\Hom_\T(-,D)}{\Hom_\U(-,D)}{}{}{\Alg{\U}.}\end{equation}
Moreover,
$$\Alg{\T}(A,\Hom_\U(B,D)) \cong \Alg{\U}(B,\Hom_\T(A,D))$$
$\V$-naturally in $A \in \Alg{\T}$, $B \in \Alg{\U}$, and $D \in \Alg{(\T,\U)}$.
\end{thm}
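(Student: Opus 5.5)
The plan is to obtain the adjunction directly from Proposition \ref{thm:rep-has-la}, applied with $\C = \Alg{\U}$ and the $\T$-algebra $E = D_{\ell r}:\T \to \Alg{\U}$. By \ref{para:dualn-func}, the $\V$-functor $\Hom_\U(-,D^\circ)$ equals $\llbracket-,D_{\ell r}\rrbracket:\Alg{\U}^\op \to \Alg{\T}$. Proposition \ref{thm:hom-t-weighted-limit} shows that the weighted limits $\{A,D_{\ell r}\}$ exist in $\Alg{\U}$ for every $\T$-algebra $A$, and that they are given by $\Hom_\T(A,D)$. Proposition \ref{thm:rep-has-la}(1), in its form with $\C = \Alg{\U}$ and $E = D_{\ell r}$, then says that $\{-,D_{\ell r}\} = \Hom_\T(-,D):\Alg{\T} \to \Alg{\U}^\op$ is left adjoint to $\llbracket-,D_{\ell r}\rrbracket$.

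Taking opposites turns $\{-,D_{\ell r}\} \dashv \llbracket-,D_{\ell r}\rrbracket$ into
$$\llbracket-,D_{\ell r}\rrbracket^\op = \Hom_\U(-,D) \;\dashv\; \Hom_\T(-,D):\Alg{\T}^\op \to \Alg{\U}.$$
This is exactly \eqref{eq:dualn_adjn}. The corresponding hom-isomorphism is $\Alg{\U}(B,\Hom_\T(A,D)) \cong \Alg{\T}(A,\llbracket B,D_{\ell r}\rrbracket) = \Alg{\T}(A,\Hom_\U(B,D))$.

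The remaining point is naturality in $D$. Proposition \ref{thm:rep-has-la}(2) gives isomorphisms $\Alg{\U}(B,\{A,E\}) \cong \Alg{\T}(A,\llbracket B,E\rrbracket)$ that are $\V'$-natural in $B$, $A$ and $E \in \Alg{\T}(\Alg{\U})$. I will precompose in the $E$ variable with the isomorphism $(-)_{\ell r}:\Alg{(\T,\U)} \cong \Alg{\T}(\Alg{\U})$ of \eqref{eq:ba_transposition_isos}. I will also use the identifications $\{A,D_{\ell r}\} = \Hom_\T(A,D)$, which are $\V$-natural in $A$ and $D$ by \ref{thm:hom-t-weighted-limit}, and $\llbracket B,D_{\ell r}\rrbracket = \Hom_\U(B,D^\circ)$, which is natural in $D$ because $D \mapsto D^\circ$ and $D \mapsto D_{\ell r}$ are compatible with the isomorphisms \eqref{eq:ba_transposition_isos}. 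Finally, since all objects involved lie in $\V$ (the categories $\Alg{\T}$, $\Alg{\U}$ and $\Alg{(\T,\U)}$ are $\V$-categories), $\V'$-naturality amounts to $\V$-naturality.

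The main obstacle is bookkeeping rather than substance: one must check that these identifications are strict equalities, or natural isomorphisms, compatible with the transposition isomorphisms. If that becomes delicate, I will instead verify the isomorphism directly. Both sides are the object of $\V$-natural families $B K \to \Alg{\T}(A,D(-,K))$, equivalently $A J \otimes B K \to D(J,K)$, obtained as an end over $\T \otimes \U$ by Fubini. This description is manifestly symmetric in $(A,\T)$ and $(B,\U)$ and manifestly natural in $D$.
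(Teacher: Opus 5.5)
Your proposal is correct and is essentially the paper's proof. The paper simply says the result follows immediately from Proposition~\ref{thm:rep-has-la} and Proposition~\ref{thm:hom-t-weighted-limit}, and your argument is that deduction spelled out: you apply \ref{thm:rep-has-la} to $D_{\ell r}:\T \to \Alg{\U}$ and then pass to opposites, which mirrors applying it directly to $D_{r\ell}:\U \to \Alg{\T}$ with \ref{thm:hom-t-weighted-limit} used for $D^\circ$, and your handling of naturality in $D$ through the transposition isomorphisms is exactly the bookkeeping the paper leaves implicit.
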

\begin{proof}
This follows immediately from \ref{thm:rep-has-la} and \ref{thm:hom-t-weighted-limit}. 
\end{proof}

\begin{defn}\label{defn:dualn_adjn}
Given a bifold algebra $(\T,\U,D)$, we call the associated $\V$-adjunction \eqref{eq:dualn_adjn} the  \bit{dualization adjunction} induced by $(\T,\U,D)$.
\end{defn}

Right adjoints $\Alg{\T}^\op \to \Alg{\U}$ are equivalently given by $(\T,\U)$-algebras:

\begin{thm}\label{thm:tu-alg-equiv-radj}
There is an equivalence of $\V$-categories
$$\Alg{(\T,\U)} \;\simeq\; \textnormal{RAdj}(\Alg{\T}^\op,\Alg{\U})$$
that assigns to each $(\T,\U)$-algebra $D$ the right adjoint $\V$-functor $\Hom_\T(-,D)$. In particular, $\textnormal{RAdj}(\Alg{\T}^\op,\Alg{\U})$ is a $\V$-category.
\end{thm}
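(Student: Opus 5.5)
The plan is to deduce this from Theorem \ref{thm:talgs-in-c-equiv-radj}, applied with $\C = \Alg{\U}$, and then compose with the isomorphisms \eqref{eq:ba_transposition_isos}. First, $\Alg{\U}$ has $\J$-powers, formed pointwise (\ref{para:talgs}). By the Corollary following \ref{thm:hom-t-weighted-limit}, it admits limits of $\T$-algebras weighted by $\T$-algebras. So the hypotheses of \ref{thm:talgs-in-c-equiv-radj} hold with $\C = \Alg{\U}$. However, that theorem concerns right adjoints $\Alg{\U}^\op \to \Alg{\T}$. To obtain right adjoints $\Alg{\T}^\op \to \Alg{\U}$ we instead apply it with the roles of $\T$ and $\U$ interchanged. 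Thus we take $\C = \Alg{\T}$, which has $\J$-powers and admits limits of $\U$-algebras weighted by $\U$-algebras by the same Corollary with $\T,\U$ swapped. This yields an equivalence of $\V'$-categories
$$\Alg{\U}(\Alg{\T}) \simeq \textnormal{RAdj}(\Alg{\T}^\op,\Alg{\U}),\qquad E \mapsto \llbracket-,E\rrbracket.$$

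Next, precompose this equivalence with the isomorphism $(-)_{r\ell}:\Alg{(\T,\U)} \xrightarrow{\sim} \Alg{\U}(\Alg{\T})$ of \eqref{eq:ba_transposition_isos}. By Definition \ref{defn:hom-t}, the composite sends $D$ to $\llbracket-,D_{r\ell}\rrbracket = \Hom_\T(-,D)$, as required. I would also check that the composite agrees on homs with the $\V'$-functor obtained by transposing $\Hom_\T:\Alg{\T}^\op \otimes \Alg{(\T,\U)} \to \Alg{\U}$. This holds because Proposition \ref{thm:yon-emb-alg-val} defines $\mathsf{Y}$ as exactly such a transpose of $\llbracket-,?\rrbracket$. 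Theorem \ref{thm:dualn_adjn} independently confirms that each $\Hom_\T(-,D)$ is a right adjoint, so the functor does land in $\textnormal{RAdj}$.

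Finally, for the claim that $\textnormal{RAdj}(\Alg{\T}^\op,\Alg{\U})$ is a $\V$-category (not merely a $\V'$-category), observe the following. Its hom-objects are isomorphic, via the fully faithful $\mathsf{Y}$, to the hom-objects of $\Alg{\U}(\Alg{\T}) \cong \Alg{(\T,\U)}$. The latter is a $\V$-category, as recalled in \ref{sec:balg} from \cite[2.2.8, 11.11]{Lu:BAlgCmt}. Hence every hom-object of $\textnormal{RAdj}$, computed in $\V'$, lies up to isomorphism in $\V$, so $\textnormal{RAdj}$ is a $\V$-category. Likewise, the equivalence is then a $\V$-equivalence.

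The only delicate point is this size/enrichment bookkeeping, namely that the ends in $[\Alg{\T}^\op,\Alg{\U}]$ exist in $\V$ for the right adjoints. I would handle it entirely via full faithfulness of $\mathsf{Y}$, transferring existence from $\Alg{(\T,\U)}$, rather than computing the ends directly.
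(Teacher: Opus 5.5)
Your proposal is correct and is essentially the paper's proof. You apply Theorem \ref{thm:talgs-in-c-equiv-radj} with the theory $\U$ and $\C = \Alg{\T}$ to get $\Alg{\U}(\Alg{\T}) \simeq \textnormal{RAdj}(\Alg{\T}^\op,\Alg{\U})$. You then use the isomorphism $\Alg{(\T,\U)} \cong \Alg{\U}(\Alg{\T})$ both to identify the assignment as $D \mapsto \Hom_\T(-,D)$ and to conclude that the hom-objects lie in $\V$. The opening false start with $\C = \Alg{\U}$, which yields adjoints in the wrong direction, can simply be deleted.
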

\begin{proof}
By \ref{thm:talgs-in-c-equiv-radj}, there is an equivalence of $\V'$-categories
$$\Alg{\U}(\Alg{\T}) \simeq \textnormal{RAdj}(\Alg{\T}^\op,\Alg{\U})\;,$$
but $\Alg{\U}(\Alg{\T}) \cong \Alg{(\T,\U)}$ is a $\V$-category, by \ref{sec:balg}.
\end{proof}

This entails the following, in view of \ref{para:dualn-func}:

\begin{cor}\label{thm:right-adj-between-cats-algs}
Given a $\V$-functor $S:\Alg{\T}^\op \to \Alg{\U}$, the following are equivalent: (1) $S$ is a right adjoint, (2) $S \cong \Hom_\T(-,D)$ for some $(\T,\U)$-algebra $D$, (3) $S$ is representable (in the sense of \ref{defn:repbl-alg-val-vfunc}).
\end{cor}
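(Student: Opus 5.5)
We prove the cycle (1)$\Rightarrow$(2)$\Rightarrow$(3)$\Rightarrow$(1), taking $\C = \Alg{\T}$ with $\T$-theory $\U$. So a $\V$-functor $S:\Alg{\T}^\op \to \Alg{\U}$ is representable when $S \cong \llbracket-,E\rrbracket$ for some $\U$-algebra $E:\U \to \Alg{\T}$.

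For (1)$\Rightarrow$(2), suppose $S$ is a right adjoint. By \ref{thm:tu-alg-equiv-radj}, the $\V$-functor $D \mapsto \Hom_\T(-,D)$ from $\Alg{(\T,\U)}$ to $\textnormal{RAdj}(\Alg{\T}^\op,\Alg{\U})$ is an equivalence. It is therefore essentially surjective on objects, so $S \cong \Hom_\T(-,D)$ for some $(\T,\U)$-algebra $D$.

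For (2)$\Rightarrow$(3), recall from \ref{defn:hom-t} and \ref{para:dualn-func} that $\Hom_\T(-,D) = \llbracket-,D_{r\ell}\rrbracket$. Here $D_{r\ell}:\U \to \Alg{\T}$ is a $\U$-algebra in $\Alg{\T}$ by \eqref{eq:ba_transposition_isos}. Hence $S \cong \llbracket-,D_{r\ell}\rrbracket$, and $S$ is representable.

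For (3)$\Rightarrow$(1), suppose $S \cong \llbracket-,E\rrbracket$ with $E:\U \to \Alg{\T}$ a $\U$-algebra. The corollary following \ref{thm:hom-t-weighted-limit}, applied with the roles of $\T$ and $\U$ interchanged, shows that $\Alg{\T}$ admits limits of $\U$-algebras weighted by $\U$-algebras. So \ref{thm:rep-has-la}(1) gives $\llbracket-,E\rrbracket$ a left adjoint $\{-,E\}$. Since having a left adjoint is invariant under isomorphism of $\V$-functors, $S$ is a right adjoint. One could equally cite \ref{thm:rep-equiv-ra}(1) here.

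The only care needed is in (3)$\Rightarrow$(1): one must check that the hypothesis of \ref{thm:rep-has-la} holds for $\C = \Alg{\T}$ with theory $\U$. The role-swapped corollary supplies exactly this. No step presents a real obstacle, since everything reduces to results already established.
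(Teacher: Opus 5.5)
Your proof is correct and follows essentially the paper's route. The paper derives this corollary directly from Theorem~\ref{thm:tu-alg-equiv-radj} together with the identification $\Hom_\T(-,D) = \llbracket-,D_{r\ell}\rrbracket$ of \ref{para:dualn-func}, which is exactly what your (1)$\Rightarrow$(2)$\Rightarrow$(3) uses; your (3)$\Rightarrow$(1) via \ref{thm:rep-has-la} and the role-swapped corollary is just the ingredient underlying that theorem (equally, every $\U$-algebra $E$ in $\Alg{\T}$ is $D_{r\ell}$ for some $D$ by \eqref{eq:ba_transposition_isos}), and ``$\T$-theory $\U$'' should read ``$\J$-theory $\U$''.
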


\hspace{-0.65ex}We now characterize the $(\T,\U)$-algebra $D$ corresponding to such a right adjoint $S$:

\begin{prop}\label{thm:bifold-alg-corresp-to-dual-adjn}
Let $L \dashv S:\Alg{\T}^\op \to \Alg{\U}$ be a $\V$-adjunction. Given a $(\T,\U)$-algebra $D$, the following are equivalent: (1) $S \cong \Hom_\T(-,D)$, (2) $L \cong \Hom_\U(-,D)$, (3) $S$ is represented by $D_{r\ell}:\U \to \Alg{\T}$, (4) $L^\op$ is represented by $D_{\ell r}:\T \to \Alg{\U}$, (5) $D_{\ell r} \cong SY^\op$ for the $\V$-functor $Y:\T^\op \to \Alg{\T}$ of \ref{para:talgs}, (6) $D_{r\ell} \cong L^\op Y^\op$ for the $\V$-functor $Y:\U^\op \to \Alg{\U}$.
\end{prop}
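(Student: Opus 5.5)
The plan is to pass between the conditions using three earlier results: uniqueness of adjoints, the identity $\Hom_\T(-,D) = \llbracket-,D_{r\ell}\rrbracket$ from \ref{para:dualn-func}, and the Yoneda embedding \ref{thm:yon-emb-alg-val} together with Theorem \ref{thm:rep-equiv-ra}(2).

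First, (1)$\Leftrightarrow$(3) holds by definition. Indeed, \ref{para:dualn-func} gives $\Hom_\T(-,D) = \llbracket -,D_{r\ell}\rrbracket$, so $S \cong \Hom_\T(-,D)$ says exactly that $S$ is represented by $D_{r\ell}$. Symmetrically, (2)$\Leftrightarrow$(4) comes from applying \ref{para:dualn-func} to $D^\circ$. This gives $\Hom_\U(-,D)^\op = \Hom_\U(-,D^\circ) = \llbracket-,D_{\ell r}\rrbracket$, so $L \cong \Hom_\U(-,D)$ iff $L^\op \cong \llbracket-,D_{\ell r}\rrbracket$.

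Next, (1)$\Leftrightarrow$(2) follows from uniqueness of adjoints up to isomorphism. By Theorem \ref{thm:dualn_adjn}, $\Hom_\U(-,D) \dashv \Hom_\T(-,D)$, and we are given $L \dashv S$. If $S \cong \Hom_\T(-,D)$, then the left adjoints are isomorphic, so $L \cong \Hom_\U(-,D)$. Conversely, if $L \cong \Hom_\U(-,D)$, then the right adjoints are isomorphic.

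For (3)$\Leftrightarrow$(6), apply Theorem \ref{thm:rep-equiv-ra}(2) with $\C = \Alg{\T}$ and the theory $\U$. Its hypothesis holds because, by the corollary to \ref{thm:hom-t-weighted-limit} with the roles of $\T$ and $\U$ swapped, $\Alg{\T}$ admits limits of $\U$-algebras weighted by $\U$-algebras. Hence $S \cong \llbracket -, L^\op Y^\op\rrbracket$ with $L^\op Y^\op:\U \to \Alg{\T}$ a $\U$-algebra. The Yoneda embedding $\mathsf{Y}$ of \ref{thm:yon-emb-alg-val} is fully faithful, so it reflects isomorphisms. Therefore $\llbracket-,D_{r\ell}\rrbracket \cong \llbracket-,L^\op Y^\op\rrbracket$ iff $D_{r\ell} \cong L^\op Y^\op$, which is (3)$\Leftrightarrow$(6).

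The case (4)$\Leftrightarrow$(5) is the same argument applied to the opposite adjunction $S^\op \dashv L^\op:\Alg{\U}^\op \to \Alg{\T}$, in which the right adjoint is $L^\op$ and the left adjoint is $S^\op$, with $\C = \Alg{\U}$. Theorem \ref{thm:rep-equiv-ra}(2) gives $L^\op \cong \llbracket-,(S^\op)^\op Y^\op\rrbracket = \llbracket -, SY^\op\rrbracket$, where $Y:\T^\op\to\Alg{\T}$. Full faithfulness of $\mathsf{Y}$ then gives $L^\op \cong \llbracket-,D_{\ell r}\rrbracket$ iff $D_{\ell r} \cong SY^\op$.

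The one point needing care is to confirm that each isomorphism is one of $\V$-functors, or of $\T$- or $\U$-algebras, in the correct $\V'$-category, so that fully faithful $\mathsf{Y}$ reflecting isomorphisms applies. The isomorphisms $L\cong\Hom_\U(-,D)$ live in $[\Alg{\U},\Alg{\T}^\op]$ and must be transported to the opposites. This bookkeeping is routine and involves no real obstacle.
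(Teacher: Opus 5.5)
Your proposal is correct and follows essentially the same route as the paper's proof. It gets (1)--(4) from \ref{para:dualn-func} and uniqueness of adjoints via \ref{thm:dualn_adjn}, (3)$\Leftrightarrow$(6) from \ref{thm:rep-equiv-ra} and the fully faithful Yoneda embedding \ref{thm:yon-emb-alg-val}, and (4)$\Leftrightarrow$(5) by rerunning that argument for $S^\op \dashv L^\op$; you also make explicit the check that $\Alg{\T}$ admits limits of $\U$-algebras weighted by $\U$-algebras, which the paper leaves implicit.
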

\begin{proof}
(1) through (4) are equivalent, by \ref{para:dualn-func} and \ref{thm:dualn_adjn}. By \ref{thm:rep-equiv-ra} and \ref{thm:yon-emb-alg-val}, (3) is equivalent to (6). Applying this result to $S^\op \dashv L^\op$ and $D^\circ$, we deduce that (4) is equivalent to (5).
\end{proof}

\section{Algebraic dual adjunctions}\label{sec:alg-dual-adj}

\begin{defn}
A \bit{$\J$-algebraic dual adjunction} is a $\V$-adjunction of the form
\begin{equation}\label{eq:jalg_dual_adjn}\RAdjn{\A^\op}{\Delta}{\nabla}{}{}{\B}\end{equation}
where $\A$ and $\B$ are $\J$-algebraic $\V$-categories over $\V$ \pref{para:jalg}.
\end{defn}

\begin{notn}
We write $\nabla \dashv \Delta:\A^\op \rightarrow \B$ or $(\A,\B,\nabla \dashv \Delta)$ to denote a given $\J$-algebraic dual adjunction as in \eqref{eq:jalg_dual_adjn}.  If $\D = (\A,\B,\nabla \dashv \Delta)$ is a $\J$-algebraic dual adjunction, then by applying $(-)^\op$ we  obtain another $\J$-algebraic dual adjunction $\Delta^\op \dashv \nabla^\op:\B^\op \rightarrow \A$ that we write as
\begin{equation}\label{eq:rev}\D^\circ = (\B,\A,\Delta^\op \dashv \nabla^\op)\;.\end{equation}
We call $\D^\circ$ the \bit{opposite} of $\D$.
\end{notn}

In Section \ref{sec:exa_ada}, we discuss various examples of $\J$-algebraic dual adjunctions. In Theorem \ref{thm:alg_dual_bifold} we show that, up to equivalence, every $\J$-algebraic dual adjunction is of the following form:

\begin{exa}\label{exa:dualn_adjn}
The dualization adjunction induced by a bifold algebra $(\T,\U,D)$ (\ref{defn:dualn_adjn}) is a $\J$-algebraic dual adjunction in which
$$\A = \Alg{\T},\;\;\;\;\B = \Alg{\U},\;\;\;\;\Delta = \Hom_\T(-,D),\;\;\;\;\nabla = \Hom_\U(-,D)\;.$$
We denote the dualization adjunction induced by $(\T,\U,D)$ by
$$\Dualn(\T,\U,D) \;\;=\;\; \bigl(\Alg{\T},\:\Alg{\U},\:\Hom_\U(-,D) \dashv \Hom_\T(-,D)\bigr)$$
or simply by $\Dualn D$.  It is clear from the construction of $\Dualn D$ that
\begin{equation}\label{eq:dualnop}(\Dualn D)^\circ = \Dualn(D^\circ)\;.\end{equation}
\end{exa}

We now define a 2-category $\ADAJ$ whose objects are $\J$-algebraic dual adjunctions.  There are multiple such 2-categories that one could define, but we have chosen the following definition because it provides a suitable notion of equivalence of $\J$-algebraic dual adjunctions \pref{para:equiv_jalg_dual} while simultaneously allowing us establish a biequivalence between $\ADAJ$ and a locally discrete 2-category of bifold algebras over various pairs of $\J$-theories.

\begin{para}\label{para:algcatj_preamble}
In the following discussion, we write $U:\ALGCATJ \rightarrow \VCAT$ to denote the forgetful 2-functor on the 2-category $\ALGCATJ$ of $\J$-algebraic $\V$-categories over $\V$ \pref{para:jalg}, and write $U^*$ to denote the composite 2-functor
$$\ALGCATJ^\co \xrightarrow{U^\co} \VCAT^\co \xrightarrow{(-)^\op} \VCAT\;.$$
We can now form the colax comma 2-category $(U^* \downarrow U)_\textnormal{colax}$ \pref{para:lax_comma}, whose objects are triples $(\A,\B,\Delta)$ consisting of $\J$-algebraic $\V$-categories $\A$ and $\B$ over $\V$ together with a $\V$-functor $\Delta:\A^\op \rightarrow \B$.  Hence every $\J$-algebraic dual adjunction $(\A,\B, \nabla \dashv \Delta)$ has an underlying object $(\A,\B,\Delta)$ of $(U^* \downarrow U)_\textnormal{colax}$.
\end{para}

\begin{defn}\label{defn:dualadjj}
The \bit{2-category of $\J$-algebraic dual adjunctions}, denoted by $\ADAJ$, is the 2-category whose objects are $\J$-algebraic dual adjunctions, with 1-cells, 2-cells, and composition defined as in the 2-category $(U^* \downarrow U)_\textnormal{colax}$ discussed in \ref{para:algcatj_preamble}.   Therefore, a 1-cell
\begin{equation}\label{eq:1cell_algdual_1}P = (P_\ell,\lambda^P,P_r,\rho^P,\phi^P)\;:\;(\A_1,\B_1,\nabla_1 \dashv \Delta_1) \longrightarrow (\A_2,\B_2,\nabla_2 \dashv \Delta_2)\end{equation}
in $\ADAJ$ consists of 1-cells $(P_\ell,\lambda^P):\A_1 \rightarrow \A_2$ and $(P_r,\rho^P):\B_1 \rightarrow \B_2$ in $\ALGCATJ$ together with a 2-cell of the form 
\begin{equation}\label{eq:1cell_algdual}
\xymatrix{
\A_1^\op \ar[r]^{P_\ell^\op} \ar[d]_{\Delta_1} & \A_2^\op \ar[d]^{\Delta_2}\\
\B_1 \ar@{}[ur]|(.4){}="t1"|(.6){}="s1"_{\phi^P} \ar[r]_{P_r} & \B_2
\ar@{<=}"s1";"t1"
}
\end{equation}
in $\VCAT$.  A 2-cell 
\begin{equation}\label{eq:2cell_algdual}\gamma\;:\;P \Longrightarrow Q\;:\;(\A_1,\B_1,\nabla_1 \dashv \Delta_1) \longrightarrow (\A_2,\B_2,\nabla_2 \dashv \Delta_2)\end{equation}
in $\ADAJ$ is a pair $\gamma = (\gamma_\ell,\gamma_r)$ consisting of 2-cells
$$\gamma_\ell:(Q_\ell,\lambda^Q) \Rightarrow (P_\ell,\lambda^P):\A_1 \rightarrow \A_2,\;\;\;\;\gamma_r:(P_r,\rho^P) \Rightarrow (Q_r,\rho^Q):\B_1 \rightarrow \B_2$$
in $\ALGCATJ$ (noting the difference in direction between $\gamma_\ell$ and $\gamma_r$) such that the following pasted 2-cells are equal:
\begin{equation}\label{eqn:pasting_2cell_algdual}
\xymatrix@!0@C=8ex @R=4ex{
\A_1^\op \ar[dd]_{\Delta_1} \ar@/^1pc/[rrr]|{Q_\ell^\op}="s2" \ar@/_1pc/[rrr]|{P_\ell^\op}="t2"  & & & \A_2^\op \ar[dd]^{\Delta_2} \ar@{}[ddll]|(.6){}="t1"|(.8){}="s1" & & \A_1^\op \ar[dd]_{\Delta_1} \ar@/^1pc/[rrr]^{Q_\ell^\op} & & \ar@{}[dl]|(.3){}="t5"|(.7){}="s5" & \A_2^\op \ar[dd]^{\Delta_2}\\
 & & & & & & & & \\
\B_1 \ar@/_1pc/[rrr]|{P_r} & & & \B_2 & & \B_1 \ar@/^1pc/[rrr]|{Q_r}="s4" \ar@/_1pc/[rrr]|{P_r}="t4" & & & \B_2
\ar@{=>}"s1";"t1"_{\phi^P}
\ar@{}"s2";"t2"|(.25){}="t3"|(.75){}="s3"
\ar@{=>}"s3";"t3"_{\gamma_\ell^\op}
\ar@{=>}"s5";"t5"^{\phi^Q}
\ar@{}"s4";"t4"|(.25){}="t6"|(.75){}="s6"
\ar@{=>}"s6";"t6"_{\gamma_r}
}
\end{equation}
\end{defn}

\begin{rem}\label{rem:mates_alg_dualadj}
As formulated in \eqref{eq:1cell_algdual_1} and \eqref{eq:1cell_algdual}, the definition of 1-cells in $\ADAJ$ involves $\Delta_i$ but not $\nabla_i$ $(i = 1,2)$, but this apparent asymmetry is illusory, because the calculus of mates (\cite[\S 2.2, Prop.~2.1]{KeStr:RevEl2Cats}, \cite[I,6.6]{Gray}) entails that 2-cells $\phi^P$ as in \eqref{eq:1cell_algdual} correspond bijectively to 2-cells of the form
\begin{equation}\label{eq:mate_2cell}
\xymatrix{
\B_1 \ar[r]^{P_r} \ar[d]_{\nabla_1} & \B_2 \ar[d]^{\nabla_2}\\
\A_1^\op \ar@{}[ur]|(.4){}="t1"|(.6){}="s1"_{} \ar[r]_{P_\ell^\op} & \A_2^\op
\ar@{=>}"s1";"t1"
}
\end{equation}
in $\VCAT$, and the latter 2-cell is called the \textit{mate} of $\phi^P$. Therefore, 2-cells $\phi^P$ as in \eqref{eq:1cell_algdual} are also in bijective correspondence with 2-cells of the form
\begin{equation}\label{eq:mate_1cell_algdual}
\xymatrix{
\B_1^\op \ar[r]^{P_r^\op} \ar[d]_{\nabla_1^\op} & \B_2^\op \ar[d]^{\nabla_2^\op}\\
\A_1 \ar@{}[ur]|(.4){}="t1"|(.6){}="s1"_{\bar{\phi}^P} \ar[r]_{P_\ell} & \A_2
\ar@{<=}"s1";"t1"
}
\end{equation}
in $\VCAT$.

This shows that for any pair of $\J$-algebraic dual adjunctions $\D_1$ and $\D_2$, each 1-cell $P = (P_\ell,\lambda^P,P_r,\rho^P,\phi^P):\D_1 \rightarrow \D_2$ in $\ADAJ$ determines an associated 1-cell
\begin{equation}\label{eq:rev_1cell}P^\circ = (P_r,\rho^P,P_\ell,\lambda^P,\bar{\phi}^P)\;:\;\D_1^\circ \longrightarrow \D_2^\circ\;\;\;\;\;\;\text{in $\ADAJ$,}\end{equation}
where we employ the notation of \eqref{eq:rev}.  This describes a bijection between 1-cells $\D_1 \rightarrow \D_2$ and 1-cells $\D_1^\circ \rightarrow \D_2^\circ$ in $\ADAJ$, and we obtain the following:
\end{rem}

\begin{prop}\label{thm:rev_iso_2cats}
There is an isomorphism of 2-categories
\begin{equation}\label{eq:rev_2func}(-)^\circ\;:\;\ADAJ^\co \overset{\sim}\longrightarrow \ADAJ\end{equation}
given on objects by \eqref{eq:rev} and on 1-cells by \eqref{eq:rev_1cell}.
\end{prop}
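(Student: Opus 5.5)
We define $(-)^\circ$ on all three levels and check that it is a strict 2-functor $\ADAJ^\co \to \ADAJ$ which is bijective on objects, 1-cells and 2-cells. On objects we use \eqref{eq:rev}, and on 1-cells we use \eqref{eq:rev_1cell}. A 2-cell $\gamma = (\gamma_\ell,\gamma_r):P \Rightarrow Q$ in $\ADAJ$ is sent to $\gamma^\circ := (\gamma_r,\gamma_\ell):Q^\circ \Rightarrow P^\circ$. This is a 2-cell of $\ADAJ$ from $Q^\circ$ to $P^\circ$, which accounts for the $(-)^\co$. The first component of $P^\circ$ is $P_r$, and a 2-cell out of $Q^\circ$ into $P^\circ$ requires a left component $(P_r,\rho^P) \Rightarrow (Q_r,\rho^Q)$ and a right component $(Q_\ell,\lambda^Q) \Rightarrow (P_\ell,\lambda^P)$; these are exactly $\gamma_r$ and $\gamma_\ell$. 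Clearly $(\D^\circ)^\circ = \D$ on objects, since $(-)^{\op}$ is involutive.

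Next we verify that $(-)^\circ$ is well defined and involutive on 1-cells. By Remark \ref{rem:mates_alg_dualadj}, $\phi^P \mapsto \bar{\phi}^P$ is a bijection between 2-cells \eqref{eq:1cell_algdual} and \eqref{eq:mate_1cell_algdual}. It is obtained by taking mates with respect to $\nabla_i \dashv \Delta_i$ and then applying the involution $(-)^\op$ of $\VCAT$, which reverses 2-cells. Applying the same construction to $\D_i^\circ$, whose adjunctions are $\Delta_i^\op \dashv \nabla_i^\op$, returns $\phi^P$. This holds because mates with respect to the opposite adjunction are the opposites of the inverse mates. Hence $P^{\circ\circ} = P$, and $(-)^\circ$ is bijective on 1-cells between given objects.

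Functoriality on 1-cells follows from the fact that the mate correspondence respects pasting (\cite[Prop.~2.1]{KeStr:RevEl2Cats}). The mate of a horizontal composite of squares is the composite of the mates, and the mate of an identity square is an identity, and $(-)^\op$ preserves these. Composition of 1-cells in $(U^*\downarrow U)_\textnormal{colax}$ is exactly pasting of such squares, together with composition of the $\ALGCATJ$-components, so $(QP)^\circ = Q^\circ P^\circ$ and $1^\circ = 1$.

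The main obstacle is the 2-cell condition: we must show that $(\gamma_\ell,\gamma_r)$ satisfies \eqref{eqn:pasting_2cell_algdual} with respect to $\phi^P,\phi^Q$ if and only if $(\gamma_r,\gamma_\ell)$ satisfies the analogous equation with respect to $\bar{\phi}^Q,\bar{\phi}^P$. We plan to use naturality of mates in the 2-cells on the horizontal edges. The mate of $\phi^P$ pasted with $\gamma_\ell^\op$ equals the mate of $\phi^P$ pasted with $\gamma_\ell^\op$ on the opposite edge, and similarly for $\gamma_r$. Both sides of \eqref{eqn:pasting_2cell_algdual} therefore transform under the mate bijection into the two sides of the corresponding equation for $P^\circ,Q^\circ$, after applying $(-)^\op$, which swaps the direction of $\gamma_\ell$ relative to $\gamma_r$. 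Since the mate correspondence is bijective, one equation holds iff the other does. Compatibility of $(-)^\circ$ with vertical and horizontal composition of 2-cells is then immediate, because these operations are computed componentwise in $\ALGCATJ$, and $(-)^\circ$ merely swaps components. Together with involutivity, this gives an isomorphism of 2-categories.
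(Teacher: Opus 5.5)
Your proposal is correct and follows essentially the same route as the paper. The paper likewise sets $\gamma^\circ := (\gamma_r,\gamma_\ell):Q^\circ \Rightarrow P^\circ$, derives the pasting equation for $\gamma^\circ$ from that for $\gamma$ via compatibility of mates with pasting (citing Gray (6.14) and Kelly--Street Prop.~2.2), obtains 2-functoriality from the same result, and concludes by noting that $(-)^\circ$ applied twice is the identity. You simply make explicit the details the paper leaves implicit, such as $\bar{\bar{\phi}}^P = \phi^P$ and the two-way equivalence of the 2-cell conditions.
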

\begin{proof}
Given any 2-cell $\gamma = (\gamma_\ell,\gamma_r):P \Rightarrow Q$ in $\ADAJ$, with the notation of \eqref{eq:2cell_algdual}, we claim that $\gamma^\circ := (\gamma_r,\gamma_\ell):Q^\circ \Rightarrow P^\circ$ is a 2-cell in $\ADAJ$.  To show this, it suffices to verify that the pasting equation \eqref{eqn:pasting_2cell_algdual} for $\gamma$ entails the analogous pasting equation for $\gamma^\circ$, but this follows from \cite[(6.14), p.~145]{Gray}, or from \cite[Prop.~2.2]{KeStr:RevEl2Cats}.  The 2-functoriality of $(-)^\circ$ now follows from \cite[Prop.~2.2]{KeStr:RevEl2Cats}.  If we write $S = (-)^\circ$, then $S$ and $S^\co$ are inverses of one another.
\end{proof}

\begin{para}[\textbf{Equivalence of $\J$-algebraic dual adjunctions}]\label{para:equiv_jalg_dual}
We say that $\J$-algebraic dual adjunctions $\D_1 = (\A_1,\B_1,\nabla_1 \dashv \Delta_1)$ and $\D_2 = (\A_2,\B_2,\nabla_2 \dashv \Delta_2)$ are \bit{equivalent} if $\D_1 \simeq \D_2$ in $\ADAJ$.  Supposing that this is the case, there is then an adjoint equivalence $P \dashv Q:\D_2 \rightarrow \D_1$ in $\ADAJ$, and we can write $P = (P_\ell,\lambda^P,P_r,\rho^P,\phi^P)$ and $Q = (Q_\ell,\lambda^Q,Q_r,\rho^Q,\phi^Q)$ in the notation of \eqref{eq:1cell_algdual_1}.  In particular, we have adjoint equivalences
$$(Q_\ell,\lambda^Q) \dashv (P_\ell,\lambda^P):\A_1 \rightarrow \A_2\;,\;\;\;\;(P_r,\rho^P) \dashv (Q_r,\rho^Q):\B_2 \rightarrow \B_1$$
in the pseudo-slice $\VCAT\sslash\V$.  Furthermore, $\phi^P$ and $\phi^Q$ are $\V$-natural transformations of the form
$$
\xymatrix{
\A_1^\op \ar[r]^{P_\ell^\op} \ar[d]_{\Delta_1} & \A_2^\op \ar[d]^{\Delta_2} & & \A_2^\op \ar[d]_{\Delta_2} \ar[r]^{Q_\ell^\op} & \A_1^\op \ar[d]^{\Delta_1}\\
\B_1 \ar@{}[ur]|(.4){}="t1"|(.6){}="s1"_{} \ar[r]_{P_r} & \B_2 & & \B_2 \ar@{}[ur]|(.4){}="t2"|(.6){}="s2"_{} \ar[r]_{Q_r} & \B_1
\ar@{<=}"s1";"t1"^{\phi^P}
\ar@{<=}"s2";"t2"^{\phi^Q}
}
$$
and $\phi^P$ and $\phi^Q$ are invertible by \ref{para:lax_comma}.  By \ref{rem:mates_alg_dualadj} and \ref{thm:rev_iso_2cats}, we also obtain invertible $\V$-natural transformations of the following forms:
$$
\xymatrix{
\B_1^\op \ar[r]^{P_r^\op} \ar[d]_{\nabla_1^\op} & \B_2^\op \ar[d]^{\nabla_2^\op} & & \B_2^\op \ar[d]_{\nabla_2^\op} \ar[r]^{Q_r^\op} & \B_1^\op \ar[d]^{\nabla_1^\op}\\
\A_1 \ar@{}[ur]|(.4){}="t1"|(.6){}="s1"_{} \ar[r]_{P_\ell} & \A_2 & & \A_2 \ar@{}[ur]|(.4){}="t2"|(.6){}="s2"_{} \ar[r]_{Q_\ell} & \A_1
\ar@{<=}"s1";"t1"^{\bar{\phi}^P}
\ar@{<=}"s2";"t2"^{\bar{\phi}^Q}
}
$$
\end{para}

\begin{thm}\label{thm:alg_dual_bifold}
Every $\J$-algebraic dual adjunction $\D$ is equivalent to the dualization \mbox{adjunction} $\Hom_\U(-,D) \dashv \Hom_\T(-,D):\Alg{\T}^\op \rightarrow \Alg{\U}$ induced by a bifold algebra $D$, in the sense that $\D \simeq \Dualn(D)$ in the 2-category $\ADAJ$.
\end{thm}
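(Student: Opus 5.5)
Given $\D = (\A,\B,\nabla \dashv \Delta)$, the plan is to transport the adjunction along equivalences to categories of algebras, invoke Corollary \ref{thm:right-adj-between-cats-algs} there, and then check that the resulting data assemble into an equivalence in $\ADAJ$.

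First, since $\A$ and $\B$ are $\J$-algebraic, choose $\J$-theories $\T,\U$ and equivalences in $\ALGCATJ$. By \ref{para:jalg} these can be upgraded to adjoint equivalences $(E,\epsilon):\Alg{\T} \to \A$ and $(F,\varphi):\Alg{\U} \to \B$, with pseudo-inverses $(E',\epsilon')$ and $(F',\varphi')$. Form the $\V$-functor
$$S := F'\Delta E^\op : \Alg{\T}^\op \to \Alg{\U}.$$
It is a right adjoint, with left adjoint $E'^{\,\op}\nabla F$ computed in the appropriate op sense, because composites of adjunctions with equivalences are adjunctions. By Corollary \ref{thm:right-adj-between-cats-algs} there is a $(\T,\U)$-algebra $D$ and a $\V$-natural isomorphism $\psi : S \cong \Hom_\T(-,D)$.

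Next, build a 1-cell $P : \Dualn D \to \D$ in $\ADAJ$. Take $(P_\ell,\lambda^P) = (E,\epsilon)$ and $(P_r,\rho^P) = (F,\varphi)$. The 2-cell $\phi^P$ must have the shape $P_r\Hom_\T(-,D) \Rightarrow \Delta E^\op$, as dictated by \eqref{eq:1cell_algdual}, whose arrow runs from the lower-left path to the upper-right path. Obtain it as the composite
$$F\Hom_\T(-,D) \xrightarrow{F\psi^{-1}} FF'\Delta E^\op \xrightarrow{\ \text{counit}\ } \Delta E^\op,$$
which is invertible.

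It remains to show $P$ is an equivalence in $\ADAJ$. Define $Q : \D \to \Dualn D$ using the pseudo-inverses $(E',\epsilon')$ and $(F',\varphi')$. Its 2-cell $\phi^Q : F'\Delta \Rightarrow \Hom_\T(-,D)E'^{\,\op}$ is built from $\psi$, whiskered by $E'^{\,\op}$, together with the unit and counit of $E \dashv E'$. Then exhibit invertible 2-cells $1 \Rightarrow QP$ and $PQ \Rightarrow 1$ whose components are the units and counits of the adjoint equivalences in $\ALGCATJ$. Two points need care here. The left component of a 2-cell in $\ADAJ$ runs in the reverse direction, so the left-hand unit/counit must be inverted accordingly; this is harmless because all of them are invertible. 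The remaining requirement is the pasting equation \eqref{eqn:pasting_2cell_algdual}.

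I expect verifying \eqref{eqn:pasting_2cell_algdual} to be the main obstacle, not because it is deep but because of bookkeeping. I would handle it by choosing $\phi^Q$ to be precisely the mate-style transpose of $\phi^P$ along the adjoint equivalences. Then both sides of the pasting equation reduce, via the triangle identities, to the same composite of $\psi$ with units and counits.

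Alternatively, I would first try a cleaner route: show directly that a 1-cell in $\ADAJ$ whose components $(P_\ell,\lambda^P)$ and $(P_r,\rho^P)$ are equivalences in $\ALGCATJ$, and whose $\phi^P$ is invertible, is itself an equivalence. This is a general fact about colax comma 2-categories: an inverse is built componentwise, with the invertible 2-cell transported across. Proving it once as a lemma avoids all explicit pasting, and the theorem then follows immediately from the construction of $P$.
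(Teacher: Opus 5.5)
Your proposal is correct and follows essentially the paper's route. You transport $\Delta$ along adjoint equivalences $\A \simeq \Alg{\T}$ and $\B \simeq \Alg{\U}$ in $\ALGCATJ$, represent the resulting right adjoint by a $(\T,\U)$-algebra $D$ via \ref{thm:right-adj-between-cats-algs} (equivalently \ref{thm:tu-alg-equiv-radj}), and check that the units and counits give invertible 2-cells in $\ADAJ$. The differences are only organizational: the paper first shows $\D \simeq \D'$ for the transported adjunction $\D'$ and then $\D' \cong \Dualn(D)$ via 1-cells with identity components and $\phi$, whereas you absorb $\psi$ directly into $\phi^P$; and your alternative lemma (a 1-cell in a colax comma 2-category whose components are equivalences and whose 2-cell is invertible is itself an equivalence) is a valid and cleaner way to discharge the pasting check that the paper calls straightforward.
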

\begin{proof} 
Let $\D = (\A,\B,\nabla \dashv \Delta)$ be a $\J$-algebraic dual adjunction.  Since $\A$ and $\B$ are $\J$-algebraic $\V$-categories over $\V$, there are equivalences $\A \simeq \Alg{\T}$ and $\B \simeq \Alg{\U}$ in $\ALGCATJ$, for $\J$-theories $\T$ and $\U$.  Hence, there is an adjoint equivalence
$$(P_r,\rho^P) \ladjto{\eta_r}{\varepsilon_r} (Q_r,\rho^Q):\Alg{\U} \rightarrow \B\;\;\;\;\textnormal{in $\ALGCATJ$}\;,$$
and since $\A \simeq \Alg{\T}$ in $\ALGCATJ^\co$ there is an adjoint equivalence
$$(P_\ell,\lambda^P) \ladjto{\eta_\ell}{\varepsilon_\ell} (Q_\ell,\lambda^Q):\Alg{\T} \rightarrow \A\;\;\;\;\textnormal{in $\ALGCATJ^\co$}\;.$$
In particular, we obtain adjoint equivalences
$$P_\ell^\op \ladjto{\eta_\ell^\op}{\varepsilon_\ell^\op} Q_\ell^\op:\Alg{\T}^\op \rightarrow \A^\op\;,\;\;\;\;\;\;P_r \ladjto{\eta_r}{\varepsilon_r} Q_r:\Alg{\U} \rightarrow \B$$
in $\VCAT$.  Let $\Delta'$ denote the composite $\V$-functor
$$\Alg{\T}^\op \overset{Q_\ell^\op}{\longrightarrow} \A^\op \overset{\Delta}{\longrightarrow} \B \overset{P_r}{\longrightarrow} \Alg{\U}\;.$$
Since $\Delta$ is a right adjoint and both $Q_\ell^\op$ and $P_r$ are equivalences, it follows that $\Delta'$ has a left adjoint $\nabla'$.  Thus we obtain a $\J$-algebraic dual adjunction
$$\D' = (\Alg{\T},\Alg{\U},\nabla' \dashv \Delta')\;.$$
Also, we obtain invertible 2-cells of the form
$$
\xymatrix{
\A^\op \ar[d]_{\Delta} \ar[r]^(.45){P_\ell^\op} & \Alg{\T}^\op \ar@{}[dl]|(.4){}="t1"|(.6){}="s1" \ar[d]^{\Delta'} \ar[r]^(.6){Q_\ell^\op} & \A^\op \ar@{}[dl]|(.4){}="t2"|(.6){}="s2" \ar[d]^{\Delta}\\
\B \ar[r]_(.4){P_r} & \Alg{\U} \ar[r]_(.6){Q_r} & \B
\ar@{=>}"s1";"t1"_{\phi^P}
\ar@{=>}"s2";"t2"_{\phi^Q}
}
$$
by taking
$$\phi^P = P_r\Delta\eta_\ell^\op:P_r\Delta \Rightarrow P_r\Delta Q_\ell^\op P_\ell^\op = \Delta' P_\ell^\op$$
$$\phi^Q = \eta_r^{-1} \Delta Q_\ell^\op:Q_r\Delta' = Q_rP_r\Delta Q_\ell^\op \Rightarrow \Delta Q_\ell^\op\;.$$
Thus we have constructed 1-cells
$$P = (P_\ell,\lambda^P,P_r,\rho^P,\phi^P):\D \rightarrow \D',$$
$$Q = (Q_\ell,\lambda^Q,Q_r,\rho^Q,\phi^Q):\D' \rightarrow \D$$
in $\ADAJ$, and it is straightforward to verify that $\eta = (\eta_\ell,\eta_r):1_\D \Rightarrow QP$ and $\varepsilon = (\varepsilon_\ell,\varepsilon_r):PQ \Rightarrow 1_{\D'}$ are invertible 2-cells in $\ADAJ$.  This proves that $\D \simeq \D'$ in $\ADAJ$. By \ref{thm:tu-alg-equiv-radj} there is some $(\T,\U)$-algebra $D$ with an invertible 1-cell $\phi:\Hom_\T(-,D) \Rightarrow \Delta'$ in $\VCAT$.  Employing the notation of \eqref{eq:1cell_algdual_1}, we obtain 1-cells
$$P = (1,1,1,1,\phi):\Dualn(D) \rightarrow \D',\;\;\;\;Q = (1,1,1,1,\phi^{-1}):\D' \rightarrow \Dualn(D)$$
in $\ADAJ$, each of whose first four components are identities, and we find that $P$ and $Q$ are inverses of one another. Hence $\D \simeq \D' \cong \Dualn(D)$ in $\ADAJ$.
\end{proof}

We have thus shown that, up to equivalence, $\J$-algebraic dual adjunctions are the dualization adjunctions induced by bifold algebras.  Because of this, we proceed to define some terminology for $\J$-algebraic dual adjunctions that is inspired by the situation of dualization with respect to a bifold algebra:

\begin{defn}\label{defn:alg_dual_terms}
Let $\D = (\A,\B,\nabla \dashv \Delta)$ be a $\J$-algebraic dual adjunction, so that $\nabla \dashv \Delta:\A^\op \rightarrow \B$.  Given an object $A$ of $\A$, we call $\Delta A$ the \bit{dual} of $A$, and we denote it by
$$A^\Delta = \Delta A\;.$$
Of course, this terminology depends implicitly on the the given dual adjunction $\nabla \dashv \Delta$, and by interpreting this same definition instead with respect to the opposite adjunction $\D^\circ = (\Delta^\op \dashv \nabla^\op:\B^\op \rightarrow \A)$ of \eqref{eq:rev}, we also arrive at the following terminology:  For each object $B$ of $\B$, we call $B^\nabla = \nabla B$ the \bit{dual} of $B$.  We call
$$A^{\Delta\nabla} = \nabla\Delta A\;\;\;\;\;\text{and}\;\;\;\;B^{\nabla\Delta} = \Delta\nabla B$$
the \bit{double duals} of $A$ and of $B$, respectively, and we call the induced $\V$-monads
$$\nabla^\op\Delta^\op:\A \rightarrow \A\;\;\;\;\text{and}\;\;\;\;\Delta\nabla:\B \rightarrow \B$$
the \bit{double dualization monads} on $\A$ and on $\B$, respectively\footnote{This can be contrasted with Kock's \cite{Kock:DblDln} use of the same term to denote the $\V$-monad $\V(\V(-,D),D)$ on $\V$ determined by an object $D$ of $\V$, which can be regarded as a special case of \ref{defn:alg_dual_terms}.}.  We write the unit and counit of $\D$ as $d^{\D}:1_{\B} \Rightarrow \Delta\nabla$ and $e^{\D}:\nabla\Delta \Rightarrow 1_{\A^\op}$.  Applying $(-)^\op$ sends $e^{\D}$ to the unit $d^{\D^\circ}$ of $\D^\circ$ and vice-versa.

We say that an object $A$ of $\A$ is \bit{reflexive} (or \bit{$\nabla\Delta$-reflexive}) if 
$$d^{\D^\circ}_A\;:\;A \longrightarrow \nabla\Delta A = A^{\Delta\nabla}$$
is an isomorphism in $\A$.  Similarly, $B \in \ob\B$ is \bit{reflexive} (or \bit{$\Delta\nabla$-reflexive}) if $d^{\D}_B:B \rightarrow \Delta\nabla B$ is an isomorphism in $\B$.
\end{defn}

We discuss various mathematical examples of these notions of enriched algebraic dualization and reflexivity in Section \ref{sec:exa_ada}.

\begin{exa}
In view of Theorem \ref{thm:alg_dual_bifold}, there is essentially no loss of generality in considering the case where $\nabla \dashv \Delta:\A^\op \rightarrow \B$ is the dualization adjunction induced by a bifold algebra $(\T,\U,D)$.  Given a $\T$-algebra $A$ and an $\U$-algebra $B$, $A^\Delta = \Hom_\T(A,D)$, $B^\nabla = \Hom_\U(B,D)$, and the double dualization monads, $\nabla^\op\Delta^\op$ on $\Alg{\T}$ and $\Delta\nabla$ on $\Alg{\U}$ are given by
$\nabla\Delta A = \Hom_\U(\Hom_\T(A,D),D)$ and $\Delta\nabla B  = \Hom_\T(\Hom_\U(B,D),D)$.
\end{exa}

\begin{defn}
Let $\D = (\A,\B,\nabla \dashv \Delta)$ be a $\J$-algebraic dual adjunction, and write $F_\ell \dashv G_\ell:\A \to \V$ and $F_r \dashv G_r:\B \to \V$ for the associated $\V$-adjunctions (\ref{para:jalg}).  The \bit{left dualizer} of $\D$ is the object $\nabla F_r I$ of $\A$, and the \bit{right dualizer} of $\D$ is the object $\Delta F_\ell I$ of $\B$ (where $I$ is the unit object of $\V$).
\end{defn}

\begin{prop}\label{thm:dualizers-represent}
Let $\D = (\A,\B,\nabla \dashv \Delta)$ be a $\J$-algebraic dual adjunction. Then the left dualizer $\nabla F_r I$ represents the $\V$-functor $G_r\Delta:\A^\op \to \V$, and the right dualizer $\Delta F_\ell I$ represents the $\V$-functor $G_\ell\nabla^\op:\B^\op \to \V$.
\end{prop}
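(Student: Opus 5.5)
The plan is to compose two $\V$-adjunctions and apply the enriched Yoneda lemma. I treat the left dualizer first; the right dualizer then follows by passing to the opposite dual adjunction $\D^\circ$ of \eqref{eq:rev}.

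For $A \in \ob\A$, I will write down the chain of isomorphisms
$$\A(A,\nabla F_r I) \;=\; \A^\op(\nabla F_r I, A) \;\cong\; \B(F_r I, \Delta A) \;\cong\; \V(I, G_r \Delta A) \;\cong\; G_r\Delta A\;.$$
The first isomorphism comes from the $\V$-adjunction $\nabla \dashv \Delta:\A^\op \to \B$, whose hom-isomorphism is $\A^\op(\nabla B, A) \cong \B(B,\Delta A)$ $\V$-naturally in $B \in \B$ and $A \in \A^\op$; here we take $B = F_r I$. The second comes from the $\V$-adjunction $F_r \dashv G_r:\B \to \V$, which exists because $\B$ is $\J$-algebraic. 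Indeed, $\B \simeq \Alg{\U}$ in $\VCAT\sslash\V$, and $G^\U$ has a left adjoint by amenability (\ref{para:setting2}); equivalently, $G_r$ is monadic by \ref{para:jalg}. The last isomorphism is the canonical $\V(I,X) \cong X$.

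Each isomorphism is $\V$-natural in $A \in \A^\op$, so their composite is a $\V$-natural isomorphism $\A(-,\nabla F_r I) \cong G_r\Delta$ of $\V$-functors $\A^\op \to \V$. This is exactly the assertion that $\nabla F_r I$ represents $G_r\Delta$. The unit element $I \to G_r\Delta\nabla F_r I$ is $G_r$ applied to the unit $d^\D_{F_r I}$, precomposed with the unit of $F_r \dashv G_r$, but the proof does not need this.

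For the right dualizer, I apply the same argument to $\D^\circ = (\B,\A,\Delta^\op \dashv \nabla^\op)$. Its left dualizer is $\Delta^\op F_\ell I = \Delta F_\ell I$, regarded as an object of $\B$, and the corresponding represented functor is $G_\ell\nabla^\op:\B^\op \to \V$. Alternatively, one can argue directly: $\B(B,\Delta F_\ell I) \cong \A^\op(\nabla B, F_\ell I) = \A(F_\ell I,\nabla B) \cong \V(I,G_\ell\nabla B) \cong G_\ell \nabla^\op B$.

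There is no real obstacle. The only care needed is with variance, namely making sure the $\V$-naturality in $A$ is the contravariant one, and with citing the existence of the left adjoints $F_\ell$ and $F_r$ from $\J$-algebraicity together with amenability.
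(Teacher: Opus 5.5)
Your proposal is correct and follows the same route as the paper: both compose the hom-isomorphisms of $\nabla \dashv \Delta$ and $F_r \dashv G_r$ to get $\A(A,\nabla F_r I) \cong \B(F_r I,\Delta A) \cong \V(I,G_r\Delta A) \cong G_r\Delta A$, $\V$-naturally in $A$. The right dualizer is then handled the same way; your explicit treatment of it, including the reduction via $\D^\circ$, is just a more detailed version of the paper's ``similarly.''
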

\begin{proof}
By adjointness, $\A(A,\nabla F_r I) \cong \B(F_r I,\Delta A) \cong \V(I,G_r \Delta A) \cong G_r \Delta A$, \linebreak $\V$-naturally in $A \in \A$, and similarly for the right dualizer.
\end{proof}

\begin{prop}\label{thm:rl_dualizer}
Let $D$ be a bifold algebra. Then the left dualizer of the dualization adjunction $\Dualn(D)$ is isomorphic to the left face $D_\ell$ of $D$, and the right dualizer of $\Dualn(D)$ is isomorphic to the right face $D_r$ of $D$.
\end{prop}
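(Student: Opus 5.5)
By symmetry we only need to identify the left dualizer. The right dualizer of $\Dualn(D)$ is the left dualizer of $(\Dualn D)^\circ = \Dualn(D^\circ)$, by \eqref{eq:dualnop}. Also, the left face of $D^\circ$ is the right face $D_r$ of $D$. So once we prove that the left dualizer of $\Dualn(D)$ is isomorphic to $D_\ell$ for every bifold algebra $D$, applying this to $D^\circ$ gives the second claim.

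For the left dualizer we use Proposition \ref{thm:dualizers-represent} together with the Yoneda lemma in $\Alg{\T}$. Here $\A = \Alg{\T}$ with $G_\ell = G^\T$, and $\B = \Alg{\U}$ with $G_r = G^\U$. Proposition \ref{thm:dualizers-represent} says that $\nabla F^\U I$ represents $G^\U\Delta = G^\U \Hom_\T(-,D)$. By \ref{para:dualn-func}, this composite is exactly $\Alg{\T}(-,D_\ell)$. Hence there are isomorphisms
$$\Alg{\T}(A,\nabla F^\U I) \cong \Alg{\T}(A,D_\ell),$$
$\V$-natural in $A \in \Alg{\T}$. By the enriched Yoneda lemma, fully faithfulness of the Yoneda embedding gives $\nabla F^\U I \cong D_\ell$ in $\Alg{\T}$.

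The only point that needs care is the identity $G^\U \Hom_\T(A,D) = \Alg{\T}(A,D_\ell)$. I would verify it directly from Definition \ref{defn:hom-t}. Evaluating at $K = I$ gives $(\Hom_\T(A,D))I = \Alg{\T}(A,D(-,I)) = \Alg{\T}(A,D_\ell)$, and $G^\U$ is evaluation at $I$. This identification must be $\V$-natural in $A$, so that it gives an isomorphism of representable functors rather than objectwise isomorphisms. That naturality holds because both sides come from the same $\V$-functor $\Alg{\T}(-,D_{r\ell}-)$ evaluated at $I$. With that settled, the Yoneda step finishes the proof, and the symmetry argument gives the statement for the right face.
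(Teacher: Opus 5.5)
Your proof is correct and follows the paper's argument: the paper likewise uses \ref{para:dualn-func} to see that $D_\ell$ represents $G_r\Hom_\T(-,D) = \Alg{\T}(-,D_\ell)$, and then concludes via \ref{thm:dualizers-represent} and the uniqueness of representing objects. Your reduction of the right-dualizer claim to the left one, using $(\Dualn D)^\circ = \Dualn(D^\circ)$ and the fact that the left face of $D^\circ$ is $D_r$, is a valid way of spelling out the paper's ``proved similarly.''
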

\begin{proof}
Write $D = (\T,\U,D)$. By \ref{para:dualn-func}, the object $D_\ell$ of $\Alg{\T}$ represents the composite $\V$-functor $G_r \Hom_\T(-,D) = \Alg{\T}(-,D_\ell):\Alg{\T}^\op \to \V$ and so, by \ref{thm:dualizers-represent}, is isomorphic to the left dualizer of $\Dualn(D)$. The claim regarding $D_r$ is proved similarly.
\end{proof}

\section{Global functoriality of the algebra-valued hom-functor}\label{sec:global-func}

We now establish a further way in which $\Hom_\T(A,D)$ \pref{defn:hom-t} is functorial. In addition to being of interest in its own right, this result will be an essential tool in later establishing a biequivalence between bifold algebras and algebraic dual adjunctions.  The resulting functoriality of $\Hom_\T(A,D)$ will enable us to vary the $\T$-algebra $A$ and the $(\T,\U)$-algebra $D$ in such a way that the $\J$-theories $\T$ and $\U$ vary as well.  First we employ the Grothendieck construction to define the categories over which this variation takes place:

\begin{para}\label{defn:algs}
We write $\AlgO$ to denote the op-fibred category over $\ThJ^\op$ determined by the functor $\pAlg{(-)}:\ThJ^\op \rightarrow \CAT$ of \eqref{eq:func_talg}.  Hence,
$$\AlgO = \Opfib\bigl(\ThJ^\op,\pAlg{(-)}\bigr)\;,$$
and we call $\AlgO$ the \bit{(op-fibred) category of algebras}.  The objects of $\AlgO$ are algebras $(\T,A)$ in the sense of \ref{para:jalg}, written also simply as $A$.  An \bit{op-theoretic morphism of algebras} is by definition a morphism
$$(M,f)\;:\;(\T_1,A_1) \rightarrow (\T_2,A_2)\;\;\;\;\text{in $\AlgO$}$$
and so consists of a morphism of $\J$-theories $M:\T_2 \rightarrow \T_1$ and a $\T_2$-homomorphism $f:M^*A_1 \rightarrow A_2$. Later, in Section \ref{sec:background-II}, we also discuss other categories of algebras over various theories \pref{para:cats-algs}, but here we employ $\AlgO$ and its fibrewise opposite (\ref{para:alg-star}) as they enable our result \ref{thm:descn_hom} on the global functoriality of $\Hom_\T(A,D)$ to exhibit the expected variances.
\end{para}

\begin{para}\label{para:alg-star}
We write $\AlgO^*$ to denote the \textit{fibrewise opposite} of $\AlgO$ \pref{para:fibr_op}, i.e.~the op-fibred category over $\ThJ^\op$ determined by the functor $\ThJ^\op \to\CAT$ given by $\T \mapsto \pAlg{\T}^\op$. An object of $\AlgO^*$ is an algebra $(\T,A)$, while a morphism $(M,f):(\T_1,A_1) \rightarrow (\T_2,A_2)$ in $\AlgO^*$ consists of a morphism of $\J$-theories $M:\T_2 \rightarrow \T_1$ and a \mbox{$\T_2$-homomorphism} $f:A_2 \rightarrow M^* A_1$. (By \ref{para:fibr_op}, $\AlgO^*$ is precisely the opposite of the fibred category over $\ThJ$ determined by $\pAlg{(-)}$, which we encounter later in \ref{para:cats-algs}.)
\end{para}

\begin{para}\label{defn:cat_bifold_alg}\emptybox
Writing $\ThJ^2 = \ThJ \times \ThJ$, we define functors
\begin{equation}\label{eq:func_tsalg}\Alg{(-,\blanktwo)}:(\ThJ^2)^\op \longrightarrow \VCAT,\;\;\;\;\pAlg{(-,?)}:(\ThJ^2)^\op \longrightarrow \CAT\end{equation}
that send each pair of $\J$-theories $(\T,\U)$ to the $\V$-category $\Alg{(\T,\U)}$ (\ref{sec:balg}) and its underlying ordinary category $\pAlg{(\T,\U)}$, respectively. The first functor sends each morphism $M = (M_\ell,M_r):(\T_2,\U_2) \rightarrow (\T_1,\U_1)$ in $\ThJ^2$ to the $\V$-functor $$M^*:\Alg{(\T_1,\U_1)} \longrightarrow \Alg{(\T_2,\U_2)}$$
given by $M^*(D) = D(M_\ell-,M_r\blanktwo)$ $(D \in \Alg{(\T_1,\U_1)})$, and the second sends $M$ to the ordinary functor underlying $M^*$. We write
$$\BAlgO = \Opfib\bigl((\ThJ^2)^\op,\pAlg{(-,\blanktwo)}\bigr)$$
to denote the op-fibred category over $(\ThJ^2)^\op$ determined by the functor $\pAlg{(-,\blanktwo)}$.  The objects of $\BAlgO$ may be identified with bifold algebras $(\T,\U,D)$, so we call $\BAlgO$ the \bit{(op-fibred) category of bifold algebras}.  An \bit{op-theoretic morphism of bifold algebras} is by definition a morphism
$$(M,g)\;:\;(\T_1,\U_1,D_1) \rightarrow (\T_2,\U_2,D_2)\;\;\;\;\text{in $\BAlgO$}$$
and so consists of a morphism $M = (M_\ell,M_r):(\T_2,\U_2) \rightarrow (\T_1,\U_1)$ in $\ThJ^2$ and a morphism $g:M^*D_1 \rightarrow D_2$ in $\pAlg{(\T_2,\U_2)}$.
\end{para}

\begin{para}\label{para:pb_algstar_bifoldalg}
In the following theorem, we write $P:\AlgO^* \rightarrow \ThJ^\op$ to denote the projection functor, and we write $L,R:\BAlgO \rightarrow \ThJ^\op$ to denote the first and second components of the projection functor $\BAlgO \rightarrow (\ThJ^2)^\op = \ThJ^\op \times \ThJ^\op$, so that the latter is the induced functor $(L,R)$.  Also, we write
$$\AlgO^* \tensor[_P]{\times}{_L} \BAlgO$$
to denote the pullback in $\CAT$ of the cospan consisting of $P$ and $L$.  The objects of $\AlgO^* \tensor[_P]{\times}{_L} \BAlgO$ are therefore pairs $((\T,A), (\T,\U,D))$ consisting of an algebra $(\T,A)$ and a bifold algebra $(\T,\U,D)$, and we may write these objects more briefly as $(\T,\U,A,D)$ or simply as $(A,D)$. Our next objective is as follows:
\end{para}

\begin{obj}\label{obj:global_func_alg_valued_hom}
Define a functor
$$\sfHom\;\;:\;\;\AlgO^* \tensor[_P]{\times}{_L} \BAlgO \;\;\longrightarrow\;\; \AlgO$$
given on objects by
$$\sfHom(A,D) = \Hom_\T(A,D)\;\;\;\;\;\;\;(A \in \ob\pAlg{\T},\;\;D \in \ob\:\pAlg{(\T,\U)}).$$
More explicitly, $\sfHom$ must send each object $(A,D) = ((\T,A),(\T,\U,D))$ of \newline $\AlgO^* \tensor[_P]{\times}{_L} \BAlgO$ to the object $\sfHom(A,D) = (\U,\Hom_\T(A,D))$ of $\AlgO$.
\end{obj}

\begin{para}\label{para:discn_pb_algstar_bifoldalg}
In order to define the functor $\sfHom$, we first examine its domain in more detail.  By definition, a morphism 
\begin{equation}\label{eq:mor_pb}((N,f),(M,g)):((\T_1,A_1),(\T_1,\U_1,D_1)) \rightarrow ((\T_2,A_2),(\T_2,\U_2,D_2))\end{equation}
in the pullback category $\AlgO^* \tensor[_P]{\times}{_L} \BAlgO$ is a pair consisting of morphisms $(N,f):(\T_1,A_1) \rightarrow (\T_2,A_2)$ in $\AlgO^*$ and $(M,g):(\T_1,\U_1,D_1) \rightarrow (\T_2,\U_2,D_2)$ in $\BAlgO$ with the property that $N = M_\ell$ when we write $M = (M_\ell,M_r)$.  Hence, up to a bijection, such a morphism can be written equivalently as a triple
\begin{equation}\label{eq:mor_pb2}(M,f,g):(\T_1,\U_1,A_1,D_1) \rightarrow (\T_2,\U_2,A_2,D_2)\;.\end{equation}
with $M:(\T_2,\U_2) \rightarrow (\T_1,\U_1)$ in $\ThJ^2$, $f:A_2 \to M_\ell^*A_1$ in $\pAlg{\T_2}$, and $g:M^* D_1 \rightarrow D_2$ in $\pAlg{(\T_2,\U_2)}$.  In fact, the correspondence between morphisms of the two forms \eqref{eq:mor_pb} and \eqref{eq:mor_pb2} leads to the following observation:
\end{para}

\begin{prop}\label{thm:pb_cat_as_gr_constr}
The category $\AlgO^* \tensor[_P]{\times}{_L} \BAlgO$ is isomorphic to the op-fibred category over $(\ThJ^2)^\op$ determined by the functor
$$\pAlg{(-)}^\op \times \pAlg{(-,\blanktwo)}\;\;:\;\;(\ThJ^2)^\op = \ThJ^\op \times \ThJ^\op \rightarrow \CAT$$
given on objects by  $(\T,\U) \mapsto \pAlg{\T}^\op \times \pAlg{(\T,\U)}$ and on morphisms by $M = (M_\ell,M_r) \mapsto (M_\ell^*)^\op \times M^*$.
\end{prop}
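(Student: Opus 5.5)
We construct mutually inverse functors between $\AlgO^* \tensor[_P]{\times}{_L} \BAlgO$ and $\E := \Opfib\bigl((\ThJ^2)^\op,\pAlg{(-)}^\op \times \pAlg{(-,\blanktwo)}\bigr)$, both commuting with the projections to $(\ThJ^2)^\op$. The first step is to check that $\Phi := \pAlg{(-)}^\op \times \pAlg{(-,\blanktwo)}$ is a functor. It is the product of $(\ThJ^2)^\op \xrightarrow{\pi_1} \ThJ^\op \xrightarrow{\pAlg{(-)}} \CAT_0 \xrightarrow{(-)^\op} \CAT_0$ with $\pAlg{(-,\blanktwo)}$ from \eqref{eq:func_tsalg}, so it is functorial on the nose.

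Next we unwind the definitions of \ref{para:gro_constr_opfibr}. An object of $\E$ is a pair $((\T,\U),(A,D))$ with $A \in \pAlg{\T}$ and $D \in \pAlg{(\T,\U)}$. This is exactly an object $((\T,A),(\T,\U,D))$ of the pullback. A morphism in $\E$ from $((\T_1,\U_1),(A_1,D_1))$ to $((\T_2,\U_2),(A_2,D_2))$ is a pair $(M,h)$. Here $M:(\T_2,\U_2) \to (\T_1,\U_1)$ lies in $\ThJ^2$, and $h:(\Phi M)(A_1,D_1) = (M_\ell^*A_1, M^*D_1) \to (A_2,D_2)$ lies in $\pAlg{\T_2}^\op \times \pAlg{(\T_2,\U_2)}$. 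So $h$ is a pair $(f,g)$ with $f:A_2 \to M_\ell^*A_1$ in $\pAlg{\T_2}$ and $g:M^*D_1 \to D_2$. These are precisely the triples $(M,f,g)$ of \eqref{eq:mor_pb2}. The bijection with pullback morphisms $((M_\ell,f),(M,g))$ noted in \ref{para:discn_pb_algstar_bifoldalg} therefore gives a bijection on hom-sets that is the identity on objects.

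The step needing care is compatibility with composition and identities. Composition in any $\Opfib(\B,\Psi)$ is $(b',f')\circ(b,f) = (b'b,\; f'\circ(\Psi b')f)$. In $\E$ the second component is computed componentwise in the product category. Its first coordinate, read in $\pAlg{\T_3}$, is
\[M_\ell'^*(f) \circ f' : A_3 \to M_\ell'^*A_2 \to M_\ell'^*M_\ell^*A_1.\]
This is exactly the composite in $\AlgO^*$ of $(M_\ell,f)$ and $(M_\ell',f')$, since $\AlgO^*$ is itself $\Opfib(\ThJ^\op,\pAlg{(-)}^\op)$ by \ref{para:alg-star}. The second coordinate is $g'\circ M'^*g$, which is the composite in $\BAlgO$.

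In both structures the base component is the composite of $M$ and $M'$ in $(\ThJ^2)^\op$. Its $\ell$-part is the corresponding composite in $\ThJ^\op$, using strict functoriality $(M M')_\ell^* = M_\ell'^* M_\ell^*$, i.e. the equality $(MM')^* = M'^*M^*$ holds strictly. Identities agree in the same way.

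Hence the bijection is a functor, indeed an isomorphism of categories, over $(\ThJ^2)^\op$. The only potential obstacle is bookkeeping of variances: the fibrewise opposite reverses $f$, so that in $\E$ it is composed in $\pAlg{\T}^\op$. Checking this carefully against \ref{para:fibr_op} is the main point.
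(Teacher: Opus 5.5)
Your proposal is correct and follows essentially the paper's route. The paper presents this proposition as an immediate observation from the correspondence between morphisms of the forms \eqref{eq:mor_pb} and \eqref{eq:mor_pb2} in \ref{para:discn_pb_algstar_bifoldalg}, that is, as the identity-on-objects bijection on morphisms given by triples $(M,f,g)$. Your explicit check that composition (with $f$ composed in $\pAlg{\T}^\op$) and identities are preserved fills in the routine verification that the paper leaves implicit.
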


We now identify these two isomorphic categories, regarding \eqref{eq:mor_pb} and \eqref{eq:mor_pb2} as two notations for the same morphism of $\AlgO^* \tensor[_P]{\times}{_L} \BAlgO$.  Objective \ref{obj:global_func_alg_valued_hom} now amounts to defining a functor between the split op-fibred categories determined by a specific pair of $\CAT$-valued functors, and to accomplish this we require a result of Kelly 
\cite[\S 6]{Ke:ClDo}, which we now recall:

\begin{para}[\textbf{Kelly's 2-functoriality of the Grothendieck construction}]\label{thm:kelly_functor_from_lax_transf}
If $\Phi,\Psi:\C \rightarrow \K$ are functors, where $\C$ is a category and $\K$ is a 2-category, then a \bit{lax natural transformation} \cite[\S 2.2]{Ke:ClDo} $\Gamma:\Phi \wavy \Psi$ is given by a family of 1-cells $\Gamma_C:\Phi C \rightarrow \Psi C$ $(C \in \ob\C)$ in $\K$ together with an assignment to each morphism $c:C \rightarrow C'$ in $\C$ a 2-cell
$$
\xymatrix{
\Phi C \ar[r]^{\Gamma_C} \ar[d]_{\Phi c} & \Psi C \ar@{}[dl]|(0.3){}="s1"|(.7){}="t1" \ar[d]^{\Psi c}\\
\Phi C' \ar[r]_{\Gamma_{C'}} & \Psi C'
\ar@{=>}"s1";"t1"_{\Gamma_c}
}
$$
in $\K$ such that (1) $\Gamma_{1_C} = 1_{\Gamma_C}$ for each object $C$ of $\C$, and (2) for each pair of composable morphisms $a:C \rightarrow C'$ and $b:C' \rightarrow C''$ in $\C$, the 2-cell $\Gamma_{b \cdot a}$ is equal to the 2-cell obtained by pasting $\Gamma_{a}$ and $\Gamma_{b}$.

By \cite[\S 3]{Ke:ClDo}, there is a 2-category $\CAT \int \CAT$ whose objects are pairs $(\C,\Phi)$ consisting of a category $\C$ and a functor $\Phi:\C \rightarrow \CAT$, and whose 1-cells $(T,\Gamma):(\C,\Phi) \rightarrow (\D,\Theta)$ are pairs consisting of a functor $T:\C \rightarrow \D$ and a lax natural transformation $\Gamma:\Phi \wavy \Theta T$.  Kelly showed in \cite[\S 6]{Ke:ClDo} that there is a 2-functor 
$$\Opfib\;:\;\textstyle{\CAT \int \CAT} \rightarrow \CAT$$
that sends each object $(\C,\Phi)$ to the op-fibred category $\Opfib(\C,\Phi)$ over $\C$ determined by $\Phi:\C \rightarrow \CAT$.  This 2-functor sends each 1-cell $(T,\Gamma):(\C,\Phi) \rightarrow (\D,\Theta)$ to a functor
\begin{equation}\label{eq:func_from_lax_transf}\Opfib(T,\Gamma)\;:\;\Opfib(\C,\Phi) \rightarrow \Opfib(\D,\Theta)\end{equation}
that is given on objects by $(C \in \ob\C,F \in \ob \Phi C) \mapsto (TC \in \ob\D,\Gamma_C(F) \in \ob \Theta TC)$. Given a morphism $(c,f):(C,F) \rightarrow (C',F')$ in $\Opfib(\C,\Phi)$, where $c:C \to C'$ in $\C$ and $f:(\Phi c)F \to F'$ in $\Phi C'$, the functor $\Opfib(T,\Gamma)$ sends $(c,f)$ to the morphism $(TC,\Gamma_C(F)) \to (TC',\Gamma_{C'}(F'))$ in $\Opfib(\D,\Theta)$ consisting of $Tc:TC \rightarrow TC'$ and the composite
$$(\Theta Tc)(\Gamma_C(F)) \xrightarrow{(\Gamma_c)_F} \Gamma_{C'}(\Phi c)(F) \xrightarrow{\Gamma_{C'}(f)} \Gamma_{C'}(F')$$
in $\Theta TC'$.
\end{para}

The latter result of Kelly reduces Objective \ref{obj:global_func_alg_valued_hom} to the task of defining a suitable lax natural transformation:

\begin{para}[\textbf{The lax natural transformation $\mathsf{hom}$}]\label{para:lax_transf_hom}
For each pair of $\J$-theories $(\T,\U)$, let us write
\begin{equation}\label{eq:functors_hom}\mathsf{hom}_{(\T,\U)}\;:\;\pAlg{\T}^\op \times \pAlg{(\T,\U)} \longrightarrow \pAlg{\U}\end{equation}
to denote the (ordinary) bifunctor determined by the $\V$-functor $\Hom_\T$ of \ref{defn:hom-t}.  We now show that these functors $\mathsf{hom}_{(\T,\U)}$ are the components of a lax natural transformation $\mathsf{hom}$ of the following form:
\begin{equation}\label{eq:lax_transf_hom}
\xymatrix{
*[r]{(\ThJ^2)^\op \;=} & \ThJ^\op \times \ThJ^\op \ar[rr]^{\pi_2} \ar[dr]_{\hspace{-10ex}\pAlg{(-)}^\op \times \pAlg{(-,\blanktwo)}} & & \ThJ^\op \ar@{}[dll]|(.35){}="t1"|(.5){}="s1" \ar[dl]^{\pAlg{(-)}}\\
 & & \CAT & 
\ar@{~>}"s1";"t1"^{\mathsf{hom}}
}
\end{equation}
In order to define such a transformation $\mathsf{hom}$, we must also specify for each morphism $M = (M_\ell,M_r):(\T_1,\U_1) \rightarrow (\T_2,\U_2)$ in $(\ThJ^2)^\op$ a natural transformation
\begin{equation}\label{eq:hom_m}
\xymatrix{
\pAlg{\T_1}^\op \times \pAlg{(\T_1,\U_1)} \ar[rr]^(.65){\mathsf{hom}_{(\T_1,\U_1)}} \ar[d]_{(M_\ell^*)^\op \times M^*} & & \pAlg{\U_1} \ar@{}[dll]|(0.3){}="s1"|(.7){}="t1" \ar[d]^{M_r^*}\\
\pAlg{\T_2}^\op \times \pAlg{(\T_2,\U_2)} \ar[rr]_(.65){\mathsf{hom}_{(\T_2,\U_2)}} & & \pAlg{\U_2}
\ar@{=>}"s1";"t1"|{\mathsf{hom}_M}
}
\end{equation}
which we now define. We go slightly further and define a family of morphisms
\begin{equation}\label{eq:hom_m_ad}(\mathsf{hom}_M)_{AD} \;:\; M_r^*\Hom_{\T_1}(A,D) \rightarrow \Hom_{\T_2}(M_\ell^* A,M^* D)\;\;\;\;\textnormal{in $\Alg{\U_2}$}\end{equation}
that is \textit{$\V$-natural} in $A \in \Alg{\T_1}$ and $D \in \Alg{(\T_1,\U_1)}$. By \ref{para:dualn-func} and \eqref{eq:hom-nat}, \begin{equation}\label{eq:m-star-hom}M_r^*\Hom_{\T_1}(-,D) = M_r^*\llbracket-,D_{r\ell}\rrbracket = \llbracket-,D_{r\ell}M_r\rrbracket\;:\;\Alg{\T_1}^\op \to \Alg{\U_2}\;,\end{equation}
while
$$\Hom_{\T_2}(M_\ell^*-,M^* D) = \llbracket M_\ell^*-,M_\ell^* D_{r\ell}M_r\rrbracket\;:\;\Alg{\T_1}^\op \to \Alg{\U_2}$$
because $(M^*D)_{r\ell}:\U_2 \to \Alg{\T_2}$ is the composite $M_\ell^* D_{r\ell}M_r$. By \ref{exa:ind-transf-between-homs}, the $\V$-functor $M_\ell^*:\Alg{\T_1} \to \Alg{\T_2}$ induces morphisms
\begin{equation}\label{eq:defn-hom-lax-transf}(\mathsf{hom}_M)_{AD} := (M^*_\ell)_{A,D_{r\ell}M_r}:\llbracket A,D_{r\ell}M_r\rrbracket \to \llbracket M^*_\ell A,M^*_\ell D_{r\ell}M_r\rrbracket\end{equation}
in $\Alg{\U_2}$ that are $\V$-natural in $A \in \Alg{\T_1}, D \in \Alg{(\T_1,\U_1)}$ and correspond via the Yoneda principle (\ref{thm:yon-alg-val}) to the identities $1_{M^*_\ell D_{r\ell}M_r}$. Indeed, in this case the Yoneda principle provides a bijection between $\V$-natural transformations $$\llbracket -,D_{r\ell}M_r\rrbracket \Rightarrow \llbracket M^*_\ell -,M^*_\ell D_{r\ell}M_r\rrbracket$$
and $\V$-natural transformations $\sigma:M_\ell^*D_{r\ell}M_r \Rightarrow M_\ell^* D_{r\ell}M_r:\U_2 \to \Alg{\T_2}$, equivalently, morphisms $\sigma:M^*D \to M^*D$ in $\pAlg{(\T_2,\U_2)}$, under which $\sigma = 1_{M^* D}$ corresponds to $(\mathsf{hom}_M)_{-D}$. Concretely, $(\mathsf{hom}_M)_{(A,D)}$ is the $\U_2$-homomorphism whose component at $K \in \ob\U_2$ is
$$(M^*_\ell)_{A,\:D(-,M_r K)}\;:\;\Alg{\T_1}(A,D(-,M_r K)) \longrightarrow \Alg{\T_2}(AM_\ell,D(M_\ell-,M_r K)).$$

This defines the needed natural transformation $\mathsf{hom}_M$ as in \eqref{eq:hom_m}, and we can now prove the following:
\end{para}

\begin{lem}\label{thm:lax_transf_hom}
The functors $\mathsf{hom}_{(\T,\U)}$ in \eqref{eq:functors_hom} and the natural transformations $\mathsf{hom}_M$ in \eqref{eq:hom_m} together constitute a lax natural transformation of the form \eqref{eq:lax_transf_hom}.
\end{lem}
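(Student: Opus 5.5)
We must check that $\mathsf{hom}_M$ is natural (already noted), that $\mathsf{hom}_{1} = 1$, and that $\mathsf{hom}$ is compatible with composition. Naturality of each $\mathsf{hom}_M$ in $(A,D)$ is immediate, since the family $(\mathsf{hom}_M)_{AD}$ was defined in \ref{para:lax_transf_hom} to be $\V$-natural in $A$ and $D$; its underlying ordinary transformation is the required natural transformation in \eqref{eq:hom_m}.

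For the two axioms, the plan is to use the concrete componentwise description. At $K \in \ob\U_2$, $(\mathsf{hom}_M)_{AD}$ is the structural morphism
$$(M^*_\ell)_{A,\:D(-,M_r K)}:\Alg{\T_1}(A,D(-,M_r K)) \to \Alg{\T_2}(AM_\ell,D(M_\ell-,M_r K))$$
of the $\V$-functor $M_\ell^*$. Since the forgetful $\V$-functor $\Alg{\U_2} \to [\U_2,\V]$ is faithful, it suffices to verify both axioms componentwise at each $K$. For the identity $M = (1,1)$, the functor $M_\ell^* = 1$ has identity structural morphisms, so $\mathsf{hom}_{1} = 1$.

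For composition, take $M:(\T_1,\U_1) \to (\T_2,\U_2)$ and $M':(\T_2,\U_2) \to (\T_3,\U_3)$ in $(\ThJ^2)^\op$, so that the composite $M'' = M' \circ M$ has $M''^*_\ell = M'^*_\ell M^*_\ell$ and $M''^* = M'^* M^*$, because $\Alg{(-)}$ and $\Alg{(-,\blanktwo)}$ are strict functors. The pasted 2-cell has component at $(A,D)$ equal to
$$M'^*_r M^*_r\Hom_{\T_1}(A,D) \xrightarrow{M'^*_r(\mathsf{hom}_M)_{AD}} M'^*_r\Hom_{\T_2}(M^*_\ell A,M^*D) \xrightarrow{(\mathsf{hom}_{M'})_{M^*_\ell A,M^*D}} \Hom_{\T_3}(M''^*_\ell A,M''^*D).$$
At $K \in \U_3$, the first map is the component at $M'_rK$ of $(\mathsf{hom}_M)_{AD}$, namely $(M^*_\ell)_{A,D(-,M_rM'_rK)}$. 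The second is $(M'^*_\ell)_{AM_\ell,D(M_\ell-,M_rM'_rK)}$. Their composite is $(M'^*_\ell M^*_\ell)_{A,D(-,M_rM'_rK)}$, because structural morphisms of a composite of $\V$-functors compose. This is exactly the $K$-component of $(\mathsf{hom}_{M''})_{AD}$.

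Alternatively, one could argue via the Yoneda principle (\ref{thm:yon-alg-val}): both sides correspond to identity sections, using the composition behaviour described in \ref{exa:ind-transf-between-homs}. I expect the componentwise argument to be cleaner. The only real obstacle is bookkeeping: confirming that the pasting in the definition of lax natural transformation (\ref{thm:kelly_functor_from_lax_transf}) matches the order of whiskering written above, given the variance of $\ThJ^\op$ and the contravariant factor $\pAlg{(-)}^\op$. The $A$-variable enters $\mathsf{hom}_{(\T,\U)}$ contravariantly, but this does not affect the whiskering, since $(M_\ell^*)^\op$ is just applied to objects.
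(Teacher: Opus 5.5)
Your proposal is correct and takes essentially the same approach as the paper, whose proof simply states that the unit and composition conditions follow from the functoriality of $\Alg{(-)}:\ThJ^\op \to \VCAT$. You unpack that claim componentwise at each $K$: the identity $M_\ell^* = 1$ gives the unit condition, and the strict equality $(M_\ell M'_\ell)^* = M'^*_\ell M^*_\ell$, together with the composition rule for structural morphisms of composite $\V$-functors, gives the composition condition.
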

\begin{proof}
After applying the definitions, one finds that conditions (1) and (2) in \ref{thm:kelly_functor_from_lax_transf} follow from the functoriality of $\Alg{(-)}:\ThJ^\op \to \VCAT$.
\end{proof}

\begin{para}
Writing
$$\Phi \;=\; \pAlg{(-)}^\op \times \pAlg{(-,\blanktwo)}\;\;:\;\;(\ThJ^2)^\op \longrightarrow \CAT\;\;\textnormal{and}$$
$$\Psi \;=\; \pAlg{(-)}\;\;:\;\;\ThJ^\op \longrightarrow \CAT\;,$$
we have a lax natural transformation
$$
\xymatrix{
(\ThJ^2)^\op \ar[rr]^{\pi_2} \ar[dr]_{\Phi} & & \ThJ^\op \ar@{}[dll]|(.35){}="t1"|(.5){}="s1" \ar[dl]^{\Psi}\\
 & \CAT & 
\ar@{~>}"s1";"t1"^{\mathsf{hom}}
}
$$
by \ref{thm:lax_transf_hom}.  Thus we obtain a 1-cell
$$(\pi_2,\mathsf{hom})\;:\;((\ThJ^2)^\op,\Phi) \longrightarrow (\ThJ^\op,\Psi)$$
in Kelly's 2-category $\CAT \int \CAT$, so by Kelly's result in \ref{thm:kelly_functor_from_lax_transf} we obtain a functor
\begin{equation}\label{eq:Hom_as_groth_pi2_hom}\Opfib(\pi_2,\mathsf{hom})\;:\;\Opfib((\ThJ^2)^\op,\Phi) \rightarrow \Opfib(\ThJ^\op,\Psi)\;.\end{equation}
But by  \ref{thm:pb_cat_as_gr_constr} and \ref{defn:algs} we know that
$$\Opfib((\ThJ^2)^\op,\Phi) \;=\; \AlgO^* \tensor[_P]{\times}{_L} \BAlgO\;,\;\;\;\;\Opfib(\ThJ^\op,\Psi) \;=\; \AlgO\;,$$
so we arrive at the following:
\end{para}

\begin{defn}
We define $\sfHom:\AlgO^* \tensor[_P]{\times}{_L} \BAlgO \rightarrow \AlgO$ to be the functor \newline $\Opfib(\pi_2,\mathsf{hom})$ in \eqref{eq:Hom_as_groth_pi2_hom}.
\end{defn}

Using the definition of the 2-functor $\Opfib$ in \ref{thm:kelly_functor_from_lax_transf}, a straightforward unpacking of definitions yields the following detailed description of this functor $\sfHom$, and Objective \ref{obj:global_func_alg_valued_hom} is thus fulfilled:

\begin{thm}\label{thm:descn_hom}
There is a functor
$$\sfHom\;\;:\;\;\AlgO^* \tensor[_P]{\times}{_L} \BAlgO \;\;\longrightarrow\;\; \AlgO$$
that sends each object $(\T,\U,A,D)$ to $(\U,\Hom_\T(A,D))$ and sends each morphism
$$(M,f,g)\;:\;(\T_1,\U_1,A_1,D_1) \rightarrow (\T_2,\U_2,A_2,D_2)$$
(written as in \ref{eq:mor_pb2}) to the morphism
$$(M_r,\psi^{(M,f,g)})\;:\;(\U_1,\Hom_{\T_1}(A_1,D_1)) \rightarrow (\U_2,\Hom_{\T_2}(A_2,D_2))$$
in $\AlgO$, where 
\begin{equation}\label{eq:psi}\psi^{(M,f,g)}\;:\;M_r^*\Hom_{\T_1}(A_1,D_1) \longrightarrow \Hom_{\T_2}(A_2,D_2)\end{equation}
is defined as the composite
$$M_r^*\Hom_{\T_1}(A_1,D_1) \xrightarrow{(\mathsf{hom}_M)_{A_1D_1}} \Hom_{\T_2}(M_\ell^* A_1,M^*D_1) \xrightarrow{\Hom_{\T_2}(f,g)} \Hom_{\T_2}(A_2,D_2)$$
in the category of $\U_2$-algebras.
\end{thm}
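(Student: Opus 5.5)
The plan is to obtain the functor as $\Opfib(\pi_2,\mathsf{hom})$, as already set up in the preceding definition, and then unpack Kelly's formula \eqref{eq:func_from_lax_transf}. By \ref{thm:lax_transf_hom}, $(\pi_2,\mathsf{hom})$ is a 1-cell in $\CAT\int\CAT$. Kelly's 2-functor $\Opfib$ (\ref{thm:kelly_functor_from_lax_transf}) therefore gives a functor $\Opfib((\ThJ^2)^\op,\Phi)\to\Opfib(\ThJ^\op,\Psi)$. Composing with the isomorphism of \ref{thm:pb_cat_as_gr_constr} and the identification $\Opfib(\ThJ^\op,\Psi)=\AlgO$ yields $\sfHom$ with the stated domain and codomain. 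Functoriality comes for free, so what remains is to compute its effect on objects and on morphisms.

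On objects, Kelly's formula sends $((\T,\U),(A,D))$ to $(\pi_2(\T,\U),\mathsf{hom}_{(\T,\U)}(A,D))=(\U,\Hom_\T(A,D))$, which is the stated value.

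For morphisms, take $(M,f,g)$ as in \eqref{eq:mor_pb2}. We first translate it into a morphism $(c,h)$ of $\Opfib((\ThJ^2)^\op,\Phi)$. Here $c=M$, regarded as a morphism $(\T_1,\U_1)\to(\T_2,\U_2)$ of $(\ThJ^2)^\op$. The second component $h:(\Phi M)(A_1,D_1)\to(A_2,D_2)$ lives in $\pAlg{\T_2}^\op\times\pAlg{(\T_2,\U_2)}$, so it is the pair consisting of $f$, read as a morphism $M_\ell^*A_1\to A_2$ in $\pAlg{\T_2}^\op$, and $g:M^*D_1\to D_2$. This matches the fibrewise-opposite convention of \ref{para:fibr_op} and \ref{para:alg-star}.

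Kelly's formula then gives first component $\pi_2(M)=M_r$. The second component is the composite
\[
(\mathsf{hom}_{(\T_2,\U_2)}(f,g))\circ(\mathsf{hom}_M)_{(A_1,D_1)} .
\]
Its first factor is $(\mathsf{hom}_M)_{A_1D_1}$ from \eqref{eq:hom_m_ad}. Its second factor is the ordinary bifunctor underlying $\Hom_{\T_2}$, applied to $(f,g)$; that is, $\Hom_{\T_2}(f,g):\Hom_{\T_2}(M_\ell^*A_1,M^*D_1)\to\Hom_{\T_2}(A_2,D_2)$, contravariant in $f$ as required. This composite is exactly $\psi^{(M,f,g)}$ in \eqref{eq:psi}.

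The only point needing care is variance bookkeeping. One must check that the $\op$ on $\pAlg{\T}$ in $\Phi$ turns $f:A_2\to M_\ell^*A_1$ into an arrow of the correct direction in the fibre. One must also check that the direction of the 2-cell $\mathsf{hom}_M$ in \eqref{eq:hom_m}, from $\Psi(\pi_2M)\circ\mathsf{hom}_{(\T_1,\U_1)}$ to $\mathsf{hom}_{(\T_2,\U_2)}\circ\Phi M$, agrees with Kelly's convention $\Gamma_c:(\Theta Tc)\Gamma_C\Rightarrow\Gamma_{C'}(\Phi c)$. Both checks are direct comparisons with the definitions, and no other step presents an obstacle.
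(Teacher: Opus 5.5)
Your proposal is correct and follows essentially the same route as the paper. The paper defines $\sfHom$ as $\Opfib(\pi_2,\mathsf{hom})$, using Lemma~\ref{thm:lax_transf_hom} and the identification in Proposition~\ref{thm:pb_cat_as_gr_constr}, and then obtains the stated description by unpacking Kelly's formula. You carry out that unpacking explicitly, including the variance checks for $f$ and for the direction of $\mathsf{hom}_M$.
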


\section{The biequivalence of bifold algebras and algebraic dual adjunctions}\label{sec:bieq-bif-alg-alg-dual-adjn}

We now show that the 2-category $\ADAJ$ of $\J$-algebraic dual adjunctions is biequivalent to the category of bifold algebras, $\BAlgO$, when the latter is regarded as a locally discrete 2-category.

\begin{para}\label{para:def_func_from_bifolds_to_jalgdualadjns}
We begin by defining a functor
$$\Dualn\;:\;\BAlgO \longrightarrow \ADAJ$$
that sends each bifold algebra $(\T,\U,D)$ to its dualization adjunction
$$\Dualn(\T,\U,D) \;\;=\;\; (\Alg{\T},\:\Alg{\U},\:\Hom_\U(-,D) \dashv \Hom_\T(-,D))\;,$$
discussed in \ref{thm:dualn_adjn} and \ref{exa:dualn_adjn}.  Given an (op-theoretic) morphism of bifold algebras,
$$(M,g)\;:\;(\T_1,\U_1,D_1) \rightarrow (\T_2,\U_2,D_2)\;\;\;\;\text{in $\BAlgO$,}$$
we define a 1-cell
$$\Dualn(M,g) \;:\; \Dualn(\T_1,\U_1,D_1) \rightarrow \Dualn(\T_2,\U_2,D_2)$$
in $\ADAJ$ as follows.  For brevity of notation, let us write
$$\Dualn(\T_i,\U_i,D_i) = (\A_i,\B_i,\nabla_i \dashv \Delta_i)\;\;\;\;(i = 1,2)\;.$$
First let us recall that $M = (M_\ell,M_r)$ consists of morphisms of $\J$-theories $M_\ell:\T_2 \rightarrow \T_1$ and $M_r:\U_2 \rightarrow \U_1$, and $g:M^* D_1 \to D_2$ is a morphism in $\pAlg{(\T_2,\U_2)}$. By applying the functor $\sAlgs:\ThJ^\op \rightarrow \ALGCATJ$ of \pref{eqn:func_alg_valued_in_psslice} we obtain 1-cells $\sAlgs M_\ell = (M_\ell^*,1):\A_1 \rightarrow \A_2$ and $\sAlgs M_r = (M_r^*,1):\B_1 \rightarrow \B_2$ in $\ALGCATJ$.  Using the notation of \eqref{eq:1cell_algdual_1}, we define
$$\Dualn(M,g) = (M^*_\ell,1,M_r^*,1,\phi^{(M,g)})\;:\;(\A_1,\B_1,\nabla_1 \dashv \Delta_1) \longrightarrow (\A_2,\B_2,\nabla_2 \dashv \Delta_2)$$
to be the 1-cell in $\ADAJ$ consisting of these 1-cells $\sAlgs M_\ell$ and $\sAlgs M_r$ in $\ALGCATJ$ together with a $\V$-natural transformation
\begin{equation}\label{eq:1cell_dualn}
\xymatrix{
\Alg{\T_1}^\op \ar[r]^{(M_\ell^*)^\op} \ar[d]_{\Hom_{\T_1}(-,D_1)} & \Alg{\T_2}^\op \ar[d]^{\Hom_{\T_2}(-,D_2)}\\
\Alg{\U_1} \ar@{}[ur]|(.4){}="t1"|(.6){}="s1"^{\phi^{(M,g)}} \ar[r]_{M_r^*} & \Alg{\U_2}
\ar@{<=}"s1";"t1"
}
\end{equation}
that we now define.  Given a $\T_1$-algebra $A$, we obtain an associated morphism
$$(M,\:1_{M_\ell^*A},\:g)\;:\;(\T_1,\U_1,A,D_1) \rightarrow (\T_2,\U_2,M_\ell^* A,D_2)$$
in the category $\AlgO^* \tensor[_P]{\times}{_L} \BAlgO$ that is discussed in \ref{para:pb_algstar_bifoldalg}, \ref{para:discn_pb_algstar_bifoldalg}, and \ref{thm:pb_cat_as_gr_constr}.  Hence we may apply the functor $\sfHom:\AlgO^* \tensor[_P]{\times}{_L} \BAlgO \rightarrow \AlgO$ \pref{thm:descn_hom} to obtain a morphism
$$\sfHom(M,\:1_{M_\ell^*A},\:g)\;:\;\sfHom(\T_1,\U_1,A,D_1) \rightarrow \sfHom(\T_2,\U_2,M_\ell^* A,D_2)$$
in $\AlgO$, which we may write with the notation of \ref{thm:descn_hom}  as
$$\left(M_r,\psi^{(M,1_{M_\ell^*A},g)}\right)\;:\;(\U_1,\Hom_{\T_1}(A,D_1)) \rightarrow (\U_2,\Hom_{\T_2}(M_\ell^* A,D_2))\;.$$
The second component is a morphism in $\pAlg{\U_2}$ that we write as
$$\phi^{(M,g)}_A := \psi^{(M,1_{M_\ell^* A},g)}\;:\;M_r^*\Hom_{\T_1}(A,D_1) \longrightarrow \Hom_{\T_2}(M_\ell^*A,D_2)$$
and that by \eqref{eq:psi} is the composite
$$M_r^*\Hom_{\T_1}(A,D_1) \xrightarrow{(\mathsf{hom}_M)_{A D_1}} \Hom_{\T_2}(M_\ell^* A,M^*D_1) \xrightarrow{\Hom_{\T_2}(M_\ell^* A,g)} \Hom_{\T_2}(M_\ell^* A,D_2)\;.$$
By \eqref{eq:hom_m_ad}, this family of morphisms $\phi^{(M,g)}_A$ is $\V$-natural in $A \in \Alg{\T_1}$ and so constitutes a $\V$-natural transformation $\phi^{(M,g)}$ of the needed form \eqref{eq:1cell_dualn}.
\end{para}

\begin{prop}\label{thm:dualn_func}
The assignments described in \ref{para:def_func_from_bifolds_to_jalgdualadjns} define a functor
$$\Dualn:\BAlgO \longrightarrow \ADAJ$$
that sends each bifold algebra to its dualization adjunction (\ref{defn:dualn_adjn}, \ref{exa:dualn_adjn}).
\end{prop}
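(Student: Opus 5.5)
We need to check three things: each $\Dualn(M,g)$ is a 1-cell in $\ADAJ$, $\Dualn$ preserves identities, and $\Dualn$ preserves composition. Since $\ADAJ$ has 1-cells and composition as in the colax comma 2-category $(U^*\downarrow U)_\textnormal{colax}$, a 1-cell is just a pair of 1-cells in $\ALGCATJ$ together with a $\V$-natural transformation of the form \eqref{eq:1cell_algdual}.

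The first point is immediate. $\sAlgs M_\ell=(M_\ell^*,1)$ and $\sAlgs M_r=(M_r^*,1)$ are 1-cells in $\ALGCATJ$ by \eqref{eqn:func_alg_valued_in_psslice}. The transformation $\phi^{(M,g)}$ was shown in \ref{para:def_func_from_bifolds_to_jalgdualadjns} to be $\V$-natural in $A$, and it has the required form \eqref{eq:1cell_dualn}.

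For functoriality, the plan is to reduce everything to the functoriality of $\sfHom$ (\ref{thm:descn_hom}) and of $\sAlgs$. On the first two components this is clear, because $\sAlgs$ is a functor and composition of 1-cells in $\ALGCATJ$ of the form $(M^*,1)$ is strict. It therefore remains to compare the third components.

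\emph{Composition.} Take composable morphisms $(M,g):D_1\to D_2$ and $(N,h):D_2\to D_3$ in $\BAlgO$. Their composite is $(MN,\;h\cdot N^*g)$, where $MN=(M_\ell N_\ell,M_rN_r)$. In the colax comma 2-category, the 2-cell of the composite 1-cell $\Dualn(N,h)\circ\Dualn(M,g)$ at a $\T_1$-algebra $A$ is the pasting
\[
N_r^*M_r^*\Hom_{\T_1}(A,D_1)\xrightarrow{N_r^*\phi^{(M,g)}_A}N_r^*\Hom_{\T_2}(M_\ell^*A,D_2)\xrightarrow{\phi^{(N,h)}_{M_\ell^*A}}\Hom_{\T_3}(N_\ell^*M_\ell^*A,D_3).
\]
The key observation is how composition works in $\AlgO=\Opfib(\ThJ^\op,\pAlg{(-)})$: the second component of a composite $(N_r,\beta)\circ(M_r,\alpha)$ is exactly $\beta\cdot N_r^*\alpha$. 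Now consider the morphisms $(M,1,g)$ and $(N,1,h)$ in $\AlgO^*\tensor[_P]{\times}{_L}\BAlgO$. Their composite is $(MN,1,h\cdot N^*g)$, because the identity components compose to the identity $1_{N_\ell^*M_\ell^*A}$ under the composition rule of the fibrewise opposite. Applying the functor $\sfHom$ to this composite therefore yields
\[
\phi^{(MN,\,h\cdot N^*g)}_A=\phi^{(N,h)}_{M_\ell^*A}\cdot N_r^*\phi^{(M,g)}_A,
\]
which is precisely the identity we need.

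\emph{Identities.} In the same way, $\sfHom$ sends the identity triple $(1,1,1)$ to an identity morphism in $\AlgO$. Hence $\phi^{(1,1_D)}=1$, and $\Dualn$ preserves identities.

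The main obstacle is bookkeeping rather than mathematics. One must check three things: that composition in the Grothendieck constructions $\Opfib$ and its fibrewise opposite matches the pasting of 2-cells in $(U^*\downarrow U)_\textnormal{colax}$; that the equalities $M^*\circ N^*=(MN)^*$ hold strictly, as guaranteed by \eqref{eq:func_talg} and \eqref{eq:func_tsalg}; and that the 2-cells $\lambda,\rho$, all identities here, do not interfere. With these checks the composite identity follows directly from Kelly's functoriality, as encoded in \ref{thm:descn_hom}. Finally, the object part sends $(\T,\U,D)$ to $\Dualn(\T,\U,D)$ by definition, which completes the statement.
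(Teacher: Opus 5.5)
Your proposal is correct and follows essentially the same route as the paper. You apply the functor $\sfHom$ to the composable pair $(M,1,g)$, $(N,1,h)$ in $\AlgO^* \tensor[_P]{\times}{_L} \BAlgO$, and read off from composition in $\AlgO$ that $\phi^{(N,h)\cdot(M,g)}_A = \phi^{(N,h)}_{M_\ell^*A}\cdot N_r^*\phi^{(M,g)}_A$, which is exactly the pasting that defines composition in $\ADAJ$; you also spell out the identity case that the paper leaves to the reader. One small slip: the strict equality you need is $(MN)^* = N^*\circ M^*$, not $M^*\circ N^*$, as your own use of $h\cdot N^*g$ already presupposes.
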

\begin{proof}
The functoriality of $\Dualn$ follows from that of
$$\sfHom:\AlgO^* \tensor[_P]{\times}{_L} \BAlgO \rightarrow \AlgO\;.$$
In detail, suppose we are given bifold algebras $(\T_i,\U_i,D_i)$ $(i = 1,2,3)$, which we write simply as $D_i$, and let $(M,g):D_1 \rightarrow D_2$ and $(N,h):D_2 \rightarrow D_3$ in $\BAlgO$.  Then the composite in $\BAlgO$ is $(N,h) \cdot (M,g) = (M \cdot N,h \cdot N^*g)$ where we write $M \cdot N:(\T_3,\U_3) \rightarrow (\T_1,\U_1)$ for the composite in $\ThJ^2$.  Letting $A$ be an arbitrary $\T_1$-algebra, we may consider the associated composable pair of morphisms
$$(A,D_1) \xrightarrow{(M,1,g)} (M_\ell^*A,D_2) \xrightarrow{(N,1,h)} (N_\ell^*M_\ell^*A,D_3)$$
in $\AlgO^* \tensor[_P]{\times}{_L} \BAlgO$, whose composite is
$$(A,D_1) \xrightarrow{(M \cdot N,\;1,\;h \cdot N^*g)} ((M \cdot N)_\ell^*A,D_3)\;,$$
where $h \cdot N^* g:(M \cdot N)^* D_1 = N^*M^* D_1 \to D_3$ in $\pAlg{(\T_3,\U_3)}$. By applying $\sfHom:\AlgO^* \tensor[_P]{\times}{_L} \BAlgO \rightarrow \AlgO$, we obtain a diagram
$$
\xymatrix{
\sfHom(A,D_1) \ar[dr]_{\sfHom(M,1,g)\hspace{3ex}} \ar[rr]^{\sfHom(M \cdot N,1,h \cdot N^*g)} & & \sfHom(N_\ell^*M_\ell^*A,D_3)\\
             & \sfHom(M_\ell^*A,D_2) \ar[ur]_{\hspace{3ex}\sfHom(N,1,h)}
}
$$
in $\AlgO$, which commutes, by the functoriality of $\sfHom$.  But in view of \ref{para:def_func_from_bifolds_to_jalgdualadjns}, this diagram can be written as
$$
\xymatrix{
(\U_1,\Hom_{\T_1}(A,D_1)) \ar[dr]_{\left(M_r,\phi^{(M,g)}_A\right)\hspace{3ex}} \ar[rr]^{\left((M \cdot N)_r,\phi^{(N,h) \cdot (M,g)}_A\right)} & & (\U_3,\Hom_{\T_3}(N_\ell^*M_\ell^*A,D_3))\\
             & (\U_2,\Hom_{\T_2}(M_\ell^*A,D_2)) \ar[ur]_{\hspace{3ex}\left(N_r,\phi^{(N,h)}_{M_\ell^* A}\right)}
}
$$
and since this diagram commutes for each $\T_1$-algebra $A$ it follows that $\phi^{(N,h) \cdot (M,g)}$ is equal to the pasted 2-cell
$$
\xymatrix{
\Alg{\T_1}^\op \ar[d]_{\Hom_{\T_1}(-,D_1)} \ar[rr]^{(M_\ell^*)^\op} & & \Alg{\T_2}^\op \ar@{}[dll]|(.4){}="t1"|(.6){}="s1" \ar[d]|{\Hom_{\T_2}(-,D_2)} \ar[rr]^{(N_\ell^*)^\op} & & \Alg{\T_3}^\op \ar@{}[dll]|(.4){}="t2"|(.6){}="s2" \ar[d]^{\Hom_{\T_3}(-,D_3)}\\
\Alg{\U_1} \ar[rr]_{M_r^*} & & \Alg{\U_2} \ar[rr]_{N_r^*} & & \Alg{\U_3}.
\ar@{=>}"s1";"t1"^{\phi^{(M,g)}}
\ar@{=>}"s2";"t2"^{\phi^{(N,h)}}
}
$$
Therefore $\Dualn((N,h) \cdot (M,g)) = \left(N_\ell^*M_\ell^*,1,N_r^*M_r^*,1,\phi^{(N,h) \cdot (M,g)}\right)$ is equal to the composite $\Dualn(N,h) \cdot \Dualn(M,g)$ in $\ADAJ$.  It is straightforward to verify by a similar method that $\Dualn$ preserves identity morphisms.
\end{proof}

\begin{lem}\label{thm:lem_for_local_equiv}
Let $(\T_i,\U_i,D_i)$ $(i = 1,2)$ be bifold algebras, let $M = (M_\ell,M_r):(\T_2,\U_2) \rightarrow (\T_1,\U_1)$ be a morphism in $\ThJ^2$. Every 2-cell
\begin{equation}\label{eq:xi}
\xymatrix{
\Alg{\T_1}^\op \ar[r]^{(M_\ell^*)^\op} \ar[d]_{\Hom_{\T_1}(-,D_1)} & \Alg{\T_2}^\op \ar[d]^{\Hom_{\T_2}(-,D_2)}\\
\Alg{\U_1} \ar@{}[ur]|(.4){}="t1"|(.6){}="s1"^{\xi} \ar[r]_{M_r^*} & \Alg{\U_2}
\ar@{<=}"s1";"t1"
}
\end{equation}
in $\VCAT$ is of the form $\xi = \phi^{(M,g)}$ for a unique morphism $g:M^* D_1 \rightarrow D_2$ in $\Alg{(\T_2,\U_2)}$, with the notation of \eqref{eq:1cell_dualn}.
\end{lem}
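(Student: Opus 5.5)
We use the Yoneda principle for algebra-valued $\V$-functors (\ref{thm:yon-alg-val}) twice: once to parametrize the 2-cells $\xi$, and once to identify $\phi^{(M,g)}$ inside that parametrization. By \eqref{eq:m-star-hom}, the domain of $\xi$ is
$$M_r^*\Hom_{\T_1}(-,D_1) = \llbracket -,D_{1r\ell}M_r\rrbracket\;:\;\Alg{\T_1}^\op \to \Alg{\U_2},$$
which is representable, represented by the $\U_2$-algebra $E := D_{1r\ell}M_r:\U_2 \to \Alg{\T_1}$. The codomain is $S := \Hom_{\T_2}(M_\ell^*-,D_2) = \llbracket M_\ell^* -,D_{2r\ell}\rrbracket$.

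We apply \ref{exa:ind-transf-between-homs} with $\C = \Alg{\T_1}$, $\B = \Alg{\T_2}$, $P = M_\ell^*$ (which preserves $\J$-powers, since these are formed pointwise), and $E' = D_{2r\ell}$. It tells us that $\V$-natural transformations $\xi:\llbracket-,E\rrbracket \Rightarrow \llbracket M_\ell^*-,E'\rrbracket$ correspond bijectively to sections of $S$ at $E$. Such a section is the same as a $\V$-natural transformation $\sigma:M_\ell^*E = M_\ell^*D_{1r\ell}M_r \Rightarrow D_{2r\ell}$ of $\V$-functors $\U_2 \to \Alg{\T_2}$. Under this correspondence, $\sigma$ goes to the composite
$$\llbracket M_\ell^*-,\sigma\rrbracket \circ (M_\ell^*)_{-E}.$$
Next, the isomorphism $(-)_{r\ell}:\Alg{(\T_2,\U_2)} \cong \Alg{\U_2}(\Alg{\T_2})$ of \eqref{eq:ba_transposition_isos}, which is full in $[\U_2,\Alg{\T_2}]$, identifies such $\sigma$ bijectively with morphisms $g:M^*D_1 \to D_2$ in $\pAlg{(\T_2,\U_2)}$ via $\sigma = g_{r\ell}$. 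Here we use $(M^*D_1)_{r\ell} = M_\ell^*D_{1r\ell}M_r$, as noted in \ref{para:lax_transf_hom}. So every $\xi$ corresponds to a unique $g$.

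It remains to check that the $\xi$ attached to $g$ is exactly $\phi^{(M,g)}$. By \ref{para:def_func_from_bifolds_to_jalgdualadjns},
$$\phi^{(M,g)}_A = \Hom_{\T_2}(M_\ell^*A,g) \circ (\mathsf{hom}_M)_{AD_1},$$
and by \eqref{eq:defn-hom-lax-transf}, $(\mathsf{hom}_M)_{AD_1} = (M_\ell^*)_{A,E}$. Also $\Hom_{\T_2}(M_\ell^*A,g) = \llbracket M_\ell^*A,g_{r\ell}\rrbracket$, because $\Hom_{\T_2}$ is defined as $\llbracket-,?\rrbracket \circ (1 \otimes (-)_{r\ell})$ (\ref{defn:hom-t}). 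Hence $\phi^{(M,g)}$ is precisely the composite that \ref{exa:ind-transf-between-homs} attaches to $\sigma = g_{r\ell}$, so $\xi = \phi^{(M,g)}$ with $g$ unique.

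The one step that needs care is this last identification. We must confirm that the bijection in \ref{exa:ind-transf-between-homs} takes exactly the composite form above, not the composite up to some transposition. This is checked at the unit: the Yoneda principle sends $\sigma$ to the transformation whose component at $C = EK$ evaluated on identities gives $\sigma_K$, and the composite $\llbracket P-,\sigma\rrbracket \circ P_{-E}$ does this. The remaining points are routine: underlying ordinary morphisms agree with $\V$-natural transformations here, and $M_\ell^*$ preserves $\J$-powers.
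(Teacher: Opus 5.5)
Your proposal is correct and is essentially the paper's proof. Both rewrite the two composites as $\llbracket-,D_{1r\ell}M_r\rrbracket$ and $\llbracket M_\ell^*-,D_{2r\ell}\rrbracket$, and use the Yoneda principle in the form of Example~\ref{exa:ind-transf-between-homs} to put 2-cells $\xi$ in bijection with $\V$-natural transformations $M_\ell^*D_{1r\ell}M_r \Rightarrow D_{2r\ell}$, i.e.\ with morphisms $g:M^*D_1 \to D_2$. Both then unwind the definitions of $\mathsf{hom}_M$ and $\phi^{(M,g)}$ to see that $\phi^{(M,g)}$ corresponds to $g$; your version just makes this final step, and side conditions such as $M_\ell^*$ preserving $\J$-powers, more explicit than the paper does.
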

\begin{proof}
In view of \ref{para:dualn-func} and \eqref{eq:m-star-hom}, the composite $\V$-functors in \eqref{eq:xi} can be written as
$$M_r^*\Hom_{\T_1}(-,D_1) = \llbracket-,D_{1r\ell} M_r\rrbracket,\;\;\;\;\Hom_{\T_2}(M_\ell^*-,D_2) = \llbracket M_\ell^*-,D_{2r\ell}\rrbracket,$$
where we write $D_{ir\ell} = (D_i)_{r\ell}$ $(i = 1,2)$. By \ref{exa:ind-transf-between-homs}, the Yoneda principle provides a bijection between $\V$-natural transformations $\xi:\llbracket-,D_{1r\ell} M_r\rrbracket \Rightarrow \llbracket M_\ell^*-,D_{2r\ell}\rrbracket$ as in \eqref{eq:xi} and $\V$-natural transformations $g:M_\ell^*D_{1r\ell} M_r \Rightarrow D_{2r\ell}:\U_2 \to \Alg{\T_2}$, equivalently, morphisms $g:M^*D_1 \to D_2$. Under this bijection, $\phi^{(M,g)}$ corresponds to $g$, in view of \ref{para:lax_transf_hom} and \ref{para:def_func_from_bifolds_to_jalgdualadjns}.
\end{proof}

Let us now regard the category $\BAlgO$ as a locally discrete 2-category, so that $\Dualn:\BAlgO \rightarrow \ADAJ$ may be regarded as a 2-functor.

\begin{thm}\label{thm:bifolds_biequiv_ada}
The 2-functor $\Dualn:\BAlgO \rightarrow \ADAJ$ is a biequivalence
$$\BAlgO \;\;\;\simeq\;\;\; \ADAJ$$
between the (locally discrete) 2-category of bifold algebras and the 2-category of \linebreak$\J$-algebraic dual adjunctions.
\end{thm}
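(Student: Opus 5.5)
By \ref{sec:bieq}, it suffices to show two things: that $\Dualn$ is biessentially surjective on objects, and that it is a local equivalence. Biessential surjectivity is exactly Theorem \ref{thm:alg_dual_bifold}: every $\J$-algebraic dual adjunction $\D$ satisfies $\D \simeq \Dualn(D)$ in $\ADAJ$ for some bifold algebra $D$. The work therefore lies in showing that, for bifold algebras $D_i = (\T_i,\U_i,D_i)$ $(i=1,2)$, the functor
$$\Dualn_{D_1D_2}:\BAlgO(D_1,D_2) \longrightarrow \ADAJ(\Dualn D_1,\Dualn D_2)$$
is an equivalence. Here the domain is a discrete category. As in the proof of \ref{thm:iota-bieq}, this amounts to proving that every 1-cell $P:\Dualn D_1 \to \Dualn D_2$ admits a unique pair $((M,g),\theta)$ with $(M,g)$ in $\BAlgO$ and $\theta:P \Rightarrow \Dualn(M,g)$ a 2-cell, and that every such 2-cell is invertible.

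First I handle the $\ALGCATJ$ components. Write $P = (P_\ell,\lambda^P,P_r,\rho^P,\phi^P)$, so that $(P_\ell,\lambda^P):\Alg{\T_1}\to\Alg{\T_2}$ and $(P_r,\rho^P):\Alg{\U_1}\to\Alg{\U_2}$ are 1-cells in $\ALGCATJ$. By Theorem \ref{thm:bieq_th_algcat}, $\sAlgs$ is a local equivalence $\ThJ^\op(\T_1,\T_2) = \ThJ(\T_2,\T_1) \simeq \ALGCATJ(\Alg{\T_1},\Alg{\T_2})$ with discrete domain. Hence there is a unique $M_\ell:\T_2\to\T_1$ together with a 2-cell $(M_\ell^*,1) \Rightarrow (P_\ell,\lambda^P)$, and that 2-cell is necessarily invertible. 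The direction matches $\gamma_\ell$ in Definition \ref{defn:dualadjj}, since $\gamma_\ell:(Q_\ell,\lambda^Q)\Rightarrow(P_\ell,\lambda^P)$ with $Q = \Dualn(M,g)$. In the same way we obtain a unique $M_r$ and an invertible $\gamma_r:(P_r,\rho^P)\Rightarrow(M_r^*,1)$.

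Next I handle the remaining 2-cell component. Given $\gamma_\ell$ and $\gamma_r$, the pasting condition \eqref{eqn:pasting_2cell_algdual} determines a unique candidate for the 2-cell of $Q$. Because $\gamma_\ell$ and $\gamma_r$ are invertible, the equation can be solved: $\phi^Q$ must equal $\phi^P$ whiskered with $\gamma_\ell^{-1\,\op}$ and $\gamma_r$, and this composite has the form \eqref{eq:xi}. Lemma \ref{thm:lem_for_local_equiv} then supplies a unique $g:M^*D_1\to D_2$ with $\phi^{(M,g)}$ equal to this 2-cell. Thus $(M,g)$ together with $\theta = (\gamma_\ell,\gamma_r)$ provides the required pair.

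The main obstacle is uniqueness, which must hold over all pairs and not merely for those built from my chosen $\gamma$'s. Suppose $((M',g'),\theta')$ is another such pair with $\theta' = (\gamma'_\ell,\gamma'_r)$. The uniqueness clause of the local equivalence $\sAlgs$ forces $M'_\ell = M_\ell$ and $\gamma'_\ell = \gamma_\ell$, and likewise on the right. Indeed, any 2-cell $(M^*,1)\Rightarrow(N^*,1)$ in $\ALGCATJ$ lies over identities in $\V$ and is invertible because $G^\T$ is conservative, so it is determined by the discrete-isofibration argument of \ref{thm:iota-bieq}. Since $\gamma_\ell,\gamma_r$ are invertible, the pasting equation then determines $\phi^{(M,g')}$, and Lemma \ref{thm:lem_for_local_equiv} gives $g'=g$. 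Invertibility of an arbitrary 2-cell $\theta:P\Rightarrow\Dualn(M,g)$ follows because both of its components are invertible. Their inverses satisfy the pasting condition \eqref{eqn:pasting_2cell_algdual} by a routine manipulation, so $\theta^{-1}$ is again a 2-cell in $\ADAJ$. With this, $\Dualn_{D_1D_2}$ is an equivalence, and therefore $\Dualn$ is a biequivalence.
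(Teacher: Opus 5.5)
Your proposal is correct and uses the same ingredients as the paper's proof: Theorem \ref{thm:alg_dual_bifold} for biessential surjectivity, and for the hom-functors the local equivalence of $\sAlgs$ from \ref{thm:bieq_th_algcat} together with Lemma \ref{thm:lem_for_local_equiv}; the difference is only in packaging, since you verify local equivalence through the unique-pair criterion from the proof of \ref{thm:iota-bieq}, whereas the paper checks essential surjectivity and then full faithfulness (showing that any 2-cell $\Dualn(M,g)\Rightarrow\Dualn(N,h)$ forces $(M,g)=(N,h)$ and is an identity). One small slip: solving the pasting equation $\Delta_2\gamma_\ell^\op\cdot\phi^P = \phi^{Q}\cdot\gamma_r\Delta_1$ gives $\phi^{Q} = \Delta_2\gamma_\ell^\op\cdot\phi^P\cdot\gamma_r^{-1}\Delta_1$, so the inverse belongs on $\gamma_r$ rather than on $\gamma_\ell$ (compare the paper's $\xi$, which is built from $\zeta_\ell^\op$ and $\zeta_r^{-1}$).
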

\begin{proof}
By Theorem \ref{thm:alg_dual_bifold}, the 2-functor $\Dualn$ is biessentially surjective on objects, so by \ref{sec:bieq} it suffices to show that $\Dualn$ is a local equivalence.  Letting $(\T_i,\U_i,D_i)$ $(i = 1,2)$ be bifold algebras, which we write simply as $D_i$, it suffices to show that the functor
$$\Dualn_{D_1D_2}\;:\;\BAlgO(D_1,D_2) \rightarrow \ADAJ(\Dualn D_1,\Dualn D_2)$$
is an equivalence, where the set $\BAlgO(D_1,D_2)$ is regarded as a discrete category.

Let us write $\Dualn D_i = (\A_i,\B_i,\nabla_i \dashv \Delta_i)$ $(i = 1,2)$, so that $\A_i = \Alg{\T_i}$ and $\B_i = \Alg{\U_i}$.  To show that $\Dualn_{D_1D_2}$ is essentially surjective on objects, let $P = (P_\ell,\lambda^P,P_r,\rho^P,\phi^P):\Dualn D_1 \rightarrow \Dualn D_2$ be a 1-cell in $\ADAJ$, with the notation of \eqref{eq:1cell_algdual_1}.  In particular, $(P_\ell,\lambda^P):\Alg{\T_1} \rightarrow \Alg{\T_2}$ and $(P_r,\rho^P):\Alg{\U_1} \rightarrow \Alg{\U_2}$ are then 1-cells in $\ALGCATJ$, but, by \ref{thm:bieq_th_algcat}, the 2-functor $\sAlgs:\ThJ^\op \rightarrow \ALGCATJ$ is a biequivalence and hence a local equivalence, when $\ThJ^\op$ is regarded as a locally discrete 2-category, so there are morphisms $M_\ell:\T_2 \rightarrow \T_1$ and $M_r:\U_2 \rightarrow \U_1$ in $\ThJ$ and invertible 2-cells $\zeta_\ell:\sAlgs M_\ell = (M^*_\ell,1) \Rightarrow (P_\ell,\lambda^P):\Alg{\T_1} \rightarrow \Alg{\T_2}$ and $\zeta_r:(P_r,\rho^P) \Rightarrow \sAlgs M_r = (M^*_r,1):\Alg{\U_1} \rightarrow \Alg{\U_2}$ in $\ALGCATJ$.  In particular, we obtain a morphism $M = (M_\ell,M_r):(\T_1,\U_1) \rightarrow (\T_2,\U_2)$ in $(\ThJ^2)^\op$.  Let $\xi$ denote the pasted 2-cell
$$
\xymatrix@!0@C=8ex @R=4ex{
\A_1^\op \ar[dd]_{\Delta_1} \ar@/^2pc/[rrr]|{(M_\ell^*)^\op}="t2" \ar[rrr]|{P_\ell^\op}="s2"  & & \ar@{}[ddl]|(.3){}="t1"|(.7){}="s1" & \A_2^\op \ar[dd]^{\Delta_2} \\
 & & & &\\
\B_1 \ar@/_2pc/[rrr]|{M_r^*}="s4" \ar[rrr]|{P_r}="t4" & & & \B_2
\ar@{=>}"s1";"t1"^(.2){\phi^P}
\ar@{}"s2";"t2"|(.25){}="s3"|(.75){}="t3"
\ar@{}"s3";"t3"|(.1){}="s5"|(.9){}="t5"
\ar@{=>}"s5";"t5"_{\zeta_\ell^\op}
\ar@{}"s4";"t4"|(.3){}="s6"|(.7){}="t6"
\ar@{=>}"s6";"t6"_{\zeta_r^{-1}}
}
$$
in $\VCAT$.  Then by \ref{thm:lem_for_local_equiv} we deduce that there is a unique morphism $g:M^* D_1 \rightarrow D_2$ in $\pAlg{(\T_2,\U_2)}$ such that that $\xi = \phi^{(M,g)}$, and in view of the definition of $\xi$ this entails that the pasted 2-cells
$$
\xymatrix@!0@C=8ex @R=4ex{
\A_1^\op \ar[dd]_{\Delta_1} \ar@/^1pc/[rrr]|{(M_\ell^*)^\op}="t2" \ar@/_1pc/[rrr]|{P_\ell^\op}="s2"  & & & \A_2^\op \ar[dd]^{\Delta_2} \ar@{}[ddll]|(.6){}="t1"|(.8){}="s1" & & \A_1^\op \ar[dd]_{\Delta_1} \ar@/^1pc/[rrr]^{(M_\ell^*)^\op} & & \ar@{}[dl]|(.3){}="t5"|(.7){}="s5" & \A_2^\op \ar[dd]^{\Delta_2}\\
 & & & & & & & & \\
\B_1 \ar@/_1pc/[rrr]|{P_r} & & & \B_2 & & \B_1 \ar@/^1pc/[rrr]|{M_r^*}="s4" \ar@/_1pc/[rrr]|{P_r}="t4" & & & \B_2
\ar@{=>}"s1";"t1"_{\phi^P}
\ar@{}"s2";"t2"|(.25){}="s3"|(.75){}="t3"
\ar@{=>}"s3";"t3"^{\zeta_\ell^\op}
\ar@{=>}"s5";"t5"^{\phi^{(M,g)}}
\ar@{}"s4";"t4"|(.25){}="t6"|(.75){}="s6"
\ar@{=>}"s6";"t6"^{\zeta_r}
}
$$
are equal.  Hence, since $\Dualn(M,g) = (M_\ell^*,1,M_r^*,1,\phi^{(M,g)})$, this entails that the pair $\zeta = (\zeta_\ell,\zeta_r)$ is an invertible 2-cell $\zeta:P \Rightarrow \Dualn(M,g)$ in $\ADAJ$ (in view of \eqref{eq:2cell_algdual}).

This shows that $\Dualn_{D_1D_2}$ is essentially surjective on objects.  To show that $\Dualn_{D_1D_2}$ is fully faithful, let $(M,g),(N,h):D_1 \rightarrow D_2$ be morphisms in $\BAlgO$, and let $\gamma = (\gamma_\ell,\gamma_r):\Dualn(M,g) \Rightarrow \Dualn(N,h):\Dualn D_1 \rightarrow \Dualn D_2$ be a 2-cell in $\ADAJ$.  By \ref{para:def_func_from_bifolds_to_jalgdualadjns}, we may write
$$\Dualn(M,g) = (\sAlgs M_\ell,\sAlgs M_r,\phi^{(M,g)}),\;\;\;\;\Dualn(N,h) = (\sAlgs N_\ell,\sAlgs N_r,\phi^{(N,h)})$$
 where $\sAlgs:\ThJ^\op \rightarrow \ALGCATJ$ is the functor defined in \eqref{eqn:func_alg_valued_in_psslice}.  Hence, by \eqref{eq:2cell_algdual} $\gamma_\ell:\sAlgs N_\ell \Rightarrow \sAlgs M_\ell$ and $\gamma_r:\sAlgs M_r  \Rightarrow \sAlgs N_r$ are 2-cells in $\ALGCATJ$, so since $\sAlgs:\ThJ^\op \rightarrow \ALGCATJ$ is a biequivalence when $\ThJ^\op$ is considered as a locally discrete 2-category (\ref{thm:bieq_th_algcat}), we deduce that $N_\ell = M_\ell$ and $M_r = N_r$ (so that $M = N$) and that $\gamma_r$ and $\gamma_\ell$ are identity 2-cells in $\ALGCATJ$.  But since $\gamma = (\gamma_\ell,\gamma_r)$ satisfies the pasting equation required of a 2-cell in $\ADAJ$, this entails that $\phi^{(M,g)} = \phi^{(N,h)}$.  Consequently, $\Dualn(M,g) = \Dualn(N,h)$, and $\gamma$ is the identity 2-cell on $\Dualn(M,g)$.  Also, since $M = N$ and $\phi^{(M,g)} = \phi^{(N,h)}$, we deduce by \ref{thm:lem_for_local_equiv} that $g = h$, so $(M,g) = (N,h)$ and the result follows, since $\BAlgO$ is locally discrete.
\end{proof}

\section{Basic example: Dualization of modules with respect to a bimodule}\label{exa:bim}

In this section, we consider a special case of Theorem \ref{thm:bifolds_biequiv_ada} where we let $\J = \{I\}$ be the system of arities $\{I\} \hookrightarrow \V$ consisting of just the unit object $I$ of a given symmetric monoidal closed category $\V$ for which $\V_0$ has finite limits.  Recall from \ref{exa:sys-ar} that $\{I\}$-theories $\R$ are equivalently monoids $R$ in $\V$, so that we have an equivalence $\Th_{\{I\}} \simeq \Mon(\V)$, and $\R$-algebras are left $R$-modules in $\V$.  Moreover, $\pAlg{\R}$ is the category $\pMod{R}$ of left $R$-modules in $\V$, which therefore underlies a $\V$-category $\Mod{R} = \Alg{\T}$.

Given monoids $R$ and $S$ in $\V$, let us regard $R$ and $S^\op$ as $\{I\}$-theories $\R$ and $\sS^\op$, respectively.  Then an $(\R,\sS^\op)$-algebra is equivalently described as an \mbox{\bit{$R$-$S$-bimodule}} in $\V$ (i.e., an $R$-$S$-biobject in the sense of Pareigis \cite[\S 3]{Parei:I}).  In particular, when $\V = \Ab$ is the category of abelian groups, with the usual tensor product and the system of arities $\{\ZZ\}$, we recover the usual notion of $R$-$S$-bimodule for a pair of rings $R$ and $S$. We write $\pMod{\text{$R$-$S$}}$ to denote the category of $R$-$S$-bimodules in $\V$, with morphisms in $\V$ that preserve both the left $R$- and right $S$-actions, noting that $\pMod{\text{$R$-$S$}} \cong \pAlg{(\R,\sS^\op)}$.   Writing $\Mon(\V)^2= \Mon(\V) \times \Mon(\V)$, there is a functor $\Phi:(\Mon(\V)^2)^\op \rightarrow \CAT$
given on objects by $\Phi(R,S) = \pMod{\text{$R$-$S$}}$ and on morphisms by restriction of scalars.  We write $\Bimod(\V)$ to denote the op-fibred category over $(\Mon(\V)^2)^\op$ determined by $\Phi$.  The objects of $\Bimod(\V)$ may be called \bit{bimodules in $\V$}, for we may write them as triples $(R,S,D)$, where $R$ and $S$ are monoids in $\V$ and $D$ is an $R$-$S$-bimodule in $\V$.  A morphism $(f_\ell,f_r,g):(R_1,S_1,D_1) \rightarrow (R_2,S_2,D_2)$ in $\Bimod(\V)$ consists of monoid morphisms $f_\ell:R_2 \rightarrow R_1$ and $f_r:S_2 \rightarrow S_1$ together with a morphism $g:\tensor[_{R_2}]{(D_1)}{_{S_2}} \rightarrow D_2$ in \pMod{\text{$R_2$-$S_2$}}, where we write $\tensor[_{R_2}]{(D_1)}{_{S_2}}$ for the $R_2$-$S_2$-bimodule obtained from $D_1$ by restriction of scalars along $f_\ell$ and $f_r$.

The system of arities $\{I\} \hookrightarrow \V$ is eleutheric (\ref{exa:sys-ar}), and a $\V$-functor $G:\A \rightarrow \V$ is $\{I\}$-algebraic iff $\A \simeq \Mod{R}$ in $\VCAT \sslash \V$ for some monoid $R$ in $\V$, where $\A$ is equipped with $G$ and $\Mod{R}$ is equipped with its `carrier' $\V$-functor; note that it makes no difference whether one employs left or right modules in this definition. Thus we refer to $\{I\}$-algebraic $\V$-categories over $\V$ as \bit{module $\V$-categories over $\V$}.

By a \bit{dual adjunction of module $\V$-categories} we mean a triple $(\A,\B,\nabla \dashv \Delta)$ consisting of module $\V$-categories $\A$ and $\B$ over $\V$ together with a $\V$-adjunction $\nabla \dashv \Delta:\A^\op \rightarrow \B$.  As a special case of \ref{thm:dualn_adjn}, every $R$-$S$-bimodule $D$ in $\V$ determines a dual adjunction of module $\V$-categories
\begin{equation}\label{eq:dualn_bimod_cor}\RAdjn{\Mod{R}^\op}{\Hom_R(-,D)}{\Hom_{S^\op}(-,D)}{}{}{\Mod{S^\op}}\end{equation}
that we  call the \textit{dualization adjunction} for $D$. In the special case where $\V = \Ab$, Morita studied dual \textit{equivalences} between full subcategories of $\Mod{R}$ and $\Mod{S^\op}$ for rings $R$ and $S$, employing as a basic tool the observation that every such dual equivalence is given by dualizing into an $R$-$S$-bimodule $D$ \cite[Theorem 1.1]{Morita}.

Dual adjunctions of module $\V$-categories are precisely $\{I\}$-algebraic dual adjunctions for the system of arities $\{I\} \hookrightarrow \V$.  Hence, by \ref{defn:dualadjj} there is a 2-category $\ADA_{\{I\}}$ whose objects are dual adjunctions of module $\V$-categories.

\begin{cor}\label{cor:dual_adj_mod_cats}
Let $\V$ be a symmetric monoidal closed category with finite limits.  Then the 2-category $\ADA_{\{I\}}$ of dual adjunctions of module $\V$-categories is biequivalent to the locally discrete 2-category $\Bimod(\V)$ of bimodules in $\V$.  Under this biequivalence, each $R$-$S$-bimodule $D$ in $\V$ corresponds to its dualization adjunction \eqref{eq:dualn_bimod_cor}.
\end{cor}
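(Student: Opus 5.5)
This is the special case $\J = \{I\}$ of Theorem \ref{thm:bifolds_biequiv_ada}, combined with an isomorphism (or at least an equivalence) of categories $\Bimod(\V) \simeq \BAlg_{\{I\}}$. The plan is to build this equivalence and compose it with the biequivalence $\Dualn:\BAlg_{\{I\}} \to \ADA_{\{I\}}$.

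First I check the hypotheses. The system of arities $\{I\} \hookrightarrow \V$ is eleutheric by \ref{exa:sys-ar}(4), hence amenable by \ref{thm:eleutheric-implies-amenable}, so the blanket assumptions of \ref{para:setting2} hold under the stated finite-limit hypothesis on $\V$. Theorem \ref{thm:bifolds_biequiv_ada} then gives a biequivalence $\BAlg_{\{I\}} \simeq \ADA_{\{I\}}$ sending each bifold algebra to its dualization adjunction.

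Next I compare $\BAlg_{\{I\}} = \Opfib((\Th_{\{I\}}^2)^\op,\pAlg{(-,?)})$ with $\Bimod(\V) = \Opfib((\Mon(\V)^2)^\op,\Phi)$.
\begin{itemize}
\item Use the equivalence $\Th_{\{I\}} \simeq \Mon(\V)$ of \ref{exa:sys-ar}(4), which sends a monoid $R$ to the one-object theory $\R$ with $\R(I,I) = R$.
\item Precompose in the second variable with the isomorphism $(-)^\op:\Mon(\V) \to \Mon(\V)$. This gives an equivalence $E:\Mon(\V)^2 \to \Th_{\{I\}}^2$, $(R,S) \mapsto (\R,\sS^\op)$.
\item Observe that $\pAlg{(\R,\sS^\op)} \cong \pMod{\text{$R$-$S$}}$ naturally in $(R,S)$. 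A $\V$-functor $\R \otimes \sS^\op \to \V$ is an object $D$ with commuting left $R$- and right $S$-actions, and $\J$-power preservation is automatic since the only arity is $I$. Restriction along morphisms of theories is restriction of scalars, so this isomorphism is strictly natural, i.e.\ $\pAlg{(-,?)} \circ E^\op \cong \Phi$.
\end{itemize}

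Since $\Opfib$ is 2-functorial by \ref{thm:kelly_functor_from_lax_transf}, a pair consisting of an equivalence of bases and a natural isomorphism of fibre functors induces an equivalence $\Bimod(\V) \simeq \BAlg_{\{I\}}$. Concretely, the functor is fully faithful because morphisms in both Grothendieck constructions correspond bijectively under $E$ and the fibre isomorphisms. It is essentially surjective because any bifold algebra $(\T,\U,D)$ is isomorphic in $\BAlg_{\{I\}}$ to one over theories of the form $(\R,\sS^\op)$: transport along the isomorphism of theories, with identity fibre map. An equivalence of categories is a biequivalence of locally discrete 2-categories. Composing with $\Dualn$ gives the biequivalence $\Bimod(\V) \simeq \ADA_{\{I\}}$.

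Finally I verify that $(R,S,D)$ goes to \eqref{eq:dualn_bimod_cor}. This is $\Dualn(\R,\sS^\op,D)$ with $\Alg{\R} = \Mod{R}$ and $\Alg{\sS^\op} = \Mod{S^\op}$, and $\Hom_\R(-,D)$, $\Hom_{\sS^\op}(-,D)$ are the $\Hom_R(-,D)$ and $\Hom_{S^\op}(-,D)$ written there.

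I expect the main obstacle to be bookkeeping: getting the variances of the op-fibrations and the $\op$ on $S$ to match, and confirming that the naturality of $\pAlg{(\R,\sS^\op)} \cong \pMod{\text{$R$-$S$}}$ holds strictly, so that the induced functor is well-defined rather than only pseudo.
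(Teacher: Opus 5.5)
Your proposal is correct and follows the same route as the paper: specialize Theorem \ref{thm:bifolds_biequiv_ada} to $\J = \{I\}$ and compose it with the identification $\Bimod(\V) \cong \BAlg_{\{I\}}$ given by $(R,S,D) \mapsto (R,S^\op,D)$. The only difference is cosmetic. The paper identifies monoids with $\{I\}$-theories outright and so gets an isomorphism of categories, whereas you use an equivalence and check essential surjectivity by transport along isomorphisms of theories, which is equally valid.
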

\begin{proof}
In view of the preceding discussion, this follows from Theorem \ref{thm:bifolds_biequiv_ada}, for upon identifying monoids in $\V$ with $\{I\}$-theories we obtain an isomorphism $\Bimod(\V) \cong \BAlg_{\{I\}}$ given on objects (in both directions) by $(R,S,D) \mapsto (R,S^\op,D)$.
\end{proof}

By returning to Morita's setting \cite{Morita} of $\V = \Ab$, we obtain a corollary on dual adjunctions between categories of modules in the usual sense. By instead taking $\V = \Set$ we obtain a corollary on dual adjunctions between toposes of monoid actions.

\section{Supporting theory II: Commutants and commutation}\label{sec:background-II}

Returning to the general setting of the paper (\ref{para:setting2}), we now review some background material from \cite{Lu:Cmt,Lu:BAlgCmt}.

\begin{para}\label{para:cats-algs}
Applying the classic Grothendieck construction (\ref{para:fibr_op}), the functor $\pAlg{(-)}:\ThJ^\op \rightarrow \CAT$ determines a fibred category over $\ThJ$ that we write as
$$\AlgT = \Fib(\ThJ,\pAlg{(-)}),$$
and call the \bit{fibred category of algebras}, as its objects are algebras $(\T,A)$ in the sense of \ref{para:talgs}; these we also write simply as $A$, calling $A$ a \bit{($\T$-)algebra on $C$} if $C = |A|$. By definition, a \bit{theoretic morphism of algebras} is a morphism $(M,f):(\T,A) \to (\U,B)$ in $\AlgT$ and so consists of a morphism of $\J$-theories $M:\T \to \U$ and a $\T$-homomorphism $f:A \to M^*B$. By \ref{para:fibr_op} and \ref{para:alg-star}, $\AlgT^\op = \AlgO^*$.

We now review some related categories of algebras defined in \cite[3.1, 3.4]{Lu:BAlgCmt}. A theoretic morphism of algebras $(M,f)$ is \bit{strong} if $f$ is an isomorphism. We write $\AlgS$ for the non-full subcategory of $\AlgT$ given by the strong morphisms. Writing $\C_\si$ for the \textit{groupoid core} of each category $\C$, i.e.~the (non-full) subcategory whose morphisms are all the isomorphisms in $\C$, we obtain by \eqref{eq:func_talg} a functor $\pAlg{(-)}_\si:\ThJ^\op \rightarrow \CAT$ such that
$$\AlgS = \Fib(\ThJ,\pAlg{(-)}_\si)\;.$$

Let $C$ be an object of $\V$.  For each $\J$-theory $\T$, let $\pAlg{\T}(C)$ denote the fibre over $C$ of the functor $G^\T:\pAlg{\T} \rightarrow \V_0$, so that $\pAlg{\T}(C)$ is the (non-full) subcategory of $\pAlg{\T}$ consisting of $\T$-algebras on $C$ and $\T$-homomorphisms $f:A \rightarrow B$ with $|f| = 1_C$.  By \cite[3.4]{Lu:BAlgCmt}, $\pAlg{\T}(C)$ is both a groupoid and also a preordered class, so simply a class equipped with an equivalence relation $\cong$.  By \eqref{eq:func_talg}, we obtain an evident functor $\pAlg{(-)}(C):\ThJ^\op \rightarrow \CAT$.  Applying the Grothendieck construction to this functor, we obtain a category
$$\AlgT(C) = \Fib(\ThJ,\pAlg{(-)}(C))$$
that we call the \bit{fibred category of algebras on $C$}.  Indeed, $\AlgT(C)$ is the (non-full) subcategory of $\AlgT$ whose objects are algebras on $C$  and whose morphisms are theoretic morphisms $(M,f)$ with $|f| = 1_C$.   But since $\pAlg{\T}(C)$ is both a groupoid and a preordered class, a morphism  $M:(\T,A) \rightarrow (\U,B)$ in $\AlgT(C)$ is equivalently given by a morphism of $\J$-theories $M:\T \rightarrow \U$ with the property that $A \cong M^*(B)$ in $\pAlg{\T}(C)$.
\end{para}

\begin{para}\label{sec:c-morph-balg}
There is a category $\BAlg_\sx$ whose objects are bifold algebras $(\T,\U,D)$, in which a morphism
$$(M,N,f):(\T,\U,D) \rightarrow (\T',\U',D')$$
is called a \bit{strong cross-morphism of bifold algebras} and is given by morphisms of $\J$-theories $M:\T \rightarrow \T'$ and $N:\U' \rightarrow \U$ (in opposite directions) and an isomorphism $f:D(-,N\blanktwo) \rightarrow D'(M-,\blanktwo)$ in $\pAlg{(\T,\U')}$ \cite[7.2]{Lu:BAlgCmt}.  Indeed, as discussed in \cite[7.1, 7.2]{Lu:BAlgCmt}, $\BAlg_\sx$ may be obtained by applying the \textit{Grothendieck construction for two-sided fibrations} \cite{Str:FibrYon2Cats} to the functor $\pAlg{(-,\blanktwo)}_\si:\ThJ^\op \times \ThJ^\op \rightarrow \CAT$ that sends each pair of $\J$-theories $(\T,\U)$ to the groupoid core $\pAlg{(\T,\U)}_\si$ of $\pAlg{(\T,\U)}$.
\end{para}

\begin{para}[\textbf{Commutants and commuting algebra pairs}]\label{para:cmt}
Given an algebra $(\T,A)$, the \bit{commutant of $\T$ with respect to $A$} \cite{Lu:Cmt} is the $\J$-theory $\T^\perp_A$ with hom-objects $\T^\perp_A(J,K) = \Alg{\T}(A^J,A^K)$ for all $J,K \in \ob\T^\perp_A = \ob\J$, and with composition and identities as in $\Alg{\T}$, where $A^J$ denotes the pointwise power of $A$ by $J$ in $\Alg{\T}$.  As discussed in \cite{Lu:Cmt,Lu:BAlgCmt}, there is a $\T^\perp_A$-algebra $A^\perp:\T^\perp_A \rightarrow \V$ that is given on objects by $J \mapsto |A|^J$ and on homs by $A^\perp_{JK} = G^\T_{A^JA^K}:\Alg{\T}(A^J,A^K) \hookrightarrow \V(|A|^J,|A|^K)$ with the notation of \ref{para:talgs}.  Hence $(\T^\perp_A,A^\perp)$ is an algebra, which we call \bit{the commutant of $(\T,A)$} \cite{Lu:BAlgCmt} and denote by
$$(\T,A)^\perp = (\T^\perp_A,A^\perp)\;.$$
By our convention in \ref{para:cats-algs}, we also write the algebra $(\T,A)^\perp$ simply as $A^\perp$ and call it \textit{the commutant of $A$}. Note that $|A^\perp| = |A|$.

Given algebras $A$ and $B$ on an object $C$ of $\V$, there is at most one morphism $M:B \rightarrow A^\perp$ in $\AlgT(C)$ \cite[4.1]{Lu:BAlgCmt}, with the notation of \ref{para:cats-algs}, and if there exists such a morphism $M$ then we say that $A$ \bit{commutes with} $B$, and we denote $M$ by $[B|A]:B \rightarrow A^\perp$ \cite[4.2]{Lu:BAlgCmt}. This succinct definition deserves elaboration: First, we may write $A$ and $B$ as $(\T,A)$ and $(\U,B)$, respectively, and then $A$ commutes with $B$ iff there exists a (necessarily unique) morphism of $\J$-theories $[B|A]:\U \rightarrow \T^\perp_A$ such that $B \cong [B|A]^*(A^\perp)$ in $\pAlg{\U}(C)$ \cite[4.1]{Lu:BAlgCmt}. The elementary characterization of commutation in \cite[4.4]{Lu:BAlgCmt} helps build intuition: Recalling that $|A| = |B| = C$, $A$ commutes with $B$ iff for each $J \in \ob\J = \ob\U$ the morphism $B_{JI}:\U(J,I) \to \V(BJ,BI) \cong \V(C^J,C)$ factors through the monomorphism $G^\T_{A^JA}:\Alg{\T}(A^J,A) \hookrightarrow \C(C^J,C)$. Loosely speaking, this captures the idea that the $\U$-operations are $\T$-homomorphisms. Further discussion of commutation can be found in \cite{Lu:Cmt,Lu:CvxAffCmt}. Commutation is symmetric, i.e.~$A$ commutes with $B$ iff $B$ commutes with $A$ \cite[8.10]{Lu:BAlgCmt}. An algebra $A$ is \bit{commutative} if $A$ commutes with itself \cite[13.1]{Lu:BAlgCmt}. A $\J$-theory $\T$ is \bit{commutative} iff every $\T$-algebra in $\V$ is commutative; see \cite[13.3]{Lu:BAlgCmt} for equivalent definitions of this concept.

We say that a pair of algebras $(A,B)$ is an \bit{algebra pair} on an object $C$ of $\V$ if $|A| = |B| = C$; if moreover $A$ commutes with $B$, then we call $(A,B)$ a \bit{commuting algebra pair} \cite[11.1]{Lu:BAlgCmt}.  If $(A,B)$ is a commuting algebra pair in which $A$ is a $\T$-algebra and $B$ a $\U$-algebra, then we say that $(A,B)$ is a \bit{commuting $\T$-$\U$-algebra pair}.  Commuting $\T$-$\U$-algebra pairs $(A,B)$ constitute a full sub-$\V$-category $\CAlgPair{\T}{\U}$ of the pullback $\Alg{\T} \times_{\V} \Alg{\U}$, and by \cite[11.11]{Lu:BAlgCmt} there is an equivalence of $\V$-categories $\Alg{(\T,\U)} \simeq \CAlgPair{\T}{\U}$, which sends each $(\T,\U)$-algebra $D$ to the pair $(D_\ell,D_r)$ consisting of the left and right faces of $D$.  Moreover, there is a functor $\CAlgPair{-}{\blanktwo}:(\ThJ^2)^\op \to \VCAT$, and the equivalence $\Alg{(\T,\U)} \xrightarrow{\sim} \CAlgPair{\T}{\U}$ is (strictly) natural in $\T,\U \in \ThJ$ \cite[11.12]{Lu:BAlgCmt}.  In particular, if $(A,B)$ is any commuting $\T$-$\U$-algebra pair then, up to $\V$-natural isomorphism, there exists a unique $(\T,\U)$-algebra $D:\T \otimes \U \rightarrow \V$ such that $(D_\ell,D_r) \cong (A,B)$ in $\CAlgPair{\T}{\U}$, and as in \cite[11.14]{Lu:BAlgCmt} we denote this $(\T,\U)$-algebra $D$ by
$$\langle A,B\rangle\;.$$
Writing the algebras $(\T,A)$ and $(\U,B)$ simply as $A$ and $B$, respectively, we also write the bifold algebra $(\T,\U,\langle A,B\rangle)$ simply as $\langle A,B\rangle$.
\end{para}

\begin{para}[\textbf{Commutant bifold algebras}]\label{para:cmtnt_bif_algs}
Given an algebra pair $(A,B)$ on an object $C$ of $\V$, we say that $B$ \bit{is the commutant of} $A$ if $B \cong A^\perp$ in $\AlgT(C)$ \cite[11.22]{Lu:BAlgCmt}.  If we write $A$ and $B$ as $(\T,A)$ and $(\U,B)$, respectively, then, by \cite[11.23]{Lu:BAlgCmt}, $B$ is the commutant of $A$ iff $A$ commutes with $B$ and the morphism of $\J$-theories $[B|A]:\U \rightarrow \T^\perp_A$ is an isomorphism.  We say that an algebra pair $(A,B)$ is a \bit{right-commutant algebra pair} (resp.~\bit{left-commutant algebra pair}) if $B$ is the commutant of $A$ (resp.~if $A$ is the commutant of $B$) \cite[11.22]{Lu:BAlgCmt}.  A \bit{commutant algebra pair} is an algebra pair that is both left-commutant and right-commutant.

Given an algebra $A$, we obtain a right-commutant algebra pair $(A,A^\perp)$. We say that the algebra $A$ is \bit{saturated} if $A$ is the commutant of $A^\perp$, i.e., if $A \cong A^{\perp\perp}$ in $\AlgT(C)$ where $C = |A|$, equivalently, if $A \cong A^{\perp\perp}$ in $\AlgS$, equivalently, if $A$ is the commutant of some algebra \cite[10.2, 10.3]{Lu:BAlgCmt}.

A \bit{right-commutant bifold algebra} (resp.~\bit{left-commutant bifold algebra}, \bit{commutant bifold algebra}) is a bifold algebra $D$ such that the algebra pair $(D_\ell,D_r)$ is right-commutant (resp.~left-commutant, commutant) \cite[9.1, 10.7]{Lu:BAlgCmt}. Various examples of these concepts are discussed in Section \ref{sec:exa_ada}. Given a bifold algebra $D = (\T,\U,D:\T \otimes \U \to \V)$, we have the following results from \cite[9.2, 9.3, 9.5, 10.8, 11.20]{Lu:BAlgCmt}: (1) $D$ is right-commutant iff $D \cong \langle A,A^\perp\rangle$ in $\BAlg_\sx$ for some algebra $A$, iff the transpose $D_{r\ell}:\U \rightarrow \Alg{\T}$ is fully faithful, (2) $D$ is left-commutant iff $D \cong \langle B^\perp, B\rangle$ in $\BAlg_\sx$ for some algebra $B$, iff the transpose $D_{\ell r}:\T \rightarrow \Alg{\U}$ is fully faithful, and (3) $D$ is commutant iff $D \cong \langle A,A^\perp\rangle$ in $\BAlg_\sx$ for some saturated algebra $A$, iff $D \cong \langle B^\perp,B\rangle$ in $\BAlg_\sx$ for some saturated algebra $B$.  Consequently, the full subcategory of $\BAlg_\sx$ spanned by right-commutant (resp.~left-commutant, commutant) bifold algebras is replete (i.e.~closed under isomorphism).  By \cite[9.4]{Lu:BAlgCmt} there is an equivalence between $\AlgS$ and the full subcategory $\RComBAlg_\sx$ of $\BAlg_\sx$ spanned by right-commutant bifold algebras, under which an algebra $A$ is sent to the bifold algebra $\langle A,A^\perp\rangle$ and, conversely, a right-commutant bifold algebra $D$ is sent to its left face $D_\ell$.  By \cite[10.10]{Lu:BAlgCmt}, the latter equivalence restricts to an equivalence $\SatAlgS \simeq \ComBAlg_\sx$ between the full, replete subcategories of $\AlgS$ and $\BAlg_\sx$ spanned by saturated algebras and commutant bifold algebras, respectively; in particular, an algebra $A$ is saturated iff $\langle A,A^\perp\rangle$ is commutant.  Similarly, there is an equivalence between $(\AlgS)^\op$ and the full subcategory $\LComBAlg_\sx$ of $\BAlg_\sx$ spanned by left-commutant bifold algebras \cite[9.4]{Lu:BAlgCmt}, given on objects by $B \mapsto \langle B^\perp, B\rangle$ and $D_r \mapsfrom D$, and this equivalence restricts also to an equivalence $\SatAlgS^\op \simeq \ComBAlg_\sx$ between saturated algebras and commutant bifold algebras \cite[10.10]{Lu:BAlgCmt}.

\begin{parasub}\label{para:one-commutative-face}
The following result is established in \cite[14.2, 14.3]{Lu:BAlgCmt}: Suppose that $(\T,\U,D)$ is a right-commutant bifold algebra whose left face $(\T,D_\ell)$ is commutative. Then every $\U$-algebra $B$ is the right face of a $(\T,\U)$-algebra. In more detail, there is a unique morphism $M:(\T,D_\ell) \to (\U,D_r)$ in $\AlgT(C)$, where $C = |D|$, and the morphism of $\J$-theories $M:\T \to \U$ witnesses that every $\U$-algebra $B$ has an underlying $\T$-algebra $BM$ for which $(BM,B)$ is a commuting $\T$-$\U$-algebra pair.
\end{parasub}
\end{para}

\section{Algebraic dual adjunctions and commuting algebra pairs}\label{sec:alg-dual-adj-comm-alg-pairs}

We write $\pCAlgPair{\T}{\U}$ for the ordinary category underlying the $\V$-category of commuting $\T$-$\U$-algebra pairs (\ref{para:cmt}). By \ref{para:cmt}, there is a functor
$$\pCAlgPair{-}{?}:(\ThJ^2)^\op \longrightarrow \CAT\;,$$
and we write $\CAlgPairO$ for the op-fibred category over $(\ThJ^2)^\op$ determined by this functor, calling $\CAlgPairO$ the \bit{(op-fibred) category of commuting algebra pairs}.  By \ref{para:cmt}, there is an equivalence $\pAlg{(-,\blanktwo)} \simeq \pCAlgPair{-}{\blanktwo}$ in the functor 2-category from $(\ThJ^2)^\op$ to $\CAT$, so (by \ref{thm:kelly_functor_from_lax_transf}) we obtain an equivalence
\begin{equation}\label{eq:balgo_equiv_cpairo}\BAlgO \;\simeq\; \CAlgPairO\end{equation}
under which each bifold algebra $D$ is sent to the commuting algebra pair $(D_\ell,D_r)$ consisting of its left and right faces, and each commuting algebra pair $(A,B)$ is sent to the bifold algebra $\langle A,B \rangle$ of \ref{para:cmt}.

Combining \eqref{eq:balgo_equiv_cpairo} with Theorem \ref{thm:bifolds_biequiv_ada}, we immediately obtain the following: 

\begin{thm}
There is a biequivalence
$$\CAlgPairO \;\;\simeq\;\;\ADAJ$$
between the (locally discrete) 2-category of commuting algebra pairs and the 2-category of $\J$-algebraic dual adjunctions, under which a commuting algebra pair $(A,B)$ is sent to the dualization adjunction for the bifold algebra $\langle A,B \rangle$, with the notation of \ref{para:cmt}.
\end{thm}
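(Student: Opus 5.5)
The biequivalence is obtained by composing two existing ones. The first is the equivalence of ordinary categories \eqref{eq:balgo_equiv_cpairo}, $E:\CAlgPairO \xrightarrow{\sim} \BAlgO$, given on objects by $(A,B) \mapsto \langle A,B\rangle$. The second is the biequivalence $\Dualn:\BAlgO \to \ADAJ$ of Theorem \ref{thm:bifolds_biequiv_ada}. Regarding all three categories $\CAlgPairO$, $\BAlgO$ as locally discrete 2-categories, I take the composite 2-functor $\Dualn \circ E:\CAlgPairO \to \ADAJ$. By construction it sends each commuting algebra pair $(A,B)$ to $\Dualn\langle A,B\rangle$, as the statement requires.

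First I will make the equivalence \eqref{eq:balgo_equiv_cpairo} explicit. The natural equivalence $\pAlg{(-,\blanktwo)} \simeq \pCAlgPair{-}{\blanktwo}$ of \cite[11.12]{Lu:BAlgCmt} is a 2-natural equivalence in the functor 2-category $[(\ThJ^2)^\op,\CAT]$. In particular its components and inverse components are strictly natural transformations, hence lax natural transformations with identity 2-cells. Kelly's 2-functor $\Opfib:\CAT\int\CAT \to \CAT$ of \ref{thm:kelly_functor_from_lax_transf} sends 1-cells of the form $(1,\Gamma)$ to functors. Since it is a 2-functor, it preserves equivalences in $\CAT\int\CAT$, and so it yields an equivalence of categories $\BAlgO \simeq \CAlgPairO$.

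The only point needing care is that the invertible modifications witnessing the equivalence in $[(\ThJ^2)^\op,\CAT]$ are 2-cells in $\CAT\int\CAT$. Kelly's 2-cells there are modifications between lax transformations over a fixed base functor, so this holds. An equivalence of ordinary categories, viewed as locally discrete 2-categories, is a biequivalence.

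Finally, I compose the two biequivalences. A composite of biequivalences is a biequivalence: local equivalences compose, and biessential surjectivity is preserved because 2-functors preserve equivalences. One could also choose the other direction $\BAlgO \to \CAlgPairO$, $D \mapsto (D_\ell,D_r)$, and compose with a pseudo-inverse. However, the direction through $E$ gives directly the description on objects claimed in the statement. I expect no real obstacle. The mildest point is confirming that Kelly's 2-functor transports the given equivalence, which is the step I check first.
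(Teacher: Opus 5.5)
Your proposal is correct and matches the paper's argument: \eqref{eq:balgo_equiv_cpairo} is obtained by applying Kelly's 2-functor $\Opfib$ to the equivalence $\pAlg{(-,\blanktwo)} \simeq \pCAlgPair{-}{\blanktwo}$, and the theorem follows by composing with the biequivalence $\Dualn$ of Theorem \ref{thm:bifolds_biequiv_ada}. One claim should be tempered: a strictly natural pseudo-inverse is not automatic. This does no harm, because a pseudonatural inverse is already a 1-cell in $\CAT\int\CAT$ (whose 1-cells are lax), and in any case the functor induced by the strictly natural faces transformation $D \mapsto (D_\ell,D_r)$ is directly fully faithful and essentially surjective.
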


\begin{para}\label{para:dualn-adj-for-calgpair}
Given a commuting $\T$-$\U$-algebra pair $(A,B)$, we abuse notation by writing the dualization adjunction for $\langle A,B \rangle$ as $\Hom_\U(-,B) \dashv \Hom_\T(-,A):\Alg{\T}^\op \to \Alg{\U}$ or even as $\Alg{\U}(-,B) \dashv \Alg{\T}(-,A):\Alg{\T}^\op \to \Alg{\U}$.
\end{para}

\section{Stable algebraic dual adjunctions and commutant bifold algebras}\label{sec:stabl-alg-dual-adj}

\begin{defn}\label{defn:stable_jald_dual_adjn}
Let $\D = (\A,\B,\nabla \dashv \Delta)$ be a $\J$-algebraic dual adjunction, and write $F_\ell \dashv G_\ell:\A \to \V$ and $F_r \dashv G_r:\B \to \V$ for the associated $\V$-adjunctions (\ref{para:jalg}).  $\D$ is \bit{left-stable} if for every object $J$ of $\J \hookrightarrow \V$, the object $F_\ell J$ of $\A$ is $\nabla\Delta$-reflexive \pref{defn:alg_dual_terms}.  Similarly, $\D$ is \bit{right-stable} if for every object $J$ of $\J$, the object $F_rJ$ of $\B$ is $\Delta\nabla$-reflexive.  $\D$ is \bit{stable} if $\D$ is both left- and right-stable.
\end{defn}

We discuss several examples of these notions in Section \ref{sec:exa_ada}.

\begin{rem}\label{rem:jgen_free}
Let us say that an object $A$ of a $\J$-algebraic $\V$-category $\A$ over $\V$ is a \bit{$\J$-generated free object} of $\A$ if $\A(A,-) \cong \V(J,G-)$ for some object $J$ of $\J$, where $G:\A \to \V$ is the associated $\V$-functor. For example, by the Yoneda lemma, the $\J$-generated free objects of $\Alg{\T}$ for a $\J$-theory $\T$ are precisely the representables, i.e.~the $\T$-algebras isomorphic to $\T(J,-):\T \to \V$ for some $J \in \ob\J = \ob\T$. In the situation of \ref{defn:stable_jald_dual_adjn}, $\D$ is left-stable (resp.~right-stable) iff every $\J$-generated free object of $\A$ is $\nabla\Delta$-reflexive (resp.~every $\J$-generated free object of $\B$ is $\Delta\nabla$-reflexive).  In particular, Definition \ref{defn:stable_jald_dual_adjn} does not depend on the chosen left adjoints to $G_\ell$ and $G_r$.
\end{rem}

\begin{rem}\label{rem:stable_dcirc}
A $\J$-algebraic dual adjunction $\D$ is left-stable if and only if $\D^\circ$ \pref{eq:rev} is right-stable.
\end{rem}

Having established in Theorem \ref{thm:bifolds_biequiv_ada} a biequivalence between algebraic dual adjunctions $\D$ and bifold algebras $D$, we now show that (right- or left-)stable algebraic dual adjunctions correspond to (right- or left-)commutant bifold algebras, using the following observations.

\begin{rem}\label{rem:cmt_dcirc}
A bifold algebra $D$ is left-commutant if and only if $D^\circ$ \eqref{eq:transpose-of-bif-alg} is right-commutant. Indeed, the left and right faces of $D^\circ$ are the right and left faces of $D$, respectively.
\end{rem}

\begin{lem}\label{thm:lemma_on_adjns}
Let $F \dashv G:\E \rightarrow \C$ be a $\V$-adjunction, with counit $\varepsilon:FG \Rightarrow 1_\E$, and let $K:\K \rightarrow \E$ be a fully faithful, strongly cogenerating $\V$-functor \cite[\S 3.6]{Ke:Ba}.  Then the composite $GK:\K \rightarrow \C$ is fully faithful if and only if $\varepsilon K:FGK \Rightarrow K$ is an isomorphism.
\end{lem}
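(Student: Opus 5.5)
The plan is to compare the two hom-maps of $GK$ with those of $K$ through the adjunction. For $X,Y \in \ob\K$, the adjunction isomorphism $\E(FGKX,KY) \cong \C(GKX,GKY)$ is given by precomposition with the unit and application of $G$. I will use the factorization
$$\K(X,Y) \xrightarrow{K_{XY}} \E(KX,KY) \xrightarrow{\E(\varepsilon_{KX},KY)} \E(FGKX,KY) \cong \C(GKX,GKY),$$
whose composite is $(GK)_{XY}$. This is the standard triangle-identity computation. Since $K$ is fully faithful, $K_{XY}$ is invertible. Hence $GK$ is fully faithful if and only if $\E(\varepsilon_{KX},KY)$ is an isomorphism in $\V$ for all $X,Y \in \ob\K$.

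For the ``if'' direction: when $\varepsilon K$ is invertible, each $\E(\varepsilon_{KX},KY)$ is invertible, so $GK$ is fully faithful.

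For the ``only if'' direction, fix $X$ and set $f = \varepsilon_{KX}:FGKX \to KX$. We know $\E(f,KY)$ is an isomorphism for every $Y \in \ob\K$, and we must conclude that $f$ is invertible. Here I use the strong cogeneration hypothesis. By \cite[\S 3.6]{Ke:Ba}, $K$ is strongly cogenerating exactly when the family of $\V$-functors $\E(-,KY)$ $(Y \in \K)$ jointly reflects isomorphisms, i.e.\ the functor $\E^\op \to [\K,\V]$, $E \mapsto \E(E,K-)$, is conservative. More carefully, this is the dual of Kelly's formulation for strongly generating functors. The naturality in $Y$ of the maps $\E(f,KY)$ is automatic, so $\E(f,K-)$ is an isomorphism in $[\K,\V]$, and conservativity gives that $f$ is invertible.

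The main point to check is that this conservativity formulation of ``strongly cogenerating'' matches Kelly's definition in \cite[\S 3.6]{Ke:Ba}. Kelly defines strong generation for a $\V$-functor as joint reflection of isomorphisms by the functors $\E(KY,-)$, in the presence of the relevant ordinary-level conditions, and strong cogeneration is the dual notion. If Kelly's official definition is phrased with extremal epimorphisms or with the underlying ordinary functors, I would use his stated equivalence with conservativity of $\E \to [\K^\op,\V]$ (dually $\E^\op \to [\K,\V]$). It is also worth noting that for this argument only the \emph{conservativity} part is needed, so the naturality of the factorization above in $X$ and $Y$ is all that remains to verify.
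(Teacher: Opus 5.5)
Your proposal is correct and follows exactly the route the paper indicates: the paper proves the lemma as a straightforward variation on Kelly's proof of (1.53) in \cite[\S 1.11]{Ke:Ba}, where $(GK)_{XY}$ is identified, up to the invertible maps $K_{XY}$ and the adjunction isomorphism, with $\E(\varepsilon_{KX},KY)$, and strong cogeneration replaces the Yoneda step to conclude that $\varepsilon_{KX}$ is invertible. Your reading of Kelly's definition as conservativity of $E \mapsto \E(E,K-)$ is the correct one, and since the argument only uses componentwise invertibility, the extra naturality check you mention at the end is not actually needed.
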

\begin{proof}
This can be proved by a straightforward variation on the proof in \cite[\S 1.11, (1.53)]{Ke:Ba} of the well-known result that $G$ is fully faithful iff $\varepsilon$ is an isomorphism.
\end{proof}

\begin{thm}\label{thm:bif_leftcmt_iff_dadj_left_stable}
A bifold algebra $(\T,\U,D)$ is left-commutant (resp.~right-commutant, resp.~commutant) if and only if its dualization adjunction is left-stable (resp.~right-stable, resp.~stable).
\end{thm}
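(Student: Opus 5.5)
We reduce everything to the left-commutant case and then apply Lemma \ref{thm:lemma_on_adjns}. The right-commutant case follows from it by passing to opposites: by Remark \ref{rem:cmt_dcirc}, $D$ is right-commutant iff $D^\circ$ is left-commutant. By \eqref{eq:dualnop}, $\Dualn(D^\circ) = (\Dualn D)^\circ$. By Remark \ref{rem:stable_dcirc}, $(\Dualn D)^\circ$ is left-stable iff $\Dualn D$ is right-stable. The commutant case is then the conjunction of the other two.

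For the left-commutant case, recall from \ref{para:cmtnt_bif_algs}(2) that $D$ is left-commutant iff the transpose $D_{\ell r}:\T \to \Alg{\U}$ is fully faithful. We express $D_{\ell r}$ as a composite of the form $GK$. By \ref{thm:bifold-alg-corresp-to-dual-adjn}(5), $D_{\ell r} \cong \Hom_\T(-,D)\circ Y^\op$, where $Y^\op:\T \to \Alg{\T}^\op$ sends $J$ to the representable $\T(J,-)$.

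We then apply Lemma \ref{thm:lemma_on_adjns} with the following data:
\begin{itemize}
\item $\E = \Alg{\T}^\op$ and $\C = \Alg{\U}$;
\item $G = \Delta = \Hom_\T(-,D)$ and $F = \nabla = \Hom_\U(-,D)$, so that $F \dashv G$;
\item $K = Y^\op:\T \to \Alg{\T}^\op$.
\end{itemize}
The hypotheses on $K$ hold as follows. $Y^\op$ is fully faithful by the Yoneda lemma. It is strongly cogenerating in $\Alg{\T}^\op$ because $Y:\T^\op \to \Alg{\T}$ is dense (\ref{para:talgs}), and dense functors are strongly generating \cite[\S 3.6, \S 5.1]{Ke:Ba}, so dually $Y^\op$ is strongly cogenerating.

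The lemma then says that $D_{\ell r}$ is fully faithful iff $\varepsilon Y^\op$ is invertible, where $\varepsilon:\nabla\Delta \Rightarrow 1$ is the counit in $\Alg{\T}^\op$. The counit in $\Alg{\T}^\op$ is $e^{\D}$, whose opposite is the unit $d^{\D^\circ}$ of \ref{defn:alg_dual_terms}. So $\varepsilon Y^\op$ is invertible iff each representable $\T(J,-)$, for $J \in \ob\J$, is $\nabla\Delta$-reflexive.

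Finally, by Remark \ref{rem:jgen_free}, the representables are exactly the $\J$-generated free $\T$-algebras $F^\T J$. Hence this reflexivity condition is precisely left-stability of $\Dualn D$.

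The main point requiring care is the identification of $D_{\ell r}$ with the composite $\Delta Y^\op$ up to isomorphism. Full faithfulness is invariant under isomorphism of $\V$-functors, so an isomorphism suffices, and \ref{thm:bifold-alg-corresp-to-dual-adjn} supplies one. The other delicate point is getting the variance of the counit versus the unit right when passing through $(-)^\op$. Both are bookkeeping rather than substantive obstacles.
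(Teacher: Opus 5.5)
Your proposal is correct and follows the paper's proof essentially step for step. Both reduce to the left-commutant case via \ref{rem:stable_dcirc}, \ref{rem:cmt_dcirc} and \eqref{eq:dualnop}, identify $D_{\ell r} \cong \Delta Y^\op$ using \ref{thm:bifold-alg-corresp-to-dual-adjn}, apply Lemma \ref{thm:lemma_on_adjns} with $K = Y^\op$ (strongly cogenerating because it is codense), and finish with \ref{rem:jgen_free}. The only cosmetic difference is that the paper cites \cite[\S 5.3]{Ke:Ba} for the fact that a codense functor is strongly cogenerating.
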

\begin{proof}
By \ref{rem:stable_dcirc} and \ref{rem:cmt_dcirc}, it suffices to prove that the bifold algebra $D$ is left-commutant if and only if its dualization adjunction $\D = \Dualn(D)$ is left-stable. Let us write $\D$ as $\nabla \dashv \Delta:\Alg{\T}^\op \rightarrow \Alg{\U}$, so that $\Delta = \Hom_\T(-,D)$.  By \ref{para:cmtnt_bif_algs}, $D$ is left-commutant iff its transpose $D_{\ell r}:\T \to \Alg{\U}$ is fully faithful. But by \ref{thm:bifold-alg-corresp-to-dual-adjn}, $D_{\ell r}$ is isomorphic to the composite
$$\T \xrightarrow{Y^\op} \Alg{\T}^\op \xrightarrow{\Delta} \Alg{\U}\;.$$
Hence $D$ is left-commutant iff $\Delta Y^\op$ is fully faithful. But $Y:\T^\op \rightarrow \Alg{\T}$ is fully faithful and dense (\ref{para:talgs}), so $Y^\op$ is fully faithful and codense. Since $Y^\op$ is codense, $Y^\op$ is strongly cogenerating, by \cite[\S 5.3]{Ke:Ba}. Hence, by Lemma \ref{thm:lemma_on_adjns}, $\Delta Y^\op$ is fully faithful iff $e^\D Y^\op:\nabla\Delta Y^\op \Rightarrow Y^\op$ is an isomorphism, where $e^\D:\nabla\Delta \Rightarrow 1_{\A^\op}$ is the counit of $\D$.  By \ref{defn:alg_dual_terms}, $e^\D = (d^{\D^\circ})^\op$, so $D$ is left-commutant iff $d^{\D^\circ}_{YJ}:YJ \rightarrow \nabla\Delta YJ$ is an isomorphism in $\Alg{\T}$ for each object $J$ of $\T$, iff the representable $\V$-functors from $\T$ to $\V$ are $\nabla\Delta$-reflexive objects of $\Alg{\T}$. But by \ref{rem:jgen_free}, the $\J$-generated free objects of $\Alg{\T}$ are precisely the representables.
\end{proof}

\begin{defn}\label{defn:2cat_sada}
Let us write $\LSADAJ$, $\RSADAJ$, and $\SADAJ$ for the full sub-2-categories of $\ADAJ$ spanned by the left-stable (resp.~right-stable, resp.~stable) $\J$-algebraic dual adjunctions.  Let us also write $\LComBAlgO$, $\RComBAlgO$, and $\ComBAlgO$ for the full subcategories of $\BAlgO$ spanned by the left-commutant (resp.~right-commutant, resp.~commutant) bifold algebras, and regard these categories as locally discrete 2-categories.
\end{defn}

\begin{thm}\label{thm:bieq_combalgs_st_jalg_dualadjns}
The biequivalence in Theorem \ref{thm:bifolds_biequiv_ada} restricts to a biequivalence between the (locally discrete) full sub-2-category of $\BAlgO$ spanned by left-commutant (resp.~right-commutant, resp.~commutant) bifold algebras and the full sub-2-category of $\ADAJ$ spanned by left-stable (resp.~right-stable, resp.~stable) $\J$-algebraic dual adjunctions:
$$\LComBAlgO \simeq \LSADAJ,\;\RComBAlgO \simeq \RSADAJ,\;\ComBAlgO \simeq \SADAJ\;.$$
\end{thm}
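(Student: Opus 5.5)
The plan is to restrict the biequivalence $\Dualn:\BAlgO \to \ADAJ$ of Theorem \ref{thm:bifolds_biequiv_ada} and to check the two properties in the definition of biequivalence (\ref{sec:bieq}) for the restricted 2-functor. First, by Theorem \ref{thm:bif_leftcmt_iff_dadj_left_stable}, $\Dualn$ sends left-commutant (resp.~right-commutant, commutant) bifold algebras to left-stable (resp.~right-stable, stable) $\J$-algebraic dual adjunctions. Hence $\Dualn$ restricts to 2-functors $\LComBAlgO \to \LSADAJ$, $\RComBAlgO \to \RSADAJ$, and $\ComBAlgO \to \SADAJ$. All the sub-2-categories involved are full (Definition \ref{defn:2cat_sada}), so the hom-categories of each restriction coincide with those of $\Dualn$. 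Each restriction is therefore a local equivalence, because $\Dualn$ is one.

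It remains to prove biessential surjectivity on objects. Let $\D$ be, say, a left-stable $\J$-algebraic dual adjunction. By Theorem \ref{thm:alg_dual_bifold}, or equivalently by biessential surjectivity of $\Dualn$, there is a bifold algebra $D$ with $\D \simeq \Dualn(D)$ in $\ADAJ$. We must show that $\Dualn(D)$ is again left-stable, since Theorem \ref{thm:bif_leftcmt_iff_dadj_left_stable} then makes $D$ left-commutant. Thus the main step, and the only real obstacle, is to show that left-stability (and likewise right-stability) is invariant under equivalence in $\ADAJ$.

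For this we use \ref{para:equiv_jalg_dual}. An equivalence $P:\D_1 \to \D_2$ provides an equivalence $(P_\ell,\lambda^P):\A_1 \to \A_2$ in $\ALGCATJ$ and invertible 2-cells $\phi^P$ and $\bar{\phi}^P$, so that $P_\ell\nabla_1^\op\Delta_1^\op \cong \nabla_2^\op\Delta_2^\op P_\ell$. The first point to check is that these isomorphisms are compatible with the units $d^{\D_i^\circ}$, i.e.\ that $P_\ell$ carries the double-dualization unit of $\D_1$ to that of $\D_2$ up to these isomorphisms. This should follow because $\bar{\phi}^P$ is the mate of $\phi^P$, and mates are compatible with units; this is the point where I expect the verification to take some care. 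Granting it, $A$ is $\nabla_1\Delta_1$-reflexive iff $P_\ell A$ is $\nabla_2\Delta_2$-reflexive. The second point is that $P_\ell$ preserves $\J$-generated free objects. Since $\lambda^P:G_2P_\ell \cong G_1$ and $P_\ell$ is an equivalence, $\A_2(P_\ell A,-) \cong \A_1(A,Q_\ell -) \cong \V(J,G_1Q_\ell -) \cong \V(J,G_2 -)$, and conversely every $\J$-generated free object of $\A_2$ is isomorphic to one of this form. By Remark \ref{rem:jgen_free}, left-stability of $\D_1$ and $\D_2$ are therefore equivalent.

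The right-stable case follows from the left-stable one by applying the isomorphism $(-)^\circ$ of Proposition \ref{thm:rev_iso_2cats} together with Remarks \ref{rem:stable_dcirc} and \ref{rem:cmt_dcirc}, or by the symmetric argument using $P_r$ and $\phi^P$ directly. The stable case is the conjunction of the two. This yields the three restricted biequivalences.
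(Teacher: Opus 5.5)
Your proposal is correct and follows the paper's proof. The paper restricts the biequivalence of Theorem \ref{thm:bifolds_biequiv_ada} using Theorem \ref{thm:bif_leftcmt_iff_dadj_left_stable}, together with the fact that the three full sub-2-categories of $\ADAJ$ are closed under equivalence; it simply calls that closure ``clear'', whereas you verify it correctly (compatibility of the mate $\bar{\phi}^P$ with the units, and preservation of $\J$-generated free objects by $P_\ell$).
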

\begin{proof}
Each of the given full sub-2-categories of $\ADAJ$ is clearly closed under equivalence, so the result follows from Theorems \ref{thm:bifolds_biequiv_ada} and \ref{thm:bif_leftcmt_iff_dadj_left_stable}.
\end{proof}

\section{Stable algebraic dual adjunctions induced by algebras}\label{sec:stable-ada-alg}

By \ref{para:cmtnt_bif_algs}, every algebra $A$ determines a right-commutant bifold algebra $\langle A,A^\perp\rangle$, whose left and right faces are isomorphic to $A$ and $A^\perp$, respectively. Recall from \ref{para:cmt} that if we write $A$ as $(\T,A)$, then we may write its commutant $A^\perp$ as $(\T^\perp_A,A^\perp)$. Hence, \ref{thm:dualn_adjn},  \ref{thm:rl_dualizer}, and \ref{thm:bif_leftcmt_iff_dadj_left_stable} entail the following:

\begin{cor}\label{thm:alg_ind_rstable_ada}
Every algebra $(\T,A)$ induces a right-stable $\J$-algebraic dual adjunction, namely the dualization adjunction $\Dualn \langle A,A^\perp\rangle$ induced by the bifold algebra $\langle A,A^\perp\rangle$. With the notation of \ref{para:dualn-adj-for-calgpair}, we may write the latter adjunction as
\begin{equation}\label{eq:rcmt_ada}\RAdjn{\Alg{\T}^\op}{\Hom_\T(-,A)}{\Hom_{\T^\perp_A}(-,A^\perp)}{}{}{\Alg{\T^\perp_A}}.\end{equation}
The left and right dualizers of \eqref{eq:rcmt_ada} are isomorphic to $A$ and $A^\perp$, respectively.  Similarly every algebra $(\T,A)$ induces a left-stable $\J$-algebraic dual adjunction, namely the dualization adjunction $\Dualn \langle A^\perp,A\rangle$ induced by $\langle A^\perp,A\rangle$.
\end{cor}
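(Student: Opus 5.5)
The plan is to assemble the corollary directly from the cited results. First, I will identify the bifold algebra in question. By \ref{para:cmtnt_bif_algs}, for an algebra $(\T,A)$ the pair $(A,A^\perp)$ is a right-commutant algebra pair. In particular it is a commuting $\T$-$\T^\perp_A$-algebra pair, since $A^\perp$ is the commutant of $A$ and so $A$ commutes with $A^\perp$ via $[A^\perp|A] = 1$. Hence \ref{para:cmt} provides the $(\T,\T^\perp_A)$-algebra $D = \langle A,A^\perp\rangle$, with $D_\ell \cong A$ and $D_r \cong A^\perp$. By definition of right-commutant bifold algebra (\ref{para:cmtnt_bif_algs}), $D$ is right-commutant.

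Second, I will apply \ref{thm:dualn_adjn} to $D$ to obtain the dualization adjunction $\Dualn D$. Then Theorem \ref{thm:bif_leftcmt_iff_dadj_left_stable} shows that $\Dualn D$ is right-stable, because $D$ is right-commutant. The notational form \eqref{eq:rcmt_ada} is just the convention of \ref{para:dualn-adj-for-calgpair} applied to the commuting pair $(A,A^\perp)$. Concretely, $\Hom_\T(-,D)$ composed with $G^{\T^\perp_A}$ is $\Alg{\T}(-,D_\ell) \cong \Alg{\T}(-,A)$ by \ref{para:dualn-func}, and similarly on the other side.

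Third, Proposition \ref{thm:rl_dualizer} gives that the left and right dualizers of $\Dualn D$ are isomorphic to $D_\ell \cong A$ and $D_r \cong A^\perp$.

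Finally, for the left-stable statement I will use $\langle A^\perp,A\rangle$. There are two ways to see it is left-commutant:
\begin{itemize}
\item $(A^\perp,A)$ is an algebra pair in which the first member is the commutant of the second, so it is left-commutant by definition in \ref{para:cmtnt_bif_algs}, using symmetry of commutation \cite[8.10]{Lu:BAlgCmt} to see that it commutes.
\item Alternatively, $\langle A^\perp,A\rangle \cong \langle A,A^\perp\rangle^\circ$ up to isomorphism, and Remark \ref{rem:cmt_dcirc} applies.
\end{itemize}
Theorem \ref{thm:bif_leftcmt_iff_dadj_left_stable} then yields left-stability. Equivalently, one can use \eqref{eq:dualnop} together with Remark \ref{rem:stable_dcirc}.

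The only point needing any care is checking that $(A,A^\perp)$, and $(A^\perp,A)$, are genuinely commuting pairs, so that the $\langle-,-\rangle$ notation applies. This is supplied by \ref{para:cmtnt_bif_algs}, so I expect no real obstacle.
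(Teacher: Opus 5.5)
Your proposal is correct and takes essentially the same approach as the paper. The paper likewise takes the right-commutant bifold algebra $\langle A,A^\perp\rangle$ from \ref{para:cmtnt_bif_algs} and combines \ref{thm:dualn_adjn}, \ref{thm:rl_dualizer}, and \ref{thm:bif_leftcmt_iff_dadj_left_stable}, with the left-stable case handled via $\langle A^\perp,A\rangle$, which is left-commutant by \ref{para:cmtnt_bif_algs} (equivalently, via the opposite, as you note).
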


\begin{defn}\label{defn:dualn-adjn-induced-from-the-left}
Given an algebra $(\T,A)$, we call the right-stable $\J$-algebraic dual adjunction $\Dualn\langle A,A^\perp\rangle$ in \eqref{eq:rcmt_ada} the \bit{dualization adjunction induced from the left} by $(\T,A)$, while we call the left-stable $\J$-algebraic dual adjunction $\Dualn\langle A^\perp,A\rangle$ in \ref{thm:alg_ind_rstable_ada} the \bit{dualization adjunction induced from the right} by $(\T,A)$.  In view of \eqref{eq:balgo_equiv_cpairo} we know that $\langle A^\perp,A\rangle \cong \langle A,A^\perp\rangle^\circ$ in $\BAlgO$, so $\Dualn\langle A^\perp,A\rangle \cong \bigl(\Dualn \langle A,A^\perp\rangle\bigr)^\circ$ by \eqref{eq:dualnop}.
\end{defn}

By, \ref{para:cmtnt_bif_algs}, an algebra $A$ is saturated iff $\langle A,A^\perp\rangle$ is a commutant bifold algebra, iff $\langle A^\perp,A\rangle$ is a commutant bifold algebra.  Hence,  Theorem \ref{thm:bif_leftcmt_iff_dadj_left_stable} and Corollary \ref{thm:alg_ind_rstable_ada} entail the following result:

\begin{cor}\label{thm:sat_alg_ind_stable_ada}
Every saturated algebra $(\T,A)$ induces a stable $\J$-algebraic dual adjunction, namely the dualization adjunction induced from the left by $(\T,A)$.  Moreover, every saturated algebra $(\T,A)$ induces two stable $\J$-algebraic dual adjunctions, namely $\Dualn\langle A,A^\perp\rangle$ and $\Dualn\langle A^\perp,A\rangle$, the second of which is isomorphic to the opposite of the first.  The dualization adjunction induced from the left by an algebra $(\T,A)$ is stable if and only if $(\T,A)$ is saturated, if and only if the dualization adjunction induced from the right by $(\T,A)$ is stable.
\end{cor}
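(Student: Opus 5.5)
The plan is to reduce everything to Theorem \ref{thm:bif_leftcmt_iff_dadj_left_stable} together with the characterization of saturated algebras recalled in \ref{para:cmtnt_bif_algs}. By definition, the dualization adjunction induced from the left by $(\T,A)$ is $\Dualn\langle A,A^\perp\rangle$, and the one induced from the right is $\Dualn\langle A^\perp,A\rangle$. First, Theorem \ref{thm:bif_leftcmt_iff_dadj_left_stable} says that $\Dualn D$ is stable iff $D$ is a commutant bifold algebra. Second, \ref{para:cmtnt_bif_algs} (citing \cite[10.10]{Lu:BAlgCmt}) says that $A$ is saturated iff $\langle A,A^\perp\rangle$ is commutant. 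Combining these gives the middle equivalence: the dualization adjunction induced from the left is stable iff $(\T,A)$ is saturated. In particular, a saturated $(\T,A)$ induces the stable adjunction $\Dualn\langle A,A^\perp\rangle$, which is the first claim.

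For the adjunction induced from the right, I would use that $\langle A^\perp,A\rangle\cong\langle A,A^\perp\rangle^\circ$ in $\BAlgO$ (Definition \ref{defn:dualn-adjn-induced-from-the-left}). By \eqref{eq:dualnop}, this gives $\Dualn\langle A^\perp,A\rangle\cong(\Dualn\langle A,A^\perp\rangle)^\circ$, which is the asserted identification of the second adjunction with the opposite of the first.

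It then remains to see that stability is preserved under $(-)^\circ$ and under isomorphism in $\ADAJ$. Invariance under $(-)^\circ$ follows from Remark \ref{rem:stable_dcirc}: $\D$ is left-stable iff $\D^\circ$ is right-stable, and applying this to both sides shows that $\D$ is stable iff $\D^\circ$ is stable. Invariance under isomorphism (indeed under equivalence) holds because the full sub-2-category $\SADAJ$ is closed under equivalence, as noted in the proof of Theorem \ref{thm:bieq_combalgs_st_jalg_dualadjns}.

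Alternatively, the right-hand case can be done directly: \ref{para:cmtnt_bif_algs} also records that $A$ is saturated iff $\langle A^\perp,A\rangle$ is commutant, and one can then apply Theorem \ref{thm:bif_leftcmt_iff_dadj_left_stable} again. I would use this direct route as a cross-check.

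The only step needing any care is the closure of stability under equivalence in $\ADAJ$. This holds because an equivalence consists of equivalences of $\J$-algebraic $\V$-categories over $\V$, which preserve $\J$-generated free objects (Remark \ref{rem:jgen_free}), together with invertible comparison 2-cells (\ref{para:equiv_jalg_dual}) that transport reflexivity. Since the paper already uses this fact, I would invoke it rather than reprove it.
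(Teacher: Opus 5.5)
Your proposal is correct and follows the paper's argument. The paper combines the facts recorded in \ref{para:cmtnt_bif_algs} that $A$ is saturated iff $\langle A,A^\perp\rangle$ is commutant iff $\langle A^\perp,A\rangle$ is commutant with Theorem \ref{thm:bif_leftcmt_iff_dadj_left_stable} (your ``cross-check'' route), and it takes $\Dualn\langle A^\perp,A\rangle\cong(\Dualn\langle A,A^\perp\rangle)^\circ$ from Definition \ref{defn:dualn-adjn-induced-from-the-left}; your primary treatment of the right-induced adjunction, which transports stability along $(-)^\circ$ (via Remark \ref{rem:stable_dcirc}) and along isomorphisms in $\ADAJ$, is a valid minor variant that uses the closure of $\SADAJ$ under equivalence in place of the second characterization of saturation.
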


\begin{para}
Next we show that by assigning to each algebra $(\T,A)$ the dualization adjunction induced from the left by $(\T,A)$, we obtain a biequivalence between algebras and right-stable $\J$-algebraic dual adjunctions, which then restricts to a biequivalence between saturated algebras and stable $\J$-algebraic dual adjunctions.  By \ref{para:cmtnt_bif_algs}, we have an equivalence $\AlgS \simeq \RComBAlg_\sx$ given on objects by $A \mapsto \langle A,A^\perp\rangle$, while by \ref{thm:bieq_combalgs_st_jalg_dualadjns} we have a biequivalence $\RComBAlgO \simeq \RSADAJ$ given on objects by $D \mapsto \Dualn(D)$.  Clearly we cannot simply compose these equivalences, because the morphisms in $\RComBAlg_\sx$ are strong cross-morphisms while those in $\RComBAlgO$ are op-theoretic morphisms.  For this reason, we instead consider the groupoid cores of these categories, which we show are isomorphic.
\end{para}

\begin{defn}
We write $\AlgO^\si$ to denote the groupoid core of $\AlgO$, and we write $\BAlgO^\si$ to denote the groupoid core of $\BAlgO^\si$.  We call $\AlgO^\si$ the \bit{groupoid of algebras} and $\BAlgO^\si$ the \bit{groupoid of bifold algebras}.  We write $\SatAlgO^\si$ to denote the full subcategory of $\AlgO^\si$ spanned by the saturated algebras \pref{para:cmtnt_bif_algs}.
\end{defn}

\begin{lem}\label{thm:groupoids-of-algebras}
The groupoid of algebras $\AlgO^\si$ is isomorphic to the groupoid core $\AlgS^\si$ of the category $\AlgS$ of algebras with strong theoretic morphisms. The groupoid of bifold algebras $\BAlgO^\si$ is isomorphic to the groupoid core $\BAlg_\sx^\si$ of the category of bifold algebras and strong cross-morphisms.  Indeed, there are identity-on-objects isomorphisms
$$\AlgO^\si \;\cong\; \AlgS^\si,\;\;\;\;\BAlgO^\si \;\cong\; \BAlg_\sx^\si.$$
\end{lem}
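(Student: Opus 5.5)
We compute the isomorphisms in each of the four categories and match them directly. In $\AlgO$, a morphism $(M,f):(\T_1,A_1)\to(\T_2,A_2)$ consists of $M:\T_2\to\T_1$ in $\ThJ$ and $f:M^*A_1\to A_2$ in $\pAlg{\T_2}$. Since $\AlgO$ is the op-fibred category of a functor into $\CAT$, a morphism $(M,f)$ is invertible iff $M$ is an isomorphism in $\ThJ$ and $f$ is an isomorphism in $\pAlg{\T_2}$. In that case the inverse is $(M^{-1},(M^{-1})^*f^{-1})$, using strict functoriality $(M^{-1})^*M^* = 1$.

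Similarly, $(N,g):(\T,A)\to(\U,B)$ in $\AlgT$ (with $N:\T\to\U$ and $g:A\to N^*B$) is invertible iff $N$ and $g$ are both invertible. Every such morphism is strong, and the same holds for its inverse, so $\AlgS^\si = \AlgT^\si$. We then define the identity-on-objects functor $\AlgO^\si\to\AlgS^\si$ by
\[
(M,f)\mapsto\bigl(M^{-1},\,(M^{-1})^*f^{-1}\bigr).
\]
Here $(M^{-1})^*f^{-1}:(M^{-1})^*A_2\to A_1$ has the form $A'\to N^*B$ required of a theoretic morphism $(\T_2,A_2)\to(\T_1,A_1)$... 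To make this covariant, we instead send $(M,f)$ to the \emph{inverse} of that theoretic morphism, namely $(M^{-1},\,(M^{-1})^*f^{-1})^{-1}$. A cleaner route is to use $\AlgT^\op=\AlgO^*$ (\ref{para:cats-algs}): an invertible morphism of $\AlgO$ is precisely a pair $(M,f)$ with both components invertible, and such pairs are exactly the invertible morphisms $(M,f^{-1})$ of $\AlgO^*=\AlgT^\op$. Taking inverses gives an identity-on-objects isomorphism $\AlgT^{\op}_\si\cong\AlgT_\si$. Composing the two identity-on-objects bijections yields $\AlgO^\si\cong\AlgS^\si$. Functoriality is checked from the composition formula $(N,h)(M,g)=(MN,h\cdot N^*g)$ together with the analogous formula in $\AlgO^*$; the computation is routine, since both sides are built from inverses of strictly functorial pullbacks.

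For bifold algebras we argue in the same way. An invertible morphism $(M,g):(\T_1,\U_1,D_1)\to(\T_2,\U_2,D_2)$ in $\BAlgO$ has $M_\ell:\T_2\to\T_1$ and $M_r:\U_2\to\U_1$ invertible and $g:D_1(M_\ell-,M_r?)\to D_2$ invertible. We assign to it the strong cross-morphism
\[
\bigl(M_\ell^{-1},\,M_r,\,f\bigr):(\T_1,\U_1,D_1)\to(\T_2,\U_2,D_2),
\]
with $M_\ell^{-1}:\T_1\to\T_2$ and $M_r:\U_2\to\U_1$. The required isomorphism is
\[
f:D_1(-,M_r?)\to D_2(M_\ell^{-1}-,?)
\]
in $\pAlg{(\T_1,\U_2)}$, and we obtain it by precomposing $g$ with $M_\ell^{-1}$ in the first variable. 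Conversely, an invertible cross-morphism $(M',N,f)$ has $M'$ and $N$ invertible and $f$ invertible, and we send it to $((M'^{-1},N),\,f(M'-,?))$. These assignments are mutually inverse on hom-sets.

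The main obstacle, though it is essentially bookkeeping, is verifying that the bifold assignment preserves composition. In $\BAlgO$ composition is $(M\cdot N,\,h\cdot N^*g)$, whereas in $\BAlg_\sx$ composition uses the two-sided Grothendieck construction (\ref{sec:c-morph-balg}). Both reduce to whiskering by the strictly functorial restrictions $M^*$, so the equality follows by evaluating the components at $(J,K)$. Identities are clearly preserved, and since both assignments are bijective on hom-sets, we obtain identity-on-objects isomorphisms of groupoids.
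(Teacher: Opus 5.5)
Your proposal is correct and is essentially the paper's proof. In the bifold case your $f=g(M_\ell^{-1}-,\blanktwo)$ is exactly the paper's $\tilde f_{JK}=f_{M^{-1}J,K}$. In the algebra case your ``cleaner route'' $\AlgO^\si\to(\AlgO^*)^\si=(\AlgT^\op)^\si\cong\AlgT^\si=\AlgS^\si$ composes to precisely the paper's direct assignment $(M,f)\mapsto(M^{-1},fM^{-1})=(M^{-1},(M^{-1})^*f)$.

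Delete the detour that precedes the cleaner route. The formula $(M^{-1},(M^{-1})^*f^{-1})$ is the inverse of $(M,f)$ in $\AlgO$, i.e.\ a morphism $(\T_2,A_2)\to(\T_1,A_1)$ there, and is not a theoretic morphism. So neither it nor its ``inverse'' lies in $\AlgS$; the correct second component is $(M^{-1})^*f$, not $(M^{-1})^*f^{-1}$. Also, in your converse bifold assignment the component should be $f(M'^{-1}-,\blanktwo)$ rather than $f(M'-,\blanktwo)$.
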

\begin{proof}
Given an isomorphism $(M,f):(\T,A) \rightarrow (\U,B)$ in $\AlgO$, we know that $M:\U \rightarrow \T$ and $f:AM \Rightarrow B$ are invertible, so $fM^{-1}:A \Rightarrow BM^{-1}$ is invertible, and we obtain an isomorphism $(M^{-1},fM^{-1}):(\T,A) \rightarrow (\U,B)$ in $\AlgS$.  Thus we obtain an isomorphism $\AlgO^\si \cong \AlgS^\si$.  Given instead an isomorphism $(M,N,f):(\T,\U,D) \rightarrow (\T',\U',D')$ in $\BAlgO$, we know that $M:\T' \rightarrow \T$, $N:\U' \rightarrow \U$, and $f:D(M-,N\blanktwo) \Rightarrow D'(-,\blanktwo)$ are invertible, so the isomorphisms $\tilde{f}_{JK} = f_{M^{-1}J,K}:D(J,NK) \rightarrow D'(M^{-1}J,K)$ in $\V$ $(J \in \T,K \in \U')$ provide an isomorphism $(M^{-1},N,\tilde{f}):(\T,\U,D) \rightarrow (\T',\U',D')$ in $\BAlg_\sx$, and the result follows.
\end{proof}

\begin{defn}
A \textit{bigroupoid} \cite{HKK:HomotopyBigroupoid} is a bicategory in which every 1-cell is an equivalence and every 2-cell is invertible. A \textit{bigroupoidal 2-category}, or (for brevity) a \mbox{\bit{2-groupoid}}, is a 2-category $\K$ that (qua bicategory) is a bigroupoid. Given an arbitrary 2-category $\K$, there is a sub-2-category
$$\K^\seq$$
of $\K$ whose objects are the same as those of $\K$, whose 1-cells are precisely those that are equivalences in $\K$, and whose 2-cells are those that are invertible in $\K$.  The resulting 2-category $\K^\seq$ is a 2-groupoid, and we call it the \bit{2-groupoid core} of $\K$.
\end{defn}

\begin{defn}
The \bit{2-groupoid of right-stable $\J$-algebraic dual adjunctions} is defined as the 2-groupoid core $\RSADAJ^\seq$ of $\RSADAJ$, with the notation of \ref{defn:2cat_sada}.  Similarly, the \bit{2-groupoid of left-stable (resp.~stable) $\J$-algebraic dual adjunctions} is defined as $\LSADAJ^\seq$ (resp.~$\SADAJ^\seq$).
\end{defn}

In the following, we implicitly regard groupoids as locally discrete 2-groupoids.

\begin{thm}\label{thm:biequiv_algs_rsada_lsada}
There is a biequivalence between the groupoid of algebras, $\AlgO^\si$, and the 2-groupoid of right-stable $\J$-algebraic dual adjunctions, $\RSADAJ^\seq$, under which an algebra $(\T,A)$ is sent to the dualization adjunction induced from the left by $(\T,A)$:
$$\AlgO^\si\;\;\simeq\;\;\RSADAJ^\seq\;.$$
Similarly, there is a biequivalence
$$\AlgO^\si\;\;\simeq\;\;\LSADAJ^\seq\;,$$
under which an algebra $(\T,A)$ is sent to the dualization adjunction induced from the right by $(\T,A)$.
\end{thm}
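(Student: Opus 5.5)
We obtain the first biequivalence by composing three (bi)equivalences and then checking that the composite sends $(\T,A)$ to $\Dualn\langle A,A^\perp\rangle$. The second biequivalence then follows from the first via the isomorphism $(-)^\circ$ of \ref{thm:rev_iso_2cats}.

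\emph{Step 1: from algebras to right-commutant bifold algebras on groupoid cores.} By \ref{para:cmtnt_bif_algs} there is an equivalence $\AlgS \simeq \RComBAlg_\sx$, $A \mapsto \langle A,A^\perp\rangle$. Any equivalence of categories restricts to an equivalence of groupoid cores, so we get $\AlgS^\si \simeq (\RComBAlg_\sx)^\si$. The latter is the full subgroupoid of $\BAlg_\sx^\si$ on the right-commutant objects. By Lemma \ref{thm:groupoids-of-algebras} we have identity-on-objects isomorphisms $\AlgO^\si \cong \AlgS^\si$ and $\BAlgO^\si \cong \BAlg_\sx^\si$. Restricting the second to right-commutant objects, which form a replete class by \ref{para:cmtnt_bif_algs}, gives $(\RComBAlg_\sx)^\si \cong (\RComBAlgO)^\si$. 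Composing yields an equivalence of groupoids $\AlgO^\si \simeq (\RComBAlgO)^\si$ that on objects is $A \mapsto \langle A,A^\perp\rangle$.

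\emph{Step 2: from bifold algebras to dual adjunctions on 2-groupoid cores.} Theorem \ref{thm:bieq_combalgs_st_jalg_dualadjns} gives a biequivalence $\Dualn:\RComBAlgO \to \RSADAJ$. We must show that a biequivalence $G:\A \to \X$ of 2-categories restricts to a biequivalence $\A^\seq \to \X^\seq$; this is the main technical point, though it is formal.

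\begin{itemize}
\item \emph{Well-definedness.} A 2-functor preserves equivalences and invertible 2-cells, so $G$ restricts to a 2-functor $\A^\seq \to \X^\seq$.
\item \emph{Biessential surjectivity.} For each $X$ in $\X$ there is an equivalence $GA \simeq X$ in $\X$, and this is also an equivalence in $\X^\seq$, since an adjoint equivalence consists of equivalences and invertible 2-cells.
\item \emph{Local equivalence.} Each $G_{AB}$ is an equivalence of categories, so it restricts to an equivalence between the cores $\A(A,B)^\si \to \X(GA,GB)^\si$. We also need that a 1-cell $f$ is an equivalence in $\A$ iff $Gf$ is one in $\X$. For the nontrivial direction, use a pseudo-inverse biequivalence $F$ from \ref{sec:bieq}, or argue directly: choose $g$ with $Gg \cong$ a pseudo-inverse of $Gf$; local full faithfulness then lifts the invertible 2-cells $G(gf)\cong 1$ and $G(fg)\cong 1$ to invertible 2-cells $gf \cong 1$ and $fg\cong 1$.
\end{itemize}

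Here $\A=\RComBAlgO$ is locally discrete, so $\A^\seq$ is exactly the groupoid $(\RComBAlgO)^\si$: equivalences in a locally discrete 2-category are isomorphisms. Also $\RSADAJ^\seq$ is by definition the 2-groupoid of right-stable dual adjunctions. This gives a biequivalence $(\RComBAlgO)^\si \simeq \RSADAJ^\seq$.

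\emph{Step 3: compose and check objects.} Composing the equivalence of Step 1, regarded as a 2-functor between locally discrete 2-categories, with the biequivalence of Step 2 gives $\AlgO^\si \simeq \RSADAJ^\seq$. On objects it sends $(\T,A)$ to $\Dualn\langle A,A^\perp\rangle$, which is the dualization adjunction induced from the left (Definition \ref{defn:dualn-adjn-induced-from-the-left}).

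For the left-stable version, compose with the isomorphism $(-)^\circ:\ADAJ^\co \cong \ADAJ$ of \ref{thm:rev_iso_2cats}. By \ref{rem:stable_dcirc} it carries right-stable dual adjunctions to left-stable ones, and it preserves equivalences and invertible 2-cells, so it induces $\RSADAJ^{\seq\,\co} \cong \LSADAJ^\seq$. The $\co$ is harmless up to biequivalence on a groupoid domain: since $\AlgO^\si$ is locally discrete, $(\AlgO^\si)^\co = \AlgO^\si$, and the biequivalence of Step 3 yields one $\AlgO^\si \simeq \RSADAJ^{\seq\,\co}$ by applying $(-)^\co$ to both sides. By Definition \ref{defn:dualn-adjn-induced-from-the-left}, this composite sends $A$ to $(\Dualn\langle A,A^\perp\rangle)^\circ \cong \Dualn\langle A^\perp,A\rangle$, the dualization adjunction induced from the right. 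Alternatively, one can run Steps 1–3 directly using the equivalence $(\AlgS)^\op\simeq \LComBAlg_\sx$, noting that $(\AlgS^\si)^\op\cong \AlgS^\si$ via inversion.
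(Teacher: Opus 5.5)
Your proposal is correct. For the right-stable biequivalence you follow the paper's proof, namely the chain $\AlgO^\si \cong \AlgS^\si \simeq \RComBAlg_\sx^\si \cong \RComBAlgO^\si \simeq \RSADAJ^\seq$. You also supply a step the paper leaves implicit: that the biequivalence $\Dualn$ of \ref{thm:bieq_combalgs_st_jalg_dualadjns} restricts to 2-groupoid cores. The key point there is that $\Dualn$ reflects equivalences, and your local full-faithfulness argument handles it correctly.

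For the left-stable case your main route genuinely differs from the paper's. The paper reruns the same chain using the equivalence $(\AlgS)^\op \simeq \LComBAlg_\sx$ from \ref{para:cmtnt_bif_algs} and the self-duality $(\AlgS^\si)^\op \cong \AlgS^\si$ of a groupoid. You instead transport the right-stable result along the isomorphism $(-)^\circ:\ADAJ^\co \cong \ADAJ$ of \ref{thm:rev_iso_2cats}. You use \ref{rem:stable_dcirc} to see that it exchanges right- and left-stable objects, and the local discreteness of $\AlgO^\si$ to absorb the $\co$.

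Your route derives the second statement formally from the first, without needing the separate left-commutant equivalence. The cost is that on objects it yields $(\Dualn\langle A,A^\perp\rangle)^\circ = \Dualn(\langle A,A^\perp\rangle^\circ)$. This is the dualization adjunction induced from the right only up to the isomorphism $\langle A,A^\perp\rangle^\circ \cong \langle A^\perp,A\rangle$ in $\BAlgO$. That is harmless, since $\langle A^\perp,A\rangle$ is itself only determined up to isomorphism. The paper's route produces $\Dualn\langle A^\perp,A\rangle$ on the nose and keeps the two cases visibly parallel; you already note it as your alternative.
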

\begin{proof}
By \ref{thm:bieq_combalgs_st_jalg_dualadjns}, there is a biequivalence $\RComBAlgO \simeq \RSADAJ$ given on objects by $D \mapsto \Dualn D$, which therefore restricts to a biequivalence $\RComBAlgO^\si \simeq \RSADAJ^\seq$.  But by \ref{thm:groupoids-of-algebras} we also have an identity-on-objects isomorphism $\BAlg_\sx^\si \cong \BAlgO^\si$, which restricts to an isomorphism $\RComBAlg_\sx^\si \cong \RComBAlgO^\si$.  Furthermore, by \ref{para:cmtnt_bif_algs} there is an equivalence $\AlgS \simeq \RComBAlg_\sx$, which therefore restricts to an equivalence $\AlgS^\si \simeq \RComBAlg_\sx^\si$, given on objects by $A \mapsto \langle A,A^\perp\rangle$, while by \ref{thm:groupoids-of-algebras} we also have an identity-on-objects isomorphism $\AlgO^\si \cong \AlgS^\si$.  Hence
$$\AlgO^\si \;\cong\; \AlgS^\si \;\simeq\; \RComBAlg_\sx^\si \;\cong\; \RComBAlgO^\si \;\simeq\; \RSADAJ^\seq\;.$$
By \ref{para:cmtnt_bif_algs} we also have an equivalence $(\AlgS)^\op \simeq \LComBAlg_\sx$, which restricts to an equivalence $(\AlgS^\si)^\op \simeq \LComBAlg_\sx^\si$, so since $\AlgS^\si$ is a groupoid and so is self-dual, we find that
$$\AlgO^\si \;\cong\; \AlgS^\si \;\cong\; (\AlgS^\si)^\op\;\simeq\; \LComBAlg_\sx^\si \;\cong\; \LComBAlgO^\si \;\simeq\; \LSADAJ^\seq$$
by \ref{thm:groupoids-of-algebras} and \ref{thm:bieq_combalgs_st_jalg_dualadjns}.
\end{proof}

\begin{cor}\label{thm:every_rst_ada_ind_by_alg}
A $\J$-algebraic dual adjunction is right-stable (resp.~left-stable) if and only if it is equivalent, in $\ADAJ$, to the dualization adjunction induced from the left (resp.~from the right) by an algebra. In that case, the latter algebra is unique up to isomorphism in $\AlgO$.
\end{cor}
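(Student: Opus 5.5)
The plan is to read this off from Theorem \ref{thm:biequiv_algs_rsada_lsada}, Corollary \ref{thm:alg_ind_rstable_ada}, Definition \ref{defn:dualn-adjn-induced-from-the-left}, and the fact that $\RSADAJ$ is a full sub-2-category of $\ADAJ$ closed under equivalence. We treat the right-stable case. The left-stable case then follows either by the same argument using the second biequivalence of \ref{thm:biequiv_algs_rsada_lsada}, or by applying $(-)^\circ$ (\ref{thm:rev_iso_2cats}, \ref{rem:stable_dcirc}) together with $\Dualn\langle A^\perp,A\rangle \cong (\Dualn\langle A,A^\perp\rangle)^\circ$.

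For the ``if'' direction, let $\D \simeq \Dualn\langle A,A^\perp\rangle$ in $\ADAJ$. By Corollary \ref{thm:alg_ind_rstable_ada}, the latter is right-stable. Right-stability is invariant under equivalence in $\ADAJ$, which is the closure under equivalence already used in the proof of \ref{thm:bieq_combalgs_st_jalg_dualadjns}. To see this directly, note that an equivalence $P$ has invertible $\phi^P$ and $\bar\phi^P$ by \ref{para:equiv_jalg_dual}, so $P_r$ commutes with $\Delta\nabla$ up to isomorphism compatibly with units. Moreover, $P_r$ is an equivalence over $\V$, so it preserves $\J$-generated free objects (\ref{rem:jgen_free}). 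Hence $\D$ is right-stable.

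For the ``only if'' direction, let $\D$ be right-stable, so that $\D$ is an object of $\RSADAJ^\seq$. The biequivalence $\AlgO^\si \simeq \RSADAJ^\seq$ of \ref{thm:biequiv_algs_rsada_lsada} is biessentially surjective on objects. So there is an algebra $(\T,A)$ with $\Dualn\langle A,A^\perp\rangle \simeq \D$ in $\RSADAJ^\seq$, and hence in $\ADAJ$.

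For uniqueness, suppose $\Dualn\langle A,A^\perp\rangle \simeq \Dualn\langle B,B^\perp\rangle$ in $\ADAJ$. Both objects lie in $\RSADAJ$, and $\RSADAJ^\seq$ contains all equivalences between its objects, so the equivalence lies in $\RSADAJ^\seq$. A biequivalence is a local equivalence, so it reflects equivalence of objects: choosing a 1-cell of $\AlgO^\si$ whose image is isomorphic to the given equivalence yields a morphism $A \to B$ in the groupoid $\AlgO^\si$. This is an isomorphism in $\AlgO$. The expected obstacle here is only bookkeeping. One must confirm that the composite biequivalence in the proof of \ref{thm:biequiv_algs_rsada_lsada} really sends $(\T,A)$ to $\Dualn\langle A,A^\perp\rangle$ on objects, which is stated there. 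Alternatively, one can avoid this by using the left dualizer: by \ref{thm:rl_dualizer}, $A$ is recovered as the left dualizer, and \ref{thm:bieq_th_algcat} identifies the underlying theories.
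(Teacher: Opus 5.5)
Your proposal is correct and matches the paper. The paper gives no separate proof of this corollary; it reads it off from Theorem~\ref{thm:biequiv_algs_rsada_lsada}, exactly as you do:
\begin{itemize}
\item biessential surjectivity of $\AlgO^\si \simeq \RSADAJ^\seq$ gives the ``only if'' direction;
\item local equivalence gives uniqueness up to isomorphism in $\AlgO$;
\item the ``if'' direction uses Corollary~\ref{thm:alg_ind_rstable_ada} together with the closure of $\RSADAJ$ under equivalence in $\ADAJ$, which is asserted in the proof of Theorem~\ref{thm:bieq_combalgs_st_jalg_dualadjns}.
\end{itemize}
Your direct verification of that closure, via the invertibility of $\phi^P$ and its mate compatibly with the units, is also sound.
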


\begin{thm}
There is a biequivalence
$$\SatAlgO^\si \;\;\simeq\;\; \SADAJ^\seq$$
between the groupoid of saturated algebras and the 2-groupoid of stable $\J$-algebraic dual adjunctions.  Indeed, one such biequivalence sends each saturated algebra $(\T,A)$ to the dualization adjunction induced from the left by $(\T,A)$, while another sends each saturated algebra $(\T,A)$ to the dualization adjunction induced from the right by $(\T,A)$.
\end{thm}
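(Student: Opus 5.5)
The plan is to restrict the chain of (bi)equivalences from the proof of Theorem \ref{thm:biequiv_algs_rsada_lsada} to saturated algebras on one side and stable dual adjunctions on the other. Recall that chain:
$$\AlgO^\si \;\cong\; \AlgS^\si \;\simeq\; \RComBAlg_\sx^\si \;\cong\; \RComBAlgO^\si \;\simeq\; \RSADAJ^\seq.$$
We check at each stage that the relevant full subcategory is carried onto the relevant full subcategory. Each such subcategory must be replete (closed under isomorphism, resp.\ equivalence), so that restriction is legitimate.

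The stages are handled as follows.
\begin{enumerate}
\item The identity-on-objects isomorphism $\AlgO^\si \cong \AlgS^\si$ of Lemma \ref{thm:groupoids-of-algebras} restricts to an isomorphism $\SatAlgO^\si \cong \SatAlgS^\si$, since saturation is a property of the object $(\T,A)$ alone.
\item By \ref{para:cmtnt_bif_algs}, the equivalence $\AlgS \simeq \RComBAlg_\sx$, given by $A \mapsto \langle A,A^\perp\rangle$, restricts to $\SatAlgS \simeq \ComBAlg_\sx$. Passing to groupoid cores gives $\SatAlgS^\si \simeq \ComBAlg_\sx^\si$.
\item The identity-on-objects isomorphism $\BAlg_\sx^\si \cong \BAlgO^\si$ restricts to $\ComBAlg_\sx^\si \cong \ComBAlgO^\si$. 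This holds because being commutant is a property of the object $D$.
\item Theorem \ref{thm:bieq_combalgs_st_jalg_dualadjns} gives a biequivalence $\ComBAlgO \simeq \SADAJ$, $D \mapsto \Dualn D$. It restricts to $\ComBAlgO^\si \simeq \SADAJ^\seq$, because any biequivalence restricts to one between 2-groupoid cores. Indeed, a local equivalence $\K(X,Y)\simeq\L(FX,FY)$ restricts to the subcategories of equivalence 1-cells and invertible 2-cells, since $F$ reflects equivalences. Biessential surjectivity is unchanged, since the comparison equivalences already lie in the core.
\end{enumerate}

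Composing these gives $\SatAlgO^\si \simeq \SADAJ^\seq$, and it sends $(\T,A)$ to $\Dualn\langle A,A^\perp\rangle$, the dualization adjunction induced from the left. Corollary \ref{thm:sat_alg_ind_stable_ada} confirms that this lands in stable adjunctions.

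For the second biequivalence we run the analogous chain through $(\AlgS^\si)^\op \simeq \LComBAlg_\sx^\si$, restricted via $\SatAlgS^\op \simeq \ComBAlg_\sx$, $B \mapsto \langle B^\perp,B\rangle$. We use that a groupoid is self-dual, $\SatAlgS^\si \cong (\SatAlgS^\si)^\op$, and then proceed exactly as in the second half of the proof of Theorem \ref{thm:biequiv_algs_rsada_lsada}. This sends $(\T,A)$ to $\Dualn\langle A^\perp,A\rangle$.

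I expect the only mildly delicate point to be stage 4, the restriction of a biequivalence to 2-groupoid cores. The key facts are that a local equivalence between hom-categories restricts to cores, and that $F(f)$ is an equivalence only if $f$ is, which follows from local equivalence. Everything else is bookkeeping of replete full subcategories.
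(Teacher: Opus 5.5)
Your proposal is correct and is essentially the paper's argument. The paper restricts the composite biequivalences of Theorem \ref{thm:biequiv_algs_rsada_lsada} in one step, using the ``if and only if'' of Corollary \ref{thm:sat_alg_ind_stable_ada} (the adjunction induced from the left/right by $A$ is stable exactly when $A$ is saturated). You instead restrict each link of the chain separately, via $\SatAlgS \simeq \ComBAlg_\sx$ and Theorem \ref{thm:bieq_combalgs_st_jalg_dualadjns}, which is the same content unpacked, with your stage 4 supplying the routine check that a biequivalence restricts to 2-groupoid cores.
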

\begin{proof}
By Corollary \ref{thm:sat_alg_ind_stable_ada}, the biequivalences in Theorem \ref{thm:biequiv_algs_rsada_lsada} restrict as needed.
\end{proof}

\begin{cor}
A $\J$-algebraic dual adjunction is stable if and only if it is equivalent, in $\ADAJ$, to the dualization adjunction induced from the left by some saturated algebra, if and only if it is equivalent to the dualization adjunction induced from the right by some saturated algebra.  
\end{cor}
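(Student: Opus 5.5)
The plan is to derive this corollary directly from the biequivalences $\SatAlgO^\si \simeq \SADAJ^\seq$ of the preceding theorem, together with Corollary \ref{thm:every_rst_ada_ind_by_alg} and Corollary \ref{thm:sat_alg_ind_stable_ada}.

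\emph{Reverse implications.} Suppose $\D$ is equivalent in $\ADAJ$ to the dualization adjunction induced from the left (or from the right) by a saturated algebra $(\T,A)$. By Corollary \ref{thm:sat_alg_ind_stable_ada}, $\Dualn\langle A,A^\perp\rangle$ and $\Dualn\langle A^\perp,A\rangle$ are stable. Stability is invariant under equivalence in $\ADAJ$; this was already used in the proof of Theorem \ref{thm:bieq_combalgs_st_jalg_dualadjns}, where $\SADAJ$ is noted to be closed under equivalence. Hence $\D$ is stable.

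\emph{Forward implication.} Let $\D$ be stable. The preceding theorem gives a biequivalence $\SatAlgO^\si \to \SADAJ^\seq$ sending $(\T,A)$ to the dualization adjunction induced from the left by $(\T,A)$. Since a biequivalence is biessentially surjective (\ref{sec:bieq}), there is a saturated algebra $(\T,A)$ together with an equivalence in $\SADAJ^\seq$ between $\D$ and $\Dualn\langle A,A^\perp\rangle$.

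An equivalence in the 2-groupoid core $\SADAJ^\seq$ is in particular an equivalence in $\ADAJ$. This holds because $\SADAJ^\seq$ is a sub-2-category of $\SADAJ$, which is a full sub-2-category of $\ADAJ$. Applying the same argument to the second biequivalence, which uses the dualization adjunction induced from the right, yields the third condition.

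The only point needing any care is that equivalences in a 2-groupoid core are equivalences in the ambient 2-category. This is immediate, since the pseudo-inverse 1-cells and the invertible 2-cells witnessing the equivalence all lie in $\ADAJ$. So no step poses a real obstacle.
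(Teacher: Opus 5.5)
Your proposal is correct and is essentially the paper's own argument: the paper states this corollary without a separate proof, as an immediate consequence of the biequivalence $\SatAlgO^\si \simeq \SADAJ^\seq$ (biessential surjectivity gives the forward direction) together with Corollary \ref{thm:sat_alg_ind_stable_ada} and the closure of stability under equivalence in $\ADAJ$ (which gives the converse). Your remark that equivalences in the 2-groupoid core $\SADAJ^\seq$ are equivalences in $\ADAJ$ is exactly the small point needed to transfer the result to the ambient 2-category.
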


\section{Examples of algebraic dual adjunctions}\label{sec:exa_ada}

\subsection{Returning to a basic example: Bimodules}\label{exa:dln_mod}

As discussed in Section \ref{exa:bim}, if $R$ and $S$ are monoids in $\V$ and $D$ is an $R$-$S$-bimodule in $\V$, then we obtain an $\{I\}$-algebraic dual adjunction
\begin{equation}\label{eq:dualn_adj_bimod}\RAdjn{\Mod{R}^\op}{\Hom_R(-,D)}{\Hom_{S^\op}(-,D)}{}{}{\Mod{S^\op}}\end{equation}
for the system of arities $\{I\} \hookrightarrow \V$. Let us consider the special case of $\V = \Ab$ and the system of arities $\{\ZZ\} \hookrightarrow \Ab$. As discussed in \ref{cor:dual_adj_mod_cats}, a ring $R$ is equivalently a  $\{\ZZ\}$-theory, and there is an isomorphism $\Bimod(\Ab) \cong \BAlg_{\{\ZZ\}}$ under which a bimodule $(R,S,D)$ (consisting of rings $R$, $S$ and an $R$-$S$-bimodule $D$) corresponds to a bifold algebra $(R,S^\op,D)$ in which we regard the rings $R$ and $S^\op$ as $\{\ZZ\}$-theories. By Theorem \ref{thm:bif_leftcmt_iff_dadj_left_stable}, the $\{\ZZ\}$-algebraic dual adjunction \eqref{eq:dualn_adj_bimod} is left-stable (resp.~right-stable) iff the bifold algebra $(R,S^\op,D)$ is left-commutant (resp.~right-commutant), which as noted in \cite[16.1]{Lu:BAlgCmt} requires precisely that the canonical ring homomorphism $R \to \End(D_S)$ (resp.~$S^\op \to \End(\tensor[_R]{D}{})$) be an isomorphism, where we write $D_S$ (resp.~$\tensor[_R]{D}{}$) for the right $S$-module (resp.~left $R$-module) underlying $D$, and we write $\End(D_S)$ (resp.~$\End(\tensor[_R]{D}{})$) for its endomorphism ring.

In particular, we may regard every ring $R$ as an $R$-$R$-bimodule $\tensor[_R]{R}{_R}$, and it is noted in \cite[16.1]{Lu:BAlgCmt} that $(R,R^\op,\tensor[_R]{R}{_R})$ is a commutant bifold algebra; hence, by Theorem \ref{thm:bif_leftcmt_iff_dadj_left_stable}, its dualization adjunction $\Hom_{R^\op}(-,R) \dashv \Hom_R(-,R):\Mod{R}^\op \rightarrow \Mod{R^\op}$ is a stable $\{\ZZ\}$-algebraic dual adjunction.

Given a ring $R$ and a left $R$-module $A$, we obtain a right-commutant bifold algebra $\langle A,A^\perp \rangle$ by \ref{para:cmtnt_bif_algs}, which by \cite[16.1]{Lu:BAlgCmt} is precisely the triple $(R,S^\op,\tensor[_R]{A}{_S})$ where $S^\op = \End(A)$ is the endomorphism ring of the left $R$-module $A$. Hence, by Theorem \ref{thm:bif_leftcmt_iff_dadj_left_stable}, we obtain a right-stable $\{\ZZ\}$-algebraic dual adjunction
$$\RAdjn{\Mod{R}^\op}{\Hom_R(-,A)}{\Hom_{\End(A)}(-,A)}{}{}{\Mod{\End(A)}},$$
namely the dualization adjunction induced from the left by $A$ (\ref{defn:dualn-adjn-induced-from-the-left}). By the above, this right-commutant bifold algebra $(R,S^\op,\tensor[_R]{A}{_S})$ (with $S^\op = \End(A)$) is a commutant bifold algebra iff it is left-commutant, iff the canonical ring homomorphism $R \to \End(A_S)$ is an isomorphism, noting that the right $S$-module $A_S$ is equivalently the left $\End(A)$-module $\tensor[_{\End(A)}]{A}{}$, so that $\End(A_S)$ is the ring of left $\End(A)$-linear endomorphisms of $A$.

\subsection{Strongly finitary algebraic dual adjunctions}\label{sec:str-fin-alg-dual-adjn}

In this subsection, we let $\V$ be a complete and cocomplete cartesian closed category. Recall from Example \ref{exa:sys-ar}(6) that there is a subcategory of arities $\SF(\V) \hookrightarrow \V$ spanned by the finite copowers $n \cdot 1$ $(n \in \NN)$ of the terminal object $1$, for which $\SF(\V)$-ary $\V$-monads are precisely \textit{strongly finitary $\V$-monads} on $\V$ in the sense of \cite{KeLa:SF}, and for which $\SF(\V)$-theories are the enriched algebraic theories of Borceux and Day \cite{BoDay}. Following \cite{Lu:EnrAlgTh,Lu:FDistn,Lu:Cmt,Lu:BAlgCmt}, we may take the objects of $\SF(\V)$ to be the natural numbers $n \in \NN$, so that the fully faithful $\V$-functor $\SF(\V) \hookrightarrow \V$ is given on objects by $n \mapsto n \cdot 1$, and an $\SF(\V)$-theory is given by a $\V$-category $\T$ with $\ob\T = \NN$ in which each object $n \in \NN$ carries the structure of a conical $n$-th power of the object $1 \in \NN$ \cite[\S 3.2]{Lu:FDistn}.

By definition, a \bit{strongly finitary algebraic dual $\V$-adjunction} is an $\SF(\V)$-algebraic dual adjunction. In this subsection, we consider examples of strongly finitary algebraic dual $\V$-adjunctions whose corresponding bifold algebras were discussed \cite{Lu:FDistn}, \cite{Lu:CvxAffCmt}, \cite[16.2]{Lu:BAlgCmt}, so we follow these papers quite closely in discussing the aspects of these examples concerning the bifold algebras (while the associated dual adjunctions were \textit{not} treated in these papers).

\begin{parasub}\label{para:cmonv-abv}
As in \cite{Lu:FDistn,Lu:BAlgCmt}, we write $\CMon(\V)$ and $\Ab(\V)$ for the symmetric monoidal closed categories of (internal) commutative monoids in $\V$ and abelian groups in $\V$, respectively \cite[6.3.2--6.3.7]{Lu:FDistn}; for example, $\CMon(\Set) = \CMon$ is the category of commutative monoids, written additively, with the tensor product of commutative monoids. These monoidal categories $\CMon(\V)$ and $\Ab(\V)$ underlie symmetric monoidal closed $\V$-categories, $\sCMon(\V)$ and $\sAb(\V)$, which are $\SF(\V)$-algebraic over $\V$ \cite[6.3.2--6.3.7]{Lu:FDistn}. An \bit{(internal) rig} in $\V$ is by definition an internal unital semiring $R = (R,+,\cdot,0,1)$ in $\V$; see \cite[2.7]{Lu:FDistn} for an explicit definition. In particular, a rig in $\Set$ (or simply a \textit{rig}) is a unital semiring in the usual sense and is equivalently given by a monoid in the monoidal category $\CMon = \CMon(\Set)$. More generally, a rig in $\V$ is equivalently given by a monoid in the monoidal category $\CMon(\V)$ \cite[6.4.1]{Lu:FDistn}. A left (resp.~right) $R$-module for an internal rig $R$ is given by a left (resp.~right) $R$-module for the monoid $R$ in $\CMon(\V)$ \cite[6.4.1]{Lu:FDistn}. An \textit{(internal) ring} in $\V$ is equivalently a monoid in $\Ab(\V)$, equivalently, a rig in $\V$ whose underlying monoid is an internal group; see \cite[2.7, 6.4.1]{Lu:FDistn}.
\end{parasub}

\begin{exasub}[\textbf{Dualization of modules for internal rings and rigs in $\V$}]\label{exa:dualn_mod}
Let $R$ be a rig in $\V$. Since $\CMon(\V)$ is a complete symmetric monoidal closed category and $R$ is a monoid in $\CMon(\V)$, we obtain a $\CMon(\V)$-category $\Mod{R}$ by applying \ref{exa:sys-ar}(4) for the base of enrichment $\CMon(\V)$. As discussed in \cite[16.2.1]{Lu:BAlgCmt}, $\Mod{R}$ has an underlying $\V$-category that we denote also by $\Mod{R}$ (or $\Mod{R}(\V)$) and that is obtained by change of base along the forgetful functor $\CMon(\V) \to \V$, which underlies a symmetric monoidal functor. By \cite[6.4.5]{Lu:FDistn}, there is an $\SF(\V)$-theory with $\Mat(R)(m,n) = R^{n \times m}$ in $\V$ for all $m,n \in \NN$, and with composition given by matrix multiplication, and $\Mod{R}$ is isomorphic  to the $\V$-category of normal $\Mat(R)$-algebras, i.e. $\Mod{R} \cong \Alg{\Mat(R)}^{\,!}$ in $\VCAT_0 \slash \V$, so $\Mod{R}$ is strictly $\SF(\V)$-algebraic over $\V$.

Let $D$ be an \textit{$R$-$S$-bimodule} for rigs $R$ and $S$ in $\V$, equivalently, an $R$-$S$-bimodule for monoids $R$ and $S$ in $\CMon(\V)$. As a special case of \ref{exa:dln_mod} we obtain a $\CMon(\V)$-adjunction $\Hom_{S^\op}(-,D) \dashv \Hom_R(-,D):\Mod{R}^\op \rightarrow \Mod{S^\op}$, so by change of base this adjunction has an underlying $\V$-adjunction, for which we use the same notation; but the $\V$-categories $\Mod{R}$ and $\Mod{S^\op}$ are $\SF(\V)$-algebraic over $\V$, so this $\V$-adjunction is an $\SF(\V)$-algebraic dual adjunction.

We now consider the special case where $R = S$ and we take $D = R$, regarded as an $R$-$R$-bimodule. The left (resp.~right) $R$-module underlying $R$ can be regarded equivalently as a normal $\Mat(R)$-algebra (resp.~$\Mat(R^\op)$-algebra) that we write also as $R$.  As discussed in \cite[16.2]{Lu:BAlgCmt}, the resulting algebras $(\Mat(R),R)$ and $(\Mat(R^\op),R)$ are commutants of one another and so are the left and right faces of a commutant bifold algebra, which we write as $(\Mat(R),\Mat(R^\op),R)$.  Therefore, we obtain a stable $\SF(\V)$-algebraic dual adjunction
\begin{equation}\label{eq:dualn_adj_for_r}\RAdjn{\Mod{R}^\op}{\Hom_R(-,R)}{\Hom_{R^\op}(-,R)}{}{}{\Mod{R^\op}}.\end{equation}
Of course, if $R$ is commutative, then $R^\op = R$. Even in this simple special case, the notion of dualization that arises from the dual adjunction $\Hom_R(-,R) \dashv \Hom_R(-,R):\Mod{R}^\op \rightarrow \Mod{R}$ for an internal commutative ring $R$ in $\V$ is of interest in analysis for certain choices of $\V$, as we now illustrate via two examples.

Firstly, take $\V = \Conv$ (the category of convergence spaces, \ref{para:specific-cccs}) and $R = \RR$, the commutative ring of reals with its usual topology, regarded as a topological ring and so as a ring in $\Conv$. In this case, $\Mod{\RR} = \Mod{\RR}(\Conv)$ is the category of \bit{convergence vector spaces} \cite{BeBu}, into which the category of topological $\RR$-vector spaces embeds as a full subcategory. We have a stable $\SF(\Conv)$-algebraic dual adjunction
\begin{equation}\label{eq:dualn-convergence-vect-sp}\Hom_\RR(-,\RR) \dashv \Hom_\RR(-,\RR):\Mod{\RR}(\Conv)^\op \rightarrow \Mod{\RR}(\Conv)\;.\end{equation}
Moreover, there are broad classes of examples of convergence vector spaces $E$ that are reflexive with respect to this dual adjunction, meaning that the canonical morphism $d_E:E \to \Hom_\RR(\Hom_\RR(E,\RR),\RR)$ is an isomorphism in $\Mod{\RR}(\Conv)$. For example, Butzmann \cite{Bu} proved that if $E$ is a Hausdorff locally convex topological vector space, then $d_E$ presents $\Hom_\RR(\Hom_\RR(E,\RR),\RR)$ as a completion of $E$, so that in this case $E$ is reflexive with respect to \eqref{eq:dualn-convergence-vect-sp} iff $E$ is complete; see \cite[4.3.20, 4.3.21]{BeBu}. As another example, recall from \ref{para:specific-cccs} that if $X$ is a given convergence space then the internal hom $C_c(X,\RR)$ in $\Conv$ is the set  of all continuous functions from $X$ to $\RR$, under continuous convergence. This convergence space $C_c(X,\RR)$ underlies a convergence vector space, and it is a theorem of Butzmann \cite{Bu} that $C_c(X,\RR)$ is reflexive, with respect to the dual adjunction \eqref{eq:dualn-convergence-vect-sp}; see \cite[4.2.4]{BeBu}. Its dual $\Hom_\RR(C_c(X,\RR),\RR)$ (with respect to \ref{eq:dualn-convergence-vect-sp}) is of interest in its own right: For example, in the special case where $X$ is a locally compact Hausdorff topological space, the elements of $\Hom_\RR(C_c(X,\RR),\RR)$ are the compactly supported $\RR$-valued Radon measures on $X$; see \cite[5.1]{Lu:FDistn}.

Secondly, we mention in passing that in the case where $\V = \Cah$ is the Cahiers topos (\ref{para:specific-cccs}) and $R$ is the line object in $\V$, if $X$ is a smooth manifold, regarded as an object of $\V$, and we write $R^X$ as the $\V$-enriched power of $R$ by $X$ in $\Mod{R}(\V)$, then the global elements of the dual $\Hom_R(R^X,R)$ are equivalently given by compactly supported $\RR$-valued Schwartz distributions on $X$ \cite[5.3]{Lu:FDistn}. A similar result in the case where $\V$ is instead the category of all $\Set$-valued functors on the category of finitely generated $C^\infty$-rings had been given in \cite[II.3.6]{MoeRey}.
\end{exasub}

\begin{exasub}[\textbf{Pontryagin duality and Binz-Butzmann duality}]\label{exa:binz-butmzmann-duality}
In this example, we take $\V$ to be the cartesian closed category of convergence spaces, $\Conv$ (\ref{para:specific-cccs}).  A \bit{convergence abelian group} is by definition an abelian group in $\Conv$.  For example, since the inclusion $\Top \hookrightarrow \Conv$ preserves limits (\ref{para:specific-cccs}), every topological abelian group is an example of a convergence abelian group. Using the term \bit{locally compact group} as a synonym for the notion of locally compact Hausdorff topological abelian group, let us recall that the \textit{Pontryagin duality} is an equivalence between the category of locally compact groups and its opposite, under which each locally compact group $A$ corresponds to its \bit{Pontryagin dual}, i.e.~the group of all continuous group homomorphisms from $A$ to the circle group $T$, regarded as a topological group under the compact-open topology; see \cite{Mor,Roe}. The Pontryagin duality was extended by Binz and Butzmann to an adjunction between the category of convergence abelian groups and its opposite \cite{Binz,Bu:DualConvGrp}.  By \ref{para:cmonv-abv}, the category of convergence abelian groups, $\Ab(\Conv)$, underlies a symmetric monoidal closed $\Conv$-category, $\sAb(\Conv)$.  Hence, by regarding $T$ as an object of $\Ab(\Conv)$, we obtain a $\Conv$-adjunction
\begin{equation}\label{eq:bi_bu_duality}\RAdjn{\sAb(\Conv)^\op}{\Hom(-,T)}{\Hom(-,T)}{}{}{\sAb(\Conv)},\end{equation}
where for each convergence abelian group $A$ we write $\Hom(A,T)$ for the internal hom from $A$ to $T$ in $\Ab(\Conv)$, which is the convergence abelian group of continuous group homomorphisms from $A$ to $T$, with the convergence structure given by continuous convergence (\ref{para:specific-cccs}).  By \ref{para:cmonv-abv}, $\sAb(\Conv)$ is $\SF(\Conv)$-algebraic over $\Conv$, so this adjunction \eqref{eq:bi_bu_duality} is an $\SF(\Conv)$-algebraic dual adjunction, which we now show is stable.  

If $A$ is any locally compact group (i.e.~locally compact Hausdorff topological abelian group), then $\Hom(A,T)$ carries (the convergence structure determined by) the compact-open topology, by \cite{Binz}, so that $\Hom(A,T)$ is the Pontryagin dual of $A$.  Hence, by Pontryagin duality, the canonical homomorphism $A \rightarrow \Hom(\Hom(A,T),T)$ is an isomorphism of topological groups and is therefore an isomorphism in $\Ab(\Conv)$.  Therefore, every locally compact group is reflexive with respect to the $\SF(\Conv)$-algebraic dual adjunction \eqref{eq:bi_bu_duality}, which therefore restricts to a dual equivalence between the full subcategories of locally compact groups, namely the Pontryagin duality.  Writing $\ZZ$ for the discrete topological group of integers, this object $\ZZ$ of $\Ab(\Conv)$ is free on the terminal object $1$ of $\Conv$.  Hence, since finite products in $\Ab(\Conv)$ are biproducts (as in any additive category), we deduce that the $\SF(\Conv)$-generated free objects of $\sAb(\Conv)$ (in the sense of \ref{rem:jgen_free}) are those isomorphic to the discrete groups $\ZZ^n$, each of which is a locally compact group and so is reflexive with respect to the $\SF(\Conv)$-algebraic dual adjunction \eqref{eq:bi_bu_duality}, which is therefore stable. The commutant bifold algebra to which this adjunction corresponds via Theorem \ref{thm:bieq_combalgs_st_jalg_dualadjns} is discussed in \cite[16.2.6]{Lu:BAlgCmt}.
\end{exasub}

In the following two examples, we return to the setting of an arbitrary complete and cocomplete cartesian closed category $\V$.

\begin{exasub}[\textbf{Internal left $R$-affine spaces and pointed right $R$-modules}]\label{exa:r_aff}
Given an internal rig $R$ in $\V$, we now recall an example of a bifold algebra from \cite[\S 9]{Lu:FDistn} and \cite[16.2.2]{Lu:BAlgCmt}, and we discuss the $\SF(\V)$-algebraic dual adjunction that it induces. Recall that the $\SF(\V)$-theory $\Mat(R)$ of \ref{exa:dualn_mod} has hom-objects $\Mat(R)(m,n) = R^{n \times m}$ $(m,n \in \NN)$. As discussed in \cite[8.4]{Lu:FDistn}, there is a subtheory $\Mat^\aff(R) \hookrightarrow \Mat(R)$, called the \textit{affine core} of $\Mat(R)$, in which $\Mat^\aff(R)(m,n) \hookrightarrow R^{n \times m}$ is the subobject carved out by the equations that assert that each row has sum $1$. A \bit{(left) $R$-affine space in $\V$} is, by definition, a normal $\Mat^\aff(R)$-algebra in $\V$; see \cite[8.5]{Lu:FDistn} for an explicit description of the structure carried by an $R$-affine space in $\V$. We write $\sAff{R} = \Alg{\Mat^\aff(R)}^{\,!}$ for the $\V$-category of $R$-affine spaces. The subtheory embedding $\Mat^\aff(R) \hookrightarrow \Mat_R$ induces a $\V$-functor $\Mod{R} \to \sAff{R}$ witnessing that every left $R$-module has an underlying $R$-affine space.

On the other hand, a \bit{pointed left $R$-module} is by definition a left $R$-module $B$ equipped with a a designated morphism $*:1 \to B$ in $\V$. As discussed in \cite[9.2]{Lu:FDistn} and \cite[16.2.2]{Lu:BAlgCmt}, pointed left $R$-modules are the objects of a $\V$-category that is written as $\Mod{R}_*$ or $\Mod{R}_*(\V)$ and that is isomorphic to the $\V$-category $\Alg{\Mat^*(R)}$ of normal $\Mat^*(R)$-algebras for an $\SF(\V)$-theory $\Mat^*(R)$. A \bit{pointed right $R$-module} is by definition a pointed left $R^\op$-module.

We may regard $R$ as a left $R$-module and so as an $R$-affine space, and we may also regard $R$ as a pointed right $R$-module by equipping $R$ with the multiplicative identity $1 \to R$. Thus we obtain a pair of algebras that we may write as $(\Mat^\aff(R),R)$ and $(\Mat^*(R^\op),R)$, and it is shown in \cite[9.3]{Lu:FDistn} that the former is necessarily the commutant of the latter (in the sense discussed in \ref{para:cmtnt_bif_algs}). Hence, as discussed in \cite[16.2.2]{Lu:BAlgCmt}, the object $R$ of $\V$ underlies a left-commutant bifold algebra $(\Mat^\aff(R),\Mat^*(R^\op),R)$. Therefore, by Theorem \ref{thm:bif_leftcmt_iff_dadj_left_stable} we obtain a left-stable $\SF(\V)$-algebraic dual adjunction
\begin{equation}\label{eq:dualn_adj_aff_rmodstar}\RAdjn{\sAff{R}^\op}{\sAff{R}(-,R)}{\Mod{R^\op}_*(-,R)}{}{}{\Mod{R^\op}_*}\;.\end{equation}
If $R$ is an \textit{ring} in $\V$, then $(\Mat^\aff(R),\Mat^*(R^\op),R)$ is a commutant bifold algebra \cite[16.2.2]{Lu:BAlgCmt}, so \eqref{eq:dualn_adj_aff_rmodstar} is a stable $\SF(\V)$-algebraic dual adjunction, by Theorem \ref{thm:bif_leftcmt_iff_dadj_left_stable}. If $R$ is, moreover, a commutative ring in $\V$, then $R = R^\op$, and the $\SF(\V)$-theory $\Mat^\aff(R)$ is commutative \cite[8.7]{Lu:FDistn}, so by \ref{para:one-commutative-face} every pointed $R$-module $B$ underlies a bifold algebra $(\Mat^\aff(R),\Mat^*(R),B)$ (as noted in \cite[16.2.2]{Lu:BAlgCmt}), which therefore induces an $\SF(\V)$-algebraic dual adjunction 
$$\RAdjn{\sAff{R}^\op}{\sAff{R}(-,B)}{\Mod{R}_*(-,B)}{}{}{\Mod{R}_*}\;.$$

Among internal rigs $R$ that are not rings, there are some for which the bifold algebra $(\Mat^\aff(R),\Mat^*(R^\op),R)$ is \textit{not} right-commutant \cite[16.2.2, 16.2.4]{Lu:BAlgCmt}, as we discuss in Example \ref{exa:dualn_semilattices}, while there some for which it is, as we discuss next.
\end{exasub}

\begin{exasub}[\textbf{$R$-convex spaces and pointed $R_+$-modules}]\label{exa:cvx}
We now discuss a class of examples of commutant bifold algebras from \cite[\S 10]{Lu:FDistn} and \cite[16.2.3]{Lu:BAlgCmt}, and we consider the associated $\SF(\V)$-algebraic dual adjunctions. By definition, a \bit{preordered ring} in $\V$ is an internal ring $R$ in $\V$ equipped with a subrig $R_+ \hookrightarrow R$, called the \textit{positive part} of $R$. (We mention in passing that the pullback of $R_+ \hookrightarrow R$ along the composite $R \times R \xrightarrow{s} R \times R \xrightarrow{-} R$ is then an internal preorder in $\V$, where $s$ is the symmetry and $-$ is the subtraction morphism.) A \bit{(left) $R$-convex space} (in $\V$) is by definition a (left) $R_+$-affine space. We write $\Cvx{R}$ or $\Cvx{R}(\V)$ for the $\V$-category $\sAff{R_+}$ of $R$-convex spaces, which is a strictly $\SF(\V)$-algebraic $\V$-category over $\V$, described by the $\SF(\V)$-theory $\Mat^\aff(R_+)$, by \ref{exa:r_aff}. In view of \ref{exa:r_aff}, $R_+$ carries the structure of a left-commutant bifold algebra $(\Mat^\aff(R_+),\Mat^*(R_+^\op),R_+)$, and we obtain a left-stable $\SF(\V)$-algebraic dual adjunction
\begin{equation}\label{eq:cvx_adj}\RAdjn{\Cvx{R}^\op}{\Cvx{R}(-,R_+)}{\Mod{R_+^\op}_*(-,R_+)}{}{}{\Mod{R_+^\op}_*}.\end{equation}
Conditions that entail that $(\Mat^\aff(R_+),\Mat^*(R_+^\op),R_+)$ is a commutant bifold algebra are given in \cite[10.6]{Lu:FDistn} and discussed in \cite[16.2.3]{Lu:BAlgCmt}, building on \cite[10.20]{Lu:CvxAffCmt}. Here we simply recall from these papers three specific examples satisfying these conditions, and in each of these examples the preordered ring $R$ is also commutative, so that its subrig $R_+$ is also commutative.

Firstly, the familiar ordered ring of reals $\RR$ in $\V = \Set$ has the property that $(\Mat^\aff(\RR_+),\Mat^*(\RR_+),\RR_+)$ is a commutant bifold algebra, by \cite[10.20, 10.21]{Lu:CvxAffCmt}, as discussed in \cite[16.2.3]{Lu:BAlgCmt}, so that \eqref{eq:cvx_adj} is a stable finitary algebraic dual adjunction (i.e.~a stable $\SF(\Set)$-algebraic dual adjunction) in this case, by Theorem \ref{thm:bif_leftcmt_iff_dadj_left_stable}.

Secondly, if we take $\V = \Conv$ to be the category of convergence spaces (\ref{para:specific-cccs}) and we take $R = \RR$ to be the topological ring of reals, regarded as a preordered ring in $\Conv$, then $(\Mat^\aff(\RR_+),\Mat^*(\RR_+),\RR_+)$ is a commutant bifold algebra (for the system of arities $\SF(\V)$), as discussed in \cite[10.9]{Lu:FDistn} and \cite[16.2.3]{Lu:BAlgCmt}, so that \eqref{eq:cvx_adj} is a stable $\SF(\V)$-algebraic dual adjunction in this case, by Theorem \ref{thm:bif_leftcmt_iff_dadj_left_stable}. We refer to $\RR$-convex spaces in $\V = \Conv$ as \bit{convergence convex spaces} \cite[8.10]{Lu:FDistn}. The notions of dualization that arise from the dual adjunction \eqref{eq:cvx_adj} in this case are of interest from the perspective of analysis: For example, if $X$ is a locally compact Hausdorff topological space, regarded as an object of $\Conv$, then the internal hom $C_c(X,\RR_+)$ in $\Conv$ underlies a pointed $\RR_+$-module in $\Conv$, whose designated point is the constant function $1$, and whose dual is the convergence convex space $\Mod{\RR_+}_*(\Conv)(C_c(X,\RR_+),\RR_+)$ of all continuous $\RR_+$-linear maps $\mu:C_c(X,\RR_+) \to \RR_+$ that preserve $1$, which are in bijective correspondence with compactly supported Radon probability measures on $X$ \cite[5.5]{Lu:FDistn}.

Thirdly, if we take $\V = \Cah$ to be the Cahiers topos (\ref{para:specific-cccs}) and we take $R$ to be the line object in $\V$, then $R$ carries the structure of a commutative preordered ring in $\V$ such that $(\Mat^\aff(R_+),\Mat^*(R_+),R_+)$ is a commutant bifold algebra, as discussed in \cite[10.11]{Lu:FDistn} and \cite[16.2.3]{Lu:BAlgCmt}, so that \eqref{eq:cvx_adj} is a stable $\SF(\V)$-algebraic dual adjunction in this case.

In these three cases, since the $\SF(\V)$-theory $\Mat^\aff(R_+)$ is commutative (as discussed in \ref{exa:r_aff}), we deduce by \ref{para:one-commutative-face} that \textit{every} pointed $R_+$-module $B$ is the right face of a bifold algebra $(\Mat^\aff(R_+),\Mat^*(R_+),B)$ and so induces an $\SF(\V)$-algebraic dual adjunction 
$$\RAdjn{\Cvx{R}^\op}{\Cvx{R}(-,B)}{\Mod{R_+}_*(-,B)}{}{}{\Mod{R_+}_*}.$$
\end{exasub}

\begin{exasub}[\textbf{Dualization of several kinds of semilattices}]\label{exa:dualn_semilattices}
Taking $\V = \Set$ and $\J = \FinCard \hookrightarrow \Set$, we now discuss some examples of bifold algebras that involve semilattices and are discussed in \cite[\S 8]{Lu:CvxAffCmt} and \cite[16.2.4]{Lu:BAlgCmt}, and we consider the associated finitary algebraic dual adjunctions (i.e. $\FinCard$-algebraic dual adjunctions). Following \cite{Joh:StSp}, we use the term \textit{meet semilattice} to mean a partially ordered set with finite meets (including a top element $\top$ as the meet of the empty set), while we use the term \textit{binary-meet semilattice} to refer to posets in which every pair of elements has a meet. We write $\SLat_{\wedge\top}$ (resp $\SLat_{\wedge}$) for the category of meet semilattices (resp.~binary-meet semilattices) and maps preserving finite meets (resp.~binary meets).

The set $2 = \{0,1\}$ is a distributive lattice and so carries two different commutative rig structures, depending on which of the lattice operations $\vee$ or $\wedge$ we take as addition.  Taking $\wedge$ and addition and $\vee$ as multiplication, the category $\pMod{2}$ of $2$-modules is isomorphic to $\SLat_{\wedge\top}$, while the category $\Aff{2}$ of $2$-affine spaces is isomorphic to $\SLat_{\wedge}$ \cite[5.7]{Lu:FDistn}.  We now discuss three examples involving these categories, two of which are the special cases of \ref{exa:dualn_mod} and \ref{exa:r_aff} obtained by taking $R = 2$.

Firstly, by \ref{exa:dualn_mod} we obtain a stable finitary algebraic dual adjunction of the form $\Hom_{\wedge\top}(-,2) \dashv \Hom_{\wedge\top}(-,2):\SLat_{\wedge\top}^\op \rightarrow \SLat_{\wedge\top}$ under which each meet semilattice $A$ is sent to the meet semilattice $\Hom_{\wedge\top}(A,2)$ of all meet semilattice homomorphisms from $A$ to $2$.  But $\Hom_{\wedge\top}(A,2)$ is isomorphic to the meet semilattice of \textit{filters} in $A$, i.e., up-closed subsets $F \subseteq A$ that are closed under finite meets (and, in particular, contain $\top$).

Secondly, by \ref{exa:r_aff} we obtain a left-stable finitary algebraic dual adjunction of the form $(\SLat_\wedge,\SLat_{\wedge\top *}, \nabla \dashv \Delta)$ where  $\SLat_{\wedge\top *}$ is the category of \textit{pointed meet semilattices}, i.e., semilattices $B$ equipped with a chosen element $* \in B$, with semilattice homomorphisms preserving the chosen element.  However, this finitary algebraic dual adjunction is \textit{not} right-stable, because it is induced by the bifold algebra $(\Mat^\aff(2),\Mat^*(2),2)$, which is not right-commutant, as discussed in \cite[\S 8]{Lu:CvxAffCmt} and \cite[16.2.4]{Lu:BAlgCmt}.

Nevertheless, $(\Mat^\aff(2),2)$ is a saturated algebra, by \ref{para:cmtnt_bif_algs}, since it is the commutant of $(\Mat^*(2),2)$, by \ref{exa:r_aff}. Therefore, $(\Mat^\aff(2),2)$ induces a stable finitary algebraic dual adjunction that we now discuss, and which is therefore \textit{not} the preceding adjunction involving pointed semilattices.  Let $\SLat_{\wedge\top\bot}$ denote the category whose objects are meet semilattices with a bottom element (in addition to the top element that we require in any meet semilattice), with maps that preserve finite meets (including the top element) and also preserve the bottom element.  This category $\SLat_{\wedge\top\bot}$ may be identified with the category of normal $\U$-algebras for a Lawvere theory $\U$ given in \cite[8.1]{Lu:CvxAffCmt}.  As discussed in \cite[\S 8]{Lu:CvxAffCmt} and \cite[16.2.4]{Lu:BAlgCmt}, $2$ carries the structure of a commutant bifold algebra $(\Mat^\aff(2),\U,2)$, so by Theorem \ref{thm:bif_leftcmt_iff_dadj_left_stable} we obtain a stable finitary algebraic dual adjunction
\begin{equation}\label{eq:dualn_mslat_w_bot}\RAdjn{\SLat_{\wedge}^\op}{\Hom_{\wedge}(-,2)}{\Hom_{\wedge\top\bot}(-,2)}{}{}{\SLat_{\wedge\top\bot}}\;.\end{equation}
Given a meet semilattice $B$ with a bottom element, the dual $\Hom_{\wedge\top\bot}(B,2)$ of $B$ is the binary-meet semilattice whose elements are maps $B \rightarrow 2$ that preserve $\wedge$, $\top$, and $\bot$, and these may be described equivalently as \textit{proper filters} in $B$, i.e., filters in $B$ that do not contain $\bot$.  The Lawvere theory $\Mat^\aff(2)$ is commutative, by \ref{exa:r_aff}, since $2$ is a commutative rig, so by \ref{para:one-commutative-face}, if $B$ is \textit{any} meet semilattice with a bottom element, then $B$ underlies a bifold algebra $(\Mat^\aff(2),\U,B)$ and so induces a finitary algebraic dual adjunction
$$\RAdjn{\SLat_{\wedge}^\op}{\Hom_{\wedge}(-,B)}{\Hom_{\wedge\top\bot}(-,B)}{}{}{\SLat_{\wedge\top\bot}}\;.$$
\end{exasub}

\subsection{Some examples with infinite arities, over $\Set$}

In view of \ref{thm:eleutheric-implies-amenable} and \ref{exa:sys-ar}(2), a functor $U:\A \to \Set$ is $\J$-algebraic for the system of arities $\J = \Set$ consisting of all (small) sets iff $U$ is monadic. We now consider some examples of $\J$-algebraic dual adjunctions for $\J = \Set$, i.e.~dual adjunctions between pairs of categories monadic over $\Set$. The first two of these, Examples \ref{exa:sup-lat} and \ref{exa:cpct-disc}, are dual equivalences and so are stable, while the third, Example \ref{exa:los}, is not an equivalence, and it is stable if and only if there are no measurable cardinals in $\Set$. The bifold algebras that induce these three algebraic dual adjunctions were discussed in \cite[16.3]{Lu:BAlgCmt}.

\begin{exasub}[\textbf{The self-duality of complete sup-lattices}]\label{exa:sup-lat}
Let $\textnormal{Sup}$ denote the category whose objects are complete lattices and whose morphisms are maps that preserve suprema (i.e., joins of arbitrary subsets).  By \cite[I.1.1]{JoyalTierney} there is an adjoint equivalence
\begin{equation}\label{eq:self_duality_sup}\EquivAlt{\textnormal{Sup}^\op}{(-)^\bullet}{(-)^\bullet}{}{}{\textnormal{Sup}}\end{equation}
under which each complete lattice $L$ corresponds to its opposite $L^\bullet := L^\op$ and each morphism $f:L \to M$ in $\textnormal{Sup}$ corresponds to the map $f^\bullet = (f_*)^\op:M^\op \to L^\op$ where $f_*:M \to L$ is the right adjoint to $f$ (qua functor). $\textnormal{Sup}$ is symmetric monoidal closed with the internal hom obtained by equipping the hom-set $\textnormal{Sup}(L,M)$ $(L,M \in \ob\textnormal{Sup})$ with the pointwise order \cite[I.5]{JoyalTierney}. There is an isomorphism $L^\bullet \cong \textnormal{Sup}(L,2)$ in $\textnormal{Sup}$, natural in $L \in \textnormal{Sup}$ \cite[I.3]{JoyalTierney}.  It is well known that $\textnormal{Sup}$ is monadic over $\Set$ and, in particular, is isomorphic to the category of $\PP$-algebras for the powerset monad $\PP$ (see, e.g., \cite[3.2.2]{SealTholen:MonTopII-Monoidal-Structures}).  Hence $\textnormal{Sup}$ is $\J$-algebraic over $\Set$ for the system of arities $\J = \Set$ consisting of all (small) sets, and the adjoint equivalence \eqref{eq:self_duality_sup} is a stable $\J$-algebraic dual adjunction, since every object of $\textnormal{Sup}$ is reflexive.
\end{exasub}

\begin{exasub}[\textbf{The duality of compact and discrete topological groups}]\label{exa:cpct-disc}
For each locally compact Hausdorff topological abelian group $G$, let us write $\widehat{G}$ for the Pontryagin dual of $G$ (\ref{exa:binz-butmzmann-duality}).  When $G$ is compact (resp.~discrete), $\widehat{G}$ is discrete (resp.~compact), and the Pontryagin duality restricts to an adjoint equivalence
\begin{equation}\label{eq:duality_cpct_disc_ab_grp}\EquivAlt{\textnormal{CAb}^\op}{\widehat{(-)}}{\widehat{(-)}}{}{}{\Ab}\end{equation}
where $\textnormal{CAb}$ is the category of compact Hausdorff topological abelian groups and $\Ab$ is the category of (discrete) abelian groups; see \cite[Thm.~12]{Mor}.  By \cite[\S 5]{Lin:Eq}, $\textnormal{CAb}$ is $\J$-algebraic over $\Set$ for the system of arities $\J = \Set$, as is $\textnormal{Ab}$, so the adjoint equivalence \eqref{eq:duality_cpct_disc_ab_grp} is a stable $\J$-algebraic dual adjunction, since all the objects of $\textnormal{CAb}$ and of $\Ab$ are reflexive with respect to this dual adjunction.
\end{exasub}

\begin{exasub}[\textbf{Dualization of abelian groups and measurable cardinals}]\label{exa:los}
Consider the dual adjunction
\begin{equation}\label{eq:dualn_ab}\RAdjn{\Ab^\op}{\Hom_\ZZ(-,\ZZ)}{\Hom_\ZZ(-,\ZZ)}{}{}{\Ab}\end{equation}
determined by the object $\ZZ$ of the category of abelian groups, $\Ab$. Since $\Ab$ is monadic over $\Set$, this is an example of a $\J$-algebraic dual adjunction for the system of arities $\J = \Set$ consisting of all small sets, and we now address the question of whether it is stable.

Given any $J \in \ob\Set$, the free abelian group on $J$ is the direct sum $\oplus_{j \in J} \ZZ$. By a theorem of Ehrenfeucht and \L o\'s \cite{EhrLos}, the abelian group $\oplus_{j \in J} \ZZ$ is reflexive (with respect to the dual adjunction \ref{eq:dualn_ab}) if and only if the cardinality of $J$ is less than all measurable cardinals (i.e.~either there are no measurable cardinals or the cardinality of $J$ is less than the first measurable cardinal); also see \cite{Eda,Shelah}. Hence \eqref{eq:dualn_ab} is a stable $\J$-algebraic dual adjunction for $\J = \Set$ if and only if there are no measurable cardinals in $\Set$. The corresponding observation that the bifold algebra inducing \eqref{eq:dualn_ab} is a commutant bifold algebra (for $\J = \Set$) iff there are no measurable cardinals in $\Set$ was treated in \cite[16.3.2]{Lu:BAlgCmt}.
\end{exasub}

\bibliographystyle{amsplain}
\bibliography{bib}

\end{document}